\documentclass[11pt]{article}
\usepackage[margin = 1in]{geometry}
\usepackage[affil-it]{authblk}
\usepackage{lipsum}
\usepackage{amsfonts}
\usepackage{graphicx}
\graphicspath{ {images/} }
\usepackage{epstopdf}
\usepackage{dsfont}
\usepackage{mathtools}
\usepackage{empheq}
\usepackage{amsfonts}
\usepackage{amsgen,amsthm,amsmath,amstext,amsbsy,amsopn,amssymb,subcaption,stmaryrd,mathabx,mathrsfs}
\usepackage{comment}
\usepackage[font=footnotesize,labelfont=bf]{caption}

\usepackage{blkarray}

\usepackage{url}
\usepackage{longtable}
\usepackage{mathtools}
\usepackage{multirow}
\usepackage{array}

\usepackage{enumitem}
\usepackage{hyperref}
\usepackage{natbib}
\usepackage{booktabs}
\usepackage{algorithm}
\usepackage{algpseudocode}
\algrenewcommand\algorithmiccomment[1]{\hfill\texttt{//~#1}}
\usepackage{xcolor}
\usepackage[utf8]{inputenc} 
\usepackage[T1]{fontenc}

\ifpdf
  \DeclareGraphicsExtensions{.eps,.pdf,.png,.jpg}
\else
  \DeclareGraphicsExtensions{.eps}
\fi
\usepackage{template}

\newtheorem{Definition}{Definition}
\newtheorem{Theorem}{Theorem}

\newtheorem{Lemma}{Lemma}
\newtheorem{Remark}{Remark}
\newtheorem{Corollary}{Corollary}

\newtheorem{Proposition}{Proposition}

\newtheorem{Claim}{Claim}

\DeclareMathAlphabet\mathbfcal{OMS}{cmsy}{b}{n}

\usepackage{bbm}
\newcommand{\eps}{\varepsilon} % Epsilon

\hypersetup{
    colorlinks=true,
    linkcolor=blue,
    filecolor=blue,      
    urlcolor=cyan,
    citecolor=blue
}

\makeatletter
\newcommand*{\rom}[1]{\expandafter\@slowromancap\romannumeral #1@}
\makeatother

\allowdisplaybreaks

\begin{document}
	\title{Adaptive Confidence Intervals in Efron's Gaussian\\ Two-Groups Model}
    
	\author[1]{Qiaosen Wang}
    \author[2]{Shuwen Chai}
    \author[1]{Chao Gao\thanks{The research of CG is supported in part by NSF Grants ECCS-2216912 and DMS-2310769, and an Alfred Sloan fellowship.}}
    \affil[1]{Department of Statistics, University of Chicago}
    \affil[2]{Department of Computer Science, Northwestern University}
	\date{\today}
    
% 	\footnotetext[2]{Email: \texttt{chaogao@uchicago.edu}. The research of CG is supported in part by NSF
% Grants ECCS-2216912 and DMS-2310769, NSF Career Award DMS-1847590, and an Alfred Sloan fellowship. }
	\maketitle

\begin{sloppypar}	
%{\bf Keywords}: 
\begin{abstract}
Robust uncertainty quantification is increasingly important in modern data analysis and is often formalized under Huber’s model, which allows an $\varepsilon$-fraction of arbitrary corruptions. In many experimental sciences, however, the measurement protocol is well controlled: contamination is more plausibly introduced upstream (e.g., during source selection or labeling), after which all samples are measured through the same instrument noise.

Motivated by this noise-oblivious nature of adversaries, we study confidence intervals for the null location parameter $\theta$ in Efron’s Gaussian two-groups model, where an unknown fraction $\varepsilon$ of observations have arbitrarily shifted means, but all samples share the same law of additive Gaussian measurement noise with variance $\sigma^2$. We characterize the minimax-optimal length among confidence intervals with a prescribed coverage level uniformly over the unknown contamination proportion and all noise-oblivious adversaries. Although prior work has shown that the minimax point estimation rate of $\theta$ does not deteriorate when $\varepsilon$ becomes unknown, our results reveal that adaptation to an unknown $\varepsilon$ is intrinsically costly for confidence intervals. In particular, with a given $\sigma^2$, the minimax-optimal length of confidence intervals that are adaptive to unknown $\varepsilon$ is of order
$$
    \sigma \Big(n^{-1/4}+\frac{\varepsilon^{1/2}}{\sqrt{1\vee\log(en\varepsilon^2)}}\Big),
$$
which is polynomially worse than the optimal length when $\varepsilon$ is known. When the variance $\sigma^2$ is also unknown, we show a further degradation: no adaptive confidence interval can be shorter than $\Omega(\sigma n^{-1/8})$.

Algorithmically, we introduce a Fourier-based certification procedure built on Carath\'{e}odory's positive-semidefiniteness constraints. By scanning candidate points and accepting those whose residual characteristic function is certifiably consistent with a Gaussian location mixture, our algorithm attains the minimax lower bound in the known-variance setting and is computable in polynomial time.
\end{abstract}

\newpage
\tableofcontents

\section{Introduction}
The classical problem of constructing a confidence interval for the mean of a simple Gaussian distribution is by now completely understood. 
Given i.i.d. observations $X_{1:n}\iid \cN(0,\sigma^2)$, the textbook $Z$-interval
\begin{align*}
    \Big[\overline{X}-z_{1-\delta/2}\frac{\sigma}{\sqrt{n}},\ \overline{X}+z_{1-\delta/2}\frac{\sigma}{\sqrt{n}}\Big],\quad \overline{X}\define \frac{1}{n}\sum_{j=1}^n X_j
\end{align*}
and its adjustment for unknown variance, the $t$-interval
\begin{align*}
    \Big[\overline{X}-t_{1-\delta/2,\, n-1}\frac{S}{\sqrt{n}},\ \overline{X}+t_{1-\delta/2,\, n-1}\frac{S}{\sqrt{n}}\Big],\quad S^2\define \frac{1}{n-1}\sum_{j=1}^n (X_j-\overline{X})^2 
\end{align*}
are guaranteed to have coverage probability $1-\delta$ in the non-asymptotic sense. Their lengths are also information-theoretically optimal up to constant factors among all confidence intervals with the same coverage level. 

By contrast, modern large-scale inference problems depart substantially from this idealization. In applications such as single-cell genomics, web-scale product experimentation, and crowd-sourced language model evaluation, one observes a large collection of experimental outcomes, of which only an (often unknown) fraction are generated under a common null distribution. The remaining samples, affected by nonnull effects, may have arbitrarily shifted mean values but are still convolved with the same type of well-specified measurement noise. A widely adopted model for such data is Efron’s two-groups formulation \cite{efron2001empirical, efron2002empirical,efron2004large,efron2008microarrays, efron2008simultaneous}. In its most typical Gaussian incarnation, this model posits that
\begin{align}\label{eq:efron_model_intro}
    X_j\sim P_{\theta,\sigma,\varepsilon,Q}\define (1-\varepsilon)\cN(\theta,\sigma^2)+\varepsilon \int \cN(\eta,\sigma^2)\dif  Q(\eta),\qquad j=1,2,\cdots,n,
\end{align}
where $\theta\in\bbR$ is the null standard, $\varepsilon\in[0,1/2)$ is the proportion of nonnulls, and $Q$ is an unknown and arbitrary probability distribution that serves as the location prior of the nonnull fraction. Alternatively, from a robust statistics perspective, nulls are regarded as inliers, while nonnulls are treated as corrupted outliers. The Gaussian two-groups model then decomposes into an equivalent form
\begin{align*}
    X= \theta \mathbf{1}_n+\mathbf{\gamma}+Z,\qquad Z\sim \cN(\mathbf{0}_n,\sigma^2\bfI_n),\ \norm{\mathbf{\gamma}}_0\lesssim \varepsilon n,
\end{align*}
which instantiates Gaussian mean-shift/noise-oblivious contamination. From this new angle, $\theta$ is interpreted as the true mean of the clean distribution, $\varepsilon$ as the proportion of contamination, and $\mathbf{\gamma}$ (equivalently, $Q$) as a noise-oblivious adversary that acts before the convolution with Gaussian measurement noises. 

The location parameter $\theta$, either interpreted as the reference level to which nonnull effects are compared, or as the true center around which the bulk of clean data are generated, is naturally tied to a variety of statistical interests. Most of the existing theory on this model and its variants focuses on the point estimation of $\theta$ \cite{jin2007estimating, jin2008proportion, cai2010optimal,carpentier2021estimating,kotekal2025optimal, diakonikolas2025efficient, diakonikolas2026sample}. In particular, the work \cite{kotekal2025optimal} characterizes the minimax estimation rate for $\theta$ under the formulation \eqref{eq:efron_model_intro} without any additional assumptions on the nonnull/adversary $Q$. As they reveal, when $\varepsilon\leq \epsmax$ for any $\epsmax\in(0,1/2)$, the minimax rate is
\begin{align}\label{eq:estimation_rate}
    \abs{\widehat{\theta}-\theta}/\sigma\asymp \frac{1}{\sqrt{n}}+\frac{\varepsilon}{ \sqrt{1\vee\log(en\varepsilon^2)}}\asymp\begin{dcases}
        \frac{1}{\sqrt{n}},\quad &0\leq \varepsilon\leq \frac{1}{\sqrt{n}},\\
        \frac{\varepsilon}{\sqrt{\log(en\varepsilon^2)}},\quad &\frac{1}{\sqrt{n}}\leq \varepsilon\leq \epsmax,
    \end{dcases}
\end{align}
for both the nonadaptive setting where $\sigma^2$ and $\varepsilon$ are known, and the more challenging adaptive setting where $\sigma^2$ and/or $\varepsilon$ are unknown. However, the corresponding inference problem of building a confidence interval for $\theta$ remains largely unexplored. In the context of large-scale inference, constructing confidence intervals for $\theta$ is arguably more imperative than obtaining a sharp point estimator. In light of the fact that the estimate of $\theta$ may enter downstream procedures such as the calculation of p-values, what matters in practice is not merely that $\widehat{\theta}$ has a small risk, but that one can rigorously quantify the uncertainty in $\widehat{\theta}$
and determine how this uncertainty propagates through the multiple testing pipeline. This calls for confidence intervals for $\theta$ with a guaranteed coverage level whose lengths are as short as information-theoretically possible, especially in the adaptive regime where $\varepsilon$ and/or $\sigma^2$ are potentially unknown.

The goal of this paper is to build adaptive robust confidence intervals for $\theta$ under Efron's Gaussian two-groups model. Throughout this paper, the nonnull proportion $\varepsilon$ is always assumed to be unknown so that the confidence intervals must adapt to it. 
Regarding variance $\sigma^2$, we study both settings in which $\sigma^2$ is known and those in which $\sigma^2$ is unknown. Given a fixed maximum nonnull proportion $\epsmax\in(0,1/2)$ and a fixed error probability $\delta\in(0,1)$, we investigate those confidence intervals that achieve:
\begin{itemize}
    \item \textbf{high-probability coverage guarantee:}
    \begin{align}\label{eq:coverage_intro}
        \inf_{\theta\in\bbR,\, \sigma^2>0}\ \inf_{\varepsilon\in[0,\epsmax],\, Q}\ P_{\theta,\sigma,\varepsilon,Q}^n\big(\theta\in \CIhat\big)\geq 1-\delta,
    \end{align}
    \item \textbf{high-probability length guarantee:}
    \begin{align}\label{eq:length_intro}
        \inf_{\theta\in\bbR,\, \sigma^2>0}\ \inf_{\varepsilon\in[0,\epsmax],\, Q}\ P_{\theta,\sigma,\varepsilon,Q}^n\big(\abs{\CIhat}\leq r\big)\geq 1-\delta,
    \end{align}
\end{itemize}
where $\abs{\CIhat}\define \sup\CIhat-\inf\CIhat$ denotes the length, and $r$ is an upper bound of length that depends on $(n,\varepsilon,\sigma)$ and also on $\delta$ and $\epsmax$. The infimum over $\varepsilon\in[0,\epsmax]$ reflects the adaptivity to the actual contamination proportion; the infimum over $Q$ represents the robustness against arbitrary noise-oblivious adversaries. We are interested in the smallest possible length $r$ under a given coverage probability, and in the design of an algorithm that achieves this limit. 

\subsection{Two Key Motivations}
\subsubsection{Adaptivity Makes Confidence Intervals Different from Point Estimation}
Confidence intervals and point estimators are mutually transferable in nonadaptive settings. When all adaptivity requirements are neglected, and both $\sigma^2$ and $\varepsilon$ are assumed to be known, a valid confidence interval can be constructed simply based on a rate-optimal estimator $\widehat{\theta}$ (such as the one in \cite{kotekal2025optimal}) by:
\begin{align}\label{eq:CI_by_estimator}
    \Big[\widehat{\theta}-C\sigma \Big( \frac{1}{\sqrt{n}}+\frac{\varepsilon}{ \sqrt{1\vee\log(en\varepsilon^2)}}\Big),\ \widehat{\theta}+C\sigma \Big( \frac{1}{\sqrt{n}}+\frac{\varepsilon}{ \sqrt{1\vee\log(en\varepsilon^2)}}\Big)\Big].
\end{align}
This is because the high-probability deviation of $\widehat{\theta}$ can be controlled by a term that can be written out once both $\sigma^2$ and $\varepsilon$ are provided. From the lower-bound perspective, this is also the shortest confidence interval up to leading constants. However, when either $\sigma^2$ or $\varepsilon$ is unknown, the optimal estimation rate of $\theta$ is not available from the data, so we cannot perform the previous trick to build a confidence interval with a point estimator.
We may first estimate $\sigma^2$ and $\varepsilon$ and then plug these estimates into the above construction \eqref{eq:CI_by_estimator}. However, $\varepsilon$ is not identifiable from finite samples, and existing results for estimating $\varepsilon$ all require additional smoothness assumptions on the adversary $Q$ \cite{jin2007estimating,jin2008proportion,cai2010optimal}. On the other hand, for point estimation, one can apply Lepskii's method \cite{lepskii1991problem, lepskii1992asymptotically} to ensure adaptivity, as adopted in \cite{kotekal2025optimal}. In contrast, there is no such universal principle for confidence intervals that additionally requires coverage. 

Another naive solution to confidence intervals without knowing $\varepsilon$ is to apply the estimator-to-CI conversion \eqref{eq:CI_by_estimator} with the unknown $\varepsilon$ substituted by a reliable upper bound $\epsmax\in(0,1/2)$. In reality, it is impossible to precisely determine the exact contamination proportion $\varepsilon$. A crude and conservative upper bound $\epsmax\in(0,1/2)$ for $\varepsilon$, say $\epsmax=0.05$ or $\epsmax=0.49$, is practically more available than the actual $\varepsilon$. Since Efron's models are nested in the sense that $P_{\theta,\sigma,\varepsilon,Q}$ can be rewritten as $P_{\theta,\sigma,\epsmax,Q'}$ whenever $\varepsilon\leq \epsmax$, we can directly replace $\varepsilon$ with $\epsmax$ in \eqref{eq:CI_by_estimator} as if $\epsmax$ were the actual contamination proportion. However, in practice, $\epsmax$ is typically a constant, which, when we apply \eqref{eq:CI_by_estimator}, leads to a barely satisfactory length of $\sigma/\sqrt{\log(n)}$. This approach essentially applies the estimation-to-CI conversion in the most pessimistic and conservative manner and cannot achieve a faster rate when the true $\varepsilon$ is much smaller than $\epsmax$ (e.g., $\epsmax=0.49$ but $\varepsilon=0$).

In fact, we will illustrate that neither the ideal known-$\varepsilon$ rate of $\sigma(1/\sqrt{n}+\varepsilon/\sqrt{1\vee\log(en\varepsilon^2)})$ nor the conservative $\varepsilon=\epsmax$ rate of $\sigma/\sqrt{\log(n)}$ is the ultimate answer when $\varepsilon$ is unknown.
\begin{Claim}[Informal]
For adaptive confidence intervals on Efron's Gaussian two-groups model with fixed $\epsmax\in(0,1/2)$, we can adapt to an unknown $\varepsilon\leq \epsmax$ and do better than the conservative length of $\sigma/\sqrt{\log(n)}$, but we cannot fully achieve the known-$\varepsilon$ rate of $\sigma(1/\sqrt{n}+\varepsilon/\sqrt{1\vee\log(en\varepsilon^2)})$.
\end{Claim}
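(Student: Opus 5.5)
The claim has two halves: an achievability part (beat $\sigma/\sqrt{\log n}$ adaptively) and an impossibility part (the known-$\varepsilon$ rate is unattainable). I would prove each by exhibiting the target rate $\sigma\big(n^{-1/4}+\varepsilon^{1/2}/\sqrt{1\vee\log(en\varepsilon^2)}\big)$ announced in the abstract, working throughout with known $\sigma$, which I normalize to $\sigma=1$. Since this rate is $o(1/\sqrt{\log n})$ whenever $\varepsilon\to0$, it gives the first half. Since it is polynomially larger than $1/\sqrt n+\varepsilon/\sqrt{1\vee\log(en\varepsilon^2)}$ (for instance at $\varepsilon=0$, where it is $n^{-1/4}$ versus $n^{-1/2}$), it gives the second half.

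\textbf{Lower bound.} I would use a two-point argument tailored to adaptivity. Take $P_0=\mathcal N(0,1)$, which is a valid model with $\varepsilon=0$. Then construct a second law $P_1=(1-\varepsilon')\mathcal N(\Delta,1)+\varepsilon'\!\int\mathcal N(\eta,1)\,dQ(\eta)$ with $\varepsilon'\le\epsmax$, chosen so that $\mathrm{TV}(P_0^n,P_1^n)$ is small. Suppose an adaptive interval covers under both laws with probability $1-\delta$ and has length at most $r(0)$ under $P_0$ with probability $1-\delta$. Then under $P_1$ it contains $\Delta$, and it also contains $0$ up to the TV slack. Hence $\Delta\le r(0)$, and so $r(0)\gtrsim\Delta$.

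For the construction, write $P_0$ itself as $(1-\varepsilon')\mathcal N(\Delta,1)+\varepsilon'\,\mathcal N(0,1)*Q$ approximately. This requires $\mathcal N(0,1)-(1-\varepsilon')\mathcal N(\Delta,1)$ to be close to a nonnegative Gaussian location mixture. The plan is to match the first $k$ moments via Gaussian quadrature, as in the lower bound of \cite{kotekal2025optimal}, with $\varepsilon'$ a constant and $\Delta$ tuned so that the chi-square distance is at most $1/n$. Matching only the first moment-order mismatch forces $\Delta^2\varepsilon'^{-1}\cdot\Delta^2\lesssim n^{-1/2}$ in chi-square, which gives $\Delta\asymp n^{-1/4}$. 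Running the same construction with null level $\varepsilon$ instead of $0$ gives the $\varepsilon^{1/2}/\sqrt{\log}$ term: compare $\varepsilon$-contaminated models against $\epsmax$-contaminated ones, with moment matching up to order $\asymp\log(en\varepsilon^2)$.

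\textbf{Upper bound.} I would scan candidate $t$ and accept $t$ if the residual empirical characteristic function $\widehat\phi(\omega)-(1-\varepsilon)e^{it\omega-\omega^2/2}$ is certifiably compatible with $\varepsilon e^{-\omega^2/2}\hat Q(\omega)$ for some $\varepsilon\le\epsmax$. Compatibility is checked through Carathéodory Toeplitz PSD constraints at frequencies up to $\asymp\sqrt{\log}$, with tolerances given by uniform concentration of $\widehat\phi$. The true $\theta$ passes with high probability, which gives coverage. For the length, any accepted $t$ must be within the target rate of $\theta$; this follows by a duality or moment argument bounding how far a mixture of weight $\varepsilon$ can mimic a shift.

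The main obstacle is this length analysis for accepted points. In particular, the $n^{-1/4}$ term requires showing that a spurious $t$ forces a residual mass of order $(t-\theta)^2$ that is detectable at precision $n^{-1/2}$. I would attempt it first via a second-order Taylor expansion of the residual at low frequency, using PSD positivity of the $2\times2$ Toeplitz minor.
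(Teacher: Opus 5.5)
\textbf{Lower bound: same as the paper.} Your lower-bound half is essentially the paper's argument. You pit a clean Gaussian against a constant-fraction contaminated Gaussian with a shifted null, use priors matching only the first moment, and apply the coverage/length-to-test reduction. With $\varepsilon'=\epsmax$, your pair is the paper's $\nu_0=(1-\epsmax)\delta_0+\epsmax\delta_{cn^{-1/4}}$, $\nu_1=\delta_{c\epsmax n^{-1/4}}$ after recentering. This already gives the ``cannot achieve'' half, since $n^{-1/4}\gg n^{-1/2}$ at $\varepsilon=0$.

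Two corrections to this half:
\begin{itemize}
\item The unmatched second moment is $(1-\varepsilon')\Delta^2/\varepsilon'$, so the requirement is $(\Delta^2/\varepsilon')^2\lesssim 1/n$. Your displayed $\Delta^4/\varepsilon'\lesssim n^{-1/2}$ would give $n^{-1/8}$, not $n^{-1/4}$.
\item The $\varepsilon^{1/2}/\sqrt{\log}$ term does not come from ``running the same construction.'' It needs priors carrying atoms of mass $\ge 1-\epsmax$ and $\ge 1-\varepsilon$, about $\varepsilon^{1/2}/\sqrt{K}$ apart, while matching $K\asymp\log(en\varepsilon^2)$ moments. The paper builds these by tilting and integrating $(e^{\mathrm{i}t/(2\tau)}-e^{-\mathrm{i}t/(2\tau)})^K$ twice (Proposition~\ref{prop:known_var_matching}). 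This term is not needed for the claim, however.
\end{itemize}

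\textbf{Upper bound: genuine gap.} You describe a certification procedure but leave its length analysis open. That analysis is the only step that produces anything shorter than $\sigma/\sqrt{\log n}$, so the achievability half is unproved. Three ingredients are missing.

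(i) \emph{The gap inequality.} Since $|\xi|\le1$, $\Upsilon(t;\mu)=2e^{-\mathrm{i}\mu t+\sigma^2t^2/2}\phi(t)-1$ lies in $\overline{\mathbb{D}}(-1+2(1-\varepsilon)e^{-\mathrm{i}rt},2\varepsilon)$ with $r=\mu-\theta$. Hence $|\Upsilon(t;\mu)|\ge 1+(1-\cos rt)-4\varepsilon\ge 1+2(rt/\pi)^2-4\varepsilon$ for $|rt|\le\pi$ (Lemma~\ref{lem:quadratic_gap}). This is a global bound from the $2\times2$ Toeplitz minor you mention; no Taylor expansion or duality argument is needed.

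(ii) \emph{Localization.} Each certificate is $2\pi/t$-periodic in $\mu$. For any finite set of frequencies, recurrence of a linear flow on a torus gives arbitrarily distant $\mu$ with $(\mu-\theta)t$ close to $0$ modulo $2\pi$ for all $t$ at once. So an unrestricted scan accepts far-away candidates whenever $\theta$ passes with positive margin. The paper restricts the scan to a pilot interval of width $M\sigma/\sqrt{\log(en)}$ (Lemma~\ref{lem:pilot_unknown_var}) and takes $\kappa\ge M/\pi$. This guarantees $|rt|\le\pi$, and it also caps the length at $O(\sigma/\sqrt{\log n})$ when $\varepsilon$ is of constant order.

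(iii) \emph{Frequency choice.} The union-bound slack on the grid $\{\sqrt{k}/(\kappa\sigma)\}_{k\le\lceil\log(en)\rceil}$ is $\Delta(t)\asymp e^{\sigma^2t^2}/\sqrt{n}$. For every unknown $\varepsilon$ you must exhibit a grid point with $2(rt/\pi)^2>4\varepsilon+2\Delta(t)$. The paper takes $t=1/(\kappa\sigma)$ if $\varepsilon\le n^{-1/2}$, and $t=\lfloor\sqrt{\log(en\varepsilon^2)}\rfloor/(\kappa\sigma)$ otherwise, with $\kappa\ge\sqrt2$ so that $\Delta(t)\lesssim\varepsilon$.

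For the qualitative claim alone there is a shortcut. Apply (i) at the single frequency $t=1/(\kappa\sigma)$ after a median-based localization of width $O(\sigma)$; this confines accepted points to $O(\sigma(\varepsilon^{1/2}+n^{-1/4}))$. Intersecting with the conservative interval then gives a valid interval that is never longer than $O(\sigma/\sqrt{\log n})$, and is shorter in order once $\varepsilon=o(1/\log n)$.
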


\subsubsection{Efron's versus Huber's: When the Analyst Is Also the Experimenter}
Efron's model can be viewed as a Gaussian distribution contaminated by a noise-oblivious adversary and is therefore a subclass of the more general Huber's contamination model \cite{huber1964robust}, which permits post-noise corruption. For Gaussian observations, Huber's contamination model is formulated by
\begin{align}\label{eq:huber_model}
    X_j\sim P_{\theta,\sigma,\varepsilon,Q}^{(\textnormal{Huber})}\define(1-\varepsilon)\cN(\theta,\sigma^2)+\varepsilon Q,\qquad j=1,2,\cdots,n. 
\end{align}
Compared to Efron's formulation \eqref{eq:efron_model_intro}, the outliers need not maintain Gaussianity; as a result, the optimal rate for point estimation and confidence intervals is notably slower. 

To assume Huber's or to assume Efron's is not merely a statistical issue but a choice between two distinct modeling philosophies. A standard data-generating process can be divided into two stages:
\begin{enumerate}
    \item \textbf{Sourcing:} In total, $n$ objects are gathered in the hope that they all share the same signal $\theta$ that we plan to infer;
    \item \textbf{Measurement:} Independent experiments are conducted to measure these objects. This adds a random measurement error to each object.
\end{enumerate}
If contamination can occur in both of the above stages, then we are forced to choose the more conservative Huber's assumption. This is the situation for pure analysts, who only see the final data and have no knowledge about the type of data corruption. 

However, this is not the case for highly unified modern experimental studies, where the analyst is typically also the experimenter. Compared with Huber's model, Efron's formulation is equivalent to the assertion that contamination occurs only in the sourcing stage, where a fraction of samples may carry shifted signals. Regarding the measurement step, Efron's model assumes that every sample's signal is convolved with a well-specified device/pipeline noise. This is a more realistic assumption for modern experimental designs, in which the measurement step is protected by top-tier devices and professional lab assembly. In contrast, in the sourcing step, an unknown fraction of erroneous objects may still slip through due to theoretical or technical limitations. For example, a trace amount of other uninterested cell types can mix with the targeted cells during a sequencing experiment, whereas the lab can guarantee proper sequencing of each sampled cell. As another example, some inadequate candidates may participate in an anonymous online survey for unknown reasons, whereas well-trained experimenters are confident in correctly evaluating each submitted response. This extra belief in noise-oblivious corruption enables the design of less conservative inference procedures that potentially achieve a faster rate.

The Huber counterpart of adaptive confidence intervals for a Gaussian location parameter has been studied in \cite{luo2024adaptive}, where the proposed procedure matches the minimax-optimal length of:
\begin{align*}
    r\asymp \sigma\Big(\frac{1}{\sqrt{\log(n)}}+\frac{1}{\sqrt{\log(1/\varepsilon)}}\Big).
\end{align*}
As Efron's model is a subclass of the more general Huber's formulation, one may ask whether the algorithm in \cite{luo2024adaptive} is already locally optimal in the noise-oblivious regime. 
The answer is no:
\begin{Claim}[Informal]
    Any procedure with guaranteed adaptive coverage over all Huber's models cannot attain the minimax-optimal length locally within Efron's subclass.
\end{Claim}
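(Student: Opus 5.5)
The plan is a two-point argument. At $\varepsilon=0$, i.e.\ under a pure Gaussian, any interval that keeps its coverage over the whole Huber class must have length of order $\sigma/\sqrt{\log n}$. This is far longer than the Efron-adaptive length $\sigma(n^{-1/4}+\varepsilon^{1/2}/\sqrt{1\vee\log(en\varepsilon^2)})$ announced in the abstract, which is $\sigma n^{-1/4}$ at $\varepsilon=0$. Since the null model $\cN(\theta,\sigma^2)$ belongs to Efron's subclass, this proves the claim. By location-scale equivariance of the two-point argument, we may take $\sigma=1$ and $\theta_0=0$, and we fix any constant $\varepsilon_1\in(0,\epsmax]$.

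\textbf{Step 1 (construction).} Set $\theta_1=c/\sqrt{\log n}$ for a small constant $c$ depending on $\varepsilon_1$. Write $\phi_\mu$ for the $\cN(\mu,1)$ density. We have
\begin{align*}
(1-\varepsilon_1)\phi_{\theta_1}(x)/\phi_0(x)=(1-\varepsilon_1)\exp(\theta_1x-\theta_1^2/2),
\end{align*}
and this ratio is at most $1$ whenever $x\le t\define \log(1/(1-\varepsilon_1))/\theta_1+\theta_1/2\asymp \sqrt{\log n}/c$. Define
\begin{align*}
P_1\define(1-\varepsilon_1)\cN(\theta_1,1)+\varepsilon_1 Q,\qquad \varepsilon_1 Q\define\big[\phi_0-(1-\varepsilon_1)\phi_{\theta_1}\big]_+ \text{ on }(-\infty,t],
\end{align*}
with any remaining mass deficit placed on $(-\infty,t]$ so that $Q$ is a probability measure. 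Then $P_1$ lies in the Huber class \eqref{eq:huber_model} with parameter $(\theta_1,1,\varepsilon_1,Q)$. Moreover,
\begin{align*}
\mathrm{TV}(\cN(0,1),P_1)\lesssim \Pr(\cN(\theta_1,1)>t)+\Pr(\cN(0,1)>t)\le \exp(-t^2/4)\le n^{-2}
\end{align*}
once $c$ is small enough.

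\textbf{Step 2 (indistinguishability).} By subadditivity of total variation over products, $\mathrm{TV}(\cN(0,1)^{\otimes n},P_1^{\otimes n})\le n\cdot n^{-2}=1/n$. Adaptive Huber coverage gives $P_1^n(\theta_1\in\CIhat)\ge 1-\delta$, and transferring this to the Gaussian gives $\cN(0,1)^{\otimes n}(\theta_1\in\CIhat)\ge 1-\delta-1/n$. Together with coverage of $0$ under the Gaussian, a union bound shows that, with probability at least $1-2\delta-1/n$ under $\cN(0,1)^{\otimes n}$, the interval contains both $0$ and $\theta_1$. Hence $\abs{\CIhat}\ge c\sigma/\sqrt{\log n}$ with probability bounded away from zero, say for $\delta<1/4$; for larger $\delta$ we use the standard modification of the argument.

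\textbf{Step 3 (comparison).} This bound holds at a point of Efron's subclass with $\varepsilon=0$. There the Efron-adaptive minimax length is of order $\sigma n^{-1/4}$, which the paper's procedure achieves. Since $n^{-1/4}\ll 1/\sqrt{\log n}$, no Huber-valid procedure is locally optimal within Efron's class. More generally, the same construction gives the lower bound $\sigma/\sqrt{\log n}$ at every $\varepsilon\lesssim(\log n)^{-1}$, so the gap persists over a whole range of $\varepsilon$.

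The main obstacle is Step 1: $Q$ must be an honest probability measure while the tail mismatch is kept at $o(1/n)$. This forces the specific scaling $\theta_1\asymp 1/\sqrt{\log n}$ and the bookkeeping of the mass deficit. If the deficit cannot be absorbed directly, I would add a point mass of $Q$ at a far-negative location and absorb the resulting error into the same $n^{-2}$ bound.
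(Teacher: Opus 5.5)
This is essentially the paper's argument. The paper passes to the hybrid test $H_0^{(\textnormal{Huber})}$ versus $H_1^{(\textnormal{Efron})}$, sets $\varepsilon=0$ so that the alternative is a clean Gaussian, and cites Lemma~4 of \cite{luo2024adaptive} for the $\sigma/\sqrt{\log n}$ separation. You instead inline the CI-to-test step and reprove that lemma with an explicit total-variation construction, which makes the argument self-contained.

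One bookkeeping correction in Step~1: the mass imbalance goes the other way. The positive part $[\phi_0-(1-\varepsilon_1)\phi_{\theta_1}]_+$ has mass $\varepsilon_1+\eta$, where $\eta=\int_t^\infty[(1-\varepsilon_1)\phi_{\theta_1}-\phi_0]\,\mathrm{d}x>0$. So there is an excess, not a deficit, and padding cannot make $Q$ a probability measure. Instead, take $\varepsilon_1 Q$ to be this positive part multiplied by $\varepsilon_1/(\varepsilon_1+\eta)$. Then $\mathrm{TV}(\cN(0,1),P_1)=\eta\le 1-\Phi(t-\theta_1)\le n^{-2}$ for $c$ small, and the rest of your argument is unchanged.

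Two scope remarks.
\begin{enumerate}
\item Since $\cN(0,\sigma^2)=P^{(\textnormal{Efron})}_{0,\sigma,\varepsilon,\delta_0}$ for every $\varepsilon$, the length guarantee at level $r(n,\sigma,\varepsilon)$ applies to the clean Gaussian for each $\varepsilon$. Your bound therefore already gives $r(n,\sigma,\varepsilon)\gtrsim\sigma/\sqrt{\log n}$ for all $\varepsilon\in[0,\epsmax]$, not only for $\varepsilon\lesssim 1/\log n$. This nesting is how the paper passes from $\varepsilon=0$ to general $\varepsilon$.
\item Your equal-variance pair cannot produce the paper's $\Omega(\sigma)$ bound for unknown variance, which is Lemma~10 of \cite{luo2024adaptive} and lets the null variance differ. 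That stronger bound is what exhibits the gap when $\sigma^2$ is unknown and $\epsmax>1/3$. In that regime nothing shorter than the $O(\sigma/\sqrt{\log n})$ pilot interval is established within Efron's class, so a $\sigma/\sqrt{\log n}$ lower bound alone would not show non-optimality there.
\end{enumerate}
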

We will detail this result later in Section~\ref{sec:no_adaptation_between}. As this claim implies, we must adopt Efron's formulation before running the procedure if we believe in the noise-oblivious contamination and want to obtain a faster rate, since there is no adaptation across these two settings. Therefore, despite the existing methodologies for Huber's model in \cite{luo2024adaptive}, Efron's formulation is an essentially new and unexplored regime for the study of adaptive robust confidence intervals.

\subsection{Main Contributions}
The main contribution of this paper is two-fold:
\begin{itemize}
    \item \textbf{Fundamental hardness of adaptive inference:} We establish the minimax length of adaptive confidence intervals for $\theta$ in Efron's model when only $\sigma^2$ is known, characterized by
    \begin{align}\label{eq:known_var_length_intro}
        r\asymp \sigma\Big(\frac{1}{n^{1/4}}+\frac{\varepsilon^{1/2}}{\sqrt{1\vee\log(en\varepsilon^2)}}\Big),
    \end{align}
    which is polynomially slower than the aforementioned estimation rate~\eqref{eq:estimation_rate} from~\cite{kotekal2025optimal}. As this indicates, although nonadaptive estimation, adaptive estimation, and nonadaptive inference share the same rate \eqref{eq:estimation_rate}, adaptive inference is considerably harder than these three settings. In other words, adapting to an unknown $\varepsilon$ incurs additional sample complexity only when building confidence intervals, not when doing point estimation. When $\sigma^2$ is also unknown, we show that the rate will degrade further to no faster than $\Omega(\sigma n^{-1/8})$. Moreover, we discover an interesting phenomenon that the lower bound's exponent in $n$ undergoes two more sharp transitions as $\epsmax$ increases, one from $n^{-1/8}$ to $n^{-1/16}$ at $\epsmax=1/3$ and the other one from $n^{-1/16}$ to $n^{-1/24}$ at $\epsmax=7/15$.
    \item \textbf{Novel and rate-optimal Fourier-based technique:} Fourier-based deconvolution techniques have been proved to be successful in the estimation track of Efron's model~\cite{cai2010optimal, carpentier2019adaptive, comminges2021adaptive, kotekal2025optimal, diakonikolas2025efficient, diakonikolas2026sample}. We propose yet another Fourier-based technique, but anchored on the novel idea of certifying the nonnull/adversarial component of the empirical characteristic function using Carath\'{e}odory's positive-semidefiniteness (PSD) constraints. Scanning over each candidate value of $\theta$ in a crude pilot interval, we approve or refute it according to whether its implied nonnull part of the characteristic function is a \textit{bona fide} characteristic function. This is accomplished by checking whether the empirical plug-in estimate of the nonnull component satisfies a set of carefully designed PSD certificates up to certain slack terms. This recipe, computable in polynomial time, achieves the optimal rate~\eqref{eq:known_var_length_intro} when $\sigma^2$ is known, and a rate of $\sigma(n^{-1/8}+\varepsilon^{1/4}/\log^{1/2}(en\varepsilon^2))$ when $\sigma^2$ is unknown and $\epsmax\leq 1/3$.
\end{itemize}
\paragraph{Notation.}We use standard asymptotic notations: $O(\cdot)$ ($\lesssim$), $\Omega(\cdot)$ ($\gtrsim$), $\Theta(\cdot)$ ($\asymp$), and also $o(\cdot)$ and $\omega(\cdot)$. The subscripts in asymptotic notation are treated as constants. For example, $a\lesssim_\delta b$ (or $a=O_\delta(b)$ means that $a\lesssim b$ for each fixed $\delta$. For a complex number $z$, we use $\Re(z)$, $\Im(z)$, $\abs{z}$, $\mathrm{Arg}(z)\in(-\pi,\pi]$, and $\overline{z}$ to denote its real part, imaginary part, modulus, principal argument, and complex conjugate, respectively. The notations $\phi(t)$ and $\xi(t)$ are often reserved for characteristic functions. For a set $I\subseteq \bbR$, we use $\abs{I}=\sup I-\inf I$ to denote its diameter, which will frequently be called the \emph{length} of $I$ in the context of evaluating confidence intervals. The dependencies on $\delta$ and $\epsmax$ are often merged into the leading constants, whereas the dependencies on $n$, $\varepsilon$, and $\sigma^2$ are always clearly specified. 

\begin{table}
\begin{tabular}{lll}
\toprule
\textbf{Efron's Model} & \textbf{Estimation} \cite{kotekal2025optimal} & \textbf{Confidence Interval} (This) \\
\midrule
known $\varepsilon$, known $\sigma^2$  & $\sigma\Big(n^{-1/2}+\frac{\varepsilon}{ \sqrt{1\vee\log(n\varepsilon^2)}}\Big)\ (\star)$ & $\sigma\Big(n^{-1/2}+\frac{\varepsilon}{ \sqrt{1\vee\log(n\varepsilon^2)}}\Big)\ (\star)$ \\
unknown $\varepsilon$, known $\sigma^2$ &  $\sigma\Big(n^{-1/2}+\frac{\varepsilon}{ \sqrt{1\vee\log(n\varepsilon^2)}}\Big)\ (\star)$ & $\sigma\Big(n^{\color{red}-1/4\color{black}}+\frac{\varepsilon^{\color{red}1/2\color{black}}}{ \sqrt{1\vee \log(n\varepsilon^2)}}\Big)\ (\star)$ \\
unknown $\varepsilon$, unknown $\sigma^2$ &$\sigma\Big(n^{-1/2}+\frac{\varepsilon}{ \sqrt{1\vee\log(n\varepsilon^2)}}\Big)\ (\star)$ & $\sigma\Big(n^{\color{red}-1/8\color{black}}+\frac{\varepsilon^{\color{red}1/4\color{black}}}{ \sqrt{1\vee \log(n\varepsilon^2)}}\Big),\ \epsmax\in(0, 1/3]$\\
\bottomrule
\toprule
\textbf{Huber's Model} & \textbf{Estimation}  & \textbf{Confidence Interval} \cite{luo2024adaptive} \\
\midrule
known $\varepsilon$, known $\sigma^2$  & $\sigma\Big(n^{-1/2}+\varepsilon\Big)\ (\star)$ & $\sigma\Big(n^{-1/2}+\varepsilon\Big)\qquad\qquad\ (\star)$ \\
unknown $\varepsilon$, known $\sigma^2$ &  $\sigma\Big(n^{-1/2}+\varepsilon\Big)\ (\star)$ & $\sigma\Big(\frac{1}{\sqrt{\log(n)}}+\frac{1}{\sqrt{\log(1/\varepsilon)}}\Big)\ (\star)$ \\
unknown $\varepsilon$, unknown $\sigma^2$ &$\sigma\Big(n^{-1/2}+\varepsilon\Big)\ (\star)$ & $\sigma \qquad\qquad\qquad\qquad\quad\ \ (\star)$\\
\bottomrule
\end{tabular}
\caption{Rate upper bounds for estimation and confidence intervals in Efron's and Huber's settings. Those marked $(\star)$ are provably minimax rate-optimal. For the length of confidence intervals on Efron's model with unknown $\sigma^2$ and $\varepsilon$, we derive a partially matched lower bound of $\Omega(\sigma n^{-1/8})$ when $\epsmax\in (0,1/3]$.}
\label{tab:compare_bounds}
\end{table}

\subsection{Related Literature}
In this section, several relevant topics and lines of preceding work are discussed in order. 
\paragraph{Efron's two-groups model.} The two-groups model was systematically summarized in the seminal work of Efron \cite{efron2008microarrays} in the context of large-scale inference, with an empirical-Bayesian reflection on the meaning of the null hypothesis and how it ought to be chosen. The development of this model dates back to even earlier literature \cite{efron2001empirical, efron2002empirical, efron2004large}. The most general formulation of Efron's two-groups model states only a density decomposition
\begin{align*}
    f=(1-\varepsilon)f_\textnormal{null}+\varepsilon f_\textnormal{nonnull}.
\end{align*}
The null density $f_\textnormal{null}$ is conventionally assumed to be $\cN(0,1)$. Therefore, the p-values defined through $p_j=\Phi(X_j)$ are theoretically $U[0,1]$ under the null hypothesis. This theoretical null $\cN(0,1)$ appears to oversimplify reality, as subsequent lines of research have noted. For example, the empirical Bayesian pipeline in \cite{efron2001empirical} challenges the supposition $f_\textnormal{null}=\cN(0,1)$ by treating $f_\textnormal{null}$ as an unknown nonparametric density that needs to be estimated. Instead of going into nonparametric generality, a mild parametric correction $f_\textnormal{null}=\cN(\theta,\sigma^2)$, termed by Efron as the \emph{empirical null}, is already a reliable replacement for the theoretical null, as advocated in \cite{efron2004large} and also adopted in later works \cite{efron2008microarrays,efron2008simultaneous}. This new choice of null distribution necessitates determining the null parameters $\theta$ (and $\sigma^2$, if unknown) from the data in the presence of interference from the nonnull fraction. The nonnull density $f_\textnormal{nonnull}$ is largely unspecified in the work on Efron's models. However, a line of work has adopted the structural assumption that $f_\textnormal{nonnull}$ follows a location-scale mixture of Gaussians either due to a good match on real-world data \cite{efron2004large, jin2008proportion} or for the brevity of theoretical characterization \cite{jin2007estimating, jin2008proportion, cai2010optimal}. We especially emphasize that in all these works, the scale part of the nonnull mixing prior is always assumed to be supported on $[\sigma,+\infty)$ where $\sigma^2$ is the null variance. This assumption is consistent with the practical intuition that nonnull distributions tend to be more dispersed than the null distribution. More importantly, this restricted type of location-scale mixture is essentially a location-only mixture, as seen from the following equivalence
\begin{align*}
    f_\textnormal{nonnull}=\int \cN(\eta,\tau^2)\dif H(\eta,\tau)=\int \cN(\eta,\sigma^2)\dif Q(\eta)
\end{align*}
as long as $\mathrm{supp}(H)\subseteq \bbR\times [\sigma,+\infty)$, where 
\begin{align*}
    Q(\eta)=\int \cN(\eta,\tau^2-\sigma^2) \dif H(\tau\mid \eta).
\end{align*}
The formulation is therefore reduced to the location mixture introduced previously in \eqref{eq:efron_model_intro}.

A substantial line of work \cite{jin2007estimating, jin2008proportion, cai2010optimal, carpentier2019adaptive, carpentier2021estimating, kotekal2025optimal, diakonikolas2025efficient, diakonikolas2026sample} has contributed to the theory of Efron's model and its variants under formulation \eqref{eq:efron_model_intro} and its close variants. The work of Cai and Jin \cite{jin2007estimating, cai2010optimal} adopts the Gaussian incarnation of Efron's model and addresses the minimax rate of estimating the null location and scale parameters and the proportion of nonnull. They leverage a key observation that $(\theta,\sigma^2)$ can be approximated by the characteristic function $\phi(t)$ through
\begin{align*}
    \theta\approx \frac{\Im(\overline{\phi}(t)\phi'(t)) }{\abs{\phi(t)}^2},\quad \sigma^2\approx -\frac{\frac{\dif}{\dif t}\abs{\phi(t)}}{t\abs{\phi(t)}}.
\end{align*}
These approximations yield practical estimators under proper assumptions, in particular, the existence of a universal upper bound on the $q$-th moment of $Q$ for some fixed positive integer $q$. Such moment-boundedness ensures that: first, $\abs{\phi(t)}$ are away from $0$ when $t$ is in a sufficiently wide neighborhood of $0$, which stabilizes the denominators in the above estimators; second, $\phi(t)$ admits a certain regularity near $t= 0$, which enables a sharp convergence of $\abs{\phi_n(t)-\phi(t)}$ uniformly over all $t$ in a $0$-neighborhood.

The works of~\cite{carpentier2019adaptive} and \cite{carpentier2021estimating} also provide new insights into the theory of Efron's model. In \cite{carpentier2019adaptive}, the authors address the problem of estimating $\varepsilon$ when $\theta=0$ through a reduction to testing whether the nonnull shift vector $\gamma$ is $k_0$-sparse or $(k_0+\Delta)$-sparse but separated from $\bbB_0(k_0)$ in $\ell_2$ distance. Their procedure is a combination of three tests, two of which are heavily Fourier-motivated. Interestingly, although their problem is not directly linked to ours, they also observe a discontinuous jump in the minimax lower bound when $k_0=0$ and $\Delta/n$ is around $1/3$, which we formalize and strengthen in this paper from the perspective of moment matching. 
The work of Carpentier et al. \cite{carpentier2021estimating} considers parameter estimation of $\theta$ on a variant of Efron's model called one-side contamination, in which the corrupted distribution can only stochastically dominate the null distribution from one known side. This one-side contamination model is a stylized restriction of the standard formulation \eqref{eq:efron_model_intro}; the parameter $\theta$ is identifiable in this model even when $\varepsilon$ reaches or goes beyond $1/2$. In \cite{carpentier2021estimating}, the feature of this one-sided adversary is fully exploited, and an optimal algorithm is designed based on the interplay between moment-generating functions (Laplace transform) and Chebyshev polynomials.

A more recent work~\cite{kotekal2025optimal} addresses estimating the null parameters $\theta$ and $\sigma^2$ precisely under the Gaussian two-groups formulation~\eqref{eq:efron_model_intro}. A crucial property essentially used in their paper is that
\begin{align*}
    \abs{\xi(t)}\leq 1,\quad \forall\, t\in\bbR,
\end{align*}
for any characteristic function $\xi(t)$ corresponding to a probability measure. As we will see later in this paper, this condition happens to be the simplest one of Carath\'{e}odory's PSD constraints \cite{caratheodory1911variabilitatsbereich}. With this property, they propose to estimate $\theta$ by
\begin{align*}
    \widehat{\theta}\in\ \argmin_{\mu\in\bbR}\, \sup_{\abs{t}\leq \tau}\, \inf_{\xi\in\bbC,\,\abs{\xi}\leq 1}\, \abs*{e^{-\im \mu t+\sigma^2 t^2/2}\phi_n(t)-(1-\varepsilon)-\varepsilon \xi},
\end{align*}
when both $\sigma^2$ and $\varepsilon$ are available. Adaptation to unknown $\sigma^2$ and/or $\varepsilon$ is also derived based on pilot estimators from a holdout set and Lepskii's method, without incurring any adaptation cost. This result has been extended by subsequent works to high-dimensional \cite{diakonikolas2025efficient} and non-Gaussian \cite{diakonikolas2026sample} settings, and also serves as the starting point for our current paper. However, the plot thickens when constructing a confidence interval, an inference task that additionally demands high-probability coverage. We are compelled to devise a way to quantify the magnitude of uncertainty and explore more of the delicate structures in a genuine characteristic function.

\paragraph{Huber's contamination.}As mentioned earlier, Efron's model can be viewed as a Gaussian distribution contaminated by an oblivious mean-shift adversary, and is therefore a subclass of the more general concept of Huber's contamination \cite{huber1964robust}. For Gaussian observations, Huber's contamination model is formulated as \eqref{eq:huber_model}. It is well-known that the minimax estimation rate under Huber's contamination model is given by $\sigma(1/\sqrt{n}+\varepsilon)$, which can be achieved by the sample median without knowing $\varepsilon$. Thus, when $\varepsilon$ and $\sigma^2$ are both given, a rate-optimal confidence interval can be given by
\begin{align}
    \left[\widehat{\theta}-C\sigma\left(\frac{1}{\sqrt{n}}+\varepsilon\right),\ \widehat{\theta}+C\sigma\left(\frac{1}{\sqrt{n}}+\varepsilon\right)\right],
\end{align}
where $\widehat{\theta}$ can be any rate-optimal estimator such as the sample median. Nevertheless, this routine becomes infeasible when $\varepsilon$ is unknown, similar to what we previously noted for Efron's model. Regarding adaptive robust confidence intervals under Huber's contamination, \cite{luo2024adaptive} shows that the shortest possible length scales as $\sigma (1/\sqrt{\log(n)}+1/\sqrt{\log(1/\varepsilon)})$ when $\sigma^2$ is known but $\varepsilon$ is unknown and lies within $[0,\epsmax]$ for some $\epsmax\in(0,1/2)$. This rate reveals how difficult it is to build adaptive confidence intervals for such a model. Even when the data are truly immaculate, we have to make do with a barely satisfactory length of $\sigma/\sqrt{\log(n)}$ because we are not informed of the cleanliness before our inference procedure. This signals a significant gap between adaptively estimating $\theta$ and adaptively inferring $\theta$. Also delivered by \cite{luo2024adaptive} is that the minimax length turns into $\Theta(\sigma)$ as soon as $\sigma^2$ becomes unknown, which means that insisting on variance-adaptivity will rule out the hope of obtaining a shrinking confidence interval. These results are in violent contrast to those we obtain on Efron's model, where a rate of $\sigma(n^{-1/4}+\varepsilon^{1/2}/\sqrt{1\vee \log(en\varepsilon^2)})$ is achievable when $\sigma^2$ is known, and a rate of $\sigma(n^{-1/8}+\varepsilon^{1/4}/\sqrt{1\vee \log(en\varepsilon^2)})$ when $\sigma^2$ is unknown, both of which are no slower than $\sigma/\sqrt{\log(n)}$. See Table~\ref{tab:compare_bounds} for comparison.

\paragraph{Characteristic function as trigonometric moments.} Statistical application of characteristic functions has been mentioned multiple times in previous discussions. Here, we present a special interpretation of the characteristic function as trigonometric moments. This becomes obvious once we write that
\begin{align}
    \phi(kt_0)=\E_P[(e^{\im t_0 X})^k]&=\E_P[\cos(kt_0 X)]+\im \E_P[\sin(kt_0 X)]\nonumber\\
    &=\E_P[T_k(\cos(t_0 X))]+\im \E_P[\sin(t_0 X)U_k(\cos(t_0 X))],\nonumber
\end{align}
where $T_k$ is the degree-$k$ Chebyshev's polynomial of the first kind, and $U_k$ is the degree-$k$ Chebyshev's polynomial of the second kind (see e.g., \cite{carothers1998short}). From this perspective, $\{\phi(kt_0)\}_{k=0}^\infty$ encodes the moments of $\exp(\im t_0 X)$, or equivalently those of $\cos(t_0X)$ and $\sin(t_0X)$. Moment information and associated polynomial inequalities have been applied to robustly estimate a Gaussian distribution under both Huber's contamination \cite{hopkins2018mixture, diakonikolas2018robustly, dalalyan2022all} and the weaker mean-shift contamination \cite{diakonikolas2025efficient}. However, in this line of work, the plain algebraic moments $\E_P[X^k]$ evaluated in a subset of samples, not the characteristic function or the trigonometric moments, are actually used. The main reason is that those papers consider strong contamination, in which the corrupted fraction need not come from a true probability distribution. Therefore, there is no further structure in the corrupted part of the data beyond the fact that corrupted samples occupy at most $\varepsilon$-fraction; thus, it is natural to filter them out by identifying the majority that appear clean. By contrast, Efron's model is an example of weak contamination in which a delicate Gaussian structure is embedded within its adversarial component. This additional structure makes it indispensable to explore both the clean and the contaminated parts. Na\"{i}vely pushing the plain moments to the corrupted fraction will cause an issue: the usual algebraic moments $\E_{Q*\cN(0,\sigma^2)}[X^k]$ may not exist due to the arbitrariness of $Q$. But the characteristic function, being the moments of the bounded random variable $\exp(\im t_0 X)$, always exists regardless of how dispersive $Q$ is. We summarize this rationale as follows:

{\centering\itshape
Characteristic functions are robustified moments.\par}

Like algebraic polynomials, trigonometric polynomials also obey systems of inequalities. Recall that by Sylvester's criterion, a matrix is PSD if and only if all of its minors have nonnegative determinants. Therefore, the Carath\'{e}odory's PSD constraints that we will use in this paper (introduced later in Lemma~\ref{lem:cara_psd}) reduce to a set of trigonometric inequalities. Thus, our methodology can be regarded as the trigonometric analog of algebraic sum-of-squares proofs. To the best of our knowledge, applying these moment-based certifications to characteristic functions is relatively new in statistics. This trigonometric moments tool is also used in recent work \cite{ma2025best}, which targets the approximation rates for several density classes by Gaussian location mixtures.

\subsection{Paper Organization}
The remainder of the paper is organized as follows.
Section~\ref{sec:overview} presents the high-level idea of our certification-based construction of adaptive robust confidence intervals. Sections~\ref{sec:upper_bound_known_var} and~\ref{sec:upper_bound_unknown_var} apply this methodology to obtain upper bounds for the known-variance and unknown-variance settings, respectively.
Section~\ref{sec:lower_bound} completes the theory by providing information-theoretic lower bounds.
Section~\ref{sec:discussion} serves as an extended discussion of several key intuitions. The performance of our procedures and minimax lower bounds are proved in Appendices~\ref{sec:proof_ub} and \ref{sec:proof_lb}, respectively. Other proofs and auxiliary results are deferred to Appendices~\ref{sec:other_proof} and \ref{sec:aux}.

\section{Preliminaries and Overview of Techniques}\label{sec:overview}
In this section, we cover the preliminary setup and outline the algorithmic design at a high level.

\subsection{Certifying the Adversarial Component in the Characteristic Function}\label{sec:certify_adversary}
We illustrate how the construction of adaptive robust confidence intervals can be tamed using a Fourier-based recipe. Starting with the known-variance setting where $\sigma^2>0$ is given, and fixing a constant $\epsmax\in(0,1/2)$, we recall Efron's model:
\begin{align*}
    P_{\theta,\sigma,\varepsilon,Q}=(1-\varepsilon)\cN(\theta,\sigma^2)+\varepsilon Q*\cN(0,\sigma^2),\quad \theta\in\bbR,\ \varepsilon\in[0,\epsmax].
\end{align*}
Its characteristic function $\phi(t)=\E_{P_{\theta,\sigma,\varepsilon,Q}}[\exp(\im tX)]$ reads
\begin{align}\label{eq:efron_cf}
    \phi(t)=(1-\varepsilon)e^{\im\theta t-\sigma^2t^2/2}+\varepsilon e^{-\sigma^2t^2/2} \xi(t),
\end{align}
where $\xi(t)=\E_Q[\exp(\im tX)]$ is the characteristic function of the adversary $Q$. To recover $\theta$ from finite samples, the most natural idea is to extract an $\exp(\im \widehat{\theta} t-\sigma^2t^2/2)$ component that best fits the empirical characteristic function $\phi_n(t)=\E_{P_n}[\exp(\im tX)]$; then this $\widehat{\theta}$ should be close to the truth $\theta$. However, to construct a confidence interval, an opposite route is taken: instead of extracting the benign component, we scan over candidates $\mu\in\bbR$ to recover the adversarial component $\xi(t)$. If $\theta$ and $\varepsilon$ are given, this adversarial component can be recovered from $\phi(t)$ using \eqref{eq:efron_cf} through
\begin{align*}
    \xi(t)=e^{\im \theta t}\cdot \frac{e^{-\im \theta t+\sigma^2t^2/2}\phi(t)-(1-\varepsilon)}{\varepsilon}.
\end{align*}
When $\theta$ is unknown and we suspect that a candidate $\mu\in\bbR$ is the true $\theta$, we can define accordingly
\begin{align}\label{eq:xi_mu}
    \xi(t;\mu)\define e^{\im \mu t}\cdot \frac{e^{-\im \mu t+\sigma^2t^2/2}\phi(t)-(1-\varepsilon)}{\varepsilon},
\end{align}
with $\xi(t;\theta)\equiv \xi(t)$ being its most special case. The rationale for pursuing $\xi(t;\mu)$ is as follows: a candidate $\mu$ is the true $\theta$ if and only if $\xi(t;\mu)$, as a function of $t$, is the characteristic function of a probability measure (referred to as a \emph{genuine characteristic function} from now on). In fact, $\xi(t;\mu)$ is the characteristic function of the signed measure
\begin{align}\label{eq:xi_signed_measure}
    Q+\frac{1-\varepsilon}{\varepsilon}(\delta_\theta-\delta_\mu),
\end{align}
which becomes a probability measure only when $\mu=\theta$. Therefore, we are inspired to form a confidence interval by collecting all $\mu\in\bbR$ whose corresponding $\xi(t;\mu)$ appears to be a genuine characteristic function. However, $\varepsilon$ is always assumed to be unknown in our setting of adaptive robust confidence intervals, and therefore $\xi(t;\mu)$ defined by \eqref{eq:xi_mu} is not available. Instead, we recall that $\varepsilon\leq \epsmax<1/2$ and replace $\varepsilon$ by $1/2$ in the definition \eqref{eq:xi_mu}; we also drop the rotation factor $e^{\im \mu t}$ in \eqref{eq:xi_mu} since this does not affect whether a complex-valued function is a genuine characteristic function or not. These modifications bring us to the new definition of
\begin{align}\label{eq:known_var_upsilon}
    \Upsilon(t;\mu)\define\frac{e^{-\im \mu t+\sigma^2t^2/2}\phi(t)-1/2}{1/2}=2e^{-\im \mu t+\sigma^2t^2/2}\phi(t)-1.
\end{align}
From \eqref{eq:efron_cf}, we know that
\begin{align*}
    \Upsilon(t;\mu)=2\varepsilon e^{-\im \mu t}\xi(t)+2(1-\varepsilon)e^{\im (\theta-\mu)t}-1,
\end{align*}
which is the characteristic function of the signed measure
\begin{align}\label{eq:upsilon_signed_meausre}
      2\varepsilon \delta_{-\mu}*Q+2(1-\varepsilon)\delta_{\theta-\mu}-\delta_0.
\end{align}
Similar to \eqref{eq:xi_signed_measure}, only when $\mu=\theta$ can the signed measure in \eqref{eq:upsilon_signed_meausre} become a probability measure, as summarized by the following lemma:
\begin{Lemma}\label{lem:true_candidate_known_var}
    When the variance $\sigma^2>0$ is given and $\varepsilon\in[0,1/2)$, a candidate $\mu$ is the true $\theta$ if and only if the function
    \begin{align*}
         \Upsilon(t;\mu)\define 2e^{-\im \mu t+\sigma^2t^2/2}\phi(t)-1,
    \end{align*}
    as a function of $t$, is the characteristic function of a probability measure. 
\end{Lemma}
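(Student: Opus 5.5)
The plan is to combine the explicit decomposition of $\Upsilon(t;\mu)$ obtained just before the statement with the uniqueness theorem for Fourier transforms of finite signed measures. Substituting \eqref{eq:efron_cf} into the definition gives
\begin{align*}
    \Upsilon(t;\mu)=2\varepsilon e^{-\im \mu t}\xi(t)+2(1-\varepsilon)e^{\im(\theta-\mu)t}-1,
\end{align*}
which is the Fourier transform of the finite signed measure $\nu_\mu\define 2\varepsilon\,\delta_{-\mu}*Q+2(1-\varepsilon)\delta_{\theta-\mu}-\delta_0$ in \eqref{eq:upsilon_signed_meausre}. Since a finite signed measure on $\bbR$ is determined by its characteristic function, $\Upsilon(\cdot;\mu)$ is the characteristic function of a probability measure if and only if $\nu_\mu$ itself is a probability measure. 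The lemma therefore reduces to showing that $\nu_\mu\geq 0$ holds exactly when $\mu=\theta$.

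\textbf{The direction $\mu=\theta$.} In this case $\nu_\theta=2\varepsilon\,\delta_{-\theta}*Q+(1-2\varepsilon)\delta_0$. This is a convex combination of two probability measures because $\varepsilon\in[0,1/2)$ gives $1-2\varepsilon>0$. Hence $\nu_\theta$ is a probability measure, and $\Upsilon(\cdot;\theta)$ is a genuine characteristic function.

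\textbf{The direction $\mu\neq\theta$.} Here I evaluate $\nu_\mu$ on the singleton $\{0\}$:
\begin{align*}
    \nu_\mu(\{0\})=2\varepsilon\,Q(\{\mu\})+0-1\leq 2\varepsilon-1<0,
\end{align*}
using $Q(\{\mu\})\leq 1$ and $\varepsilon<1/2$. The middle term vanishes because $\theta-\mu\neq 0$. So $\nu_\mu$ assigns negative mass to $\{0\}$ and is not a positive measure. By the uniqueness theorem, no probability measure can have $\Upsilon(\cdot;\mu)$ as its characteristic function, since that measure would have to equal $\nu_\mu$.

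\textbf{Main obstacle.} The only subtle point is invoking uniqueness for signed measures rather than probability measures. This follows by applying the standard Fourier inversion and uniqueness result to the difference of two finite measures, that is, to the Jordan decomposition of $\nu_\mu$ minus the candidate probability measure. Everything else is direct computation.
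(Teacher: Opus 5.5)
Your proof is correct, but for $\mu\neq\theta$ it takes a different route from the paper.

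The paper never identifies the underlying measure. It uses only the necessary condition $\abs{\Upsilon(t;\mu)}\leq 1$, which is the order-1 Carath\'{e}odory constraint, at the single frequency $t=\pi/(\theta-\mu)$. There the clean term $2(1-\varepsilon)e^{\im(\theta-\mu)t}$ equals $-2(1-\varepsilon)$, so $\abs{\Upsilon(t;\mu)}\geq 3-4\varepsilon>1$ follows from $\abs{\xi}\leq 1$ alone. You instead invoke uniqueness of Fourier--Stieltjes transforms to pin down $\nu_\mu$, and then exhibit its negative atom $\nu_\mu(\{0\})=2\varepsilon Q(\{\mu\})-1<0$.

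Each route buys something different:
\begin{itemize}
\item Your argument gives the exact representing measure. You correctly keep the shift, obtaining $(1-2\varepsilon)\delta_0+2\varepsilon\,\delta_{-\theta}*Q$ at $\mu=\theta$; the paper's written proof drops the factor $e^{-\im\mu t}$ on $\xi$, which is harmless there.
\item The paper's argument is more elementary and quantitative. It shows that every wrong $\mu$ violates the modulus certificate by the explicit margin $2-4\varepsilon$ at an explicit frequency. This is exactly the mechanism that Algorithm~\ref{alg:known_var} and Lemma~\ref{lem:quadratic_gap} exploit later. Your atom argument says nothing about which certificate detects the violation.
\end{itemize}

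On your ``main obstacle'', you can avoid signed-measure uniqueness entirely. Suppose a probability measure $P$ has characteristic function $\Upsilon(\cdot;\mu)$. Then $P+\delta_0$ and $2\varepsilon\,\delta_{-\mu}*Q+2(1-\varepsilon)\delta_{\theta-\mu}$ are positive measures of total mass $2$ with the same transform. By the usual uniqueness theorem for probability measures they are equal. This forces $P(\{0\})+1=2\varepsilon Q(\{\mu\})\leq 2\varepsilon<1$, a contradiction.
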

See Appendix~\ref{sec:proof_chf} for the proof. Because only finite samples are provided, we may approximately recover $\Upsilon(t;\mu)$ from the empirical characteristic function $\phi_n(t)$ with a plug-in estimate:
\begin{align}\label{eq:known_var_upsilon_n}
    \Upsilon_n(t;\mu)\define \frac{e^{-\im \mu t+\sigma^2t^2/2}\phi_n(t)-1/2}{1/2}=2e^{-\im \mu t+\sigma^2t^2/2}\phi_n(t)-1.
\end{align}
The following route is now open to us:
\begin{enumerate}
    \item start with $\CIhat=\emptyset$, scan over $\mu$ in a pilot region, compute the function $\Upsilon_n(\cdot\,;\mu)$ for each $\mu$;
    \item certify whether $\Upsilon_n(\cdot\,;\mu)$ is \emph{approximately} a genuine characteristic function; if true, then add this $\mu$ to $\CIhat$.
\end{enumerate}
Under the unknown-variance setting and the condition that $\epsmax$ is a constant in $(0,1/3]$, we add a new parameter $\lambda$ that serves as a guessed candidate of the variance $\sigma^2$. In analogy to the known-variance setting, we define
\begin{align}\label{eq:unknown_var_upsilon}
    \Upsilon(t;\mu,\lambda)\define \frac{e^{-\im \mu t+\lambda t^2/2}\phi(t)-2/3}{1/3}=3e^{-\im \mu t+\lambda t^2/2}\phi(t)-2
\end{align}
and its plug-in estimate 
\begin{align}\label{eq:unknown_var_upsilon_n}
    \Upsilon_n(t;\mu,\lambda)\define 3e^{-\im \mu t+\lambda t^2/2}\phi_n(t)-2.
\end{align}
The constants are different from the known-variance setting because we assume $\epsmax\in(0,1/3]$, not $\epsmax\in(0,1/2)$. In analogy to Lemma~\ref{lem:true_candidate_known_var}, we claim that:
\begin{Lemma}\label{lem:true_candidate_unknown_var}
    When the variance $\sigma^2>0$ is unknown and $\varepsilon\in[0,1/3]$, a candidate $\mu$ is the true $\theta$ if and only if there exists $\lambda>0$ such that the function
    \begin{align*}
        \Upsilon(t;\mu,\lambda)\define 3e^{-\im \mu t+\lambda t^2/2}\phi(t)-2,
    \end{align*}
    as a function of $t$, is the characteristic function of a probability measure. Furthermore, if such a $\lambda>0$ exists, it must equal $\sigma^2$.
\end{Lemma}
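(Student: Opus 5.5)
Substituting \eqref{eq:efron_cf} gives
\begin{align*}
    \Upsilon(t;\mu,\lambda)=3(1-\varepsilon)e^{\im(\theta-\mu)t}e^{(\lambda-\sigma^2)t^2/2}+3\varepsilon e^{-\im\mu t}e^{(\lambda-\sigma^2)t^2/2}\xi(t)-2 .
\end{align*}
\textbf{Sufficiency.} Take $\mu=\theta$ and $\lambda=\sigma^2$. Then $\Upsilon=3(1-\varepsilon)+3\varepsilon e^{-\im\theta t}\xi(t)-2=(1-3\varepsilon)+3\varepsilon e^{-\im\theta t}\xi(t)$. This is the characteristic function of $(1-3\varepsilon)\delta_0+3\varepsilon\,\delta_{-\theta}*Q$, which is a probability measure because $\varepsilon\le 1/3$.

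\textbf{Necessity, and uniqueness of $\lambda$.} Suppose $\Upsilon(\cdot;\mu,\lambda)$ is the characteristic function of a probability measure $R$ for some $\lambda>0$. I plan three steps.

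\emph{Step 1: $\lambda\le\sigma^2$.} Since $|\Upsilon|\le 1$, we have
\[
    |\phi(t)|\le e^{-\lambda t^2/2}\cdot\tfrac{3}{3}=e^{-\lambda t^2/2}.
\]
Write $\phi=e^{-\sigma^2t^2/2}\psi$, where $\psi$ is the characteristic function of $(1-\varepsilon)\delta_\theta+\varepsilon Q$. By the triangle inequality, $|\psi(t)|\ge 1-2\varepsilon\ge 1/3$ for all $t$. So $e^{-\sigma^2t^2/2}/3\le e^{-\lambda t^2/2}$ for all $t$, and letting $t\to\infty$ forces $\lambda\le\sigma^2$.

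\emph{Step 2: $\lambda\ge\sigma^2$.} Write $\phi_R(t)=\Upsilon(t;\mu,\lambda)$ and $s=\sigma^2-\lambda\ge 0$. Then
\[
    3e^{-\im\mu t}\psi(t)\,e^{-st^2/2}=\phi_R(t)+2 .
\]
The left side is $3$ times the characteristic function of $\big(\delta_{-\mu}*((1-\varepsilon)\delta_\theta+\varepsilon Q)\big)*\mathcal N(0,s)$. The right side is the characteristic function of the finite positive measure $R+2\delta_0$, which has an atom of mass at least $2$ at $0$. If $s>0$, the left side is the transform of an absolutely continuous measure, which has no atoms. By uniqueness of Fourier--Stieltjes transforms this is a contradiction, so $s=0$ and $\lambda=\sigma^2$. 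This gives the uniqueness claim.

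\emph{Step 3: $\mu=\theta$.} With $\lambda=\sigma^2$, the identity becomes
\[
    R=3(1-\varepsilon)\delta_{\theta-\mu}+3\varepsilon\,\delta_{-\mu}*Q-2\delta_0 .
\]
If $\mu\ne\theta$, the mass of $R$ at $0$ is $3\varepsilon Q(\{\mu\})-2\le 3\varepsilon-2<0$. This contradicts positivity of $R$, so $\mu=\theta$.

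The step I expect to be the main obstacle is Step 2. A naive argument only compares decay rates, and that fails because $\psi$ need not vanish. The atom argument avoids this: the term $-2$ in $\Upsilon$ produces a point mass that no Gaussian-smoothed measure can reproduce. I should also double-check the Step 1 bound $1-2\varepsilon\ge 1/3$, which uses $\varepsilon\le 1/3$. In fact Step 2 alone handles $\lambda<\sigma^2$, and the case $\lambda>\sigma^2$ follows from Step 1's growth bound, so Steps 1 and 2 together cover all $\lambda>0$.
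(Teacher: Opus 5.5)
Your proof is correct. Your sufficiency step keeps the rotation $e^{-\im\theta t}$, so the limiting law is $(1-3\varepsilon)\delta_0+3\varepsilon\,\delta_{-\theta}*Q$; the paper's write-up harmlessly drops this factor. Your Step 1 is the paper's argument for $\lambda>\sigma^2$ in different words. The genuine difference is in Steps 2 and 3.

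The paper never leaves the Fourier side and uses only the order-1 constraint $\sup_t|\Upsilon(t;\mu,\lambda)|\le 1$. For $\lambda<\sigma^2$ it notes that both Gaussian-damped terms vanish, so $|\Upsilon(t;\mu,\lambda)|\to 2$ as $t\to\infty$. For $\lambda=\sigma^2$ and $\mu\neq\theta$ it evaluates at $t=\pi/(\theta-\mu)$, where $e^{\im(\theta-\mu)t}=-1$ and $|\Upsilon|\ge 5-6\varepsilon\ge 3$. You instead pass to measures via uniqueness of Fourier--Stieltjes transforms and compare masses at $0$.

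Your version makes the structural reason visible: the $-2$ is a point mass that neither an atomless Gaussian-smoothed measure nor a positive measure can absorb. It also identifies $R$ exactly. However, it needs the full positivity of $R$ and the uniqueness theorem. The paper's version is more elementary and proves slightly more: the modulus certificate alone already forces $(\mu,\lambda)=(\theta,\sigma^2)$. That is the kind of statement the certification procedure is built on.

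Finally, your remark that a decay-rate argument fails in Step 2 is mistaken. No lower bound on $|\psi|$ is needed there, only $|\psi|\le 1$. This gives $|\Upsilon(t;\mu,\lambda)|\ge 2-3e^{-(\sigma^2-\lambda)t^2/2}\to 2>1$, which is exactly the paper's argument.
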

The proof is also deferred to Appendix~\ref{sec:proof_chf}. The following pipeline, as an adjustment of the previous known-variance case, is reconciled with the introduction of a variance candidate:
\begin{enumerate}
    \item start with $\CIhat=\emptyset$, scan over $\mu$ in a pilot region, and for each of these $\mu$, scan over $\lambda$ in another pilot region; then compute the function $\Upsilon_n(\cdot\,;\mu,\lambda)$ for each pair of $(\mu,\lambda)$;
    \item certify whether $\Upsilon_n(\cdot\,;\mu,\lambda)$ is \emph{approximately} a genuine characteristic function; if true, then add this $\mu$ to $\CIhat$.
\end{enumerate}

Two concepts in the above methodology remain unclear: how should we certify the genuineness of a candidate characteristic function, and how close can we say \textit{approximately}? 

\subsection{Carath\'{e}odory's Positive Semidefiniteness (PSD) Constraints}\label{sec:cara_psd}
We certify a candidate characteristic function using the positive semidefiniteness conditions by Carath\'{e}odory \cite{caratheodory1911variabilitatsbereich} (see also \cite{schmudgen2017moment} Theorem 11.5):
\begin{Lemma}\label{lem:cara_psd}
For any $\phi:\bbR\to \bbC$ being the characteristic function of a probability distribution, it must satisfy
\begin{enumerate}
    \item \textbf{normalization:} $\phi(0)=1$;
    \item \textbf{Hermitian:} $\phi(-t)=\overline{\phi(t)}$ for any $t\in\bbR$;
    \item \textbf{positive semidefiniteness:} for any integer $m\in\bbN^*$ and any $t\in\bbR$,
    \begin{align}
    \bfT_m[\phi](t)\define
    \begin{bmatrix}
    z_0 & z_1 & z_2 & \cdots & z_m \\
    \overline{z}_1 & z_0 & z_1 & \cdots & z_{m-1} \\
    \overline{z}_2 & \overline{z}_1 & z_0 & \cdots & z_{m-2} \\
    \vdots & \vdots & \vdots & \ddots & \vdots \\
    \overline{z}_m & \overline{z}_{m-1} & \overline{z}_{m-2} & \cdots & z_0
    \end{bmatrix}\succeq 0,
    \end{align}
    where $z_k=\phi(kt)$ for $k=0,1,2,\cdots,m$.
\end{enumerate}
\end{Lemma}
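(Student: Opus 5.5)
We prove the three properties directly from the definition $\phi(t)=\E_P[e^{\im tX}]$, where $X\sim P$ is a real random variable. Since $|e^{\im tX}|=1$, all expectations are finite, so no moment assumptions are needed.

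\textbf{Normalization and Hermitian symmetry.} These are immediate:
\begin{align*}
\phi(0)=\E_P[1]=1,\qquad \phi(-t)=\E_P\big[\,\overline{e^{\im tX}}\,\big]=\overline{\phi(t)}.
\end{align*}
The second identity holds because complex conjugation commutes with expectation, and $X$ is real.

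\textbf{Positive semidefiniteness.} Fix $m\in\bbN^*$ and $t\in\bbR$, and set $z_k=\phi(kt)$. We first identify the entries of $\bfT_m[\phi](t)$ as covariances. Index rows and columns by $j,\ell\in\{0,1,\dots,m\}$.
\begin{itemize}
    \item For $\ell\geq j$, the $(j,\ell)$ entry is $z_{\ell-j}=\phi((\ell-j)t)$.
    \item For $\ell<j$, the entry is $\overline{z_{j-\ell}}=\phi((\ell-j)t)$, by the Hermitian property just proved.
\end{itemize}
So in all cases the entry is $\phi((\ell-j)t)=\E_P[e^{\im(\ell-j)tX}]$.

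Now define the random vector $V=(e^{\im\ell tX})_{\ell=0}^m\in\bbC^{m+1}$. The $(j,\ell)$ entry of $\overline{V}V^{\top}$ is $\overline{V_j}V_\ell=e^{\im(\ell-j)tX}$. Taking expectations entrywise gives
\begin{align*}
\bfT_m[\phi](t)=\E_P\big[\overline{V}V^{\top}\big].
\end{align*}
For any $c\in\bbC^{m+1}$, this yields
\begin{align*}
c^{*}\,\bfT_m[\phi](t)\,c=\sum_{j,\ell}\overline{c_j}\,c_\ell\,\E_P\big[e^{\im(\ell-j)tX}\big]=\E_P\Big[\Big|\sum_{\ell=0}^m c_\ell\, e^{\im\ell tX}\Big|^2\Big]\geq 0.
\end{align*}
Exchanging the finite sum with the expectation is justified by linearity. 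The matrix is Hermitian by the previous step, so this inequality shows it is positive semidefinite.

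\textbf{Where care is needed.} The only step requiring attention is the index bookkeeping: one must check that the lower-triangular entries $\overline{z}_k$ match $\phi(-kt)$. This is exactly what the Hermitian property guarantees, which is why it is established before the PSD claim.
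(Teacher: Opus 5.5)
Your proof is correct and matches the paper's: the paper also calls normalization and Hermitian symmetry standard, and proves positive semidefiniteness by writing the quadratic form as the expectation of $\bigl|\sum_{k=0}^m v_k e^{\im k t X}\bigr|^2 \geq 0$. Your explicit index check, that the lower-triangular entries $\overline{z}_k$ equal $\phi(-kt)$, just spells out a step the paper leaves implicit.
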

\begin{proof}
The first and second conclusions are standard. To show the third conclusion on positive semidefiniteness, we notice that for any complex vector $\boldsymbol{v}\in \bbC^{m+1}$,
\begin{align*}
    \overline{\boldsymbol{v}}^\top \bfT_m[\phi](t)\boldsymbol{v}=\E_P\Big[\abs{\sum_{k=0}^m v_k e^{\im k t X}}^2\Big]\geq 0,
\end{align*}
which concludes the proof.
\end{proof}
Normalization and Hermitian property are naturally satisfied for all our candidate characteristic functions, either $\Upsilon(t;\mu)$ or $\Upsilon(t;\mu,\lambda)$, or for their finite sample estimators. The only property we need to certify is whether these candidate characteristic functions satisfy the PSD constraints. When $m=1$, the above condition is a modulus-based condition
\begin{align}\label{eq:known_var_criterion_overview}
    \abs{\Upsilon(t;\mu)}\leq 1,
\end{align}
which is an unknown-$\varepsilon$ generalization of the condition $\abs{\xi(t;\mu)}\leq 1$ that Kotekal and Gao essentially leverage in \cite{kotekal2025optimal}. As we show later in Section~\ref{sec:upper_bound_known_var}, this single certificate suffices for the known-variance setting. For the unknown-variance setting with $\epsmax\in(0,1/3]$, we choose to set $m=2$ and exploit the PSD property of a $3\times 3$ Hermitian-Toeplitz. It will be shown later in Section~\ref{sec:upper_bound_unknown_var} that this condition is equivalent to
\begin{align}\label{eq:unknown_var_criterion_overview}
    \abs{\Upsilon(t;\mu,\lambda)}\leq 1,\quad \abs{\Upsilon^2(t;\mu,\lambda)-\Upsilon(2t;\mu,\lambda)}+\abs{\Upsilon(t;\mu,\lambda)}^2\leq 1.
\end{align}
\subsection{Finite-Sample Slackness, Frequency Gridding, and Wrapping-Unwrapping}\label{sec:additional_issues}
Despite its theoretical elegance, Carath\'{e}odory's PSD constraints are not directly applicable unless the following three issues are addressed:
\paragraph{Finite-sample slackness.}In practice, we only have finite samples at hand. Therefore, $\Upsilon_n$, instead of the population-level $\Upsilon$, is what we can directly reconstruct from the data. Due to the random deviation between $\Upsilon_n$ and $\Upsilon$, we cannot expect the previous inequalities \eqref{eq:known_var_criterion_overview} and \eqref{eq:unknown_var_criterion_overview} to hold at the truth when $\Upsilon$ is replaced by $\Upsilon_n$. To avoid over-rigorous criteria that can mistakenly reject the true $\theta$, we leave a room of tolerance by adding slack terms to the right-hand side of the inequalities. The magnitudes of such slack terms are chosen by carefully quantifying the finite-sample plug-in error $\abs{\Upsilon_n-\Upsilon}$. For example, in the known-variance setting, we propose to certify the candidate by checking whether
\begin{align*}
    \abs{\Upsilon_n(t;\mu)}\leq 1+\Delta(t)
\end{align*}
for some slack term that takes the form $\Delta(t)=C_1 e^{C_2\sigma^2t^2}/\sqrt{n}$. We defer the details to the proofs in Section~\ref{sec:upper_bound_known_var} and \ref{sec:upper_bound_unknown_var}.

\paragraph{Frequency gridding.}Another issue seems even more urgent. Ideally, when $\Upsilon$ is a genuine characteristic function, Carath\'{e}odory's PSD constraints should hold for all frequencies $t\in\bbR$. The more frequencies $t$ we examine, the less likely we are to let in a false candidate. However, increasing the number of frequency checkpoints also causes two issues. First, screening over all $t\in\bbR$ is barely tractable. As another drawback, more frequency checkpoints will incur higher prices for the uniform concentration of $\abs{\phi_n(t)-\phi(t)}$. Compared with the setup in \cite{jin2007estimating, cai2010optimal}, we drop their assumption on a universal moment upper bound for $Q$; thus, $\phi(t)$ in our setting is of poor regularity, and we cannot even hope for $\abs{\phi_n(t)-\phi(t)}$ to uniformly concentrate within a universal neighborhood of $0$. 

To find a balance, we propose checking the certification inequalities only on a particularly designed finite subset of frequencies. Taking the known-variance setting as an instance, we certify an empirical candidate $\Upsilon_n$ by checking whether
\begin{align*}
    \abs{\Upsilon_n(t;\mu)}\leq 1+\Delta(t),\quad \forall\, t\in\cT,
\end{align*}
where $\cT$ is a collection of $O(\log(n))$ frequencies specified in Section~\ref{sec:upper_bound_known_var} by \eqref{eq:known_var_grid}. The proof therein justifies the sufficiency of looking at such a finite grid. When an unknown variance arises, the construction of grid becomes data-dependent and will make use of a holdout set of samples.

\paragraph{Wrapping and unwrapping.}We end this section by remedying an inherent limitation of this Fourier-based certification approach. Consider the certificate at a single frequency $t$, for example, $\abs{\Upsilon_n(t;\mu)}\leq 1+\Delta(t)$ in the known-variance setting. From the definition, we know that $\Upsilon_n(t;\mu)$ is a periodic function in $\mu$; $2\pi/t$ is one of its periods. As this implies, if a candidate $\mu$ passes the certificate at a frequency $t\in\cT$, all points in the form $\mu+2k\pi/t:\ k\in\bbZ$ will also pass the same certificate. Consequently, all candidates $\mu$ that pass $\abs{\Upsilon_n(t;\mu)}\leq 1+\Delta(t)$ form a \emph{wrapped} set of recurring intervals, each consecutive two of which are spaced by a margin of $2\pi/t$. Even after taking the intersection over all $t\in\cT$, the resulting confidence set is likely to take a fence-like shape, spuriously tending to infinity. For no reason should we deem this spurious set as a desirable confidence region.

The Fourier-based recipe can adapt to the fine-grained structures in Efron's models, but is, in the meantime, susceptible to its inherent periodicity. On the other hand, there are coarse pilot confidence intervals for $\theta$, such as conservatively applying the estimator-to-CI conversion \eqref{eq:CI_by_estimator} with $\varepsilon=\epsmax$. These pilot confidence intervals are generally too long (e.g.,  $\Theta(\sigma/\sqrt{\log(en)})$) compared to the desired minimax length, and usually do not have adaptation to $\varepsilon$. Nevertheless, a combination of a coarse yet non-periodic pilot interval and a fine-grained yet wrapped Fourier-based confidence set can yield the best of both worlds, on which we will elaborate in Section~\ref{sec:upper_bound_known_var} and \ref{sec:upper_bound_unknown_var}. At a high level, we first form a wrapped confidence set using the Fourier-based approach, where a desired short confidence set is hidden in one of its recurring branches; then, we unwrap it by intersecting it with a pilot confidence interval to identify the correct branch. Equivalently, we can first construct a pilot confidence interval as \emph{localization} and then implement our Fourier-based filtering within this interval as \emph{refinement}.

There are abundant versions of pilot estimators, among which we are interested in a $\Theta(\sigma/\sqrt{\log(en)})$-rate mean estimator and a $\Theta(\sigma^2/\log(en))$-rate variance estimator:
\begin{Lemma}\label{lem:pilot_unknown_var}
    There exist a pilot mean estimator $\widetilde{\theta}$ and a pilot variance estimator $\widetilde{\sigma}^2$ that meet the following: fix any $\delta\in(0,1)$ and $\epsmax\in(0,1/2)$, there are known absolute constants $L$, $M$, and $N_0$ that depend only on $\delta$ and $\epsmax$, such that whenever $n\geq N_0$,
    \begin{align}
        \inf_{\theta\in\bbR,\, \sigma^2>0,\, Q}\ P_{\theta,\sigma,\epsmax,Q}^n\Big(\abs{\widetilde{\theta}-\theta}\leq \frac{M\sigma}{\sqrt{\log(en)}},\ \frac{\abs{\widetilde{\sigma}^2-\sigma^2}}{\sigma^2}\leq \frac{L}{\log(en)}\Big)\geq 1-\frac{\delta}{2}.
    \end{align}
    Furthermore, both $\widetilde{\theta}$ and $\widetilde{\sigma}^2$ can be computed within $\mathrm{poly}(n)$ time.
\end{Lemma}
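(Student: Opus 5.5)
The estimators would be built in two stages. First comes a crude, quantile-based localization that is accurate to within constant factors. Then comes a Fourier refinement at a single high frequency of order $\sqrt{\log(en)}/\sigma$, which improves both estimates by a $\sqrt{\log(en)}$ (respectively $\log(en)$) factor. Throughout I would work with $\varepsilon=\epsmax$, which is legitimate by the nestedness of Efron's models noted in the introduction. I would also split the sample into halves, so that the crude estimates are independent of the empirical characteristic function used in the refinement.

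\emph{Step 1 (crude localization).} Let $\check\theta$ be the sample median, and let $\check\sigma$ be the length of the shortest interval containing half of the sample points, suitably rescaled. Since the null component has mass $1-\epsmax>1/2$, the population median lies within $c(\epsmax)\sigma$ of $\theta$. The interval $[\theta-K\sigma,\theta+K\sigma]$ carries mass at least $(1-\epsmax)(1-2\Phi(-K))$, which exceeds $1/2$ for a suitable $K=K(\epsmax)$, so the population shortest half is at most $2K\sigma$. Conversely, $P_{\theta,\sigma,\epsmax,Q}$ has density bounded by $1/(\sqrt{2\pi}\sigma)$, so every interval of mass $1/2$ has length at least $c\sigma$. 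The DKW inequality transfers these population statements to the sample. Hence with probability $1-\delta/8$ we get $|\check\theta-\theta|\le C\sigma$ and $c\le \check\sigma/\sigma\le C$ for constants depending only on $(\delta,\epsmax)$.

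\emph{Step 2 (variance refinement).} Fix $t_\star^2=\kappa\log(en)/\check\sigma^2$ with $\kappa$ small. On the event of Step 1, $\sigma^2t_\star^2\le \kappa C^2\log(en)$, so $e^{\sigma^2t_\star^2/2}\le n^{1/4}$ for small $\kappa$. Combined with $|\phi_n(t)-\phi(t)|\lesssim n^{-1/2}$ (Hoeffding on bounded variables, with a union bound over the $O(\log n)$ frequencies used below), this shows that $g_n(t)\define e^{\sigma^2t^2/2}\phi_n(t)$ differs from $g(t)=(1-\epsmax)e^{\im\theta t}+\epsmax\xi(t)$ by $O(n^{-1/4})$. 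Since $|\xi|\le 1$, we have $1-2\epsmax\le |g(t)|\le 1$. I would set $\widetilde\sigma^2=-2\log|\phi_n(t_\star)|/t_\star^2$. Then
\[
\widetilde\sigma^2-\sigma^2=-\frac{2\log|g_n(t_\star)|}{t_\star^2},
\qquad
\bigl|\log|g_n(t_\star)|\bigr|\le \log\tfrac{1}{1-2\epsmax}+o(1),
\]
and therefore $|\widetilde\sigma^2-\sigma^2|/\sigma^2\lesssim 1/(\sigma^2t_\star^2)\asymp 1/\log(en)$.

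\emph{Step 3 (mean refinement by phase unwrapping).} The phase of $g(t)$ differs from $\theta t$ modulo $2\pi$ by at most $a\define\arcsin(\epsmax/(1-\epsmax))<\pi/2$, and the same holds for $g_n$ up to $o(1)$. So at each frequency $t$ the quantity $\mathrm{Arg}(\phi_n(t))$ determines $\theta$ up to error $(a+o(1))/t$, modulo $2\pi/t$. I would use a geometric ladder $t_0=c_0/\check\sigma<t_1<\dots<t_K=t_\star$ with ratio $t_{k+1}/t_k=\rho$, where $\rho>1$ is a constant close to $1$, so that $K=O(\log\log n)$. The first frequency $t_0$ is chosen small enough that $\check\theta$ already pins down the correct branch. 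At step $k+1$ one picks the representative of $\theta$ modulo $2\pi/t_{k+1}$ closest to the previous estimate. This selects the correct branch provided
\[
\frac{a}{t_k}+\frac{a}{t_{k+1}}<\frac{\pi}{t_{k+1}},
\qquad\text{i.e.}\qquad a(\rho+1)<\pi,
\]
which holds for $\rho$ close to $1$ because $a<\pi/2$. The final estimate satisfies $|\widetilde\theta-\theta|\le (a+o(1))/t_\star\lesssim\sigma/\sqrt{\log(en)}$. All quantities involve $O(n\log n)$ arithmetic operations, so the procedure runs in polynomial time.

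I expect the main obstacle to be Step 3. The constants in the unwrapping must be arranged so that no branch error occurs anywhere on the ladder, uniformly in the adversary $Q$. This requires controlling the accumulated $o(1)$ phase noise, including near $|g|\ge 1-2\epsmax$, which stays bounded away from $0$ and thus keeps $\mathrm{Arg}$ Lipschitz. It also requires handling the dependence of $t_0$ and $t_\star$ on $\check\sigma$, which sample splitting resolves. The probability bookkeeping then only needs union bounds giving total failure probability at most $\delta/2$ for $n\ge N_0(\delta,\epsmax)$.
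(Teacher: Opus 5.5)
Your proposal is correct, and it takes a genuinely different route from the paper for the mean and for the crude variance. Your Step~2 is essentially the paper's variance refinement: the paper also sets $\widetilde{\sigma}^2=-2\log\abs{\phi_n(t_*)}/t_*^2$ with $t_*\asymp\sqrt{\log(en)}/\breve{\sigma}$ on a sample split.

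\textbf{Crude variance.} The paper imports the blockwise minimum sample variance of \cite{kotekal2025optimal} (Lemma~\ref{lem:constant_var_est}), computed over $\lceil n^{C_1}\rceil$ random blocks. Your shortest-half estimator, controlled by DKW and the density bound $1/(\sqrt{2\pi}\sigma)$, is self-contained instead.

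\textbf{Mean.} The paper discretizes the Kotekal--Gao minimum-distance estimator. It minimizes $\cL_n(\mu,\lambda)=\max_{t\in\cT}\bigl(\abs{\epsmax^{-1}e^{-\im\mu t+\lambda t^2/2}\phi_n(t)-(1-\epsmax)/\epsmax}-1\bigr)_+$ jointly over fine grids $\cM\times\cV$, with $\cT$ a grid of the single band $[t_*,2t_*]$. Aliasing is defeated because, for any bad shift $\abs{r}\geq 2\pi/t_*$, the band contains a frequency with $\cos(rt)\leq-\beta$. The discretization is then absorbed by a Lipschitz bound on $\cL_n$.

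You instead read $\theta$ directly off $\mathrm{Arg}\,\phi_n(t)$. This phase is untouched by the positive factor $e^{-\sigma^2t^2/2}$, so you never need to profile over a variance candidate $\lambda$. You resolve the $2\pi/t$ ambiguity sequentially along a geometric ladder anchored by the median. Both arguments rest on the same fact, $\alpha=\epsmax/(1-\epsmax)<1$: your condition is $a=\arcsin\alpha<\pi/2$, the paper's is $\gamma=\beta/\alpha-1>0$.

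\textbf{What each buys.} Your route is closed-form and uses only $O(\log\log n)$ frequencies and $O(n\log n)$ time, avoiding the polynomial-size grid search. The paper's route reuses an existing estimator and its analysis, and its loss is exactly the order-1 modulus certificate that drives the main confidence intervals.

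Two small remarks. First, the obstacle you flag does not arise: phase error does not accumulate along the ladder. Each $\widehat{\theta}_{k+1}$ is a fresh branch of $\mathrm{Arg}\,\phi_n(t_{k+1})$ with error at most $a'/t_{k+1}$, where $a'=a+o(1)$ uniformly on the ladder. The previous estimate only selects the branch, so $a'(\rho+1)<\pi$ together with $t_0<(\pi-a')/(2C\sigma)$ is all you need. Second, after the union bound over the ladder the deviation is of order $\sqrt{\log(K/\delta)/n}$ rather than $n^{-1/2}$. This is harmless once multiplied by $e^{\sigma^2t^2/2}\leq n^{1/4}$.
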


These two estimators can be obtained by setting $\varepsilon=\epsmax$ in Theorems 3.4 and 3.2 of \cite{kotekal2025optimal}, respectively. For the sake of completeness, we append the proof in Appendix~\ref{sec:other_proof_sec2}. Our proof is adapted from \cite{kotekal2025optimal} by adding a discretization step that clearly establishes polynomial-time computability.

\section{Construction of Confidence Intervals When Variance Is Known}\label{sec:upper_bound_known_var}
In this section, we apply our Fourier-based recipe to the known-variance setting. Given $\sigma^2>0$ and $\epsmax\in(0,1/2)$, we illustrate the coverage and tightness of the following confidence interval
\begin{align}\label{eq:CI_known_var}
    \CIhat=\left\{\mu\in \widetilde{\CI}:\ \cos\Big(t\mu-\Arg(\phi_n(t))\Big)\geq \frac{1-(1+\Delta(t))^2+4\abs{e^{\sigma^2t^2/2}\phi_n(t)}^2}{4\abs{e^{\sigma^2t^2/2}\phi_n(t)}},\, \forall\, t\in\cT\right\},
\end{align}
where 
\begin{itemize}
    \item $\phi_n(t)=\E_{P_n}[e^{\im t X}]$ is the empirical characteristic function;
    \item $\widetilde{\CI}$ is a pilot confidence interval derived from Lemma~\ref{lem:pilot_unknown_var}, defined as
    \begin{align}\label{eq:known_var_pilot_CI}
        \widetilde{\CI}=\Big[\widetilde{\theta}-\frac{M\sigma}{\sqrt{\log(en)}},\, \widetilde{\theta}+\frac{M\sigma}{\sqrt{\log(en)}}\Big],
    \end{align}
    where $M$ (specified by Lemma~\ref{lem:pilot_unknown_var}) is an absolute constant that depends only on $\delta$ and $\epsmax$;
    \item $\Delta(t)$ is a slack term:
    \begin{align}\label{eq:known_var_slack}
        \Delta(t)=2\kappa e^{\sigma^2 t^2}\sqrt{8\log(10/\delta)/n};
    \end{align}
    \item $\cT$ is a finite frequency set:
    \begin{align}\label{eq:known_var_grid}
        \cT=\big\{\sqrt{k}(\kappa \sigma)^{-1}:\ k=1,2,\cdots,\ceil{\log(en)}\big\},
    \end{align}
    where $\kappa$ is an absolute constant that depends only on $\delta$ and $\epsmax$.
\end{itemize}
The main aim is to show the following theorem.
\begin{Theorem}\label{thm:known_var}
    Assume $\sigma^2>0$ is known and fix any $\delta\in(0,1)$ and $\epsmax\in(0,1/2)$. Let $M$ be the $(\delta,\epsmax)$-dependent constant as in Lemma~\ref{lem:pilot_unknown_var}. For the $\CIhat$ defined by \eqref{eq:CI_known_var} with $\kappa\geq \sqrt{2}\vee M/\pi$, there exist absolute constants $C$ and $N$ that depend only on $\delta$, $\epsmax$, and $\kappa$, such that
    \begin{align*}
\inf_{\theta\in\bbR}\ \inf_{\varepsilon\in[0,\epsmax],\, Q}\ P^n_{\theta,\sigma,\varepsilon,Q}\Big(\theta\in\CIhat,\, \abs{\CIhat}\leq \overline{r}\Big)\geq 1-\delta
    \end{align*}
    whenever $n\geq N$, where
    \begin{align*}
        \overline{r}(n,\sigma,\varepsilon)=C\sigma\left(\frac{1}{n^{1/4}}+\frac{\varepsilon^{1/2}}{\sqrt{1\vee\log(en\varepsilon^2)}}\right).
    \end{align*}
\end{Theorem}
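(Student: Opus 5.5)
The plan is to prove coverage and length on one high-probability event built from three pieces. The first is the pilot event of Lemma~\ref{lem:pilot_unknown_var}, of probability at least $1-\delta/2$, on which $\theta\in\widetilde{\CI}$ (note $\varepsilon\le\epsmax$, and the model is nested, so the lemma applies). The second is a uniform concentration event for the empirical characteristic function on the grid. Since $|e^{\im tX}|\le 1$, Hoeffding's inequality applied to the real and imaginary parts, with a union bound over the $\ceil{\log(en)}$ points of $\cT$, gives
\begin{align*}
    |\phi_n(t)-\phi(t)|\le \sqrt{8\log(10/\delta)/n}\quad \text{for all } t\in\cT
\end{align*}
with probability at least $1-\delta/2$. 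The union-bound factor $\log\log n$ has to be absorbed, either into the constant $\kappa\ge\sqrt2$ in $\Delta(t)$ or by enlarging $N$. On this event,
\begin{align*}
    |\Upsilon_n(t;\mu)-\Upsilon(t;\mu)|=2e^{\sigma^2t^2/2}|\phi_n-\phi|\le \Delta(t),
\end{align*}
and the factor $e^{\sigma^2t^2}$ in $\Delta(t)$ leaves extra room. Next I would rewrite the defining inequality in \eqref{eq:CI_known_var}. Writing $w=e^{\sigma^2t^2/2}\phi_n(t)$ and expanding $|2e^{-\im\mu t}w-1|^2=4|w|^2-4|w|\cos(t\mu-\Arg\phi_n(t))+1$, the constraint is exactly $|\Upsilon_n(t;\mu)|\le 1+\Delta(t)$.

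\textbf{Coverage.} At $\mu=\theta$, Lemma~\ref{lem:true_candidate_known_var} shows that $\Upsilon(\cdot;\theta)$ is a genuine characteristic function. By Lemma~\ref{lem:cara_psd} with $m=1$, $|\Upsilon(t;\theta)|\le 1$, so the triangle inequality gives $|\Upsilon_n(t;\theta)|\le 1+\Delta(t)$ for every $t\in\cT$. Together with $\theta\in\widetilde{\CI}$, this puts $\theta$ in $\CIhat$.

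\textbf{Length.} Take any $\mu\in\CIhat$ and set $d=\theta-\mu$. Since both points lie in $\widetilde{\CI}$, $|d|\le 2M\sigma/\sqrt{\log(en)}$. From $\Upsilon(t;\mu)=2\varepsilon e^{-\im\mu t}\xi(t)+2(1-\varepsilon)e^{\im dt}-1$ and $|\xi|\le1$,
\begin{align*}
    |2(1-\varepsilon)e^{\im dt}-1|\le 2\varepsilon+2\Delta(t).
\end{align*}
Squaring gives $4(1-\varepsilon)^2+1-4(1-\varepsilon)\cos(dt)\le(2\varepsilon+2\Delta)^2$. Using $4(1-\varepsilon)^2+1-4\varepsilon^2=5-8\varepsilon$, this becomes
\begin{align*}
    4(1-\varepsilon)(1-\cos dt)\le 4\varepsilon-\ldots
\end{align*}
which I would rather handle through the identity
\begin{align*}
    |2(1-\varepsilon)e^{\im x}-1|^2-(2\varepsilon)^2=(1-2\varepsilon)^2\cdot\ldots+8(1-\varepsilon)\sin^2(x/2)\cdot\ldots
\end{align*}
The cleanest form is
\begin{align*}
    |2(1-\varepsilon)e^{\im x}-1|^2=(1-2\varepsilon)^2+8(1-\varepsilon)\sin^2(x/2).
\end{align*}
Hence
\begin{align*}
    8(1-\varepsilon)\sin^2(dt/2)\le 4\varepsilon^2-(1-2\varepsilon)^2+O(\varepsilon\Delta+\Delta^2)+\ldots
\end{align*}
This is wrong as it stands, because $4\varepsilon^2<(1-2\varepsilon)^2$ is not the relevant comparison. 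What matters is that $|2(1-\varepsilon)e^{\im x}-1|\ge 1-2\varepsilon$ while the bound permitted is $1+\Delta$. Using $|\Upsilon|\le 1+2\Delta$ directly gives
\begin{align*}
    (1-2\varepsilon)^2+8(1-\varepsilon)\sin^2(dt/2)\le (1+2\varepsilon+2\Delta)^2,
\end{align*}
so $\sin^2(dt/2)\lesssim \varepsilon+\Delta(t)$. The choice $\kappa\ge M/\pi$ and $t\le\sqrt{\log(en)}/(\kappa\sigma)$ ensure $|dt|/2\le\pi/2$, so $\sin^2(dt/2)\asymp d^2t^2$ and therefore
\begin{align*}
    d^2\lesssim \frac{\varepsilon+e^{\sigma^2t^2}/\sqrt n}{t^2}.
\end{align*}

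Finally, optimize over the grid point $t=\sqrt k/(\kappa\sigma)$. If $\varepsilon\le n^{-1/2}$, take $k=1$; then $d^2\lesssim\sigma^2 n^{-1/2}$, so $|d|\lesssim\sigma n^{-1/4}$. Otherwise take $k\approx c\log(en\varepsilon^2)$ with $c$ small, so that $e^{\sigma^2t^2}/\sqrt n\lesssim\varepsilon$ and $k\le\ceil{\log(en)}$. This gives $d^2\lesssim \sigma^2\varepsilon/\log(en\varepsilon^2)$, which is the claimed $\overline r$.

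\textbf{Main obstacle.} The delicate step is this last one. I need the exponent to be tuned through $\kappa$ so that $e^{k/\kappa^2}\lesssim (n\varepsilon^2)^{1/2}$ for the selected $k$, and I need the $\log\log n$ union-bound factor to be handled without spoiling the constants.
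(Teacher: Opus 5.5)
Your skeleton matches the paper's proof. It combines a pilot event with a uniform characteristic-function event. Coverage comes from $\abs{\Upsilon(t;\theta)}\le 1$, and length comes from the disk geometry of $\Upsilon(t;\mu)$ at a grid frequency $t\asymp\sigma^{-1}\sqrt{1\vee\log(en\varepsilon^2)}$. Your identity $\abs{2(1-\varepsilon)e^{\im x}-1}^2=(1-2\varepsilon)^2+8(1-\varepsilon)\sin^2(x/2)$ gives $8(1-\varepsilon)\sin^2(dt/2)\le(4\varepsilon+2\Delta(t))(2+2\Delta(t))$ for every $\mu\in\CIhat$. That is a clean replacement for Lemma~\ref{lem:quadratic_gap}. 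Keep this corrected form: the first displayed bound $\abs{2(1-\varepsilon)e^{\im dt}-1}\le 2\varepsilon+2\Delta(t)$ is missing the leading $1$. Arguing from $\mu\in\CIhat$ is just the contrapositive of exhibiting a failing witness.

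The genuine gap is the concentration event. You flag it but do not resolve it, and neither remedy you propose works. A Hoeffding union bound over the $\ceil{\log(en)}$ points of $\cT$ at the fixed level $\sqrt{8\log(10/\delta)/n}$ has total failure probability of order $\delta\log(en)$. Reducing this to $\delta/2$ at a common level costs an extra factor $\sqrt{\log\log n}$. That factor is unbounded, so it cannot be absorbed into the fixed constant $\kappa$ or removed by enlarging $N$.

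The fix is the room you noticed but did not use. The argument only needs $2e^{\sigma^2t^2/2}\abs{\phi_n(t)-\phi(t)}\le\Delta(t)$, i.e. $\abs{\phi_n(t)-\phi(t)}\le\kappa e^{\sigma^2t^2/2}\sqrt{8\log(10/\delta)/n}$. At $t=\sqrt{k}/(\kappa\sigma)$ this threshold grows with $k$, so allocate the union bound non-uniformly. As in Lemma~\ref{lem:chf_concentration}, use the threshold $\kappa\sigma t\sqrt{8\log(10/\delta)/n}=\sqrt{k}\,\sqrt{8\log(10/\delta)/n}$ at the $k$-th point. Hoeffding then gives failure probability at most $4(\delta/10)^k$ there, which sums to less than $\delta/2$ over all $k\ge 1$, even over the whole infinite grid. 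Since $\kappa\sigma t\le\kappa e^{\sigma^2t^2/2}$, the resulting bound fits inside $\Delta(t)$. With this event, your coverage and length arguments go through.

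A smaller slip concerns the range of $dt$. From $\abs{d}\le 2M\sigma/\sqrt{\log(en)}$ and $t\le\sqrt{\log(en)}/(\kappa\sigma)$, the hypothesis $\kappa\ge M/\pi$ gives only $\abs{dt}\le 2\pi$, not $\abs{dt}\le\pi$, and $\sin^2(dt/2)$ vanishes at $dt=2\pi$. Your small constant $c$ repairs this. With $c\le 1/4$ you get $\abs{dt}\le M/\kappa\le\pi$, hence $\sin^2(dt/2)\ge(dt/\pi)^2$. Also $c/\kappa^2\le 1/2$ (true since $\kappa^2\ge 2$) keeps $e^{\sigma^2t^2}/\sqrt{n}\lesssim\varepsilon$. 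Take $k=\max\{1,\floor{c\log(en\varepsilon^2)}\}$, so that $k\ge 1$ even when $\log(en\varepsilon^2)$ is of order one; in that regime the bound $d^2\lesssim\sigma^2\varepsilon$ already has the claimed order. In the case $\varepsilon\le n^{-1/2}$ with $k=1$, $\abs{dt}\le 2M/(\kappa\sqrt{\log(en)})\le\pi$ once $n\ge e^3$.
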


\subsection{Order-1 Certificates Based on Modulus}
We begin by showing how the confidence interval in \eqref{eq:CI_known_var} is derived from the certification-based idea in Section~\ref{sec:overview}. Recall Carath\'{e}odory's PSD constraints in Section~\ref{sec:cara_psd}. We apply it with $m=1$, which is to check whether $\abs{\Upsilon(t;\mu)}\leq 1$. Based on this modulus certificate, we derive the following fact as a direct corollary of Lemma~\ref{lem:true_candidate_known_var}.
\begin{Claim}\label{clm:true_candidate_order_1}
    When the variance $\sigma^2>0$ is given and $\epsmax\in(0,1/2)$, we have $\abs{\Upsilon(t;\theta)}\leq 1$ for all $t\in\bbR$.
\end{Claim}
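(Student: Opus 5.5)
The plan is to get the claim in two lines from Lemma~\ref{lem:true_candidate_known_var} and Lemma~\ref{lem:cara_psd}, and then to record a direct computation that avoids the lemma altogether.

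\emph{First route.} Since $\varepsilon\in[0,\epsmax]\subseteq[0,1/2)$, Lemma~\ref{lem:true_candidate_known_var} applied with the candidate $\mu=\theta$ says that $\Upsilon(\cdot\,;\theta)$ is the characteristic function of a probability measure. Lemma~\ref{lem:cara_psd} with $m=1$ then says that the $2\times 2$ Hermitian Toeplitz matrix
\begin{align*}
\begin{bmatrix} 1 & \Upsilon(t;\theta)\\ \overline{\Upsilon(t;\theta)} & 1\end{bmatrix}
\end{align*}
is positive semidefinite for every $t\in\bbR$. Its determinant is therefore nonnegative, which gives $1-\abs{\Upsilon(t;\theta)}^2\geq 0$, i.e.\ $\abs{\Upsilon(t;\theta)}\leq 1$.

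\emph{Second route.} For completeness I would also verify the bound directly from the representation following \eqref{eq:known_var_upsilon}. At $\mu=\theta$ the formula $\Upsilon(t;\mu)=2\varepsilon e^{-\im\mu t}\xi(t)+2(1-\varepsilon)e^{\im(\theta-\mu)t}-1$ simplifies to
\begin{align*}
\Upsilon(t;\theta)=(1-2\varepsilon)+2\varepsilon\, e^{-\im\theta t}\xi(t).
\end{align*}
This is the characteristic function of the mixture $(1-2\varepsilon)\delta_0+2\varepsilon\,\delta_{-\theta}*Q$. Because $\varepsilon<1/2$, both weights $1-2\varepsilon$ and $2\varepsilon$ are nonnegative and they sum to $1$, so this is a genuine probability measure.

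The modulus bound then follows from the triangle inequality together with $\abs{\xi(t)}\leq 1$:
\begin{align*}
\abs{\Upsilon(t;\theta)}\leq (1-2\varepsilon)+2\varepsilon\abs{\xi(t)}\leq 1.
\end{align*}

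The only step with any content is the nonnegativity of the weight $1-2\varepsilon$. This is exactly where the assumption $\epsmax<1/2$ is used, and it is also why the paper normalizes by $1/2$ in \eqref{eq:known_var_upsilon}.
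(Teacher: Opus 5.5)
Your proposal is correct and matches the paper, which obtains the claim as a direct corollary of Lemma~\ref{lem:true_candidate_known_var}: at $\mu=\theta$, $\Upsilon(\cdot\,;\theta)$ is the characteristic function of a probability measure, so its modulus is at most one (the $m=1$ Carath\'{e}odory condition). Your second route is just the computation in the paper's proof of that lemma, $\Upsilon(t;\theta)=(1-2\varepsilon)+2\varepsilon e^{-\im\theta t}\xi(t)$, followed by the triangle inequality, and it is equally valid.
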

With this modulus-based criterion, we apply the certification procedure introduced in Section~\ref{sec:certify_adversary} and complement it with the three technical adjustments presented in Section~\ref{sec:additional_issues}. Combining these ingredients leads to checking whether
\begin{align*}
    \abs{\Upsilon_n(t;\mu)}\leq 1+\Delta(t),\quad \forall\, t\in\cT,
\end{align*}
for each $\mu\in\widetilde{\CI}$. Here, the slack term $\Delta(t)$, the frequency set $\cT$, and the pilot confidence interval $\widetilde{\CI}$ are particularly chosen as in \eqref{eq:known_var_slack}, \eqref{eq:known_var_grid}, and \eqref{eq:known_var_pilot_CI}, respectively.

We now illustrate why, with the above specification, the output of this certification procedure can be transcribed into the closed form \eqref{eq:CI_known_var}. Denote by $\CIhat_t$ the set of candidates that pass the certification at $t\in\cT$:
\begin{align}\label{eq:CI_known_var_piece}
    \CIhat_t=\Big\{\mu\in \widetilde{\CI}:\ \abs{\Upsilon_n(t;\mu)}\leq 1+\Delta(t)\Big\}.
\end{align}
Recall that $\Upsilon_n(t;\mu)=2e^{-\im\mu t+\sigma^2t^2/2 }\phi_n(t)-1$. Rearranging the terms in the certification inequality, we discover that
\begin{align}\label{eq:upsilon_to_cosine}
    \abs{\Upsilon_n(t;\mu)}\leq 1+\Delta(t)\ \Longleftrightarrow\ \cos(t\mu-\Arg(\phi_n(t)))\geq \frac{1-(1+\Delta(t))^2+4\abs{e^{\sigma^2t^2/2}\phi_n(t)}^2}{4\abs{e^{\sigma^2t^2/2}\phi_n(t)}}.
\end{align}
Finally, according to our selection rule detailed in Algorithm~\ref{alg:known_var}, only those candidates that pass the certifications for all $t\in\cT$ are included in our confidence interval, which means,
\begin{align*}
    \CIhat=\bigcap_{t\in\cT}\CIhat_t.
\end{align*}
The closed form \eqref{eq:CI_known_var} then follows.
\begin{algorithm}[ht]
  \caption{Certification-based construction of $\CIhat$ (known-variance)}
  \label{alg:known_var}
  \begin{algorithmic}[1]
    \Require Max contamination level $\epsmax\in(0,1/2)$; Known variance $\sigma^2$; Data $X_1,\dots,X_n$
    \Ensure Confidence set $\CIhat$ for $\theta$
    \State Construct the pilot estimator $\widetilde{\theta}$ as in Lemma~\ref{lem:pilot_unknown_var}
    \State $\CIhat \gets \varnothing$
    \For{$\text{each candidate }\mu \in [\widetilde{\theta}\pm M\sigma/\sqrt{\log(en)}]$}\Comment{crude pilot interval}
    \State $\textsc{Cet}\gets \textsc{Pass}$
    \For{$\text{each frequency }t\in\cT$ }
        \State Compute $\Upsilon_n(t;\mu)=2e^{-\im \mu t+\sigma^2t^2/2}\phi_n(t)-1$
        \If{$\abs{\Upsilon_n(t;\mu)}>1+\Delta(t)$}\Comment{certify by modulus at each frequency}
            \State $\textsc{Cet}\gets \textsc{Fail}$
            \State \textbf{break}\Comment{end the loop over $t\in\cT$}
        \EndIf
    \EndFor
    \If{$\textsc{Cet}=\textsc{Pass}$}\Comment{all certificates are passed}
        \State $\CIhat \gets \CIhat \bigcup \{\mu\}$
    \EndIf
    \EndFor
    \State $\CIhat\gets [\inf\CIhat,\ \sup \CIhat]$\Comment{(optional) make $\CIhat$ a true interval}
  \end{algorithmic}
\end{algorithm}

We present the pseudo-code in Algorithm~\ref{alg:known_var}. Two remarks are in order:
\begin{Remark}[Whether the output $\CIhat$ is truly an interval]\label{rmk:whether_interval_known_var}
    Merely with the certification steps, we may not obtain a true interval. We can always take its convex hull, which is precisely the final step in Algorithm~\ref{alg:known_var}. Taking the convex hull neither decreases the coverage probability nor increases the length (referring to the diameter). 

    In fact, even without the convexification step, the output $\CIhat$ is a true interval as long as we set $\kappa$ as a sufficiently large constant (as allowed by Theorem~\ref{thm:known_var}).  See Appendix~\ref{sec:explain_remark} for detailed explanations.
\end{Remark}
\begin{Remark}[Scanning over $\widetilde{\CI}$ made efficient]\label{rmk:known_var_discretize}
    Even without the closed-form expression \eqref{eq:CI_known_var}, scanning $\mu$ over the pilot range can be implemented efficiently via a further discretization step. Since we know that the rate is slower than $\Theta(\sigma n^{-1/4})$ for whatever $\varepsilon\leq \epsmax$, we may partition the pilot interval $\widetilde{\CI}$ equally into $J=\Theta(n^{1/4}/\sqrt{\log(en)})$ pieces, resulting in $J+1$ pivots $\mu_0=\widehat{\theta}_\mathrm{med}-M\sigma/\sqrt{\log(en)},\ \mu_1,\ \mu_2,\cdots,\  \mu_J=\widehat{\theta}_\mathrm{med}+M\sigma/\sqrt{\log(en)}$. Let $j_L$ and $j_R$ be the minimum and maximum of the indices corresponding to the passed candidates:
    \begin{align*}
        j_L=\min \, (\textnormal{resp.}\,j_R=\max)\ \Big\{j\in \{0,1,2,\cdots,J\}:\ \abs{\Upsilon_n(t;\mu_j)}\leq 1+\Delta(t),\ \forall\, t\in\cT \Big\}.
    \end{align*}
    We can take $\CIhat'=[\mu_{j_L-1},\ \mu_{j_R+1}]$ as our final confidence interval, with the convention that $\mu_{-1}=\mu_0$ and $\mu_{J+1}= \mu_J$. Note that this $\CIhat'$ covers the original $\CIhat$. Moreover, the discretization step incurs a length increment of no more than $\Theta(\sigma n^{-1/4})$, and thus it will not change the order of the original length $\sigma\cdot O(n^{-1/4}+\varepsilon^{1/2}/\sqrt{1\vee\log(en\varepsilon^2)})$. With this, the entire algorithm is limited to at most $O(n^{1/4}\sqrt{\log(n)})$ certification steps and is thus computable within $\mathrm{poly}(n)$ time.
\end{Remark}

\subsection{Coverage and Length Guarantees from the Modulus Certificates}\label{sec:known_var_derivation}
The preceding analysis illustrates the certification-based principle of constructing a confidence interval, but is insufficient to explain why this specific confidence interval can achieve the coverage and length guarantees claimed in Theorem~\ref{thm:known_var}. In the remainder of this section, we will pin down the geometry induced by the modulus-based certificates to deliver a sketch view of why Theorem~\ref{thm:known_var} holds.

As a direct consequence of Lemma~\ref{lem:pilot_unknown_var}, the pilot coverage event
\begin{align}\label{eq:pilot_event_known_var}
    \widetilde{\cE}\define \Big\{\theta\in\widetilde{\CI}\Big\}=\Big\{\abs{\widetilde{\theta}-\theta}\leq \frac{M\sigma}{\sqrt{\log(en)}}\Big\}
\end{align}
holds with probability at least $1-\delta/2$. Meanwhile, it is straightforward from Hoeffding's inequality that $\abs{\phi_n(t)-\phi(t)}\lesssim 1 /\sqrt{n}$ with high probability for each $t\in\cT$, indicating that
\begin{align*}
    \abs{\Upsilon_n(t;\mu)-\Upsilon(t;\mu)}=2e^{\sigma^2t^2/2}\abs{\phi_n(t)-\phi(t)}\lesssim  e^{\sigma^2t^2/2}/\sqrt{n}
\end{align*} for each $t\in\cT$ separately.  Note that this concentration bound does not depend on the candidate $\mu\in\bbR$; this is because $\mu$ only appears in a rotation factor $e^{-\im \mu t}$ that does not affect the modulus. To obtain uniform concentration over $t$, we use the union bound for all $t\in\cT$, which will incur an additional cost in the convergence rate. Hopefully, as Lemma~\ref{lem:chf_concentration} ensures, only a multiplicative $\kappa \sigma t$ factor emerges, which can be absorbed into an additional $\kappa e^{\sigma^2 t^2/2}$ term. Therefore, we conclude that the uniform concentration event
\begin{align*}
    \cE\define \Big\{\abs{\phi_n(t)-\phi(t)}\lesssim \kappa e^{\sigma^2t^2/2}\cdot O_\delta (1/\sqrt{n}),\ \forall\, t\in\cT\Big\}
\end{align*}
holds with at least $1-\delta/2$ probability, on which we have
\begin{align*}
    \abs{\Upsilon_n(t;\mu)- \Upsilon(t;\mu)}\leq \Delta(t),\quad \forall\, t\in\cT,\ \mu\in\bbR.
\end{align*}
In the following derivation, we will always assume the joint event $\widetilde{\cE}\medcap \cE$ that holds with probability at least $1-\delta$.

\paragraph{Deriving the coverage.}We start by demonstrating that $\CIhat$ covers $\theta$ with high probability. From Claim~\ref{clm:true_candidate_order_1}, we know  $\abs{\Upsilon(t;\theta)}\leq 1$. Thus, by triangle inequality, $\abs{\Upsilon_n(t;\theta)}\leq\abs{\Upsilon(t;\theta)}+\Delta(t)\leq 1+\Delta(t)$ simultaneously for all $t\in\cT$ on $\widetilde{\cE}\medcap \cE$. As a consequence, $\theta$ passes the modulus-based certificates for all $t\in\cT$ on this high-probability joint event, which concludes the coverage.

\paragraph{Deriving the length.}The proof of the $\overline{r}\asymp\sigma(n^{-1/4}+\varepsilon^{1/2}/\sqrt{1\vee\log(en\varepsilon^2)})$ length is more involved and requires a careful inspection of the characteristic function's geometry on the complex plane $\bbC$. It suffices to show that simultaneously for any $\mu$ with $\abs{\mu-\theta}\geq \overline{r}$, there exists a witness $t\in\cT$ depending on $\mu$ such that $\abs{\Upsilon_n(t;\mu)}\leq 1+\Delta(t)$ fails. Since we have assumed the joint event $\widetilde{\cE}\medcap \cE$ that implies the pilot coverage event $\widetilde{\cE}$ \eqref{eq:pilot_event_known_var}, those $\mu$ with $\abs{\mu-\theta}>M/\sqrt{\log(en)}$ are already rejected by the pilot confidence interval $\widetilde{\CI
}$. Therefore, we only need to find witnesses for those $\mu$ with $\abs{\mu-\theta}\in[\overline{r},M\sigma/\sqrt{\log(en)}]$. Again, by the property of the uniform concentration event $\cE$, we have $\abs{\Upsilon_n(t;\mu)- \Upsilon(t;\mu)}\leq \Delta(t)$ for all $t\in\cT$ and $\mu\in\bbR$. By triangle inequality, a sufficient condition for the exclusion of $\mu$ is that $\abs{\Upsilon(t;\mu)}> 1+2\Delta(t)$ for some $t\in\cT$. Compare this with the proof of coverage that elicits $\abs{\Upsilon(t;\theta)}\leq 1$ from Claim~\ref{clm:true_candidate_order_1}. We now see that the width of $\CIhat$ is fully characterized by the rate at which $\abs{\Upsilon(t;\mu)}$ inflates above $1$ when $\mu$ moves away from the truth $\theta$. 

Let $r=\mu-\theta$ be the deviation of the candidate from the truth. Using the fact that the modulus of any characteristic function is no larger than $1$, we conclude that
\begin{align*}
    \Upsilon(t;\mu)=2(1-\varepsilon)e^{-\im rt}-1+\underbrace{2\varepsilon e^{-\im \mu t}\xi(t)}_{\abs{\cdot}\leq 2\varepsilon}\in \overline{\bbD}(-1+2(1-\varepsilon)e^{-\im rt},\, 2\varepsilon),
\end{align*}
where $\overline{\bbD}(w,r)$ denotes the closed disk $\{z\in\bbC:\, \abs{z-w}\leq r\}$ on the complex plane. The radius of the disk is fixed as $2\varepsilon$, while its center $-1+2(1-\varepsilon)e^{-\im rt}$ rotates around $-1+0\im$. When $r=0$ ($\mu=\theta$), the disk becomes $\overline{\bbD}(1-2\varepsilon,\, 2\varepsilon)$, which is completely contained within the unit disk $\overline{\bbD}(0,1)$ (modulus $\leq 1$); this is why high-probability coverage holds, as we have discussed previously. As the rotating angle $\abs{rt}$ increases from $0$ to $\pi$, the disk $\overline{\bbD}(-1+2(1-\varepsilon)e^{-\im rt},\, 2\varepsilon)$, where $\Upsilon(t;\mu)$ is located, will eventually be separated from the unit disk $\overline{\bbD}(0,1)$ with a positive margin for all large enough $\abs{rt}\in[0,\pi]$. In these cases, we will have $\abs{\Upsilon(t;\mu)}>1$. If, furthermore, the distance between $\overline{\bbD}(-1+2(1-\varepsilon)e^{-\im rt},\, 2\varepsilon)$ and $0$ overwhelms one plus the plug-in error bound $\Delta(t)\gtrsim\abs{\Upsilon_n(t;\mu)-\Upsilon(t;\mu)}$ for some $t\in\cT$, we claim that, with high probability, $\abs{\Upsilon_n(t;\mu)}>1+\Delta(t)$ and thus the certification fails. This can be seen from the triangular inequality that on the uniform concentration event $\cE$,
\begin{align*}
    \abs{\Upsilon_n(t;\mu)}\geq \abs{\Upsilon(t;\mu)}-\Delta(t)\geq \underbrace{\mathrm{dist}\Big(\overline{\bbD}(-1+2(1-\varepsilon)e^{-\im rt},\, 0\Big)}_{\textnormal{if this }>1+2\Delta(t) }-\Delta(t)>1+\Delta(t).
\end{align*}
To relate this condition to $\abs{r}$, we need the following analytic result.
\begin{Lemma}\label{lem:quadratic_gap}
    For $b\in[-\pi,\pi]$ and $\varepsilon\in[0,1/2)$, it holds that
    \begin{align*}
        \mathrm{dist}\Big(\overline{\bbD}(-1+2(1-\varepsilon)e^{-\im b},\, 0\Big)\geq 1+2(b/\pi)^2-4\varepsilon.
    \end{align*}
\end{Lemma}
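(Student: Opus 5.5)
The plan is to reduce the claim to an elementary inequality for the modulus of the center $c(b)\define -1+2(1-\varepsilon)e^{-\im b}$, with the $\varepsilon$-dependence handled by a single triangle inequality. The typeset statement reads literally as a disk of radius $0$, i.e.\ the point $c(b)$. In the application $\Upsilon(t;\mu)$ lies in the disk of radius $2\varepsilon$ around $c(b)$, so its distance to the origin is at least $\abs{c(b)}-2\varepsilon$. I will prove the stronger bound $\abs{c(b)}\geq 1+2(b/\pi)^2-2\varepsilon$. This gives the stated inequality under either reading, since subtracting the radius $2\varepsilon$ yields exactly the $-4\varepsilon$ in the statement.

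\emph{Step 1 (remove $\varepsilon$).} Write $c(b)=\bigl(-1+2e^{-\im b}\bigr)-2\varepsilon e^{-\im b}$. The triangle inequality gives
\begin{align*}
\abs{c(b)}\geq \abs{-1+2e^{-\im b}}-2\varepsilon=\sqrt{5-4\cos b}-2\varepsilon .
\end{align*}
It therefore suffices to show $\sqrt{5-4\cos b}\geq 1+2(b/\pi)^2$ for $b\in[-\pi,\pi]$.

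\emph{Step 2 (quadratic lower bound on $1-\cos b$).} Use $1-\cos b=2\sin^2(b/2)$ together with Jordan's inequality $\abs{\sin(b/2)}\geq \abs{b}/\pi$ for $\abs{b/2}\leq\pi/2$. This concavity bound on $[0,\pi/2]$ gives $1-\cos b\geq 2b^2/\pi^2$. Setting $s\define (b/\pi)^2\in[0,1]$, we get $5-4\cos b=1+4(1-\cos b)\geq 1+8s$.

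\emph{Step 3 (compare squares).} Both sides of the target are nonnegative, so it suffices to compare squares: we need $1+8s\geq (1+2s)^2=1+4s+4s^2$. This is equivalent to $4s(1-s)\geq 0$, which holds because $s\in[0,1]$; this is exactly where the restriction $b\in[-\pi,\pi]$ enters. Combining the three steps gives $\abs{c(b)}\geq 1+2(b/\pi)^2-2\varepsilon$. Subtracting the disk radius $2\varepsilon$ then yields $\mathrm{dist}\geq 1+2(b/\pi)^2-4\varepsilon$.

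The only delicate point is choosing the constants in Steps 2 and 3 so that the quadratic comparison closes on the whole range $s\leq 1$. Jordan's inequality with constant $2/\pi^2$ makes it work exactly, with equality at $s=0$ and $s=1$. No other obstacle is expected, and the hypothesis $\varepsilon<1/2$ is not needed beyond keeping the statement meaningful.
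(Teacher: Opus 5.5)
Your proof is correct and follows essentially the same route as the paper: bound the distance from below by the modulus of the center minus the radius $2\varepsilon$, then use $1-\cos b\geq 2b^2/\pi^2$ on $[-\pi,\pi]$. The only difference is cosmetic. The paper keeps $\varepsilon$ inside the center and drops the $\sin^2 b$ term, $\sqrt{(2-2\varepsilon-\cos b)^2+\sin^2 b}\geq 2-2\varepsilon-\cos b$, which gives the linear-in-$(1-\cos b)$ bound directly, without your second triangle inequality or the squared comparison in Step 3.
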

Therefore, to witness $\Upsilon_n(t;\mu)>1+\Delta(t)$ on $\cE$ for a bad candidate $\mu=\theta+r$, it suffices to find $t\in\cT$ such that $\abs{rt}\leq \pi$ and
\begin{align}\label{eq:known_var_critical}
     (rt/\pi)^2- 2\varepsilon-2\Delta(t)>0.
\end{align}
Let us temporarily neglect the constraint that $t\in\cT$ and $\abs{rt}\leq \pi$, in order to develop a conceptual grasp of why the above condition leads to the $\sigma(n^{-1/4}+\varepsilon^{1/2}/\sqrt{1\vee\log(en\varepsilon^2)})$ rate. Rearranging the terms in \eqref{eq:known_var_critical}, we rewrite the witness condition as
\begin{align}\label{eq:known_var_critical_rearranged}
    r^2> O(1)\cdot\frac{\varepsilon}{t^2}+O(1)\cdot \frac{e^{O(1)\cdot \sigma t^2}}{t^2\sqrt{n}}\define G(t).
\end{align}
To make this condition hold for an $\abs{r}>0$ as small as possible, we minimize the right-hand side $G(t)$ with respect to $t>0$. Note that the first term $\varepsilon/t^2$ is polynomially decreasing in $t$, whereas the second term is exponentially increasing for $t\gtrsim 1/\sigma$. After we balance these two terms, the entire right-hand side is seen to be minimized at $t_*\asymp \sigma^{-1} \sqrt{1\vee \log(en\varepsilon^2)}$, with $r\asymp \sigma (n^{-1/4}+\varepsilon^{1/2}/\sqrt{1\vee\log(en\varepsilon^2)})$ being the square root of the minimized value $G(t_*)$. In general, this optimal $t_*$ that varies with the unknown $\varepsilon$ cannot be precisely realized by any finite frequency set. What we justify is that our choice $\cT$ is dense enough to approximate $t_*\asymp\sigma^{-1}\sqrt{1\vee \log(en\varepsilon^2)}$ for every unknown contamination strength $\varepsilon\in[0,\epsmax]$. In fact, for the right-hand side function $G(t)$, we have $G(t)\lesssim G(t_*)$ as long as $\abs{t^2-(t_*)^2}\lesssim 1/\sigma$. This motivates our choice of setting $\cT$ as the square root of a dense enough arithmetic sequence as prescribed in \eqref{eq:known_var_grid}. The condition that $\abs{rt}\leq \pi$ can also be satisfied by using $\abs{r}\leq M/\sqrt{\log(en)}$ and setting the constant $\kappa$ large enough. See Appendix~\ref{sec:proof_known_var_ub} for detailed proofs of correctness.

\section{Construction of Confidence Intervals When Variance Is Unknown}\label{sec:upper_bound_unknown_var}
We move on to the setting where not only $\varepsilon$ but also the variance $\sigma^2$ are unknown. In this section, we always assume $\epsmax\leq 1/3$ as opposed to $\epsmax<1/2$. We remark that the threshold $1/3$ is not arbitrary. As we shall see later in the proof of lower bounds, the optimal rate will undergo a sharp transition when $\epsmax$ exceeds $1/3$, as will be discussed in Section~\ref{sec:unknown_var_lb}.

\subsection{Pilot Confidence Intervals for Mean and Variance}
In the setting of unknown variance, the desired confidence interval is required to adapt to both $\varepsilon$ and $\sigma^2$. In light of this compound complexity, we first prepare the readers with some pilot inference strategies. These strategies may have very crude rates that do not adapt to the contamination level $\varepsilon$. However, they are adapted to variance $\sigma^2$ and thus can decompose the double-adaptation requirement, facilitating the design of our subsequent methodology. We begin by constructing a pilot confidence interval for $\sigma^2$ and a variance-adaptive pilot confidence interval for $\theta$ based on Lemma~\ref{lem:pilot_unknown_var}. Consider a sample splitting strategy in which we split $n$ iid samples into $\{\widetilde{X}_{1:\floor{n/2}},\, X_{1:\ceil{n/2}}\}$ and feed the first half $\{\widetilde{X}_{1:\floor{n/2}}\}$ into the pilot estimation process. According to Lemma~\ref{lem:pilot_unknown_var}, there are absolute $(\delta,\epsmax)$-dependent constants $(L,M,N_0)$ such that the pilot event
\begin{align}\label{eq:pilot_event_unknown_var}
    \widetilde{\cE}(\widetilde{X}_{1:\floor{n/2}})=\Big\{\abs{\widetilde{\theta}-\theta}\leq \frac{M\sigma}{\sqrt{\log(en)}},\ \frac{\abs{\widetilde{\sigma}^2-\sigma^2}}{\sigma^2}\leq \frac{L}{\log(en)}\Big\},
\end{align}
holds with probability at least $1-\delta/2$ (w.r.t. the randomness of $\widetilde{X}_{1:\floor{n/2}}$) when $n\geq 2N_0$. Here, $\widetilde{\theta}$ and $\widetilde{\sigma}^2$ are the pilot estimators introduced in Lemma~\ref{lem:pilot_unknown_var}. We then construct a pilot confidence interval for $\sigma^2$ by
\begin{align}
     \midb{\widetilde{\sigma}^2(1+\frac{L}{\log(en)})^{-1},\ \ \widetilde{\sigma}^2(1+\frac{L}{\log(en)})}\define[\widetilde{\sigma}^2_-,\widetilde{\sigma}^2_+],
\end{align}
and a pilot confidence interval for $\theta$ by
\begin{align}
    \widetilde{\CI}(\widetilde{X}_{1:\floor{n/2}})\define\Big[\widetilde{\theta}- \frac{M \widetilde{\sigma}_+}{\sqrt{\log(en)}},\ \widetilde{\theta}+\frac{M \widetilde{\sigma}_+}{\sqrt{\log(en)}}\Big].
\end{align}
With these setups, we claim by directly applying Lemma~\ref{lem:pilot_unknown_var} that:
\begin{Corollary}\label{cor:unknown_var_pilot}
Fix any $\delta\in(0,1)$ and $\epsmax\in(0,1/2)$, conditioned on $\widetilde{\cE}$ and assume $n\geq 2N_0$. We have
\begin{align*}
    \theta\in\widetilde{\CI},\quad \abs{\widetilde{\CI}}\leq \frac{M\sigma}{\sqrt{\log(en)}}\Big(1+\frac{L}{\log(en)}\Big),
\end{align*}
and also,
\begin{align*}
     \sigma^2\in[\widetilde{\sigma}^2_-,\widetilde{\sigma}^2_+],\quad \frac{\widetilde{\sigma}^2_+}{\sigma^2}\vee \frac{\sigma^2}{\widetilde{\sigma}^2_-} \leq \Big(1+\frac{L}{\log(en)}\Big)^2.
\end{align*}
\end{Corollary}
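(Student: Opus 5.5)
The plan is purely algebraic: every claim follows by unpacking the two inequalities that define the pilot event $\widetilde{\cE}$ in \eqref{eq:pilot_event_unknown_var}. Set $a\define L/\log(en)$. Note that Lemma~\ref{lem:pilot_unknown_var} is stated at contamination level $\epsmax$, whereas the data come from $P_{\theta,\sigma,\varepsilon,Q}$ with $\varepsilon\le\epsmax$. This is harmless by the nesting observation: any such law can be rewritten as $P_{\theta,\sigma,\epsmax,Q'}$. So the guarantee of $\widetilde{\cE}$ holds on the holdout half, with $n$ replaced by $\floor{n/2}$. Since $\log(e\floor{n/2})\asymp\log(en)$, this only changes constants, which I absorb into $(L,M,N_0)$; this is where the requirement $n\ge 2N_0$ enters.

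For the variance claims, $\widetilde{\cE}$ gives $\sigma^2(1-a)\le\widetilde{\sigma}^2\le\sigma^2(1+a)$. First, $\widetilde{\sigma}^2_-=\widetilde{\sigma}^2/(1+a)\le\sigma^2$ is immediate. Second, $\widetilde{\sigma}^2_+=\widetilde{\sigma}^2(1+a)\ge\sigma^2(1-a)(1+a)$; this is not quite $\ge\sigma^2$, so I need the lower side in the form $\widetilde{\sigma}^2\ge\sigma^2/(1+a)$. I would obtain it by stating the pilot guarantee of Lemma~\ref{lem:pilot_unknown_var} in the multiplicative form $\widetilde{\sigma}^2/\sigma^2\in[(1+a)^{-1},1+a]$. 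This costs nothing: $1-a\ge(1+a')^{-1}$ with $a'=a/(1-a)\le 2a$ for large $n$, so one only enlarges $L$. With the multiplicative form, $\sigma^2\in[\widetilde{\sigma}^2_-,\widetilde{\sigma}^2_+]$ holds. The ratio bounds then follow: $\widetilde{\sigma}^2_+/\sigma^2\le(1+a)\cdot(1+a)$ and $\sigma^2/\widetilde{\sigma}^2_-=(1+a)\sigma^2/\widetilde{\sigma}^2\le(1+a)^2$.

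For the mean, coverage follows from $\abs{\widetilde\theta-\theta}\le M\sigma/\sqrt{\log(en)}\le M\widetilde{\sigma}_+/\sqrt{\log(en)}$, using $\sigma\le\widetilde{\sigma}_+$ from the previous step. For the length, the radius is $M\widetilde{\sigma}_+/\sqrt{\log(en)}$ with $\widetilde{\sigma}_+\le\sigma(1+a)$, by taking square roots of the ratio bound. This gives $\abs{\widetilde{\CI}}\le 2M\sigma(1+a)/\sqrt{\log(en)}$. The stated bound omits the factor $2$, so I would absorb it by redefining $M$, consistent with the paper's convention that such constants depend only on $(\delta,\epsmax)$.

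The only step needing care is the upper inclusion $\sigma^2\le\widetilde{\sigma}^2_+$, which forces the additive-to-multiplicative constant adjustment above. Everything else is direct substitution.
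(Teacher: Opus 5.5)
Your route is the direct unpacking of $\widetilde{\cE}$ that the paper intends; the paper treats the corollary as immediate from Lemma~\ref{lem:pilot_unknown_var} and gives no further argument. Both discrepancies you flag are genuine. With $a=L/\log(en)$, the additive event only gives $\widetilde{\sigma}^2\ge(1-a)\sigma^2$, and since $(1-a)(1+a)<1$, both $\sigma^2\le\widetilde{\sigma}^2_+$ and $\sigma^2/\widetilde{\sigma}^2_-\le(1+a)^2$ can fail as literally stated. Your multiplicative repair fixes this: replace $L$ by $2L$ in both $\widetilde{\cE}$ and $\widetilde{\sigma}^2_\pm$, which is valid once $\log(en)\ge 2L$.

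One correction concerns the missing factor $2$ in the length bound: it cannot be absorbed by redefining $M$. The constant $M$ also sets the radius of $\widetilde{\CI}$, so both sides scale linearly in $M$. Indeed $\sigma\le\widetilde{\sigma}_+$ forces $\abs{\widetilde{\CI}}=2M\widetilde{\sigma}_+/\sqrt{\log(en)}\ge 2M\sigma/\sqrt{\log(en)}$, which exceeds $M\sigma(1+a)/\sqrt{\log(en)}$ whenever $a<1$. So the stated bound is false as written and must be read with $2M$ in place of $M$. This only changes constants downstream; for example, the exclusion radius $2M\sigma/\sqrt{\log(en)}$ used in Proposition~\ref{prop:unknown_var_length} becomes $3M\sigma/\sqrt{\log(en)}$.
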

The reason for sample-splitting is that the frequency set becomes data-dependent when $\sigma^2$ is unknown. As a consequence, we need to construct the frequency set using the holdout samples to avoid confounding the uniform-concentration event of the empirical characteristic function evaluated on the training set. Mimicking the known-variance setting, we also construct a discrete frequency set 
\begin{align}\label{eq:unknown_var_grid}
    \widetilde{\cT}(\widetilde{X}_{1:\floor{n/2}})=\Big\{\sqrt{k}(\kappa \widetilde{\sigma}_+)^{-1}:\ k=1,2,\cdots,\ceil{\log(en)} \Big\},
\end{align}
where $\kappa$ is an absolute constant to be determined later that depends only on $\delta$ and $\epsmax$. We also define a discretized version of $[\widetilde{\sigma}_-^2,\widetilde{\sigma}_+^2]$ by
\begin{align}
    \widetilde{\cV}(\widetilde{X}_{1:\floor{n/2}})=\Big\{\widetilde{\sigma}_-^2+k\cdot \frac{(\widetilde{\sigma}_+^2-\widetilde{\sigma}_-^2)}{\ceil{4\kappa^{-2}\sqrt{n}\log(en)}}:\ k=0,1,2,\cdots,\ceil{4\kappa^{-2}\sqrt{n}\log(en)}\Big\},
\end{align}
where $A$ is an absolute constant that depends only on $\delta$, $\epsmax$, and $\kappa$.

\subsection{Order-2 Certificates from Carath\'{e}odory's Constraints}
Recall that, as discussed in Section~\ref{sec:certify_adversary}, in this unknown-variance setting, we switch to certify
\begin{align*}
    \Upsilon_n(t;\mu,\lambda)=3e^{-\im \mu t+\lambda t^2/2}\phi_n(t)-2,
\end{align*}
where the new parameter $\lambda$ functions as a guess for the true variance $\sigma^2$, and the different coefficients $3$ and $-2$ (previously $2$ and $-1$) reflect the modified assumption that $\epsmax\leq 1/3$. Here, $\phi_n(t)$ is the empirical characteristic function evaluated on the training set $X_{1:\ceil{n/2}}$. With a little abuse of notation, we still use $\phi_n(t)$ to denote it.

Following the principled routine in Section~\ref{sec:certify_adversary}, we scan over $(\mu,\lambda)\in \bbR\times (0,+\infty)$ and check whether $\Upsilon_n(\cdot\,;\mu,\lambda)$ is approximately a genuine characteristic function, and then collect all $\mu$ for which there exists a $\lambda$ such that $\Upsilon_n(\cdot\,;\mu,\lambda)$ passes the certification. In light of the pilot results in Corollary~\ref{cor:unknown_var_pilot}, the scanning range can be reduced to $\mu\in \widetilde{\CI}$ and $\lambda\in[\widetilde{\sigma}_-^2,\widetilde{\sigma}_+^2]$ only. The scanning range of $\lambda$ can be further reduced to the finite set $\widetilde{\cV}$, if we can show that $\widetilde{\cV}$ is a sufficiently fine-grained discretization of $[\widetilde{\sigma}_-^2,\widetilde{\sigma}_+^2]$ and will only incur a tenable approximation error.

As the final step, we specify the certificates to be applied to $\Upsilon_n(\cdot\,;\mu,\lambda)$. When variance is known, we use only a modulus check. When the unknown variance introduces an additional degree of freedom, we also add new certificates to capture the more delicate geometric properties. Specifically, given a candidate parameter $\mu$, we check whether both
\begin{align}\label{eq:unknown_var_certificates_empirical}
    \begin{dcases}
         \abs{\Upsilon_n(t;\mu,\lambda)}\leq 1+\Delta_1(t),\\
         \abs{\Upsilon_n^2(t;\mu,\lambda)-\Upsilon_n(2t;\mu,\lambda)}+\abs{\Upsilon_n(t;\mu,\lambda)}^2\leq 1+\Delta_2(t),
    \end{dcases}
\end{align}
hold for specific slack terms $\Delta_1(t)$ and $\Delta_2(t)$, for some $\lambda\in\widetilde{\cV}$ and all $t\in\widetilde{\cT}$. The rationale behind the certificates is the PSD constraints attributed to Carath\'{e}odory as per Lemma~\ref{lem:cara_psd}.
Take $m=2$ in Lemma~\ref{lem:cara_psd}, since $\Upsilon(t;\mu,\lambda)$ is already Hermitian, we obtain the following certificate:
\begin{align*}
\begin{bmatrix}
    1 & \Upsilon(t;\mu,\lambda) & \Upsilon(2t;\mu,\lambda)\\
    \overline{\Upsilon}(t,\mu,\lambda) & 1 & \Upsilon(t;\mu,\lambda)\\
    \overline{\Upsilon}(2t;\mu,\lambda) & \overline{\Upsilon}(t;\mu,\lambda) & 1
\end{bmatrix}\succeq 0.
\end{align*}
By Sylvester's criterion, a Hermitian matrix is PSD if and only if all of its principal minors are nonnegative. Therefore, the above certificate is equivalent to (omitting the trivial minor $1>0$):
\begin{align*}
   1-\abs{\Upsilon(t;\mu,\lambda)}^2\vee \abs{\Upsilon(2t;\mu,\lambda)}^2\geq 0,\quad (1-\abs{\Upsilon(t;\mu,\lambda)}^2)^2-\abs{\Upsilon^2(t;\mu,\lambda)-\Upsilon(2t;\mu,\lambda)}^2\geq 0.
\end{align*}
Note that the first part of the first inequality is equivalent to $\abs{\Upsilon(t;\mu,\lambda)}\leq 1$, and rearranging the second inequality yields that $\abs{\Upsilon(2t;\mu,\lambda)-\Upsilon(t;\mu,\lambda)^2}\leq 1-\abs{\Upsilon(t;\mu,\lambda)}^2$. 
By triangular inequality,
\begin{align*}
    \abs{\Upsilon(2t;\mu,\lambda)}\leq \abs{\Upsilon(t;\mu,\lambda)^2}+1-\abs{\Upsilon(t;\mu,\lambda)}^2=1,
\end{align*}
which covers the second part of the first inequality.
Therefore, it suffices to certify the following
\begin{align}\label{eq:unknown_var_certificates_population}
    \begin{dcases}
        \abs{\Upsilon(t;\mu,\lambda)}\leq 1,\\
        \abs{\Upsilon^2(t;\mu,\lambda)-\Upsilon(2t;\mu,\lambda)}+\abs{\Upsilon(t;\mu,\lambda)}^2\leq 1.
    \end{dcases}
\end{align}
With finite samples, we replace $\Upsilon(\cdot\,;\mu,\lambda)$ with $\Upsilon_n(\cdot\,;\mu,\lambda)$  and add slack terms $\Delta_1(t)$ and $\Delta_2(t)$ to the right-hand side of \eqref{eq:unknown_var_certificates_population}, which leads to the usable empirical version \eqref{eq:unknown_var_certificates_empirical}.

Having specified all the ingredients for certification, we can formulate the confidence interval by
\begin{align}\label{eq:CI_unknown_var}
    \CIhat=\bigcup_{\lambda\in\widetilde{\cV}}\,\bigcap_{t\in\widetilde{\cT}}\CIhat_{\lambda,t},
\end{align}
where 
\begin{align}
    \CIhat_{\lambda,t}=\Big\{\mu\in\widetilde{\CI}:\
    &\abs{\Upsilon_n(t;\mu,\lambda)}\leq 1+\Delta_1(t),\,\nonumber\\
    &\abs{\Upsilon_n^2(t;\mu,\lambda)-\Upsilon_n(2t;\mu,\lambda)}+\abs{\Upsilon_n(t;\mu,\lambda)}^2\leq 1+\Delta_2(t)  \Big\},
\end{align}
consists of all candidates $\mu$ that pass at $t$ with the company of $\lambda$, and
\begin{align}
    \Delta_1(t)=6\kappa e^{\widetilde{\sigma}_+^2t^2}\sqrt{\frac{16\log(10/\delta)}{n}},\quad \Delta_2(t)=81\kappa e^{5\widetilde{\sigma}_+^2t^2/2}\sqrt{\frac{16\log(10/\delta)}{n}}
\end{align}
denote carefully chosen slack terms. 
See Algorithm~\ref{alg:unknown_var} in Appendix~\ref{sec:alg_description} for an algorithmic description.

We present below the main result of this section, which guarantees the performance of the confidence interval $\CIhat$ defined above by \eqref{eq:CI_unknown_var}.
\begin{Theorem}\label{thm:unknown_var}
    Assume $\sigma^2>0$ is unknown and fix any $\delta\in(0,1)$ and $\epsmax\in(0,1/3]$. Let $M$ be the $(\delta,\epsmax)$-dependent constant as in Lemma~\ref{lem:pilot_unknown_var}. For the $\CIhat$ defined by \eqref{eq:CI_unknown_var} with $\kappa\geq \sqrt{5}\vee (2M/\pi)$, there exist absolute constants $C$ and $N$ that depend only on $\delta$, $\epsmax$, and $\kappa$, such that
    \begin{align*}
\inf_{\theta\in\bbR,\, \sigma^2>0}\ \inf_{\varepsilon\in[0,\epsmax],\, Q}\ P^n_{\theta,\sigma,\varepsilon,Q}\Big(\theta\in\CIhat,\, \abs{\CIhat}\leq \overline{r}\Big)\geq 1-\delta
    \end{align*}
    whenever $n\geq N$, where
    \begin{align*}
        \overline{r}(n,\sigma,\varepsilon)=C\sigma\left(\frac{1}{n^{1/8}}+\frac{\varepsilon^{1/4}}{\sqrt{1\vee\log(en\varepsilon^2)}}\right).
    \end{align*}
\end{Theorem}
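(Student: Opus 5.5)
The plan is to follow the proof of Theorem~\ref{thm:known_var}, with one change: the order-1 modulus certificate is paired with the order-2 Carath\'{e}odory certificate. The order-2 certificate is needed to rule out a candidate $\mu$ that compensates its rotation error by a wrong variance guess $\lambda$.

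\textbf{High-probability events.} I first condition on the pilot event $\widetilde{\cE}$ from \eqref{eq:pilot_event_unknown_var}, which is measurable with respect to $\widetilde{X}_{1:\floor{n/2}}$ and has probability at least $1-\delta/2$. On $\widetilde{\cE}$, Corollary~\ref{cor:unknown_var_pilot} gives $\theta\in\widetilde{\CI}$ and $\sigma^2\in[\widetilde{\sigma}_-^2,\widetilde{\sigma}_+^2]$, with $\widetilde{\sigma}_\pm^2/\sigma^2=1+O(1/\log(en))$. Since the frequency set $\widetilde{\cT}$ is built from the holdout half, it is fixed given $\widetilde{X}$. Hence Lemma~\ref{lem:chf_concentration}, applied conditionally on $\widetilde{X}$ to the training half over the $O(\log n)$ frequencies $t$ and $2t$ with $t\in\widetilde{\cT}$, yields an event $\cE$ of probability at least $1-\delta/2$. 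On $\cE$,
\begin{align*}
\abs{\phi_n(t)-\phi(t)}\lesssim \kappa\sqrt{\log(10/\delta)/n}
\end{align*}
at all these frequencies. Because $\lambda\le\widetilde{\sigma}_+^2$, this gives $\abs{\Upsilon_n(t;\mu,\lambda)-\Upsilon(t;\mu,\lambda)}\le\Delta_1(t)/2$. Expanding the quadratic expression in the second certificate (every $\abs{\Upsilon}$ is $O(1)$ once the first certificate holds, and $e^{\lambda(2t)^2/2}\le e^{2\widetilde{\sigma}_+^2t^2}$), the perturbation of the second certificate is at most $\Delta_2(t)/2$. All remaining steps are deterministic on $\widetilde{\cE}\medcap\cE$.

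\textbf{Coverage.} Let $\lambda^\star\in\widetilde{\cV}$ be the grid point closest to $\sigma^2$, so $\abs{\lambda^\star-\sigma^2}\lesssim\kappa^2\sigma^2/(\sqrt n\,\log^2(en))$. By Lemma~\ref{lem:true_candidate_unknown_var}, $\Upsilon(\cdot\,;\theta,\sigma^2)$ is a genuine characteristic function (this uses $\varepsilon\le 1/3$), so it satisfies \eqref{eq:unknown_var_certificates_population}. Moreover,
\begin{align*}
\Upsilon(t;\theta,\lambda^\star)+2=e^{(\lambda^\star-\sigma^2)t^2/2}\big(\Upsilon(t;\theta,\sigma^2)+2\big).
\end{align*}
For $t$ or $2t$ with $t\in\widetilde{\cT}$ we have $t^2\le 4\log(en)/(\kappa^2\widetilde{\sigma}_+^2)$, so the multiplicative factor is $1+O(n^{-1/2})$. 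This perturbation fits inside the remaining halves of $\Delta_1$ and $\Delta_2$, so $\theta\in\bigcap_{t}\CIhat_{\lambda^\star,t}\subseteq\CIhat$.

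\textbf{Length: reduction to a population gap.} Write $\mu=\theta+r$ with $\overline r\le\abs{r}\le M\widetilde{\sigma}_+/\sqrt{\log(en)}$; larger $\abs r$ is already excluded by the pilot interval. Fix $\lambda\in\widetilde{\cV}$ and set $a=e^{(\lambda-\sigma^2)t^2/2}$ and $b=rt$. Then
\begin{align*}
\Upsilon(t;\mu,\lambda)=3a(1-\varepsilon)e^{-\im b}-2+w_1,\quad \abs{w_1}\le 3a\varepsilon,
\end{align*}
and at frequency $2t$ the same expression holds with $a^4$ and $2b$ in place of $a$ and $b$, with $\abs{w_2}\le 3a^4\varepsilon$. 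The choice $\kappa\ge 2M/\pi$ ensures $\abs{b}\le\pi/2$ for all $t\in\widetilde{\cT}$. The key step, and the main obstacle, is a geometric lemma playing the role of Lemma~\ref{lem:quadratic_gap}: for $\varepsilon\le1/3$, $\abs{b}\le\pi/2$, and all $a>0$,
\begin{align*}
\max\Big\{\abs{\Upsilon(t)}-1,\ \abs{\Upsilon(t)^2-\Upsilon(2t)}+\abs{\Upsilon(t)}^2-1\Big\}\ \ge\ c\,b^4-C\varepsilon.
\end{align*}
I would prove it in two cases:
\begin{itemize}
\item If $a\ge 1+c'b^2$, the modulus certificate already fails, because the disk containing $\Upsilon(t)$ sits outside the unit disk, as in Lemma~\ref{lem:quadratic_gap}.
\item If $a<1+c'b^2$ (in particular when $\lambda<\sigma^2$ and a shrunken $a$ lets the modulus pass), I expand around $(a,b)=(1,0)$. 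At $\varepsilon=0$ the $3\times3$ Toeplitz determinant is a polynomial in $(a-1,b)$ whose leading nonnegative part is order $b^4$ along the curve where the modulus constraint is tight.
\end{itemize}
This requires a careful Taylor/compactness argument, and it is where $\varepsilon\le1/3$ enters.

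\textbf{Length: choosing the witness frequency.} Given the lemma, $\mu$ is rejected once some $t\in\widetilde{\cT}$ satisfies
\begin{align*}
c(rt)^4>C\varepsilon+2\Delta_2(t)\asymp \varepsilon+\kappa e^{5\widetilde{\sigma}_+^2t^2/2}/\sqrt n .
\end{align*}
Equivalently, $r^4\gtrsim \varepsilon/t^4+e^{O(\sigma^2t^2)}/(t^4\sqrt n)$. Balancing the two terms at $t_*^2\asymp\sigma^{-2}(1\vee\log(en\varepsilon^2))$ gives $r\asymp\sigma\big(n^{-1/8}+\varepsilon^{1/4}/\sqrt{1\vee\log(en\varepsilon^2)}\big)=\overline r/C$. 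As in the known-variance case, the $\sqrt{k}$ grid $\widetilde{\cT}$ approximates $t_*^2$ to within $O(\sigma^{-2})$, which changes the bound only by a constant. This $t$ works for every $\lambda$ simultaneously, so $\mu\notin\CIhat$, and $\abs{\CIhat}\le\overline r$ for $n\ge N$.
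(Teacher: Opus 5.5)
Your skeleton matches the paper's proof. You use the pilot event together with a training-half concentration event at the frequencies $t$ and $2t$. Coverage comes from the grid variance nearest $\sigma^2$, which is a $1+O(n^{-1/2})$ multiplicative perturbation of the genuine characteristic function $\Upsilon(\cdot\,;\theta,\sigma^2)$. Length comes from a margin of order $(rt)^4$, uniform in $\lambda$, balanced against $\varepsilon$ and $e^{O(\sigma^2t^2)}/\sqrt n$.

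\textbf{The gap.} The one ingredient that is genuinely new relative to the known-variance case is this quartic margin (the paper's Lemma~\ref{lem:quartic_gap}). The lemma you state is true, but you do not prove it, and the route you sketch rests on a false premise.

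Take $\varepsilon=0$, $z=\Upsilon(t)=3ae^{-\mathrm{i}b}-2$ and $w=\Upsilon(2t)=3a^4e^{-2\mathrm{i}b}-2$, so that $\det\mathbf{T}_2=(1-|z|^2)^2-|z^2-w|^2$. On the curve $|z|=1$, where the modulus constraint is tight, this equals $-|z^2-w|^2$. Substituting $3ae^{-\mathrm{i}b}=2+z$ gives $w=(2+z)^3(2+\overline{z})/27-2$, and then exactly $|z^2-w|=\tfrac{2}{27}|z-1|^3|z-4|\asymp|b|^3$ near $(a,b)=(1,0)$. So on that curve the determinant is of order $b^6$, and the certificate slack is of order $|b|^3$, not $b^4$.

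In general, $\mathcal{J}_2-1=-\det\mathbf{T}_2/(1-|z|^2+|z^2-w|)$, and this denominator vanishes at $(1,0)$. The true $b^4$ bottleneck lies strictly inside the region where the modulus passes, roughly $b^2\ll 1-a\ll|b|^{4/3}$. There the slack is about $3b^4$, while $\det\mathbf{T}_2\approx-36(1-a)b^4$. So expanding the determinant around $(1,0)$, even combined with compactness away from that point, does not give a uniform $b^4$ margin for the certificate. You would also need matching control of the vanishing denominator, which is as delicate as the original problem.

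\textbf{How the paper closes it.} The paper lower-bounds the certificate directly and splits at $a=1$, i.e.\ $\lambda$ versus $\sigma^2$.
\begin{itemize}
\item If $\cos b\le0$, then $|\Upsilon(t)|\ge2$.
\item If $|b|\le\pi/2$ and $a\ge1$, then $|3ae^{-\mathrm{i}b}-2|\ge|3e^{-\mathrm{i}b}-2|\ge1+8(b/\pi)^2$.
\item If $|b|\le\pi/2$ and $a\le1$, use $|z^2-w|\ge\Re(z^2-w)$ to get $\mathcal{J}_2\ge1+6a^2(3-a^2)x^2-24ax+3a^4+9$ with $x=\cos b$.
\end{itemize}
In the last case the paper uses the identity
\[
6a^2(3-a^2)x^2-24ax+3a^4+9=9(1-x)^2+3a^4(1-x)^2+6(a-1)^2(a^2+2a+3)x(1-x)+3(1-a)^3(a+3)x^2 .
\]
For $(a,x)\in[0,1]^2$ the right side is at least $9(1-x)^2$, and $9(1-x)^2\ge9(2b/\pi)^4$.

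Contamination enters only afterwards, as an additive $O(\varepsilon)$ error from comparing $\phi$ with the clean $\phi_0$ via the Lipschitz bounds; this is uniform because $a$ is bounded on the pilot event. In particular, $\varepsilon\le1/3$ plays no role in the length argument. It is needed only for coverage, where you invoke it correctly.

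\textbf{A smaller point.} For $\mu$ in the pilot interval you only get $|r|\le M(\sigma+\widetilde\sigma_+)/\sqrt{\log(en)}$, about twice the bound you wrote, because the interval is centered at the pilot estimate, not at $\theta$. So $\kappa\ge2M/\pi$ gives $|rt|\le\pi$ at the witness frequency, not $|rt|\le\pi/2$. Your lemma therefore must also cover $|b|\in(\pi/2,\pi]$. That case is easy, since there $|\Upsilon(t)|\ge2-3a\varepsilon$.
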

The proof of the above theorem is deferred to Appendix~\ref{sec:proof_unknown_var_ub}.

\subsection{Coverage and Length Guarantees from Order-2 Certificates}
We outline the proof of Theorem~\ref{thm:unknown_var} by first briefing the coverage property and then the length bound. We will always condition on the intersection between the good pilot event \eqref{eq:pilot_event_unknown_var} and the uniform concentration event of the empirical characteristic function.

\paragraph{Deriving the coverage.} We shall mimic the previous derivation in Section~\ref{sec:known_var_derivation}, using the fact that $\Upsilon(t;\mu=\theta,\lambda=\sigma^2)$ is a genuine characteristic function to show that $\mu=\theta$ can pass all certificates. It is possible that the true $\sigma^2$ may not be attained by any $\lambda\in\widetilde{\cV}$ since the variance candidate set $\widetilde{\cV}$ has been discretized. Nevertheless, the discretization is dense enough such that there exists a $\lambda_*\in\widetilde{\cV}$ very close to $\sigma^2$. As a result, this deterministic approximation error, together with the random plug-in error, can be absorbed into our slack terms $\Delta_1(t)$ and $\Delta_2(t)$, and therefore the mean-variance pair $(\theta,\lambda_*)$ passes all certificates.

\paragraph{Deriving the length.} The derivation of high-probability length also borrows intuitions from an approximation viewpoint. Given a bad location candidate $\mu$ with $r=\mu-\theta$ large in the absolute sense, we need to verify that there is no $\lambda\in \widetilde{\cV}$ such that the pair $(\mu,\lambda)$ can pass the certificates for all $t\in\widetilde{\cT}$. Recall that
\begin{align*}
    \phi(t)=[(1-\varepsilon)e^{\im \theta t}+\varepsilon \xi(t)]\cdot e^{\sigma^2t^2/2},
\end{align*}
where $\xi(t)=\E_Q[\exp(\im tX)]$. Since $\abs{\xi(t)}\leq 1$, we have
\begin{align*}
    \abs{\phi(t)-\phi_0(t)}\lesssim O(\varepsilon)e^{-\sigma^2t^2/2},
\end{align*}
where $\phi_0(t)=e^{-\im \theta t +\sigma^2t^2/2}$ is the characteristic function of the clean fraction $\cN(\theta,\sigma^2)$. As this implies, $\phi$ is $O(\varepsilon e^{-\sigma^2t^2/2})$-close to $\phi_0$, and thus their certificate values should also be very close to each other. In fact, through rigorous control of the approximation error, we can show that the population certificate values of $\phi$ are also $O(\varepsilon)$-close to those of $\phi_0$. Furthermore, by change-of-variables in the location family, the certificate values of $\phi_0$ against the candidate $\mu$ are equal to the certificate values of $e^{-\sigma^2t^2/2}=\E_{\cN(0,\sigma^2)}[\exp(\im tX)]$ against the candidate $r=\mu-\theta$. Therefore, to quantify the population-level gap between the certificate values and $1$, it remains to determine:

\emph{For $r\neq 0$, how well can order-2 certificates detect that the distribution $\cN(0,\sigma^2)$ is not an Efron's model with location $r$? }

In contrast to the quadratic gap in Lemma~\ref{lem:quadratic_gap}, we have a quartic gap in the unknown-variance setting.

\begin{Lemma}\label{lem:quartic_gap}
Let $\Upsilon(t;r,\lambda)$ be as in \eqref{eq:unknown_var_upsilon} with specifically $\phi(t)=e^{-\sigma^2t^2/2}$. For any $\sigma^2>0$, $\lambda>0$, and $rt\in[-\pi,\pi]$, it holds that
\begin{align*}
    \max\left\{\abs{\Upsilon(t;r,\lambda)},\ \abs{\Upsilon^2(t;r,\lambda)-\Upsilon(2t;r,\lambda)}+\abs{\Upsilon(t;r,\lambda)}^2\right\}\geq 1+(rt/\pi)^4.
\end{align*}
\end{Lemma}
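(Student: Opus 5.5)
We reduce the statement to a two-variable inequality. With $\phi(t)=e^{-\sigma^2t^2/2}$, set $s\define rt\in[-\pi,\pi]$ and $a\define e^{(\lambda-\sigma^2)t^2/2}>0$. Then
\begin{align*}
\Upsilon(t;r,\lambda)=3ae^{-\im s}-2,\qquad \Upsilon(2t;r,\lambda)=3a^4e^{-2\im s}-2 .
\end{align*}
Consequently,
\begin{align*}
\abs{\Upsilon(t)}^2=9a^2-12a\cos s+4,\qquad
\Upsilon^2(t)-\Upsilon(2t)=(9a^2-3a^4)e^{-2\im s}-12ae^{-\im s}+6 .
\end{align*}
The variance mismatch is absorbed into the single scalar $a$, and the claim becomes: for every $a>0$ and $s\in[-\pi,\pi]$, $\max\{A,B\}\geq 1+(s/\pi)^4$, where $A=\abs{\Upsilon(t)}$ and $B=\abs{\Upsilon^2(t)-\Upsilon(2t)}+\abs{\Upsilon(t)}^2$. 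Since $1-\cos s\geq 2s^2/\pi^2$ on $[-\pi,\pi]$, it suffices to prove
\begin{align*}
\max\{A,B\}\geq 1+c_0(1-\cos s)^2
\end{align*}
with $c_0\geq 1/4$. Set $c\define\cos s\in[-1,1]$.

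\textbf{Removing the modulus in $B$.} Multiply by the unimodular factor $e^{\im s}$ and take real parts, so that $\abs{\Upsilon^2-\Upsilon(2t)}\geq \pm\Re\big(e^{\im s}(\Upsilon^2-\Upsilon(2t))\big)$. This gives
\begin{align*}
B\geq \pm\big[(9a^2-3a^4)c-12a+6c\big]+9a^2-12ac+4 ,
\end{align*}
with the sign to be chosen. Both bounds are polynomials in $(a,c)$. At $a=1$, $c=1$ we have $A=B=1$ and the bracket vanishes, so this is the degenerate point.

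\textbf{Far from the degenerate point.} If $A^2-1=9a^2-12ac+3\geq 2c_0(1-c)^2+c_0^2(1-c)^4$, we are done through $A$. Since $9a^2-12ac+3\leq 0$ forces $c\geq 1/2$ and $a$ near $1$, outside a compact region near $(a,c)=(1,1)$ the first certificate already wins with room to spare.

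\textbf{Near $(1,1)$.} Write $a=1+u$ and $c=1-v$ with $v=1-\cos s\in[0,2]$. Expanding the first certificate gives
\begin{align*}
A^2-1\approx 6u+12v+\cdots .
\end{align*}
In the second certificate, with the sign chosen as $-$ (the bracket is negative for $u>0$), the linear terms in $u$ appear with the opposite sign. I therefore plan to take a convex combination $\alpha(A^2-1)+(1-\alpha)(B-1)$, with $\alpha$ possibly depending on $v$, that cancels the linear term in $u$. The combination then becomes a quadratic in $u$ whose discriminant I would check leaves a residual $\gtrsim v^2$. If one real-part projection is too lossy, I would instead project onto a better rotation $e^{\im\psi}$, or keep the exact modulus and square it.

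\textbf{Main obstacle.} The hard part is this last polynomial inequality. It must hold uniformly in $a>0$, including large $a$ and $a\to0$ (there $A\approx2$, so these cases are easy), with an explicit constant $c_0\geq 1/4$. I would first confirm the quartic order by a local Taylor analysis of $\min_a\max\{A,B\}$ in $v$. I would then cover the global range by splitting $v\in[0,2]$ into a small-$v$ regime, handled by the expansion, and a regime $v\geq v_0$, where a direct bound shows $\max\{A,B\}\geq 1+c_1$ for some constant $c_1$ large enough to dominate $c_0v^2\le 4c_0$.
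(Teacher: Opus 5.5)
Your reduction to $a=e^{(\lambda-\sigma^2)t^2/2}$ and $s=rt$ is correct, and so are your formulas for $\Upsilon(2t)=3a^4e^{-2\mathrm{i}s}-2$ and $W:=\Upsilon^2(t)-\Upsilon(2t)$. However, the step that actually carries the lemma is only planned, and the plan as stated fails. With $a=1+u$ and $\cos s=1-v$, one gets exactly $A^2-1=6u+9u^2+12v+12uv$, while your two projected bounds read
\begin{align*}
-\Re(e^{\mathrm{i}s}W)+|\Upsilon(t)|^2-1&=12u+24v+O(u^2+uv),\\
\Re(e^{\mathrm{i}s}W)+|\Upsilon(t)|^2-1&=6uv-12u^3+O(u^2v+u^4).
\end{align*}
The coefficient of $u$ has the same (positive) sign in all three. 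So your claim that the $-$ choice flips the linear term is false, and no convex combination cancels it.

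Worse, for $a<1$ in the window $2v\lesssim 1-a\lesssim\sqrt{v/2}$, all three lower bounds are negative. At $a=0.999$, $\cos s=0.9999$ they are about $-4.8\times10^{-3}$, $-9.6\times10^{-3}$ and $-5.9\times10^{-7}$, whereas the true value is $B-1\approx1.7\times10^{-7}$. The reason is that in this window $W\approx 6(1-a)-12(1-\cos s)$ is nearly real and positive. Even your better ($+$) projection, which rotates by $e^{\mathrm{i}s}$ first, discards at least $(1-\cos s)\Re W$. That loss is of order $(1-a)(1-\cos s)$ and swamps the true margin, which is only of order $(1-\cos s)^2$.

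Separately, the region where $A\le 1$ is not a small neighbourhood of $(1,1)$. At $\cos s=1$ it contains every $a\in[1/3,1]$, so a local expansion around $(1,1)$ cannot cover it.

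The missing idea is to split on $a\gtrless 1$ (i.e.\ $\lambda\gtrless\sigma^2$) and to use the \emph{unrotated} projection.
\begin{itemize}
\item If $\cos s\le 0$, then $A\ge|3a\cos s-2|\ge 2$.
\item If $a\ge1$, then $A^2=9a^2-12a\cos s+4$ is nondecreasing in $a$, since its derivative $18a-12\cos s$ is positive. Hence $A\ge\sqrt{13-12\cos s}\ge 1+2(1-\cos s)$, and the second certificate is not needed at all.
\item If $a\le 1$ and $x=\cos s\in[0,1]$, then $|W|\ge\Re W$ gives $B-1\ge 6a^2(3-a^2)x^2-24ax+3a^4+9$. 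The identity
\begin{align*}
6a^2(3-a^2)x^2-24ax+3a^4+9&=9(1-x)^2+3a^4(1-x)^2+6(a-1)^2(a^2+2a+3)x(1-x)\\
&\quad+3(1-a)^3(a+3)x^2\ \ge\ 9(1-x)^2
\end{align*}
then holds, and combined with $1-\cos s\ge 4s^2/\pi^2$ on $[-\pi/2,\pi/2]$ it yields the quartic bound.
\end{itemize}
This is exactly the paper's three-case proof. Your fallback of projecting onto a better rotation $e^{\mathrm{i}\psi}$ would have to land on $\psi=0$, used only for $a\le 1$.
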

Note how this bound is entirely independent of the true variance $\sigma^2$ and its guessed proxy $\lambda$. With this bound established, the failure of certification can be reduced to
\begin{align}\label{eq:unknown_var_critical}
    \underbrace{(rt/\pi)^4}_{\textnormal{geometric gap}} \quad - \underbrace{O(1)\cdot \varepsilon}_{\textnormal{approximation error}} - \qquad \underbrace{O(1)\cdot e^{O(1)\cdot \sigma^2 t^2}/\sqrt{n}}_{\textnormal{plug-in error}}>0
\end{align}
for some $t\in\widetilde{\cT}$ such that $\abs{rt}\leq \pi$. Rearranging the terms, we obtain:
\begin{align}
    r^4> O(1)\cdot \frac{\varepsilon}{t^4}+O(1)\cdot \frac{e^{O(1)\cdot \sigma^2 t^2}}{t^4\sqrt{n}}\define H(t).
\end{align}
Minimizing the right-hand side with respect to $t\geq 0$ yields exactly the desired rate $r\gtrsim \sigma(n^{-1/8}+\varepsilon^{1/4}/\sqrt{1\vee \log(en\varepsilon^2)})$, with the minimum attained at $t_*\asymp \sigma^{-1}\sqrt{1\vee\log(en\varepsilon^2)}$. Similar to the known-variance setting, we have $H(t)\lesssim H(t_*)$ as long as $\abs{t^2-t_*^2}\lesssim 1/\sigma^2$, which motivates our choice of the discrete frequency set $\widetilde{\cT}$. See Appendix~\ref{sec:proof_unknown_var_ub} for detailed proofs of correctness.

\section{Minimax Lower Bounds}\label{sec:lower_bound}

In this section, we develop information-theoretic limits to reveal the tightness of the upper bounds shown in previous sections. Following \cite{luo2024adaptive}, we reduce the inference task to a composite hypothesis testing problem. Recall the notation $P_{\theta,\sigma,\varepsilon,Q}=(1-\varepsilon)\cN(\theta,\sigma^2)+\varepsilon Q*\cN(0,\sigma^2)$. We start with the known-variance setting in which $\epsmax\in(0,1/2)$ is a fixed constant and $\sigma^2>0$ is given. For $r>0$ and $\varepsilon\in[0,\epsmax]$, we consider testing between a pair of composite hypotheses:
\begin{align}\label{eq:known_var_test}
    H_0(\sigma,\epsmax):\ X_{1:n}\iid P\in\{P_{0,\sigma,\epsmax,Q}:\, Q\}\quad\textnormal{v.s.}\quad H_1(r,\sigma,\varepsilon):\ X_{1:n}\iid P\in\{P_{r,\sigma,\varepsilon,Q}:\, Q\}.
\end{align}
Note that the differences between $H_0(\sigma,\epsmax)$ and $H_1(r,\sigma,\varepsilon)$ differ not only in location parameters but also in their contamination strength. For any $\CIhat$ that adaptively satisfies high-probability coverage and a length rate $r/2=r(n,\sigma,\varepsilon)/2$, it naturally induces a test for $H_0(\sigma,\epsmax)$ versus $H_1(r,\sigma,\varepsilon)$ that has uniformly small type-I and type-II errors over all $\varepsilon\in[0,\epsmax]$. Indeed, define the test
\begin{align}\label{eq:test_by_CI}
    T(X_{1:n})=\indi\{0\notin \CIhat\}.
\end{align}
By modifying the proof of Proposition 2 in \cite{luo2024adaptive}, it can be shown that 
\begin{align*}
    \sup_{P\in H_0(\sigma,\epsmax)} P^n(T)+\sup_{P\in H_1(r,\sigma,\varepsilon)}P^n(1-T)\leq 3\delta,
\end{align*}
as long as $\CIhat$ (knowing $\sigma^2$) satisfies both the adaptive robust coverage and $r/2$-length guarantees. We will detail this fact in Proposition~\ref{prop:CI_to_test_known_var}. This $\CIhat$-to-test reduction is used in our proof of the lower bound via contradiction. To show a rate $\underline{r}=\underline{r}(n,\sigma,\varepsilon)$ is a lower bound for the length of all adaptive confidence intervals, it suffices to verify the indistinguishability between $H_0(\sigma,\epsmax)$ and $H_1(\underline{r},\sigma,\varepsilon)$ for all $\varepsilon\in[0,\epsmax]$.

Slightly generalizing the known-variance setting, we consider in the unknown-variance setting a modified pair of hypotheses:
\begin{align}\label{eq:unknown_var_test}
    H_0(\epsmax):\ X_{1:n}\iid P\in\{P_{0,\sigma,\epsmax,Q}:\, \sigma,Q\}\quad\textnormal{v.s.}\quad H_1(r,\sigma,\varepsilon):\ X_{1:n}\iid P\in\{P_{r,\sigma,\varepsilon,Q}:\, Q\}
\end{align}
for every $\sigma^2>0$ and $\varepsilon\in[0,\epsmax]$ and for some function $r=r(n,\sigma,\varepsilon)>0$. Compared to \eqref{eq:known_var_test}, the only change is on the null hypothesis $H_0$ whose variance is now arbitrary and no longer needs to be the same as $H_1$, to capture the hardness of adapting to an unknown variance. Again, consider the test $T(X_{1:n})$ defined by \eqref{eq:test_by_CI}. We can show that the sum of type-I and type-II errors of $T$ in \eqref{eq:unknown_var_test} is within $3\delta$ as long as  $\CIhat$ satisfies both adaptive robust coverage and $r/2$-length without knowing the variance. Therefore, the lower bound $\underline{r}$ can again be derived from the indistinguishability between $H_0(\sigma,\epsmax)$ and $H_1(\underline{r},\sigma,\varepsilon)$.

With the above reduction established, it suffices to find a pair of Gaussian location mixtures from $H_0(\sigma,\epsmax)$ (resp. $H_0(\epsmax)$) and $H_1(r,\sigma,\varepsilon)$ when the variance $\sigma^2$ is known (resp. unknown) that are hard to distinguish. To achieve this, we first construct a pair of priors that match several leading moments. Then, the chi-square divergence between their Gaussian posteriors can be controlled by the differences between unmatched prior moments using Hermite polynomials, as justified by the following lemma from \cite{wu2020polynomial}.
\begin{Lemma}[\cite{wu2020polynomial} Theorem 3.3.3.]\label{lem:chi2_subgaussian} Let $\nu_0$ and $\nu_1$ be two probability distributions on $\bbR$. Suppose that they match the first $K$ moments, and both of them are sub-Gaussian with variance proxy $\gamma^2<1$. Then,
\begin{align*}
    \chi^2\big(\nu_0*\cN(0,1),\, \nu_1*\cN(0,1)\big)\leq \frac{16 \gamma^{2K+2}}{\sqrt{K}(1-\gamma^2)}.
\end{align*}
\end{Lemma}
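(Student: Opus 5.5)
The plan is to expand the chi-square divergence in the Hermite basis and control the coefficients through the moment-matching condition. Write $f_i=\nu_i*\cN(0,1)$ and $\varphi$ for the standard normal density. Both $\nu_i$ are sub-Gaussian with proxy $\gamma^2<1$; in the intended use they have mean zero, and I assume this throughout. Then $f_0$ is comparable to a Gaussian density from below, so we can bound
\begin{align*}
\chi^2(f_1,f_0)=\int \frac{(f_1-f_0)^2}{f_0}.
\end{align*}
I will first replace the denominator $f_0$ by $\varphi$ up to a manageable factor, and then work in $L^2(\varphi^{-1})$. Expanding in probabilists' Hermite polynomials,
\begin{align*}
\frac{f_i(x)}{\varphi(x)}=\E_{U\sim\nu_i}\big[e^{Ux-U^2/2}\big]=\sum_{k\ge0}\frac{\E_{\nu_i}[U^k]}{k!}\,He_k(x),
\end{align*}
and the orthogonality relation $\int He_kHe_l\varphi=k!\,\delta_{kl}$ gives
\begin{align*}
\int \frac{(f_1-f_0)^2}{\varphi}=\sum_{k>K}\frac{(m_k(\nu_1)-m_k(\nu_0))^2}{k!}.
\end{align*}
All terms with $k\le K$ cancel because the first $K$ moments match.

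For the denominator, Jensen's inequality gives $f_0(x)\ge \varphi(x)\exp(x\,\E U-\E U^2/2)=\varphi(x)e^{-\E U^2/2}$ when the mean is zero. Since $\E U^2\le\gamma^2$, this yields $1/f_0\le e^{\gamma^2/2}/\varphi$, an $O(1)$ factor. Bounding the denominator this crudely might cost the exact constant, so the alternative is to work directly with $\int f_1^2/f_0$. Either way, a bounded multiplicative factor is absorbed into the constant $16$.

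Next I bound the unmatched moments using sub-Gaussianity. A proxy of $\gamma^2$ gives $\E|U|^k\le$ roughly $2\,\Gamma(k/2+1)(2\gamma^2)^{k/2}$, or more sharply $\E U^{2j}\le \gamma^{2j}(2j)!/(2^j j!)$ via the mgf comparison $\E e^{sU}\le e^{s^2\gamma^2/2}$. Then $(m_k(\nu_1)-m_k(\nu_0))^2\le 2(m_k(\nu_1)^2+m_k(\nu_0)^2)$, and for even $k$ this gives
\begin{align*}
\frac{m_k^2}{k!}\lesssim \gamma^{2k}\frac{k!}{(2^{k/2}(k/2)!)^2}\asymp \frac{\gamma^{2k}}{\sqrt{k}}
\end{align*}
by Stirling's formula, since $\binom{k}{k/2}2^{-k}\asymp k^{-1/2}$. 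Odd $k$ are handled by Cauchy--Schwarz against neighboring even moments. Summing the geometric tail $\sum_{k>K}\gamma^{2k}/\sqrt k\le \gamma^{2K+2}/(\sqrt{K}(1-\gamma^2))$ produces exactly the stated form.

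The main obstacle is obtaining the sharp per-moment bound $m_k^2/k!\lesssim\gamma^{2k}/\sqrt k$ with explicit constants. A crude sub-Gaussian moment bound loses a factor of $2^k$, which would destroy the condition $\gamma<1$. I would use the mgf-domination trick, which says that even moments are dominated by those of $\cN(0,\gamma^2)$, and then track constants to reach $16$. The denominator replacement is a secondary source of lost constants.
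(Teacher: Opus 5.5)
Your overall route is the standard one, and it is the route behind the cited Wu--Yang bound (compare the paper's modified Lemma~\ref{lem:chi2_by_moments}). You expand $\int (f_1-f_0)^2/\varphi$ in Hermite polynomials, the first $K$ terms cancel by moment matching, and you lower-bound the denominator by Jensen, $f_0\ge e^{-\gamma^2/2}\varphi$ when the means are zero. The gap is at the step you yourself call the main obstacle. The claim that $\mathbb{E}e^{sU}\le e^{s^2\gamma^2/2}$ for all $s$ forces $\mathbb{E}U^{2j}\le \gamma^{2j}(2j-1)!!$ is false, because an inequality between generating functions does not transfer to their coefficients. For a counterexample, take $\gamma=1$ and let $U=\pm 2$ with probability $0.1$ each and $U=0$ otherwise. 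Then $e^{s^2/2}-\mathbb{E}e^{sU}=\tfrac{1}{10}s^2-\tfrac{1}{120}s^4+\tfrac{11}{3600}s^6+\cdots$, where every later coefficient is positive and $\tfrac{1}{10}-\tfrac{u}{120}+\tfrac{11u^2}{3600}$ has negative discriminant. So $U$ is $1$-sub-Gaussian, yet $\mathbb{E}U^4=3.2>3$. Your diagnosis of the crude bound is also off. Using $\mathbb{E}|U|^k\le 2\Gamma(k/2+1)(2\gamma^2)^{k/2}$, the $2^k$ cancels against $\Gamma(k/2+1)^2/k!\approx 2^{-k}\sqrt{\pi k/2}$. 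The result is $m_k^2/k!\lesssim \sqrt{k}\,\gamma^{2k}$, i.e.\ a polynomial loss that turns $1/\sqrt{K}$ into $\sqrt{K}$, not a $2^k$ loss that threatens $\gamma<1$.

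What you actually need is domination up to a constant. You can get it by applying a single-frequency Chernoff bound to the moment itself rather than integrating the tail. Since $\cosh(sx)\ge e^{s|x|}/2$, we have $|x|^k\le 2\big(k/(es)\big)^k\cosh(sx)$ for all $x$, and $\mathbb{E}\cosh(sU)\le e^{s^2\gamma^2/2}$. Choosing $s=\sqrt{k}/\gamma$ gives $\mathbb{E}|U|^k\le 2(k/e)^{k/2}\gamma^k$ for every $k$, odd or even, so no Cauchy--Schwarz step is needed. Combined with $k!\ge\sqrt{2\pi k}\,(k/e)^k$, this gives $(m_k(\nu_1)-m_k(\nu_0))^2/k!\le 16\gamma^{2k}/\sqrt{2\pi k}$. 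Summing over $k\ge K+1$ and multiplying by $e^{\gamma^2/2}<e^{1/2}$ yields $\chi^2\le \frac{16e^{1/2}}{\sqrt{2\pi}}\cdot\frac{\gamma^{2K+2}}{\sqrt{K+1}(1-\gamma^2)}$, which is within the stated bound because $e^{1/2}/\sqrt{2\pi}<1$. So the constant $16$ does close, but only after this explicit check, not by absorption.

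Finally, the mean-zero assumption does not hold in this paper's use: in the proof of Theorem~\ref{thm:unknown_var_lower}, $\nu_1=\mathcal{N}(r,r^2s^2)$ has mean $r\neq 0$. You should state the reduction explicitly. Translate both measures by their common first moment; this preserves the matching of the first $K$ moments and leaves the chi-square divergence between the Gaussian convolutions unchanged.
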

By slightly modifying their proof, we obtain a more handy version in the current context.
\begin{Lemma}[Modified from \cite{wu2020polynomial} Theorem 3.3.3.]\label{lem:chi2_by_moments} Let $\nu_0$ and $\nu_1$ be two probability distributions on $\bbR$. Suppose that $\nu_0(\{0\})\geq 1/2$, then,
\begin{align*}
    \chi^2\big(\nu_0*\cN(0,1),\, \nu_1*\cN(0,1)\big)\leq 2\sum_{k=1}^{+\infty}\frac{\big(\E_{\nu_0}[X^k]-\E_{\nu_1}[X^k]\big)^2}{k!}.
\end{align*}
\end{Lemma}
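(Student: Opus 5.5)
We follow the Hermite-expansion argument behind Lemma~\ref{lem:chi2_subgaussian}, but we bound the denominator crudely instead of tracking sub-Gaussianity. Write $f_i=\nu_i*\cN(0,1)$ and let $\varphi$ be the standard Gaussian density. The first step is the generating-function identity for probabilists' Hermite polynomials,
\begin{align*}
    \varphi(x-u)=\varphi(x)\sum_{k\geq 0}\mathrm{He}_k(x)\frac{u^k}{k!}.
\end{align*}
Integrating against $\nu_i$ gives
\begin{align*}
    f_1(x)-f_0(x)=\varphi(x)\sum_{k\geq 1}\mathrm{He}_k(x)\frac{m_k(\nu_1)-m_k(\nu_0)}{k!},
\end{align*}
where $m_k$ denotes the $k$-th moment. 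We may assume the right-hand side of the claim is finite, since otherwise there is nothing to prove. Finiteness implies that all moments of $\nu_1$ are finite. It also gives enough control on $\nu_1$ to justify the interchange of sum and integral: either dominated convergence applied to partial sums, or an $L^2(\varphi)$ argument.

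The second step handles the denominator, and this is where the hypothesis $\nu_0(\{0\})\geq 1/2$ enters. Since $\varphi*\nu_0\geq \nu_0(\{0\})\,\varphi$, we get
\begin{align*}
    f_0(x)\geq \tfrac12\varphi(x)\quad\text{pointwise}.
\end{align*}
Hence
\begin{align*}
    \chi^2(f_1,f_0)=\int\frac{(f_1-f_0)^2}{f_0}\leq 2\int\frac{(f_1-f_0)^2}{\varphi}=2\int\varphi(x)\Big(\sum_{k\geq1}\mathrm{He}_k(x)\frac{\Delta_k}{k!}\Big)^2\dif x,
\end{align*}
where $\Delta_k=m_k(\nu_1)-m_k(\nu_0)$.

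The third step uses the orthogonality relation $\int \mathrm{He}_j\mathrm{He}_k\varphi=k!\,\indi\{j=k\}$ (Parseval in $L^2(\varphi)$). This collapses the last integral to $\sum_k \Delta_k^2/k!$, which yields the stated bound with the factor $2$.

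We should flag a mismatch with the statement. As written, the convolutions are $\nu_0*\cN(0,1)$ versus $\nu_1*\cN(0,1)$, so $\chi^2(\nu_0*\cN,\nu_1*\cN)$ has $f_1$ in the denominator. That would require a lower bound on $f_1$, whereas the hypothesis constrains $\nu_0$. So we either read the divergence with $\nu_0$ as the reference measure, which is the convention under which the hypothesis is usable, or we apply the argument with the roles of the two measures swapped. The moment sum is symmetric, so the conclusion is unaffected.

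The main obstacle is the rigor of the Hermite expansion when $\nu_1$ is not compactly supported. We would first prove the identity for compactly supported $\nu_1$, where the series converges absolutely and uniformly on compacts. We would then extend to general $\nu_1$ by truncation together with lower semicontinuity of $\chi^2$. In the paper's applications the priors are finitely supported, so the compact case already suffices there.
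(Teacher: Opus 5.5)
Your Steps 1--3 are the intended argument. The paper gives no separate proof; it only says it modifies the Hermite/Parseval proof of \cite{wu2020polynomial}, and the modification is exactly your pointwise bound $f_0\geq \nu_0(\{0\})\varphi\geq \varphi/2$ in the denominator. For compactly supported $\nu_0,\nu_1$ the argument is complete. This covers every use in the paper, since all priors there are finitely supported. Indeed, with $\rho=\nu_1-\nu_0$, Fubini gives $\int (f_1-f_0)^2/\varphi=\iint e^{uv}\,\rho(\mathrm{d}u)\rho(\mathrm{d}v)=\sum_{k\geq 1}\Delta_k^2/k!$. Two of your side claims are wrong, however.

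First, the ``swap the roles'' fallback is invalid. The moment sum is symmetric, but the hypothesis is not, and the bound genuinely fails with $\nu_1*\cN(0,1)$ as the reference measure. Take $\nu_0=\frac12\delta_0+\frac12\delta_{-a}$ and $\nu_1=\delta_a$. Then $\int (f_0-f_1)^2/f_1=\frac14e^{a^2}+\frac12e^{2a^2}+\frac14e^{4a^2}-1$, whereas $2\sum_k\Delta_k^2/k!=\frac12(\cosh a^2-1)+\frac92\sinh a^2$. At $a=1$ these are about $17$ and $5.6$. So only your first reading is correct. It is also the reading the paper uses: it invokes the lemma as $\chi^2(P_1,P_0)$, with $P_0=\nu_0*\cN(0,1)$ in the denominator.

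Second, your planned extension to non-compactly supported priors cannot succeed, because the lemma is false in that generality. Let $\alpha$ have the standard log-normal density $p$, and let $\beta$ have density $p(x)(1+\sin(2\pi\log x))$. These laws are distinct but share all moments. Set $\nu_0=\frac12\delta_0+\frac12\alpha$ and $\nu_1=\frac12\delta_0+\frac12\beta$. Then $\nu_0(\{0\})=1/2$ and the right-hand side is $0$. Yet $f_0\neq f_1$, because convolution with $\cN(0,1)$ is injective, so $\chi^2>0$.

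Consequently, finiteness of the right-hand side does not justify interchanging sum and integral. In a truncation argument, the truncated moment sums need not converge to the full one. A correct general version needs a condition on each prior separately, for example compact support, or $\sum_k m_{2k}(\nu_i)^2/(2k)!<\infty$ for $i=0,1$. Under the latter, Tonelli gives $\iint e^{|uv|}\,\nu_i(\mathrm{d}u)\nu_i(\mathrm{d}v)\leq 2\sum_k m_{2k}(\nu_i)^2/(2k)!<\infty$. Hence $f_i/\varphi\in L^2(\varphi)$ with Hermite coefficients $m_k(\nu_i)$, and Parseval applies.

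Relatedly, your ``compact $\nu_1$ first'' step must also control $f_0/\varphi$, since $\nu_0$ enters the expansion too. When $\nu_1$ is compact, finiteness of the right-hand side does imply the above condition for $\nu_0$, via $m_k(\nu_0)^2\leq 2\Delta_k^2+2m_k(\nu_1)^2$, but you need to say so.
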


\subsection{Known-Variance Lower Bounds}\label{sec:known_var_lb}
When the variance $\sigma^2$ is known, we derive the following lower bound that matches the upper bound in Theorem~\ref{thm:known_var}.
\begin{Theorem}\label{thm:known_var_lower}
Fix any $\delta\in(0,1/4)$, $\epsmax\in(0,1/2)$, and assume that $\sigma^2>0$ is known. For any confidence interval $\CIhat$ that satisfies
\begin{align*}
    \inf_{\theta}\, \inf_{\varepsilon\in[0,\epsmax],\, Q}\, P_{\theta,\sigma,\varepsilon,Q}^n(\theta\in\CIhat)\geq 1-\delta,\quad \inf_{\theta}\, \inf_{\varepsilon\in[0,\epsmax],\, Q}\, P_{\theta,\sigma,\varepsilon,Q}^n(\abs{\CIhat}\leq r(n,\sigma,\varepsilon))\geq 1-\delta,
\end{align*}
it must hold that
\begin{align*}
    r(n,\sigma,\varepsilon)\geq c\sigma \Big(\frac{1}{n^{1/4}}+\frac{\varepsilon^{1/2}}{1\vee \sqrt{\log(en\varepsilon^2)}}\Big),
\end{align*}
for all $(n,\sigma,\varepsilon)$ and an absolute constant $c$ that depends only on $\delta$ and $\epsmax$.
\end{Theorem}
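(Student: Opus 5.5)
We reduce the theorem to a two-point testing lower bound and then build moment-matching priors. Fix $\sigma$; by scaling we may take $\sigma=1$. Suppose $\CIhat$ has adaptive coverage and length $r(n,1,\varepsilon)$. By the $\CIhat$-to-test reduction (Proposition~\ref{prop:CI_to_test_known_var}, adapted from \cite{luo2024adaptive}), the test $T=\indi\{0\notin\CIhat\}$ would distinguish $H_0(1,\epsmax)$ from $H_1(2r,1,\varepsilon)$ with total error at most $3\delta<3/4$. So it suffices to exhibit, for $\underline r\asymp n^{-1/4}+\varepsilon^{1/2}/\sqrt{1\vee\log(en\varepsilon^2)}$, two mixing distributions
\begin{align*}
\nu_0=(1-\epsmax)\delta_0+\epsmax Q_0,\qquad \nu_1=(1-\varepsilon)\delta_{\underline r}+\varepsilon Q_1
\end{align*}
with $n\,\chi^2(\nu_0*\cN(0,1),\nu_1*\cN(0,1))\le c_0$ for a small constant $c_0$. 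Then $\chi^2$ tensorization, $1+\chi^2(P^n,P'^n)=(1+\chi^2)^n$, keeps the total variation between the $n$-fold products small, which contradicts the testing guarantee. Since $\nu_0(\{0\})=1-\epsmax\ge 1/2$, Lemma~\ref{lem:chi2_by_moments} applies, and the task reduces to controlling $\sum_k(\E_{\nu_0}X^k-\E_{\nu_1}X^k)^2/k!$. We handle the two terms of the rate separately, since the lower bound is the maximum of the two.

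\textbf{The $n^{-1/4}$ term.} Take $\varepsilon=0$, so $\nu_1=\delta_r$, and let $Q_0=\delta_{r/\epsmax}$. This matches the first moment: $\E_{\nu_0}X=r$. For $k\ge2$ the discrepancy is $r^k(\epsmax^{1-k}-1)$. The series is therefore bounded by $C_{\epsmax}r^4$ whenever $r\le \epsmax$ (or any constant), because $\sum_{k\ge2}r^{2k}\epsmax^{2-2k}/k!\lesssim r^4$. Choosing $r=c\,n^{-1/4}$ gives $n\chi^2\le c_0$. The point is that the unknown contamination level lets the null mimic a shift of size $r$ while matching the mean, leaving only a second-order discrepancy.

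\textbf{The $\varepsilon^{1/2}/\sqrt{\log}$ term.} Here $\varepsilon\ge n^{-1/2}$. We spend the $\varepsilon$-mass of $Q_1$ to match many more moments. Let $K\asymp\log(en\varepsilon^2)$ and $a\asymp\sqrt K$. We want $Q_0,Q_1$ supported on $[-a,a]$ such that $\nu_0$ and $\nu_1$ agree in moments $1,\dots,K$, with $r^2\asymp\varepsilon/K$.

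The construction proceeds in order:
\begin{enumerate}
\item Start from the first-moment matching above.
\item Observe that the required correction $\eta_k:=(1-\varepsilon)r^k-\epsmax(r/\epsmax)^k$ for $k=2,\dots,K$ has size about $r^k\epsmax^{1-k}$.
\item Realize these corrections as differences of moments of two positive measures of masses $\varepsilon$ and $\epsmax$ on $[-a,a]$. For this, use a Gauss-quadrature / Carathéodory-type representation: the $\eta_k$ must lie in the interior of the moment cone of measures of mass $\varepsilon$ on $[-a,a]$. This needs roughly $|\eta_k|\lesssim\varepsilon a^k$ in a Hankel/PSD sense, which holds once $r\lesssim\sqrt{\varepsilon}/\sqrt{K}$ up to the $\epsmax$-dependent constant. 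Equivalently, the signed measure $(1-\varepsilon)\delta_r-(1-\epsmax)\delta_0-\epsmax\delta_{r/\epsmax}$ has its first $K$ moments dominated by those of a mass-$\varepsilon$ measure of width $\sqrt K$.
\item Bound the unmatched tail. Since everything is supported on $[-a,a]$, the tail is at most $\sum_{k>K}(2a^k)^2/k!\le (Ca^2/K)^{K}\lesssim e^{-cK}$. Compare this with an $\varepsilon^2$ factor from the mass, which gives $\chi^2\lesssim\varepsilon^2 e^{-c'K}$. With $K=C\log(en\varepsilon^2)$ this is at most $c_0/n$.
\end{enumerate}

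\textbf{Main obstacle.} The hard step is the existence claim in item 3: certifying that the truncated moment sequence of mass $\varepsilon$ with width $a\asymp\sqrt K$ can absorb the discrepancies $\eta_k$ for all $k\le K$ simultaneously, with $r^2$ as large as $\varepsilon/K$ rather than something polynomially smaller. I would first attempt a dual/Chebyshev-polynomial argument. By the Markov–Krein/Hahn–Banach duality, infeasibility is equivalent to the existence of a polynomial $p$ of degree $K$ with $|p|\le1$ on $[-a,a]$ and $\sum_k p_k\eta_k>\varepsilon$. Chebyshev extremal growth bounds the coefficients, $|p_k|\lesssim (C/a)^k\binom{K}{k}^{1/2}$-type, and summing against $|\eta_k|\lesssim r^k$ shows infeasibility is impossible when $r\sqrt K\lesssim\sqrt\varepsilon$. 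If this route becomes messy, the fallback is to import the explicit moment-matching pair from the lower bound of \cite{kotekal2025optimal}, rescaled so that its first-order term is absorbed by $\delta_{r/\epsmax}$. The square-root loss, $\varepsilon^{1/2}$ instead of $\varepsilon$, comes precisely from this free first-moment matching.
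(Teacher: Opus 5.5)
Your reduction, your $n^{-1/4}$ pair ($\nu_1=\delta_r$, $Q_0=\delta_{r/\epsmax}$), and your parameter choices for the second term ($K\asymp\log(en\varepsilon^2)$, support width $a\asymp\sqrt K$, shift $r\asymp\sqrt{\varepsilon/K}$, Poisson-type tail) all coincide with the paper's. The gap is the second term: what it requires is the existence of a suitable prior pair. That is Proposition~\ref{prop:known_var_matching}, which the paper proves by the explicit tilt-then-integrate construction of $M_4$, and your item~3 does not supply it.

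\textbf{Item 3 as stated is infeasible.}
\begin{itemize}
\item $\eta_2=r^2(1-\varepsilon-1/\epsmax)<0$ is not a moment of any positive measure.
\item Reversing the sign does not help. If you keep $Q_0=\delta_{r/\epsmax}$ and let $\varepsilon Q_1$ absorb the discrepancy, pairing with the nonnegative polynomial $x^2(x-r/\epsmax)^2$ gives a contradiction once $K\ge 4$.
\item If instead $Q_0$ ranges over all probability measures on $[-a,a]$, nothing in your argument stops it from putting $\Theta(1)$ of its mass near $\pm a$. Then the moment gaps for $k>K$ are only $O(\epsmax a^k)$. The $\varepsilon^2$ factor you invoke in item~4 disappears, $K\gtrsim\log n$ is forced, and the $\log(en\varepsilon^2)$ is lost: at $\varepsilon=n^{-1/2}\log n$ you would recover only $n^{-1/4}$.
\end{itemize}
The property you actually need is the one behind property~3 of Proposition~\ref{prop:known_var_matching}: in \emph{both} priors, all mass away from the near-zero atoms is $O(\varepsilon)$.

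\textbf{The quantitative heuristics do not support $r\sqrt K\lesssim\sqrt\varepsilon$.}
\begin{itemize}
\item The condition ``$|\eta_k|\lesssim\varepsilon a^k$'' would allow $r^2\asymp\varepsilon K$, not $\varepsilon/K$.
\item The coefficient bound $|p_k|\lesssim(C/a)^k\binom{K}{k}^{1/2}$ is false. For even $K$, $T_K(x/a)$ has $x^2$-coefficient $K^2/(2a^2)\asymp K$, not $O(1)$. Were your bound true, the same computation would give feasibility at $r\asymp\sqrt\varepsilon$, contradicting Theorem~\ref{thm:known_var}.
\item The fallback does not close the gap. The non-adaptive pair of \cite{kotekal2025optimal} has contamination $\varepsilon$ on both sides and shift $\asymp\varepsilon/\sqrt K$. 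You do not explain how rescaling would produce the asymmetric pair (constant contamination under the null, $\varepsilon$ under the alternative) with shift $\sqrt{\varepsilon/K}$.
\end{itemize}

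\textbf{How to repair the duality route.} Fixed this way, it would give a genuinely different, non-constructive proof.
\begin{enumerate}
\item Set $\epsmax'=\epsmax/2$. Let $\lambda=\delta_r-(1-\epsmax')\delta_0-\epsmax'\delta_{r/\epsmax'}$, which has zero mass and zero mean. Require $\lambda$ to agree in moments $0,\dots,K$ with a signed measure $\rho$ on $[-a,a]$ of total variation at most $\varepsilon$.
\item By Hahn--Banach, such $\rho$ exists iff $p(r)-(1-\epsmax')p(0)-\epsmax'p(r/\epsmax')\le\varepsilon$ for every real $p$ of degree $\le K$ with $\max_{[-a,a]}|p|\le 1$.
\item The Bernstein--Walsh ellipse bound gives $|p|\le e$ on the disk of radius $a/(2K)$ about $0$. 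Cauchy's estimate then gives $|p^{(j)}(0)|/j!\le e(2K/a)^j$.
\item The $j=0,1$ Taylor terms cancel. So whenever $2Kr\le a\epsmax'/2$, the left side is at most $8eK^2r^2/(a^2\epsmax')$. This is $\le\varepsilon$ for $a^2=K/e^2$ and $r^2=c\varepsilon/K$ with $c$ small.
\item Write $\rho=\rho_+-\rho_-$, each part of mass $\le\varepsilon/2$. Define $\nu_1\propto\delta_r+\rho_-$ and $\nu_0\propto(1-\epsmax')\delta_0+\epsmax'\delta_{r/\epsmax'}+\rho_+$, with common normalizer $1+\rho_\pm(\bbR)\le 1+\varepsilon/2$.
\item These satisfy $\nu_1(\{r\})\ge 1-\varepsilon$ and $\nu_0(\{0\})\ge 1-\epsmax$. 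Their moment gaps for $k>K$ are at most $\varepsilon a^k+\epsmax'^{1-k}r^k$, which restores the $\varepsilon^2 e^{-K}$ tail.
\end{enumerate}
This trades the paper's explicit Fourier bookkeeping (Claim~\ref{clm:relative_magnitude}, Lemma~\ref{lem:control_alternation}) for a soft existence argument. As written, though, your proposal is missing exactly this step.
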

\begin{proof}
By scale-equivariance, it suffices to prove the conclusion for $\sigma^2=1$. When $\varepsilon\leq n^{-1/2}$, we shall prove the first part of the rate, $n^{-1/4}$, which can be seen from the following pair of priors:
\begin{align*}
    \nu_0=(1-\epsmax)\delta_0+\epsmax\delta_{c_\delta n^{-1/4}}, \quad  \nu_1=\delta_{c_\delta\epsmax n^{-1/4}},
\end{align*}
for some $c_\delta\in(0,1)$. Note that they match in the first moment. Define $P_i=\nu_i*\cN(0,1)$. We notice that $P_0\in H_0(1,\epsmax)$ and $P_1\in H_1(\Theta_{\delta,\epsmax}(n^{-1/4}),1,\epsmax)$. By Lemma~\ref{lem:chi2_by_moments}, we have
\begin{align*}
    \chi^2(P_1,P_0)\leq 2\sum_{k\geq 2}\frac{c_\delta^{2k}(\epsmax n^{-k/4}-\epsmax^k n^{-k/4})^2}{k!} \leq 2c_\delta^4/n\cdot \sum_{k\geq 2}1/k!\leq 2ec_\delta^4 /n.
\end{align*}
Thus,
\begin{align*}
    \inf_T\ P_0^n(T)+P_1^n(1-T)=1-\TV(P_0^n, P_1^n)\geq 1-\frac{1}{2}\sqrt{(1+\chi^2(P_1,P_0))^n-1}\geq 4\delta,
\end{align*}
as long as $c_\delta$ is small enough, which completes the proof of the first rate $c_{\delta,\epsmax}n^{-1/4}$. To show the second part of the rate, we need to find new priors $\nu_0$ and $\nu_1$ that match a higher number of moments.
\begin{Proposition}\label{prop:known_var_matching}
For any $\epsmax\in(0,1/2)$, $\varepsilon\in(0,\epsmax]$, $K$ a positive multiple of $4$, and $\tau>0$, there exist two finitely supported probability distributions $\nu_0$, $\nu_1$, and an absolute constant $c$ that depends only on $\epsmax$, such that
\begin{enumerate}
    \item \textbf{point concentration:}
    \begin{align}
        \nu_0(\{0\})\geq 1-\epsmax,\quad \nu_1(\{c\varepsilon^{1/2}/\tau\})\geq 1-\varepsilon;
    \end{align}
    \item \textbf{moment matching:}
    \begin{align}
        \E_{\nu_0}[X^k]=\E_{\nu_1}[X^k],\quad k=0,1,2,\cdots, K+1;
    \end{align}
    \item \textbf{magnitude of moments:} if, in addition, $K/\tau\geq 1$, then
    \begin{align}
    \abs*{\E_{\nu_0}[X^k]}\vee \abs*{\E_{\nu_1}[X^k]}\leq 2\varepsilon (K/\tau)^k,\quad k\geq 2.
    \end{align}  
\end{enumerate}
\end{Proposition}
The proof of this result is deferred to Appendix~\ref{sec:proof_known_var_matching}. With the above proposition established, we continue to derive a lower bound for the $\varepsilon\geq n^{-1/2}$ case, which is the second part of the rate in Theorem~\ref{thm:known_var_lower}. Let $\nu_0$ and $\nu_1$ be the two priors given by Proposition~\ref{prop:known_var_matching}. We set 
\begin{align*}
    K=16\ceil{\log(en\varepsilon^2/c_\delta)},\quad \tau=eK^{1/2},
\end{align*}
where $c_\delta\in(0,1)$ is a $\delta$-dependent constant to be specified later. Check that 
\begin{align*}
    K/\tau=(4/e)\sqrt{\ceil{\log(en\varepsilon^2/c_\delta)}}\geq 1
\end{align*}
when $\varepsilon\geq n^{-1/2}$, as long as $c_\delta\in(0,1)$. 
Denote $P_i=\nu_i*\cN(0,1)$ for $i=0,1$. Then, by Lemma~\ref{lem:chi2_by_moments},
\begin{align*}
    \chi^2(P_1,P_0)&\leq 2\sum_{k\geq K+2}\frac{(4\varepsilon (K/\tau)^k)^2}{k!}\\
    &=32 \varepsilon^2 e^{K^2/\tau^2} \Prob(\mathrm{Poi}(K^2/\tau^2)\geq K+2)\\
    &\leq 32 \varepsilon^2e^{K^2/\tau^2}\cdot e^{K^2/\tau^2}\Big(\frac{eK^2/\tau^2}{K+2}\Big)^{K+2}\\
    &=32 \varepsilon^2 \Big(\frac{eK^2/\tau^2}{K+2}\Big)^{K+2},
\end{align*}
where in the second-to-last line we used the Chernoff bound for Poisson (Lemma~\ref{lem:poisson_tail}) and the fact that $ K^2/\tau^2=K/e^2< K+2$. Furthermore, we notice that $ eK^2/(\tau^2(K+2))=K/(e(K+2))\leq 1/e$. Therefore, we can continue to write:
\begin{align*}
     \chi^2(P_1,P_0)&\leq 32\varepsilon^2 e^{-(K+2)}\leq 32e^{-2} \varepsilon^2 e^{-8\log(en\varepsilon^2/c_\delta)}\leq 32e^{-2}\varepsilon^2\cdot \frac{c_\delta}{en\varepsilon^2}\leq 2c_\delta/n.
\end{align*}
Again, by setting $c_\delta$ small enough, we have
\begin{align*}
    \inf_T\ P_0^n(T)+P_1^n(1-T)=1-\TV(P_0^n, P_1^n)\geq 1-\frac{1}{2}\sqrt{(1+\chi^2(P_1,P_0))^n-1}\geq 4\delta,
\end{align*}
which proves a lower bound of $c_{\delta,\epsmax}\cdot\varepsilon^{1/2}/(1\vee \log^{1/2}(en\varepsilon^2))$ and completes the whole proof.
\end{proof}

The proof of Theorem~\ref{thm:known_var_lower} relies on the construction of $\nu_0$ and $\nu_1$ as in Proposition~\ref{prop:known_var_matching}, which we briefly describe below. Recall that for any distribution $\nu$ whose $k$-th moment exists, its $k$-th moment can be obtained from its characteristic function through the relation
\begin{align*}
    \E_\nu[X^k]=\im^{-k}\left.\frac{\dif \E_\nu[e^{\im tX}]}{\dif t}\right|_{t=0}.
\end{align*}
Therefore, two distributions match their $0$-th to $(K+1)$-th moments (if they exist in the absolute sense) if and only if the difference between their characteristic function is $o(\abs{t}^{K+1})$ as $t\to 0$. This motivates us to prove the existence of $\nu_0$ and $\nu_1$ by constructing the (unnormalized) difference between their characteristic functions: $M(t)\propto \E_{\nu_0-\nu_1}[e^{\im tX}]$. The desired difference $M(t)$ should satisfy
\begin{enumerate}
    \item $M_0(0)=0$, since every characteristic function takes value $1$ at $0$, and $M(t)$ is proportional to the difference of two characteristic functions;
    \item $M(t)=o(\abs{t}^{K+1})$, which is the moment-matching condition that we have just discussed.
\end{enumerate}
We start the trial with $\nu_0$ and $\nu_1$ being finitely supported distributions, which means that $M(t)=\sum_{j\in[J]} \alpha_j e^{\im b_j t}$ for a finite collection of coefficients $(\alpha_j,b_j)$. A good starting point is the binomial form:
\begin{align*}
    M(t)= \Big(e^{\im t/(2\tau)}-e^{-\im t/(2\tau)}\Big)^{\Theta(K)}=\sum_{k=0}^{\Theta(K)}(-1)^{\Theta(K)-k}\binom{\Theta(K)}{k}e^{\im (k-\Theta(K)/2) t/\tau },
\end{align*}
which automatically satisfy the previous constraints. Nevertheless, we also need to check whether the induced $\nu_0$ and $\nu_1$ both highly concentrate at one point, in order to guarantee that $\nu_0*\cN(0,\sigma^2)\in H_0(\sigma,\epsmax)$ and $\nu_1*\cN(0,\sigma^2)\in H_1(r,\sigma,\varepsilon)$ after location-shifting and normalization. Unfortunately, although we know that the binomial coefficient $\binom{K}{k}$ peaks at $k=K/2$ for even $K$, it is not peaky enough to include $1-\epsmax$ (resp. $1-\varepsilon$) of the total mass to form a legal $\nu_0$ (resp. $\nu_1$). To address this issue, we perform a \emph{tilt-then-integrate} trick twice. To be specific, we define
\begin{align*}
    M_0(t)=\Big(e^{\im t/(2\tau)}-e^{-\im t/(2\tau)}\Big)^{K}=\sum_{k=0}^{K}(-1)^{K-k}\binom{K}{k}e^{\im (k-K/2) t/\tau }
\end{align*}
and then multiply it by an exponential tilting factor $e^{\im at/\tau}$ for some $a\in(0,1/2)$ to become
\begin{align*}
    M_1(t)=e^{\im at/\tau}M_0(t)=\sum_{k=0}^{K}(-1)^{K-k}\binom{K}{k}e^{\im (k-K/2+a) t/\tau }.
\end{align*}
The magic emerges when we conduct integration, i.e., defining
\begin{align*}
    M_2(t)=\int_0^t M_1(s)\dif s=\sum_{k=0}^{K}(-1)^{K-k}\binom{K}{k}\frac{\tau }{k-K/2+a}e^{\im (k-K/2+a) t/\tau }.
\end{align*}
Note that for $k\neq K/2$, the absolute denominator $\abs{k-K/2+a}$ is larger than $1/2$ since we have assumed $a\in(0,1/2)$; but for $k=K/2$, the denominator becomes simply $a$, and can thus be made arbitrarily small by tuning $a\to 0^+$ to be small enough. Thus, the central coefficient ($k=K/2$) can be made significantly larger than the sum of the absolute values of its noncentral counterparts, well mimicking the hypotheses $H_0(\sigma,\epsmax)$ and $H_1(r,\sigma,\varepsilon)$ that we desire. The full construction is given by applying this tilt-then-integrate trick twice, that is, to analyze:
\begin{align*}
    M_4(t)=\int_0^t e^{\im bs/\tau}\Big(\int_0^{s}e^{\im as'/\tau}M_0(s')\dif s'\Big)\dif s,
\end{align*}
where we set $a\asymp b\asymp \varepsilon^{1/2}$. This choice of $a$ and $b$ will scale the central coefficient by $1/ab\asymp 1/\varepsilon$ so that it can dominate at least $1-O(\varepsilon)$ proportion of the total mass. Readers are directed to Appendix~\ref{sec:proof_known_var_matching} for a detailed proof of the correctness.

\subsection{Unknown-Variance Lower Bounds}\label{sec:unknown_var_lb}
In the unknown-variance setting with $\epsmax\in(0,1/3]$, we have seen a further degradation of the rate in the upper bounds, especially from $n^{-1/4}$ to $n^{-1/8}$ when $\eps=O(n^{-1/2})$. The following theorem justifies that such a worse exponent is unavoidable.
\begin{Theorem}\label{thm:unknown_var_lower}
Fix any $\delta\in(0,1/4)$, $\epsmax\in(0,1/3]$, and assume that $\sigma^2>0$ is unknown. For any confidence interval $\CIhat$ that satisfies
\begin{align*}
    \inf_{\theta,\, \sigma^2}\, \inf_{\varepsilon\in[0,\epsmax],\, Q}\, P_{\theta,\sigma,\varepsilon,Q}^n(\theta\in\CIhat)\geq 1-\delta,\quad \inf_{\theta,\, \sigma^2}\, \inf_{\varepsilon\in[0,\epsmax],\, Q}\, P_{\theta,\sigma,\varepsilon,Q}^n(\abs{\CIhat}\leq r(n,\sigma,\varepsilon))\geq 1-\delta,
\end{align*}
it must hold that
\begin{align*}
    r(n,\sigma,\varepsilon)\geq c\sigma n^{-1/8},
\end{align*}
for all $(n,\sigma,\varepsilon)$ and an absolute constant $c$ that depends only on $\delta$ and $\epsmax$.
\end{Theorem}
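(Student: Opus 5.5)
The plan is to apply the $\CIhat$-to-test reduction for \eqref{eq:unknown_var_test}: it suffices to exhibit, for every $\varepsilon\in[0,\epsmax]$, a distribution $P_1\in H_1(r,1,\varepsilon)$ and a distribution $P_0\in H_0(\epsmax)$ with $\chi^2(P_1,P_0)\lesssim 1/n$ when $r=c\,n^{-1/8}$. Then $1-\TV(P_0^n,P_1^n)\geq 4\delta$, which contradicts the $3\delta$ error bound. By scale-equivariance we take $\sigma^2=1$.

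\textbf{Construction.} For the alternative we take the clean Gaussian $P_1=\cN(r,1)$. It lies in $H_1(r,1,\varepsilon)$ for every $\varepsilon$, with $Q=\delta_r$, so the single construction handles all $\varepsilon$ at once. For the null we exploit the freedom in the variance. Set $s=\beta r^2$ for a constant $\beta$ and write
\begin{align*}
P_1=\cN(r,s)*\cN(0,1-s).
\end{align*}
Take $P_0=\nu_0*\cN(0,1-s)$ with $\nu_0=(1-\epsmax)\delta_0+\epsmax\,\rho$. This is an Efron model with $\theta=0$, variance $1-s$, contamination $\epsmax$ and adversary $Q=\rho*\cN(0,0)$, so $P_0\in H_0(\epsmax)$.

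We choose $\rho$ so that $\nu_0$ matches the first three moments of $\cN(r,s)$. Write $\rho$ as the law of $rY$. We need
\begin{align*}
\E Y=\frac{1}{\epsmax},\qquad \E Y^2=\frac{1+\beta}{\epsmax},\qquad \E Y^3=\frac{1+3\beta}{\epsmax}.
\end{align*}
Such a $Y$ exists as soon as the prescribed variance is positive, i.e. $(1+\beta)/\epsmax>1/\epsmax^2$. This holds once $\beta>1/\epsmax-1$, for instance any $\beta\geq 3$ works since $\epsmax\leq1/3$. Given positive variance, a two-point law can match any prescribed mean, variance and third moment, so $\rho$ can be taken finitely supported. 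All constants here depend only on $\epsmax$.

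\textbf{Chi-square bound.} Rescale by $\sqrt{1-s}$ so that the Gaussian kernel is $\cN(0,1)$. Moments then change only by factors $(1-s)^{-k/2}\leq 2^{k/2}$ for small $r$. Since $\nu_0(\{0\})\geq 2/3\geq1/2$, Lemma~\ref{lem:chi2_by_moments} applies, and the first three moment differences vanish. For $k\geq4$ we bound $|\E_{\nu_0}X^k|\leq (C r)^k$ because $\rho$ has bounded support on scale $r$. We also bound
\begin{align*}
|\E(r+\sqrt{s}Z)^k|\leq (Cr)^k\,\E|1+\sqrt\beta Z|^k\leq (C'r)^k\sqrt{k!}.
\end{align*}
Hence each squared difference divided by $k!$ is at most $(C''r)^{2k}$. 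Summing the geometric series over $k\geq4$ gives $\chi^2(P_1,P_0)\lesssim r^8=c^8/n$ for $r$ small. Choosing $c=c_{\delta,\epsmax}$ small makes $\tfrac12\sqrt{(1+\chi^2)^n-1}\leq 1-4\delta$, which yields $r(n,\sigma,\varepsilon)\geq c\sigma n^{-1/8}$.

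\textbf{Main obstacle.} The delicate step is the feasibility of the three-moment match with an $\epsmax$-fraction of mass. It is the constraint $\mathrm{Var}(Y)>0$ that forces the inflated variance $s\asymp r^2$ on the alternative side, and it is also where the restriction $\epsmax\leq1/3$ enters. Beyond that, I would need to check the moment-growth bookkeeping carefully so that the sum over $k$ really is dominated by its $k=4$ term of order $r^8$.
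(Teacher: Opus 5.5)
Your proposal is correct and is essentially the paper's proof: a three-moment match between $(1-\epsmax)\delta_0+\epsmax\cdot(\text{two-point law on scale } r)$ and $\cN(r,\Theta(r^2))$, with the Gaussian's extra variance traded against the null's noise variance, which gives $\chi^2\lesssim r^8$. The cosmetic differences are that you keep the alternative at variance $\sigma^2$ and shrink the null's (the paper fixes the null at variance $1$ and inflates the alternative to $1+r^2s^2$), and that you bound $\chi^2$ via Lemma~\ref{lem:chi2_by_moments} with explicit moment growth rather than Lemma~\ref{lem:chi2_subgaussian}.

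One slip to fix: $\beta\geq 3$ does not suffice for small $\epsmax$; you need $\beta>1/\epsmax-1$ (e.g.\ $\beta=1/\epsmax$). Also, $\epsmax\leq 1/3$ is not what makes the construction feasible: three moments can be matched for every $\epsmax\in(0,1/2)$, and $\epsmax\leq 1/3$ only rules out matching a fourth moment, and hence a stronger bound.
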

Although this lower bound does not fully match the scaling of the upper bound in Theorem~\ref{thm:unknown_var} for all $\varepsilon\in[0,\epsmax]$, it does capture the severe degradation of the length from $n^{-1/4}$ to $n^{-1/8}$ even when the samples are truly clean ($\varepsilon=0$). According to the $\CIhat$-to-test reduction introduced at the beginning of Section~\ref{sec:lower_bound}, it suffices to show the indistinguishability between $H_0(\epsmax)$ and $H_1(r,\sigma,0)$ for $r\asymp \sigma n^{-1/8}$, i.e., finding $Q$ and $v^2>0$ such that
\begin{align*}
    [(1-\epsmax)\delta_0+\epsmax Q]*\cN(0,v^2)\qquad \textnormal{v.s.}\qquad \delta_r *\cN(0,\sigma^2)
\end{align*}
cannot be distinguished with a high enough probability using $n$ samples. Scaling both sides by $1/v$ (which does not affect the hardness of testing), the above hypotheses become:
\begin{align*}
    [(1-\epsmax)\delta_0+\epsmax Q']*\cN(0,1)\qquad \textnormal{v.s.}\qquad \delta_{r/v} *\cN(0,\sigma^2/v^2)=\cN(r/v,\sigma^2/v^2-1)*\cN(0,1),
\end{align*}
for some new $Q'$ as long as $\sigma^2\geq v^2$, which is in a standardized format ready for Lemma~\ref{lem:chi2_subgaussian}. The choice $\sigma^2\geq v^2$ is natural through the lens of moment-matching since the variance on the left-hand side is never lower than $1$. Now, to apply Lemma~\ref{lem:chi2_subgaussian}, we need to determine the maximum number of moments that the location priors:
\begin{align*}
    \nu_0=(1-\epsmax)\delta_0+\epsmax Q'\qquad \textnormal{and}\qquad  \nu_1=\cN(r/v,\sigma^2/v^2-1)
\end{align*}
can match. Again, since scaling both sides will not affect the number of matched moments, we divide both $\nu_0$ and $\nu_1$ simultaneously by $r/v$, and further reduce this problem to the moment-matching between
\begin{align*}
    \mu_0=(1-\epsmax)\delta_0+\epsmax Q''\qquad \textnormal{and}\qquad \mu_1=\cN(1,s^2),
\end{align*}
where $s^2=(\sigma^2-v^2)/r^2$. How many moments can $\mu_0$ and $\mu_1$ match? A short answer when $\epsmax\in(0,1/3]$ is: we can find a finitely supported $Q''$ and an $s^2>0$ that both depend only on $\epsmax$ such that $\mu_0$ and $\mu_1$ (and thus $\nu_0$ and $\nu_1$) match their leading three moments; on the other hand, matching the fourth moment is impossible. Therefore, by Lemma~\ref{lem:chi2_subgaussian}, the chi-square divergence between $\nu_0*\cN(0,1)$ and $\nu_1*\cN(0,1)$ is bounded by $(r/v)^{2\times 3+2}=(r/\sqrt{\sigma^2-r^2s^2})^8=(r/\sqrt{\sigma^2-O_{\epsmax}(r^2)})^8$, which can be made $O_\delta(1/n)$ once we set $r\asymp_{\delta,\epsmax}(\sigma n^{-1/8})$.

The remaining task is to explain why the $\mu_0$ and $\mu_1$ defined above can match at most the first three but not the fourth moment. At the end of this section, we will prove Theorem~\ref{thm:unknown_var_lower} by constructing a concrete pair of $\mu_0$ and $\mu_1$ that match the first three moments. Nevertheless, there is another \emph{a priori} way to determine the maximum number of matched moments without instantiating $\mu_0$ and $\mu_1$ in arduous detail. We first prepare the readers with a standard conclusion on the truncated Hamburger moment problem.
\begin{Lemma}[Theorem 3.3 and 3.4 of \cite{lasserre2009moments}]\label{lem:truncated_hamburger}
Let $\{m_0=1,\, m_1,\, m_2,\cdots,\, m_k\}$ be a finite sequence in $\bbR$. Define the associated Hankel matrix by $\bfH_\ell=[m_{i+j}]_{i,j=0}^\ell\in\bbR^{(\ell+1)\times (\ell+1)}$ for $0\leq \ell\leq \floor{k/2}$. Then, there exists a probability measure $\mu$ on $\bbR$ such that $\E_\mu[X^j]=m_j$ for $j=0,1,2,\cdots,k$, if and only if
\begin{itemize}
    \item when $k$ is odd, $\bfH_{(k-1)/2}\succeq 0$ and $(m_{(k+1)/2}, \cdots, m_k)\in\mathrm{range}(\bfH_{(k-1)/2})$;
    \item when $k$ is even, there exist $m_{k+1}$ and $m_{k+2}$ such that $\bfH_{k/2+1}\succeq 0$ and $\mathrm{rank}(\bfH_{k/2+1})=\mathrm{rank}(\bfH_{k/2})$.
\end{itemize}
\end{Lemma}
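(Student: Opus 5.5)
The plan is to treat the two parities separately and to reduce everything to a flat-extension statement for Hankel matrices, the route of Curto--Fialkow that underlies Theorems 3.3 and 3.4 of \cite{lasserre2009moments}. One identity is used throughout. For a coefficient vector $\boldsymbol{v}=(v_0,\dots,v_\ell)$, write $p_{\boldsymbol{v}}(x)=\sum_i v_ix^i$. For any measure $\mu$ representing the moments,
\begin{align*}
\boldsymbol{v}^\top \bfH_\ell \boldsymbol{v}=\E_\mu[p_{\boldsymbol{v}}(X)^2],
\end{align*}
which mirrors the proof of Lemma~\ref{lem:cara_psd} for Toeplitz matrices. This identity gives $\bfH_\ell\succeq 0$ immediately. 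It also shows that $\boldsymbol{v}\in\ker \bfH_\ell$ forces $\mu$ to be supported on the zero set of $p_{\boldsymbol{v}}$.

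\textbf{Odd case, $k=2n+1$.} For necessity, positivity of $\bfH_n$ follows from the identity above. For the range condition, note that the vector $(m_{n+1},\dots,m_{2n+1})$ equals $\E_\mu[X^{n+1}(1,X,\dots,X^n)]$. Pairing it with any $\boldsymbol{v}\in\ker\bfH_n$ gives $\E_\mu[X^{n+1}p_{\boldsymbol{v}}(X)]=0$, because $p_{\boldsymbol{v}}$ vanishes $\mu$-a.e. Since $\bfH_n$ is symmetric, this vector is orthogonal to the kernel and therefore lies in the range.

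For sufficiency, the range condition lets me write $(m_{n+1},\dots,m_{2n+1})=\bfH_n\boldsymbol{w}$. I then define $m_{2n+2}$ so that the enlarged matrix $\bfH_{n+1}$ is a flat extension of $\bfH_n$. Concretely, the new last column is set to $\bfH_{n+1}$ applied to the shift of $\boldsymbol{w}$; this is a Schur-complement choice making the new row and column linear combinations of the old ones. This reduces the odd case to the flat-extension theorem described below.

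\textbf{Even case, $k=2n$.} For sufficiency, the existence of $m_{k+1},m_{k+2}$ with $\bfH_{n+1}\succeq0$ and $\mathrm{rank}\,\bfH_{n+1}=\mathrm{rank}\,\bfH_n=:r$ is exactly the flat-extension hypothesis, so again the theorem below applies.

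For necessity, given a representing $\mu$ (whose moments beyond order $k$ may not exist), I would first replace $\mu$ by an atomic measure with at most $n+1$ atoms matching $m_0,\dots,m_{2n}$. If $\bfH_n$ is singular, $\mu$ is already supported on at most $r\le n$ points, and I keep $\mu$ itself. Otherwise I use a Gauss--Radau type quadrature with $n+1$ nodes, one node prescribed freely. Such a quadrature is exact for degree $\le 2n$; its existence follows from positive definiteness of $\bfH_n$ via orthogonal polynomials. This atomic measure has all moments, and I take $m_{2n+1},m_{2n+2}$ to be its moments. For a measure with $s$ atoms, $\bfH_\ell$ has rank $\min(s,\ell+1)$ by a Vandermonde factorization $\bfH_\ell=V^\top D V$. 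Hence $\mathrm{rank}\,\bfH_{n+1}=s=\mathrm{rank}\,\bfH_n$ whenever $s\le n+1$.

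\textbf{Flat-extension theorem (main obstacle).} The remaining task is to show that $\bfH_{n+1}\succeq0$ with $\mathrm{rank}\,\bfH_{n+1}=\mathrm{rank}\,\bfH_n=r$ implies that $m_0,\dots,m_{2n+2}$ has an $r$-atomic representing measure. I would argue in four steps.
\begin{enumerate}
\item Positivity forces the first $r$ columns $1,X,\dots,X^{r-1}$ of $\bfH_{n+1}$ to be independent, since a dependency among them would propagate by the Hankel structure. The column $X^r$ is therefore a combination $\sum_{j<r}c_jX^j$. This yields a monic polynomial $p(x)=x^r-\sum_{j<r} c_jx^j$ whose coefficient vector lies in the kernel.
\item Flatness makes the sequence recursively generated: $m_{i+r}=\sum_j c_j m_{i+j}$ for all indices up to $2n+2$.
\item Positivity forces $p$ to have $r$ distinct real roots $x_1,\dots,x_r$. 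Otherwise, factor $p$ as $p=q\cdot g$, where either $g$ is a real quadratic factor with non-real roots or $g=(x-a)^2$ for a repeated root. In both cases there is a polynomial $h$ of degree $<r$, a suitable combination built from $q$ and $g$, with a nonnegative quadratic form value $\E[h^2]$ that the recursion forces to be zero. This contradicts the independence established in the first step.
\item Solve the Vandermonde system $\sum_i w_i x_i^j=m_j$ for $j<r$, which is uniquely solvable since the roots are distinct. The recursion then propagates the moment identities up to order $2n+2$. Finally, $w_i>0$ follows from $w_i=\E[\ell_i(X)^2]>0$, where $\ell_i$ are the Lagrange basis polynomials at the nodes.
\end{enumerate}
I expect the third step, showing that the roots are real and simple, together with the bookkeeping that flatness yields the recursion at the top indices, to be the delicate part.
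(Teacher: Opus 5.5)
Your proposal is correct. It is the standard Curto--Fialkow flat-extension argument on which Theorems 3.3--3.4 of \cite{lasserre2009moments} rest, and the paper gives no proof beyond that citation, so your reconstruction follows the intended route.

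Two minor remarks. First, in Step 1 positivity alone only propagates a column dependency up to column $n$. For example, the data $(1,0,0,0,1)$ give a PSD $\bfH_2$ of rank $2$ whose first two columns are dependent. So you must also invoke flatness for column $n+1$, just as you do in Step 2.

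Second, in the nonsingular even case the Gauss--Radau quadrature is unnecessary. Take $b=(m_{n+1},\dots,m_{2n+1})^\top$ with $m_{2n+1}$ arbitrary, and set $m_{2n+2}=b^\top\bfH_n^{-1}b$. The Schur complement is then zero, so this already yields a flat PSD extension.
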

Once $\mu_0$ and $\mu_1$ match their first $k$ moments, the moments of $Q''$ are automatically restricted to 
\begin{align}\label{eq:moments_q''}
    \E_{Q''}[X^j]=\epsmax^{-1}\Big(\E_{\cN(1,s^2)}[X^j]-(1-\epsmax)\E_{\delta_0}[X^j]\Big),\quad j=0,1,2,\cdots,k,
\end{align}
which can always be written as polynomials in $s^2$ and $\epsmax$. Consider the corresponding Hankel matrices $\{\bfH_j\}$. We notice that
\begin{align*}
    \det \bfH_0=1,\quad \det \bfH_1=\epsmax^{-2}(\epsmax s^2+\epsmax -1).
\end{align*}
By setting $s^2>(1-\epsmax)/\epsmax$, we can make both $\det H_0>0$ and $\det H_1>0$. This is equivalent to $ \bfH_2\succ 0$ (and thus $\mathrm{range}(\bfH_2)=\bbR^{3}$) by Sylvester's criterion; therefore, according to Lemma~\ref{lem:truncated_hamburger}, matching the first three moments is feasible for all $\epsmax\in(0,1/2)$. However, we also have
\begin{align*}
    \det\bfH_2=\epsmax^{-3}s^2\Big[(3\epsmax-1) s^4+\epsmax -1\Big].
\end{align*}
When $\epsmax\leq 1/3$, we have $\det \bfH_2<0$ if $s^2>0$, and $\det \bfH_1<0$ if $s^2=0$, indicating that $\bfH_3$ can never be PSD no matter what $m_5$ and $m_6$ we augment. Therefore, applying Lemma~\ref{lem:truncated_hamburger} with $k=4$, we conclude the impossibility of matching the first-to-fourth moments when $\epsmax\leq 1/3$.

Although we have shown the existence of priors that match up to the third moment, to properly apply Lemma~\ref{lem:chi2_subgaussian}, we need $\mu_0$ and $\mu_1$ to be $O_{\delta,\epsmax}(1)$-sub-Gaussian so that the unmatched moments only have a controllable contribution to the chi-square divergence. It suffices to find $\mu_0$ and $\mu_1$ with finite supports that depend on at most $\delta$ and $\epsmax$. 
This is true according to Lemma~\ref{lem:finite_support} from Appendix~\ref{sec:aux}, which also guarantees the existence of a four-point-supported moment-matching $Q''$, as long as we set $s^2>(1-\epsmax)/\epsmax$ to ensure the positive definiteness of the Hankel matrix $\bfH_1$ and thus the $0$-th to $3$-rd terms of~\eqref{eq:moments_q''} become a valid truncated moment sequence. Moreover, the support of such a $Q''$ depends only on $\epsmax$ since the truncated moment sequence \eqref{eq:moments_q''} only depends on $\epsmax$ and $s^2$ (which is set to depend on $\epsmax$ only). The number of supporting points can be further reduced from $4$ to $2$. In Appendix~\ref{sec:proof_unknown_var_lower}, a concrete example of $Q''$ is selected to prove Theorem~\ref{thm:unknown_var_lower}.

Although we do not show any upper bound for $\epsmax\in(1/3,1/2)$ in the unknown variance setting, we present the following lower bound results proved in Appendix~\ref{sec:proof_unknown_var_lower_above_third}.
\begin{Theorem}\label{thm:unknown_var_lower_above_third}
Fix any $\delta\in(0,1/4)$, $\epsmax\in(1/3,1/2)$, and assume that $\sigma^2>0$ is unknown. For any confidence interval $\CIhat$ that satisfies
\begin{align*}
    \inf_{\theta,\, \sigma^2}\, \inf_{\varepsilon\in[0,\epsmax],\, Q}\, P_{\theta,\sigma,\varepsilon,Q}(\theta\in\CIhat)\geq 1-\delta,\quad \inf_{\theta,\, \sigma^2}\, \inf_{\varepsilon\in[0,\epsmax],\, Q}\, P_{\theta,\sigma,\varepsilon,Q}(\abs{\CIhat}\leq r(n,\sigma,\varepsilon))\geq 1-\delta,
\end{align*}
it must hold that
\begin{align*}
    r(n,\sigma,\varepsilon)\geq\begin{dcases}
        c\sigma n^{-1/16},\quad &\textnormal{if}\ \epsmax\in(1/3,7/15],\\
        c\sigma n^{-1/24},\quad &\textnormal{if}\ \epsmax\in(7/15,1/2),
    \end{dcases}
\end{align*}
for all $(n,\sigma,\varepsilon)$ and an absolute constant $c$ that depends only on $\delta$ and $\epsmax$. 
\end{Theorem}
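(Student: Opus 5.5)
The plan follows the argument for Theorem~\ref{thm:unknown_var_lower}, changing only how many moments are matched. I would reduce to the composite test \eqref{eq:unknown_var_test} with $\varepsilon=0$ through the $\CIhat$-to-test reduction. By the scaling argument of Section~\ref{sec:unknown_var_lb}, it then suffices to find a finitely supported $Q''$ and a constant $s^2=s^2(\epsmax)>0$ such that
\[
\mu_0=(1-\epsmax)\delta_0+\epsmax Q'' \quad\text{and}\quad \mu_1=\cN(1,s^2)
\]
match their first $K$ moments. The target is $K=7$ when $\epsmax\in(1/3,7/15]$ and $K=11$ when $\epsmax\in(7/15,1/2)$. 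Rescaling by $r/v$ and applying Lemma~\ref{lem:chi2_subgaussian} then gives $\chi^2\lesssim (r/v)^{2K+2}$, with $v^2=\sigma^2-r^2s^2$. This exponent is $16$ or $24$ respectively, so taking $r\asymp_{\delta,\epsmax}\sigma n^{-1/16}$ or $\sigma n^{-1/24}$ makes $\chi^2=O_\delta(1/n)$. The total variation argument of Theorem~\ref{thm:known_var_lower} then bounds the testing error from below by $4\delta$, which is the desired contradiction.

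\textbf{Step 1 (feasibility of matching).} Fixing $\mu_1$ forces the moments of $Q''$ to be those in \eqref{eq:moments_q''}. Each such moment is a polynomial in $s^2$ with $\epsmax$-dependent coefficients. Since $K$ is odd in both cases, Lemma~\ref{lem:truncated_hamburger} says it is enough for the Hankel matrix $\bfH_{(K-1)/2}$ to be positive definite: then its range is all of $\bbR^{(K+1)/2}$. So I need $\det\bfH_j>0$ for $j\le 3$ in the first regime and $j\le 5$ in the second.

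I would expand each $\det\bfH_j$ as a polynomial in $s^2$ and read off the sign of its leading coefficient as a function of $\epsmax$. Sending $s^2\to\infty$ then makes every determinant positive at a single large constant $s^2(\epsmax)$. The paper already gives $\det\bfH_1\propto \epsmax s^2+\epsmax-1$ and $\det\bfH_2\propto (3\epsmax-1)s^4+\epsmax-1$, both of which have positive leading coefficient once $\epsmax>1/3$.

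\textbf{Main obstacle.} I expect the main work, and the real source of the thresholds, to be showing two sign facts:
\begin{itemize}
\item the leading coefficients of $\det\bfH_3$ stay positive on $(1/3,1/2)$, or at least on $(1/3,7/15]$;
\item those of $\det\bfH_4$ and $\det\bfH_5$ become positive exactly when $\epsmax>7/15$.
\end{itemize}
The $1/3$ threshold should arise as $3\epsmax-1$, the factor $3=\E Z^4$ appearing in the leading coefficient of $\det\bfH_2$. In the same way I expect the higher leading coefficients to be built from Gaussian moments, and one of them to vanish at $\epsmax=7/15$. Computing these leading coefficients directly is routine but heavy. I would first try factoring out $\epsmax^{-(j+1)}$ and the highest power of $s$, so that each one reduces to the determinant of a Hankel matrix of centered Gaussian moments perturbed by an $\epsmax$-dependent rank-one term coming from $\delta_0$. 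If the degrees of the polynomials make this messy, the fallback is to exhibit explicit two- or three-atom choices of $Q''$ and verify the moment equations by hand.

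\textbf{Step 2 (sub-Gaussianity and constants).} Given a positive definite $\bfH_{(K-1)/2}$, Lemma~\ref{lem:finite_support} provides a finitely supported $Q''$. Its atoms depend only on $\epsmax$, because $s^2$ does. Undoing the scaling by $r/v$, both priors $\nu_0,\nu_1$ are then sub-Gaussian with variance proxy $\gamma^2\lesssim_{\epsmax}(r/v)^2$. This is below $1$ once $r\le c\sigma$, and since $v^2=\sigma^2-O_{\epsmax}(r^2)\ge\sigma^2/2$, Lemma~\ref{lem:chi2_subgaussian} applies with $\gamma\asymp r/\sigma$.

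The remaining steps are the same bookkeeping as in Theorem~\ref{thm:unknown_var_lower}. Choose the constant $c$ in $r$ small enough in terms of $\delta$ and $\epsmax$ that $(1+\chi^2)^n-1$ is small. Then note that the hypothesis pair is a null from $H_0(\epsmax)$ with variance $v^2$ against $H_1(r,\sigma,0)$. Since $\varepsilon=0$ is the hardest case, the resulting bound holds for all $\varepsilon$.
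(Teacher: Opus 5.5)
Your proposal is correct and follows the paper's proof. You reduce to $\varepsilon=0$, make the leading coefficients in $s^2$ of $\det\bfH_0,\dots,\det\bfH_3$ (resp.\ $\det\bfH_5$) positive so that a single large $s^2(\epsmax)$ gives $\bfH_3\succ 0$ (resp.\ $\bfH_5\succ 0$) and hence $7$ (resp.\ $11$) matched moments, then invoke Lemma~\ref{lem:finite_support}, rescale, and apply Lemma~\ref{lem:chi2_subgaussian} to get $\chi^2\lesssim (r/\sigma)^{16}$ (resp.\ $(r/\sigma)^{24}$). The paper simply lists the determinants, whereas your rank-one route $\epsmax\bfH_j=G_j-(1-\epsmax)e_0e_0^\top$ (with $G_j$ the Hankel matrix of $\cN(1,s^2)$) settles your ``main obstacle'' directly: the leading coefficients are proportional to $0!\,1!\cdots j!-(1-\epsmax)\det[\E Z^{a+b+2}]_{a,b=0}^{j-1}$, i.e.\ to $9\epsmax-3$, $45\epsmax-21$, $225\epsmax-105$ for $j=3,4,5$, which are exactly the paper's thresholds $1/3$ and $7/15$ (your two- or three-atom fallback could not match $7$ or $11$ moments, but it is not needed).
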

As the above theorem suggests, the rate may undergo additional sharp transitions as the maximum contamination strength $\epsmax$ increases. Therefore, the $\epsmax>1/3$ setting is substantially harder than $\epsmax\leq 1/3$, and our upper bound $\sigma(n^{-1/8}+\varepsilon^{1/4}/\sqrt{\log(en\varepsilon^2)})$ for $\epsmax\in(0,1/3]$ cannot continue to hold when $\epsmax$ is above $1/3$.

\section{Discussions}\label{sec:discussion}

\subsection{No Adaptation Between Huber's and Efron's Models}\label{sec:no_adaptation_between}
Adaptive robust confidence intervals for Huber's model have been studied in \cite{luo2024adaptive}, where a procedure is provided that achieves $\sigma(1/\sqrt{\log(1/\varepsilon)}+1/\sqrt{\log(n)})$ length. Since Efron's model is a subclass of the more general Huber's formulation, one may question whether the algorithm in \cite{luo2024adaptive} is already locally optimal in the noise-oblivious regime. The answer is negative. As we will show in what follows, any method with guaranteed coverage over all Huber's models, including the one in \cite{luo2024adaptive}, cannot attain the optimal length locally within Efron's subclass. Let
\begin{align*}
    P_{\theta,\sigma,\varepsilon,Q}^{(\textnormal{Huber})}=(1-\varepsilon)\cN(\theta,\sigma^2)+\varepsilon Q,\quad P_{\theta,\sigma,\varepsilon,Q}^{(\textnormal{Efron})}=(1-\varepsilon)\cN(\theta,\sigma^2)+\varepsilon Q*\cN(0,\sigma^2).
\end{align*}
We claim the following:
\begin{Theorem}[No adaptation between Huber's and Efron's models]\label{thm:no_adaptation_between}
Let $\delta\in(0,1/4)$ and $\epsmax\in(0,1/2)$ be fixed constants.
\begin{enumerate}
    \item If $\sigma^2>0$ is known, then for any adaptive confidence interval $\CIhat$ that satisfies
\begin{align*}
    \inf_{\theta}\,\sup_{\varepsilon\in[0,\epsmax],\, Q} \Big(P_{\theta,\sigma,\varepsilon,Q}^{(\textnormal{Huber})}\Big)^n(\theta\in\CIhat)\geq 1-\delta,\quad  \inf_{\theta}\,\sup_{\varepsilon\in[0,\epsmax],\, Q} \Big(P_{\theta,\sigma,\varepsilon,Q}^{(\textnormal{Efron})}\Big)^n(\abs{\CIhat}\leq r)\geq 1-\delta,
\end{align*}
the length function $r=r(n,\sigma,\varepsilon)$ must satisfy $r\gtrsim_{\delta,\epsmax}\sigma/\sqrt{\log(n)}$ for all $(n,\sigma,\varepsilon)$.
\item If $\sigma^2>0$ is unknown, then for any adaptive confidence interval $\CIhat$ that satisfies
\begin{align*}
    \inf_{\theta,\sigma^2}\,\sup_{\varepsilon\in[0,\epsmax],\, Q} \Big(P_{\theta,\sigma,\varepsilon,Q}^{(\textnormal{Huber})}\Big)^n(\theta\in\CIhat)\geq 1-\delta,\quad  \inf_{\theta,\sigma^2}\,\sup_{\varepsilon\in[0,\epsmax],\, Q} \Big(P_{\theta,\sigma,\varepsilon,Q}^{(\textnormal{Efron})}\Big)^n(\abs{\CIhat}\leq r)\geq 1-\delta,
\end{align*}
the length function $r=r(n,\sigma,\varepsilon)$ must satisfy $r\gtrsim_{\delta,\epsmax}\sigma$ for all $(n,\sigma,\varepsilon)$.
\end{enumerate}
\end{Theorem}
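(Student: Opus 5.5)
The plan is to reduce both parts to a two-point testing problem of the kind used in Section~\ref{sec:lower_bound}. We pick a Huber distribution centered at $0$ and a distribution centered at $r$ that lies in Efron's class, and make the two nearly identical. Coverage over the Huber class forces $0\in\CIhat$ with probability at least $1-\delta$ under the first distribution. The length guarantee over Efron's class, together with coverage of $r$, forces $0\notin\CIhat$ with probability at least $1-2\delta$ under the second whenever $\abs{\CIhat}< r$. The test $T=\indi\{0\notin\CIhat\}$ therefore has type-I plus type-II error at most $3\delta$, as in Proposition~\ref{prop:CI_to_test_known_var}. This contradicts $\TV$ between the two $n$-fold products being $o(1)$, since $\delta<1/4$.

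I read the quantifiers in the statement as requiring coverage over every Huber model, and the length bound over every Efron model.

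For the Efron side we always take $P_1=\cN(r,\sigma^2)$. It belongs to Efron's class for every $\varepsilon\in[0,\epsmax]$, for instance via $Q=\delta_r$. Hence a single construction yields the bound for all $\varepsilon$ at once. By scale-equivariance we may set $\sigma=1$.

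\textbf{Part 1 (known variance).} Let $p_0,p_1$ be the densities of $\cN(0,1)$ and $\cN(r,1)$, and write $\varepsilon=\epsmax$. The function $p_1-(1-\varepsilon)p_0$ is negative only where $e^{rx-r^2/2}<1-\varepsilon$, that is, on $x<x_*:=(r^2/2+\log(1-\varepsilon))/r\approx -c_{\epsmax}/r$. Its negative part therefore has mass
\[
\eta\le \Prob\big(\cN(0,1)<x_*\big)\le e^{-c'_{\epsmax}/r^2}.
\]
Its positive part has mass $\varepsilon+\eta$. We define
\[
\varepsilon Q=\frac{\varepsilon}{\varepsilon+\eta}\,\big(p_1-(1-\varepsilon)p_0\big)_+ ,
\]
so that $P_0=(1-\varepsilon)\cN(0,1)+\varepsilon Q$ is a legitimate Huber distribution centered at $0$ with $\TV(P_0,P_1)\lesssim\eta$. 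Then $\TV(P_0^n,P_1^n)\le n\eta\le n e^{-c'/r^2}$, which is small once $r\le c/\sqrt{\log n}$. Combined with the testing reduction, any admissible $\CIhat$ must therefore have $r\gtrsim_{\delta,\epsmax}1/\sqrt{\log n}$.

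The delicate step is the tail calculation. Its sign and constant must be checked so that $\eta$ is genuinely $\exp(-\Omega(1/r^2))$ uniformly in $\epsmax$. For small $n$ the bound is trivial after adjusting constants.

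\textbf{Part 2 (unknown variance).} Here the Huber null may use a different variance, so we can make $\TV$ exactly zero. Take $P_0=(1-\varepsilon)\cN(0,\sigma_0^2)+\varepsilon Q$ with a fixed $\sigma_0^2<1$, and $P_1=\cN(r,1)$. The log-ratio
\[
\log\frac{p_1}{p_0}(x)=\log\sigma_0+\frac{x^2}{2\sigma_0^2}-\frac{(x-r)^2}{2}
\]
is a convex quadratic, because its $x^2$-coefficient $\tfrac12(\sigma_0^{-2}-1)$ is positive. Its minimum is $\log\sigma_0-O(r^2/(1-\sigma_0^2))$.

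First choose $\sigma_0$ so that $\log\sigma_0>\tfrac12\log(1-\epsmax)$. Then choose $r=c_{\epsmax}$ small enough that this minimum is at least $\log(1-\epsmax)$. This gives $p_1\ge(1-\epsmax)p_0$ pointwise, so $Q:=(p_1-(1-\epsmax)p_0)/\epsmax$ is a probability density and $P_0=P_1$ exactly.

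Under this common law, coverage forces both $0$ and $r$ into $\CIhat$ with probability at least $1-2\delta>1/2$. The length guarantee would put $\abs{\CIhat}<r$ with probability at least $1-\delta$, and the two cannot hold together. Hence $r(n,\sigma,\varepsilon)\ge c_{\epsmax}\sigma$ for every $n$.

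The only step needing care here is the explicit minimization of the quadratic to pin down admissible constants. I expect the main obstacle overall to be the tail bound in Part 1, namely ensuring the $\TV$ deficit decays like $e^{-\Omega(1/r^2)}$.
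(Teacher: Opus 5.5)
Your proof is correct. It has the same skeleton as the paper's: the CI-to-test reduction with a Huber null centered at $0$ against the clean Gaussian $\cN(r,\sigma^2)$ at $r$. The difference is how the resulting testing lower bounds are obtained.

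The paper observes that at $\varepsilon=0$ the hybrid problems become Huber-versus-Huber problems, and cites Lemmas 4 and 10 of \cite{luo2024adaptive} for the $\sigma/\sqrt{\log n}$ and $\sigma$ thresholds. You prove both from scratch:
\begin{itemize}
\item in the known-variance case, by the explicit tail-mass construction giving $\TV(P_0,P_1)\le\Phi(x_*)\le e^{-c_{\epsmax}/r^2}$;
\item in the unknown-variance case, by exact non-identifiability. After shrinking the null variance to some $\sigma_0\in(\sqrt{1-\epsmax},1)$ and taking $r^2\le(1-\sigma_0^2)\log\frac{1}{1-\epsmax}$, the minimum of the log-likelihood ratio, $\log\sigma_0-\frac{r^2}{2(1-\sigma_0^2)}$, stays above $\log(1-\epsmax)$, so $P_0=P_1$.
\end{itemize}

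The citation buys brevity. Your version is self-contained, and it makes explicit two points the paper leaves terse:
\begin{itemize}
\item The single instance bounds $r(n,\sigma,\varepsilon)$ for every $\varepsilon$, because $\cN(r,\sigma^2)\in H_1^{(\textnormal{Efron})}(r,\sigma,\varepsilon)$ for all $\varepsilon$ via $Q=\delta_r$. The paper's ``the special $\varepsilon$ cases can serve as a lower bound for general $\varepsilon$'' relies on exactly this.
\item In the unknown-variance case the obstruction does not depend on $n$ at all.
\end{itemize}

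Reading the $\sup_{\varepsilon,Q}$ in the statement as $\inf_{\varepsilon,Q}$ is the right call. It matches the reduction proposition the paper uses, and with a literal $\sup$ the $Z$-interval would be a counterexample.

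One bookkeeping point for the write-up: you use $r$ both for the separation and for the length function. Choose the separation strictly larger than $r(n,\sigma,\varepsilon)$, e.g.\ by a factor of $2$ as in Proposition~\ref{prop:CI_to_test_known_var}, so that $\{r\in\CIhat\}\cap\{\abs{\CIhat}\le r(n,\sigma,\varepsilon)\}$ genuinely excludes $0$.
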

As the above result conveys, it is a strict dichotomy to choose between:
\begin{enumerate}
    \item guaranteed coverage on Huber's models;
    \item local optimal length on Efron's models for all $\varepsilon\in[0,\epsmax]$.
\end{enumerate}
Therefore, Huber's and Efron's formulations differ not only in their real-world implications; there is also a statistical barrier that prevents any inference procedure from adapting from the former to the latter. 

\subsection{Geometric Intuition About the Adaptation Cost}
In this section, we develop further intuition about the adaptation cost through geometric visualization. We will focus on the known-$\sigma^2$, unknown-$\varepsilon$ setting to illustrate the hardness of an unknown contamination/non-null proportion. Recall that, in Section~\ref{sec:lower_bound}, the lower bound for the adaptive setting is characterized by the compound testing problem \eqref{eq:known_var_test}, which we recall below:
\begin{align*}
    H_0(\sigma,\epsmax):\ X_{1:n}\iid P\in\{P_{0,\sigma,\epsmax,Q}:\, Q\}\quad\textnormal{v.s.}\quad H_1(r,\sigma,\varepsilon):\ X_{1:n}\iid P\in\{P_{r,\sigma,\varepsilon,Q}:\, Q\}.
\end{align*}
However, for the nonadaptive setting in which $\varepsilon$ is known, \eqref{eq:known_var_test} is no longer a valid lower bound construction. Instead, one should consider the following version as used in \cite{kotekal2025optimal}:
\begin{align}\label{eq:known_var_test_nonadaptive}
    H_0(\sigma,\varepsilon):\ X_{1:n}\iid P\in\{P_{0,\sigma,\varepsilon,Q}:\, Q\}\quad\textnormal{v.s.}\quad H_1(r,\sigma,\varepsilon):\ X_{1:n}\iid P\in\{P_{r,\sigma,\varepsilon,Q}:\, Q\}.
\end{align}
Compared to \eqref{eq:known_var_test}, the null hypothesis in \eqref{eq:known_var_test_nonadaptive} is a smaller class, and therefore the null and alternative in \eqref{eq:known_var_test_nonadaptive} are easier to separate than those in \eqref{eq:known_var_test} given the same $r$. However, this is just a clich\'{e} saying that adaptation is never easier. Only after we have constructed hard instances for \eqref{eq:known_var_test} and \eqref{eq:known_var_test_nonadaptive} can we conclude the existence of a concrete adaptation cost. 

In fact, there is a much simpler way for us to gain quantitative insight into why the rate degrades from $\sigma(n^{-1/2}+\varepsilon/\sqrt{\log(en\varepsilon^2)})$ to $\sigma(n^{-1/4}+\varepsilon^{1/2}/\sqrt{\log(en\varepsilon^2)})$ in the adaptive setting. This is by switching to the upper-bound perspective to investigate when \eqref{eq:known_var_test} and \eqref{eq:known_var_test_nonadaptive} can be consistently tested using any (empirical) functional of the characteristic function $\phi(t)$. Recall that in Efron's model, the characteristic function is given in \eqref{eq:efron_cf} by
\begin{align*}
    \phi(t)=(1-\varepsilon)e^{\im\theta t-\sigma^2t^2/2}+\varepsilon e^{-\sigma^2t^2/2} \xi(t),
\end{align*}
where $\xi(t)=\E_Q[\exp(\im tX)]$. We begin by locating $\phi(t)$ under the null and alternative of \eqref{eq:known_var_test} separately. Using the fact that $\abs{\xi(t)}=\abs{\E_Q[e^{\im tX}]}\leq 1$ for all $t\in\bbR$, we obtain:
\begin{align*}
    \textrm{adaptive}\ \eqref{eq:known_var_test}:\quad e^{\sigma^2t^2/2}\phi(t)\in
    \begin{dcases}
         \overline{\bbD}\Big((1-\epsmax),\epsmax\Big)\quad & \textrm{under}\ H_0(\sigma,\epsmax),\\
         \overline{\bbD}\Big((1-\varepsilon)e^{\im rt},\varepsilon\Big)\quad & \textrm{under}\ H_1(r,\sigma,\varepsilon).
    \end{dcases}
\end{align*}
Similarly, for \eqref{eq:known_var_test_nonadaptive}, we have 
\begin{align*}
    \textrm{non-adaptive}\ \eqref{eq:known_var_test_nonadaptive}:\quad e^{\sigma^2t^2/2}\phi(t)\in
    \begin{dcases}
         \overline{\bbD}\Big((1-\varepsilon),\varepsilon\Big)\quad & \textrm{under}\ H_0(\sigma,\varepsilon),\\
         \overline{\bbD}\Big((1-\varepsilon)e^{\im rt},\varepsilon\Big)\quad & \textrm{under}\ H_1(r,\sigma,\varepsilon).
    \end{dcases}
\end{align*}
See Figure~\ref{fig:two_disks} for illustrations. 
\begin{figure}[ht]
    \centering
    \includegraphics[width=1.0\linewidth]{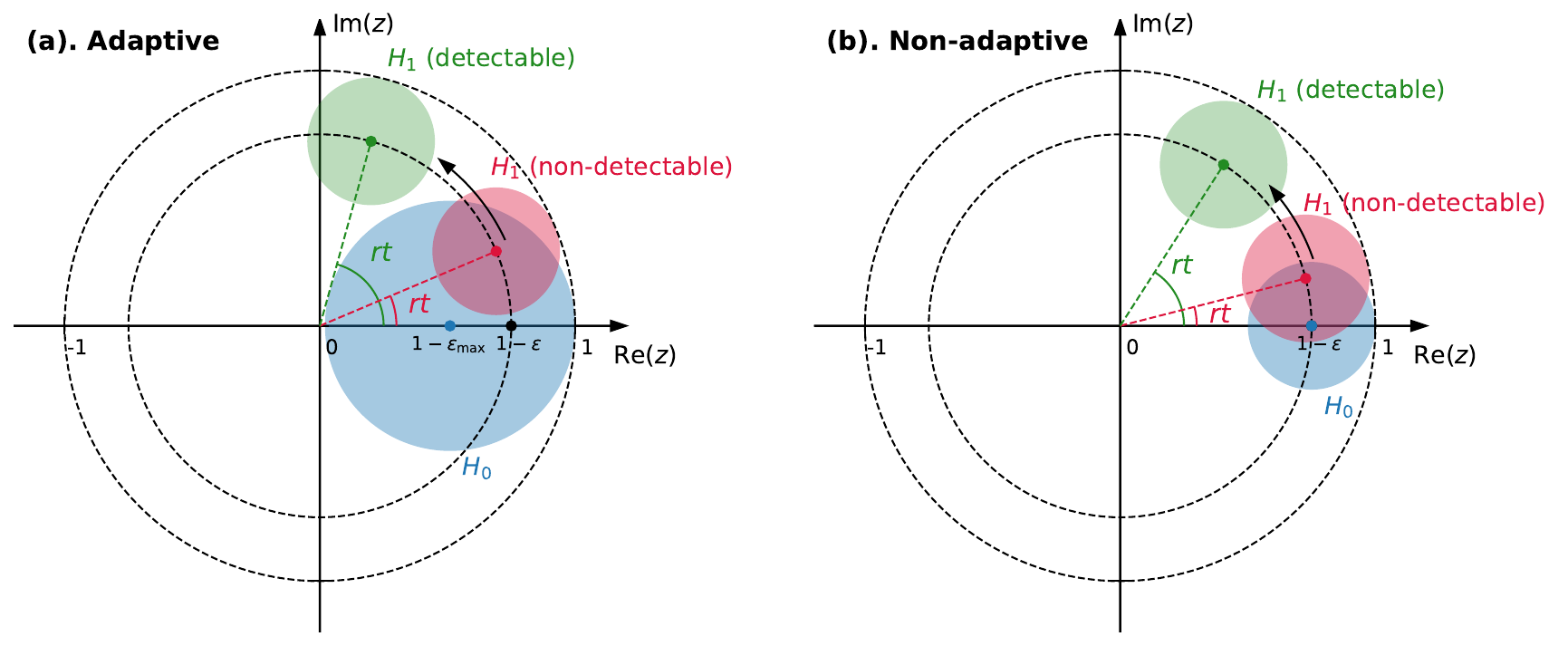}
    \caption{Possible locations of $e^{\sigma^2t^2/2}\phi(t)$ on the complex plane $\bbC$ under the null and alternative hypotheses of: (left). the adaptive testing problem \eqref{eq:known_var_test}; (right). the non-adaptive testing problem \eqref{eq:known_var_test_nonadaptive}.}
    \label{fig:two_disks}
\end{figure}
In the population regime (with infinite samples), the null and alternative are distinguishable with $\phi(t)$ only if the null disk (in blue) and the alternative disk (in red or green) have no contact with each other. As seen in Figure~\ref{fig:two_disks}, this will require the alternative disk to be rotated through a large enough angle $rt$. In the adaptive setting \eqref{eq:known_var_test}, the null disk is significantly larger than its counterparts in the non-adaptive setting \eqref{eq:known_var_test_nonadaptive}, and will therefore require a larger rotation angle. To quantitatively inspect the difference between these two settings, we write down the Euclidean distance between the null and alternative disks: 
\begin{align*}
    &\ \mathrm{dist}\left(\overline{\bbD}\Big((1-\epsmax),\epsmax\Big),\, \overline{\bbD}\Big((1-\varepsilon)e^{\im rt},\varepsilon\Big)\right)\\
    &=\left(\sqrt{(\epsmax-\varepsilon)^2+4(1-\epsmax)(1-\varepsilon)\sin^2(rt/2)}-(\epsmax+\varepsilon)\right)_+.
\end{align*}
When $\varepsilon\leq \epsmax-\Omega(1)$, we can expand the distance for small $\abs{rt}$ as:
\begin{align*}
    \textrm{adaptive}\ \eqref{eq:known_var_test}:\quad \mathrm{dist}\left(\overline{\bbD}\Big((1-\epsmax),\epsmax\Big),\, \overline{\bbD}\Big((1-\varepsilon)e^{\im rt},\varepsilon\Big)\right)\asymp_{\epsmax} (rt)^2-\varepsilon,
\end{align*}
\begin{align*}
    \textrm{non-adaptive}\ \eqref{eq:known_var_test_nonadaptive}:\quad \mathrm{dist}\left(\overline{\bbD}\Big((1-\varepsilon),\varepsilon\Big),\, \overline{\bbD}\Big((1-\varepsilon)e^{\im rt},\varepsilon\Big)\right)\asymp \abs{rt}-\varepsilon.
\end{align*}
We observe that the gap between the two disks increases quadratically with the rotation angle $\abs{rt}$ in the adaptive setting, while it increases linearly with $\abs{rt}$ in the nonadaptive setting. Thus, the population-level gap is significantly smaller in the former setting, indicating that the signal of the testing problem \eqref{eq:known_var_test} is intrinsically weaker than that of \eqref{eq:known_var_test_nonadaptive}. If these population gaps overwhelm the empirical fluctuations of $\phi(t)$, we can test with $\phi_n(t)$ using finite samples. For the adaptive setting \eqref{eq:known_var_test}, this condition becomes
\begin{align*}
    (rt)^2-\varepsilon\gtrsim e^{O(\sigma t^2)}/\sqrt{n}\ \Longleftrightarrow\  r^2\gtrsim \frac{\varepsilon}{t^2}+\frac{e^{O(\sigma t^2)}}{t^2\sqrt{n}}.
\end{align*}
Optimizing over $t>0$ produces precisely the adaptive length of $\sigma(n^{-1/4}+\varepsilon^{1/2}/\sqrt{1\vee \log(en\varepsilon^2)})$ as in Theorem~\ref{thm:known_var}. In contrast, for the nonadaptive setting, the condition becomes
\begin{align*}
    \abs{rt}-\varepsilon\gtrsim e^{O(\sigma t^2)}/\sqrt{n}\ \Longleftrightarrow\  r\gtrsim \frac{\varepsilon}{t}+\frac{e^{O(\sigma t^2)}}{t\sqrt{n}}.
\end{align*}
Optimizing over $t>0$ produces precisely the non-adaptive length (point estimation rate) of $\sigma(n^{-1/2}+\varepsilon/\sqrt{1\vee \log(en\varepsilon^2)})$ proved in \cite{kotekal2025optimal}. In summary, the $\varepsilon$-adaptivity induces a smaller order of geometric separation (from linear to quadratic), and therefore results in a slower rate for the length of confidence intervals.

\subsection{Higher-Order Certificates}
We have seen in Theorem~\ref{thm:unknown_var_lower_above_third} that in the unknown-variance setting, the rate will further degrade to $\Omega(\sigma n^{-1/16})$ when $\epsmax$ exceeds $1/3$ and to $\Omega(\sigma n^{-1/24})$ when $\epsmax$ exceeds $7/15$. Although we have not provided any upper bound on the length in these regimes, we conjecture that it can be tackled with Carath\'{e}odory's PSD certificates in higher orders. We have seen in Section~\ref{sec:upper_bound_known_var} the application of the order-1 (modulus-based) certificates to the known-variance setting, and in Section~\ref{sec:upper_bound_unknown_var} the application of order-2 certificates to the unknown-variance setting when $\epsmax\leq 1/3$. Here, we briefly discuss what higher-order PSD certificates should look like. When $m=3$, the $4\times 4$ matrix $\bfT_m[\phi](t)$ has $2^4-1=15$ principal minors, leading to $15$ polynomial inequalities in $\{1,\phi(t),\phi(2t),\phi(3t)\}$ and their complex conjugates. Many of these inequalities are trivial or redundant, and the PSD condition can eventually be narrowed down to $4$ inequalities:
\begin{align*}
    \begin{dcases}
        1-\abs{\phi(t)}^2\geq 0,\\
        (1-\abs{\phi(t)}^2)^2-\abs{\phi(t)^2-\phi(2t)}^2\geq 0,\\
        (1-\abs{\phi(t)}^2)^2-\abs{\phi(t)^2-\phi(3t)}^2\geq 0,\\
        1-3\abs{\phi(t)}^2-2\abs{\phi(2t)}^2-\abs{\phi(3t)}^2+(\abs{\phi(t)}^2-\abs{\phi(2t)}^2)^2+\abs{\phi(t)}^2\abs{\phi(3t)}^2\\
        \quad +2(\phi(t)^2\overline{\phi(2t)}+
        \overline{\phi(t)}^2\phi(2t))+2(\phi(t)\phi(2t)\overline{\phi(3t)}+\overline{\phi(t)\phi(2t)}\phi(3t))-(\phi(t)^3\overline{\phi(3t)}\\
        \quad +\overline{\phi(t)}^3\phi(3t))-(\phi(t)\overline{\phi(2t)}^2\phi(3t)+\overline{\phi(t)}\phi(2t)^2\overline{\phi(3t)})\geq 0.
    \end{dcases}
\end{align*}
The expression is already highly complicated. However, still only a finite number of certificates need to be analyzed, and the concentration of their empirical values can still be controlled in terms of $\abs{\mathrm{poly}(\phi_n(t))-\mathrm{poly}(\phi(t))}$. To obtain an adaptive robust confidence interval in these settings, we only need to change the certification inequalities in Algorithm~\ref{alg:unknown_var} and re-determine the slack terms and the frequency set.

\bibliography{ref}

\newcommand{\etalchar}[1]{$^{#1}$}
\begin{thebibliography}{DKK{\etalchar{+}}18}

\bibitem[Car07]{caratheodory1907variabilitatsbereich}
Constantin Carath{\'e}odory.
\newblock {\"U}ber den variabilit{\"a}tsbereich der koeffizienten von potenzreihen, die gegebene werte nicht annehmen.
\newblock {\em Mathematische Annalen}, 64(1):95--115, 1907.

\bibitem[Car11]{caratheodory1911variabilitatsbereich}
Constantin Carath{\'e}odory.
\newblock {\"U}ber den variabilit{\"a}tsbereich der fourier’schen konstanten von positiven harmonischen funktionen.
\newblock {\em Rendiconti Del Circolo Matematico di Palermo (1884-1940)}, 32(1):193--217, 1911.

\bibitem[Car98]{carothers1998short}
Neal~L Carothers.
\newblock A short course on approximation theory.
\newblock {\em Bowling Green State University, Bowling Green, OH}, 38, 1998.

\bibitem[CCNT21]{comminges2021adaptive}
L~Comminges, O~Collier, M~Ndaoud, and AB~Tsybakov.
\newblock Adaptive robust estimation in sparse vector model.
\newblock {\em The Annals of Statistics}, 49(3):1347--1377, 2021.

\bibitem[CDRV21]{carpentier2021estimating}
Alexandra Carpentier, Sylvain Delattre, Etienne Roquain, and Nicolas Verzelen.
\newblock Estimating minimum effect with outlier selection.
\newblock {\em The Annals of Statistics}, 49(1):272--294, 2021.

\bibitem[CJ10]{cai2010optimal}
T~Tony Cai and Jiashun Jin.
\newblock Optimal rates of convergence for estimating the null density and proportion of nonnull effects in large-scale multiple testing.
\newblock {\em The Annals of Statistics}, pages 100--145, 2010.

\bibitem[CV19]{carpentier2019adaptive}
Alexandra Carpentier and Nicolas Verzelen.
\newblock Adaptive estimation of the sparsity in the gaussian vector model.
\newblock {\em The Annals of Statistics}, 47(1):93--126, 2019.

\bibitem[DIKL26]{diakonikolas2026sample}
Ilias Diakonikolas, Giannis Iakovidis, Daniel~M Kane, and Sihan Liu.
\newblock Sample complexity bounds for robust mean estimation with mean-shift contamination.
\newblock {\em arXiv preprint arXiv:2602.22130}, 2026.

\bibitem[DIKP25]{diakonikolas2025efficient}
Ilias Diakonikolas, Giannis Iakovidis, Daniel~M Kane, and Thanasis Pittas.
\newblock Efficient multivariate robust mean estimation under mean-shift contamination.
\newblock {\em arXiv preprint arXiv:2502.14772}, 2025.

\bibitem[DKK{\etalchar{+}}18]{diakonikolas2018robustly}
Ilias Diakonikolas, Gautam Kamath, Daniel~M Kane, Jerry Li, Ankur Moitra, and Alistair Stewart.
\newblock Robustly learning a gaussian: Getting optimal error, efficiently.
\newblock In {\em Proceedings of the Twenty-Ninth Annual ACM-SIAM Symposium on Discrete Algorithms}, pages 2683--2702. SIAM, 2018.

\bibitem[DM22]{dalalyan2022all}
Arnak~S Dalalyan and Arshak Minasyan.
\newblock All-in-one robust estimator of the gaussian mean.
\newblock {\em The Annals of Statistics}, 50(2):1193--1219, 2022.

\bibitem[Efr04]{efron2004large}
Bradley Efron.
\newblock Large-scale simultaneous hypothesis testing: the choice of a null hypothesis.
\newblock {\em Journal of the American Statistical Association}, 99(465):96--104, 2004.

\bibitem[Efr08a]{efron2008microarrays}
Bradley Efron.
\newblock Microarrays, empirical bayes and the two-groups model.
\newblock {\em Statistical Science}, 23(1):1--22, 2008.

\bibitem[Efr08b]{efron2008simultaneous}
Bradley Efron.
\newblock Simultaneous inference: When should hypothesis testing problems be combined?
\newblock {\em The Annals of Applied Statistics}, pages 197--223, 2008.

\bibitem[ET02]{efron2002empirical}
Bradley Efron and Robert Tibshirani.
\newblock Empirical bayes methods and false discovery rates for microarrays.
\newblock {\em Genetic epidemiology}, 23(1):70--86, 2002.

\bibitem[ETST01]{efron2001empirical}
Bradley Efron, Robert Tibshirani, John~D Storey, and Virginia Tusher.
\newblock Empirical bayes analysis of a microarray experiment.
\newblock {\em Journal of the American statistical association}, 96(456):1151--1160, 2001.

\bibitem[HL18]{hopkins2018mixture}
Samuel~B Hopkins and Jerry Li.
\newblock Mixture models, robustness, and sum of squares proofs.
\newblock In {\em Proceedings of the 50th Annual ACM SIGACT Symposium on Theory of Computing}, pages 1021--1034, 2018.

\bibitem[Hub64]{huber1964robust}
Peter~J Huber.
\newblock Robust estimation of a location parameter.
\newblock {\em The Annals of Mathematical Statistics}, 35(1):73--101, 1964.

\bibitem[JC07]{jin2007estimating}
Jiashun Jin and T~Tony Cai.
\newblock Estimating the null and the proportion of nonnull effects in large-scale multiple comparisons.
\newblock {\em Journal of the American Statistical Association}, 102(478):495--506, 2007.

\bibitem[Jin08]{jin2008proportion}
Jiashun Jin.
\newblock Proportion of non-zero normal means: universal oracle equivalences and uniformly consistent estimators.
\newblock {\em Journal of the Royal Statistical Society Series B: Statistical Methodology}, 70(3):461--493, 2008.

\bibitem[KG25a]{kotekal2025optimal}
Subhodh Kotekal and Chao Gao.
\newblock Optimal estimation of the null distribution in large-scale inference.
\newblock {\em IEEE Transactions on Information Theory}, 2025.

\bibitem[KG25b]{kotekal2025sparsity}
Subhodh Kotekal and Chao Gao.
\newblock Sparsity meets correlation in gaussian sequence model.
\newblock {\em The Annals of Statistics}, 53(3):1095--1122, 2025.

\bibitem[Las09]{lasserre2009moments}
Jean~Bernard Lasserre.
\newblock {\em Moments, positive polynomials and their applications}, volume~1.
\newblock World Scientific, 2009.

\bibitem[Lep91]{lepskii1991problem}
OV~Lepskii.
\newblock On a problem of adaptive estimation in gaussian white noise.
\newblock {\em Theory of Probability \& Its Applications}, 35(3):454--466, 1991.

\bibitem[Lep92]{lepskii1992asymptotically}
OV~Lepskii.
\newblock Asymptotically minimax adaptive estimation. i: Upper bounds. optimally adaptive estimates.
\newblock {\em Theory of Probability \& Its Applications}, 36(4):682--697, 1992.

\bibitem[LG24]{luo2024adaptive}
Yuetian Luo and Chao Gao.
\newblock Adaptive robust confidence intervals.
\newblock {\em arXiv preprint arXiv:2410.22647}, 2024.

\bibitem[MWY25]{ma2025best}
Yun Ma, Yihong Wu, and Pengkun Yang.
\newblock On the best approximation by finite gaussian mixtures.
\newblock {\em IEEE Transactions on Information Theory}, 2025.

\bibitem[Sch17]{schmudgen2017moment}
Konrad Schm{\"u}dgen.
\newblock {\em The Moment Problem}, volume 277.
\newblock Springer, 2017.

\bibitem[SS10]{stein2010complex}
Elias~M Stein and Rami Shakarchi.
\newblock {\em Complex analysis}, volume~2.
\newblock Princeton University Press, 2010.

\bibitem[Ver18]{vershynin2018high}
Roman Vershynin.
\newblock {\em High-dimensional probability: An introduction with applications in data science}, volume~47.
\newblock Cambridge university press, 2018.

\bibitem[WY20]{wu2020polynomial}
Yihong Wu and Pengkun Yang.
\newblock Polynomial methods in statistical inference: Theory and practice.
\newblock {\em Foundations and Trends{\textregistered} in Communications and Information Theory}, 17(4):402--585, 2020.

\end{thebibliography}
\bibliographystyle{alpha}
\appendix

\section{Deferred Proofs of Performance Guarantees}\label{sec:proof_ub}

\subsection{Proofs of Performance Guarantees in the Known-Variance Setting}\label{sec:proof_known_var_ub}

\subsubsection{Proof of Lemma~\ref{lem:quadratic_gap}}
\begin{proof}[Proof of Lemma~\ref{lem:quadratic_gap}]
We have
\begin{align*}
    \mathrm{dist}\Big(\overline{\bbD}(-1+2(1-\varepsilon)e^{-\im b},2\varepsilon),0\Big)&\geq \abs{-1+2(1-\varepsilon)e^{-\im b}}-2\varepsilon\\
    &=\sqrt{\Big(-1+2(1-\varepsilon)\cos(b)\Big)^2+\Big(2(1-\varepsilon)\sin(b)\Big)^2}-2\varepsilon\\
    &=\sqrt{\Big(2-2\varepsilon-\cos(b)\Big)^2+\Big(1-\cos^2(b)\Big)}-2\varepsilon\\
    &\geq (1-\cos(b))+1-4\varepsilon\\
    &\geq 2(b/\pi)^2+1-4\varepsilon,
\end{align*}
where we apply Lemma~\ref{lem:cosine_quadratic} to the last line.
\end{proof}

\subsubsection{Proof of Theorem~\ref{thm:known_var}}
\begin{Proposition}\label{prop:known_var_coverage}
    With the setup of Theorem~\ref{thm:known_var}, for the $\CIhat$ defined by \eqref{eq:CI_known_var}, we have
    \begin{align*}
\inf_{\theta\in\bbR}\ \inf_{\varepsilon\in[0,\epsmax],\, Q}\ P^n_{\theta,\sigma,\varepsilon,Q}\Big(\theta\in\CIhat\Big)\geq 1-\delta,
    \end{align*}
whenever $n\geq N_0$, where $N_0$ is specified by Lemma~\ref{lem:pilot_unknown_var}.
\end{Proposition}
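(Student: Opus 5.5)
We will show that on the intersection of two high-probability events, the true $\theta$ passes every certificate and also lies in the pilot interval; the coverage claim then follows directly.

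\textbf{The two events.} The first is the pilot event $\widetilde{\cE}$ from \eqref{eq:pilot_event_known_var}. By Lemma~\ref{lem:pilot_unknown_var} it has probability at least $1-\delta/2$ once $n\geq N_0$, uniformly over $\theta$, $\varepsilon\le\epsmax$ and $Q$. This uses the nesting $P_{\theta,\sigma,\varepsilon,Q}=P_{\theta,\sigma,\epsmax,Q'}$, so the guarantee stated at contamination level $\epsmax$ applies for every $\varepsilon\in[0,\epsmax]$. The second is the uniform concentration event
\begin{align*}
\cE=\Big\{\abs{\phi_n(t)-\phi(t)}\le \kappa e^{\sigma^2t^2/2}\sqrt{8\log(10/\delta)/n}\ \ \forall t\in\cT\Big\}.
\end{align*}
On $\cE$ we have $\abs{\Upsilon_n(t;\mu)-\Upsilon(t;\mu)}=2e^{\sigma^2t^2/2}\abs{\phi_n(t)-\phi(t)}\le\Delta(t)$ for all $t\in\cT$ and all $\mu$, matching the slack \eqref{eq:known_var_slack}.

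\textbf{Bounding $\cE$.} For a fixed $t$, the real and imaginary parts of $e^{\im tX_j}$ are bounded in $[-1,1]$. Hoeffding's inequality applied to each part gives $\Prob(\abs{\phi_n(t)-\phi(t)}>2\sqrt{2\log(4/\eta)/n})\le\eta$. We then take a union bound over the $\ceil{\log(en)}$ frequencies $t_k=\sqrt{k}/(\kappa\sigma)$, allotting failure probability $\eta_k\propto\delta/k^2$ to $t_k$ so that the total is at most $\delta/2$. This costs a factor of order $\sqrt{\log(k^2/\delta)}\lesssim \sqrt{\log(10/\delta)}\,(1+\sqrt{\log k})$. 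Since $\sigma^2t_k^2/2=k/(2\kappa^2)$, it remains to absorb $\sqrt{\log k}$ into $\kappa e^{k/(2\kappa^2)}$. This is where $\kappa\ge\sqrt2$ enters: we need to verify an elementary inequality such as $1+\sqrt{\log k}\le C\kappa e^{k/(2\kappa^2)}$ for all $k\ge1$. This is essentially the role of Lemma~\ref{lem:chf_concentration}, which we would invoke or reprove. The main obstacle is tracking the constants so that they match exactly the explicit factor $2\kappa\sqrt{8\log(10/\delta)}$ in $\Delta(t)$. If the crude allocation $\delta/k^2$ is too lossy, we would instead use a per-frequency level $\eta_k=\delta\,e^{-k/\kappa^2}$ or similar. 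Then $\log(1/\eta_k)=\log(1/\delta)+k/\kappa^2$, and $\sqrt{\log(1/\delta)+k/\kappa^2}\le\sqrt{\log(1/\delta)}\,e^{k/(2\kappa^2)}\cdot O(1)$ follows from $\sqrt{1+x}\le e^{x/2}$. The sum $\sum_k e^{-k/\kappa^2}$ is a constant depending on $\kappa$, and it is absorbed via $\log(10/\delta)$ together with the extra factor $\kappa$.

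\textbf{Conclusion.} On $\widetilde{\cE}\cap\cE$, which has probability at least $1-\delta$, we have $\theta\in\widetilde{\CI}$ by the definition of $\widetilde{\cE}$. By Claim~\ref{clm:true_candidate_order_1} (equivalently, Lemma~\ref{lem:cara_psd} with $m=1$ applied to the genuine characteristic function $\Upsilon(\cdot;\theta)$ from Lemma~\ref{lem:true_candidate_known_var}), $\abs{\Upsilon(t;\theta)}\le1$ for every $t$. The triangle inequality then gives $\abs{\Upsilon_n(t;\theta)}\le1+\Delta(t)$ for all $t\in\cT$. By the equivalence \eqref{eq:upsilon_to_cosine}, $\theta$ satisfies every cosine constraint in \eqref{eq:CI_known_var}, so $\theta\in\CIhat$. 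Taking the convex hull, as in Remark~\ref{rmk:whether_interval_known_var}, only enlarges the set, so coverage is preserved. Because the bounds used hold uniformly over $\theta$, $\varepsilon$ and $Q$, the infimum statement follows.
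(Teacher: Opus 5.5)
Your proposal is correct and follows essentially the same route as the paper: intersect the pilot event of Lemma~\ref{lem:pilot_unknown_var} with a uniform concentration event for $\phi_n$ on $\cT$, then combine $|\Upsilon(t;\theta)|\le 1$ with the triangle inequality. Your nesting remark is what justifies using the pilot lemma for every $\varepsilon\le\epsmax$. The one difference is that the paper obtains $\cE$ directly from Lemma~\ref{lem:chf_concentration} with $t_1=(\kappa\sigma)^{-1}$ at level $\delta/2$, which gives $|\phi_n(t)-\phi(t)|\le \kappa\sigma t\sqrt{8\log(10/\delta)/n}$ on all of $\cT$, and then uses $\sigma t\le e^{\sigma^2t^2/2}$. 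So no custom union-bound allocation is needed, and $\kappa\ge\sqrt2$ plays no role in coverage; it is used only in the length argument.
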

\begin{proof}
By Lemma~\ref{lem:chf_concentration}, the event
\begin{align}\label{eq:concentration_event_known_var_proof}
    \cE=\Big\{\abs{\phi_n(t)-\phi(t)}\leq \kappa\sigma t \sqrt{\frac{8\log(10/\delta)}{n}}\leq  \kappa e^{\sigma^2 t^2/2}\sqrt{\frac{8\log(10/\delta)}{n}},\quad \forall\ t\in\cT\Big\}
\end{align}
holds with probability at least $1-\delta/2$, regardless of the choice of the underlying distribution. Also, the pilot concentration event 
\begin{align}\label{eq:pilot_event_known_var_proof}
    \widetilde{\cE}=\Big\{\abs{\widetilde{\theta}-\theta}\leq M\sigma/\sqrt{\log(en)}\Big\}
\end{align}
holds with probability exceeding $1-\delta/2$ whenever $n\geq N_0$ according to Lemma~\ref{lem:pilot_unknown_var}. Therefore, on the union event $\cE\medcap \widetilde{\cE}$ that happens with at least $1-\delta$ probability, we have both
\begin{align*}
    \theta\in \widetilde{\CI}=\Big[\widetilde{\theta}-M\sigma/\sqrt{\log(en)},\ \widetilde{\theta}+M\sigma/\sqrt{\log(en)}\Big]
\end{align*}
and, by the triangle inequality, $\theta$ passes the certification since
\begin{align*}
    \abs{\Upsilon_n(t;\theta)}&=\abs{2e^{-\im \theta t+\sigma^2t^2/2}\phi_n(t)-1}\\
    &\leq \abs{2e^{-\im \theta t+\sigma^2t^2/2}\phi_n(t)-2e^{-\im \theta t+\sigma^2t^2/2}\phi(t)}+\abs{2e^{-\im \theta t+\sigma^2t^2/2}\phi(t)-1}\\
    &=2e^{\sigma^2t^2/2}\abs{\phi_n(t)-\phi(t)}+\abs{\Upsilon(t;\theta)}\\
    &\leq 2\kappa e^{\sigma^2 t^2/2}\cdot e^{\sigma^2 t^2/2}\sqrt{\frac{8\log(10/\delta)}{n}}+1\\
    &\leq 2\kappa e^{\sigma^2 t^2}\sqrt{\frac{8\log(10/\delta)}{n}}+1,\quad \forall\ t\in\cT,
\end{align*}
where in the second-to-last line we have used the uniform concentration result and the fact that $\Upsilon(t;\theta)$ is a genuine characteristic function. Therefore, $\theta\in\CIhat$ on $\cE\medcap \widetilde{\cE}$, and the desired coverage property follows.
\end{proof}

\begin{Proposition}\label{prop:known_var_length}
    With the setup of Theorem~\ref{thm:known_var}, for the $\CIhat$ defined in Section~\ref{sec:upper_bound_known_var} with $\kappa\geq \sqrt{2}\vee (M/\pi)$, there exist constants $C$ and $N$ that depend only on $\epsmax$, $\delta$, and $\kappa$, such that
    \begin{align*}
        \inf_{\theta\in\bbR}\ \inf_{\varepsilon\in[0,\epsmax],\, Q}\ P^n_{\theta,\sigma,\varepsilon,Q}\Big(\CIhat\bigcap \{\mu:\abs{\mu-\theta}> \overline{r}\}=\emptyset\Big)\geq 1-\delta,
    \end{align*}
    whenever $n\geq N$, where
    \begin{align*}
        \overline{r}(n,\varepsilon,\sigma)=C\sigma\left(\frac{1}{n^{1/4}}+\frac{\varepsilon^{1/2}}{\sqrt{1\vee \log(en\varepsilon^2)}}\right).
    \end{align*}
\end{Proposition}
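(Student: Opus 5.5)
On the event $\cE\medcap\widetilde{\cE}$ from the proof of Proposition~\ref{prop:known_var_coverage}, which has probability at least $1-\delta$ for $n\geq N_0$, I will show that every $\mu$ with $\abs{\mu-\theta}>\overline{r}$ is rejected. Write $r=\mu-\theta$. If $\abs{r}>2M\sigma/\sqrt{\log(en)}$, then $\mu\notin\widetilde{\CI}$, because $\widetilde{\CI}$ contains $\theta$ and has half-width $M\sigma/\sqrt{\log(en)}$. So it suffices to treat $\overline{r}<\abs{r}\leq 2M\sigma/\sqrt{\log(en)}$.

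On $\cE$ we have $\abs{\Upsilon_n(t;\mu)-\Upsilon(t;\mu)}\leq\Delta(t)$ for all $t\in\cT$ and all $\mu$. Using the disk representation together with Lemma~\ref{lem:quadratic_gap} at angle $b=rt$, this gives, whenever $\abs{rt}\leq\pi$,
\begin{align*}
\abs{\Upsilon_n(t;\mu)}\geq 1+2(rt/\pi)^2-4\varepsilon-\Delta(t).
\end{align*}
Hence rejection follows once we exhibit a witness $t\in\cT$ with $\abs{rt}\leq\pi$ and $(rt/\pi)^2>2\varepsilon+\Delta(t)$. The inequality $\Delta(t)\leq 2\kappa e^{\sigma^2t^2}\sqrt{8\log(10/\delta)/n}$ holds for every $t\in\cT$ by construction.

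The witness is chosen uniformly in $r$ and depends only on $\varepsilon$: take $k_*=\ceil{c_0(1\vee\log(en\varepsilon^2))}$ for a small constant $c_0$, and set $t=\sqrt{k_*}/(\kappa\sigma)$. Then $\sigma^2t^2=k_*/\kappa^2$. With $\kappa\geq\sqrt2$ and $c_0$ small, this gives $e^{\sigma^2t^2}\leq e^{1/2}(en\varepsilon^2\vee e)^{c_0/2}$, which is the place the lower bound $\kappa\ge\sqrt2$ is used. The witness lies in $\cT$ provided $k_*\leq\ceil{\log(en)}$. This holds because $\varepsilon\leq\epsmax<1$ gives $\log(en\varepsilon^2)\leq\log(en)$, and we take $c_0\le1$.

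Next, the pilot constraint $\abs{rt}\leq\pi$. We need $2M\sqrt{k_*}/(\kappa\sqrt{\log(en)})\leq\pi$, and since $k_*\lesssim\log(en)$ this follows from $\kappa\geq M/\pi$ after adjusting $c_0$ (a factor-2 slack may need to be absorbed into $c_0$). Finally, the gap condition. Since $\abs{r}>\overline{r}$, we have $(rt)^2>\overline{r}^2k_*/(\kappa^2\sigma^2)$, and we need this to exceed $\pi^2\bigl(2\varepsilon+C'(en\varepsilon^2\vee e)^{c_0/2}n^{-1/2}\bigr)$. We split into two regimes.

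\textbf{Regime $\varepsilon\leq n^{-1/2}$.} Here $k_*=\ceil{c_0}=1$, so it suffices that $C^2n^{-1/2}/\kappa^2\gtrsim\varepsilon+n^{-1/2}$. This holds since $\varepsilon\le n^{-1/2}$.

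\textbf{Regime $\varepsilon> n^{-1/2}$.} Here $k_*\asymp\log(en\varepsilon^2)$ and $\overline{r}^2\geq C^2\sigma^2\varepsilon/\log(en\varepsilon^2)$, so $(rt)^2\gtrsim C^2\varepsilon/\kappa^2$. The noise term is $(n\varepsilon^2)^{c_0/2}n^{-1/2}=\varepsilon\,(n\varepsilon^2)^{c_0/2-1/2}\leq\varepsilon$ when $c_0\leq1$. Both terms on the right are therefore $O(\varepsilon)$, and choosing $C$ large relative to $\kappa$, $\delta$, and $\epsmax$ completes the argument.

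The main obstacle is the bookkeeping in this last step. One must choose $c_0$ and $C$ so that three requirements hold at once: the witness stays in $\cT$; $\abs{rt}\leq\pi$ holds for every $r$ up to the pilot width, which is exactly where $\kappa\geq M/\pi$ enters; and the exponential slack $e^{\sigma^2t^2}$ remains dominated by $\varepsilon$ rather than growing polynomially in $n\varepsilon^2$. I would handle the interplay through the single parameter $c_0$, taking $c_0\le 1$. Take $N\geq N_0$ large enough that $\log(en)\ge1$ and the ceilings are harmless.
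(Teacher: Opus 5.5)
Your proposal is correct and follows essentially the same route as the paper's proof: on $\cE\cap\widetilde{\cE}$ you restrict to the pilot window, apply the quadratic-gap lemma, and exhibit the witness $t=1/(\kappa\sigma)$ when $\varepsilon\le n^{-1/2}$ and $t\asymp\sqrt{\log(en\varepsilon^2)}/(\kappa\sigma)$ otherwise, with $\kappa\ge\sqrt{2}$ keeping the exponential slack at $O(\varepsilon)$. Your small constant $c_0$ in the witness index also correctly absorbs the factor $2$ in the pilot width ($|\mu-\theta|\le 2M\sigma/\sqrt{\log(en)}$ on $\widetilde{\cE}$, at the cost of taking $N$ large), which the paper's proof glosses over when it asserts $|rt|\le M/\kappa$.
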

\begin{proof}
Let $r=\mu-\theta$. Consider the good union event $\cE\medcap \widetilde{\cE}$ (defined in \eqref{eq:concentration_event_known_var_proof} and \eqref{eq:pilot_event_known_var_proof}) that holds with probability at least $1-\delta$. On this event (more specifically whenever $\widetilde{\cE}$ holds), we have $\abs{\widetilde{\theta}-\theta}\leq M\sigma/\sqrt{\log(en)}$, and thus
\begin{align*}
    \CIhat\subseteq \widetilde{\CI}=\Big[\widetilde{\theta}-M\sigma/\sqrt{\log(en)},\ \widetilde{\theta}+M\sigma/\sqrt{\log(en)}\Big].
\end{align*}
Therefore, all $\mu$ with $r=\abs{\mu-\theta}>M\sigma/\sqrt{\log(en)
}$ are excluded from $\CIhat$. It suffices to show that all $\mu$ with $r=\abs{\mu-\theta}\in(\overline{r},M\sigma/\sqrt{\log(en)}]$ will also be excluded. Also, on this union event, we have
\begin{align*}
    \abs{\Upsilon_n(t;\mu)}-1-\Delta(t)&\geq \abs{\Upsilon(t;\mu)}-\abs{\Upsilon_n(t;\mu)-\Upsilon(t;\mu)}-1-\Delta(t)\\
    &= \abs{-1+2(1-\varepsilon)e^{-\im r t}+2\varepsilon e^{-\im \mu}\xi(t)}-2\abs{\phi_n(t)-\phi(t)}-1-\Delta(t)\\
    &\geq \mathrm{dist}\Big(\overline{\bbD}(-1+2(1-\varepsilon)e^{-\im rt},2\varepsilon),0\Big)-1-4\kappa e^{\sigma^2 t^2}\sqrt{\frac{8\log(10/\delta)}{n}},
\end{align*}
where in the last line we use the fact that $-1+2(1-\varepsilon)e^{-\im r t}+2\varepsilon e^{-\im \mu}\xi(t)$ is within the closed disk $\overline{\bbD}(-1+2(1-\varepsilon)e^{-\im rt},2\varepsilon)$ since $\abs{\xi(t)}\leq 1$. Therefore, applying Lemma~\ref{lem:quadratic_gap}, we can continue to write
\begin{align*}
    \abs{\Upsilon_n(t;\mu)}-1-\Delta(t)&\geq 2(rt/\pi)^2-4\varepsilon-4\kappa e^{\sigma^2 t^2}\sqrt{\frac{8\log(10/\delta)}{n}}\define S(t),
\end{align*}
whenever $t\in\cT$ and $\abs{rt}\leq \pi$. We consider three different cases.
\paragraph{Case I.}When $\varepsilon\in[0,n^{-1/2}]$ and $\abs{r/\sigma}\in[C_1 n^{-1/4}, M/\sqrt{\log(en)}]$, consider $t_1=(\kappa\sigma)^{-1}\in\cT$. Since $\abs{rt_1}\leq M/\kappa$, we have $\abs{rt_1}\leq \pi$ as long as $\kappa \geq M/\pi$. Then, we have
\begin{align*}
    S(t_1)\geq [2C_1^2/(\pi^2\kappa^2)-4-4\kappa e^{1/\kappa^2}\sqrt{8\log(10/\delta)}]\cdot n^{-1/2}.
\end{align*}
As long as both $\kappa\geq M/\pi$ and $C_1^2> 2\pi^2\kappa^2+2\pi^2\kappa^3e^{1/\kappa^2}\sqrt{8\log(10/\delta)}$ hold, we will have $S(t_1)> 0$ and thus $\mu\notin \CIhat$.

\paragraph{Case II.}When $\varepsilon\in[n^{-1/2},\epsmax]$ and $\abs{r/\sigma}\in[C_2\varepsilon^{1/2}/\sqrt{\log(en\varepsilon^2)},M/\sqrt{\log(en)}]$, consider $t_2=(\kappa\sigma)^{-1}\floor{\sqrt{\log(en\varepsilon^2)}}$. Note that $t_2\in\cT$ as long as $B\geq 1$, and $\abs{rt_2}\leq \pi$ as long as $\kappa\geq M/\pi$. Given these, we have
\begin{align*}
    S(t_2)&\geq [2C_2^2/(\pi^2\kappa^2)-4]\varepsilon-4\kappa e^{\kappa^{-2}\log(en\varepsilon^2)}\sqrt{\frac{8\log(10/\delta)}{n}}\\
    &=[2C_2^2/(\pi^2\kappa^2)-4]\varepsilon-4\kappa\sqrt{8\log(10/\delta)}e^{1/\kappa^2}\varepsilon^{2/\kappa^2} n^{1/\kappa^2-1/2}\\
    &\geq [2C_2^2/(\pi^2\kappa^2)-4-4\kappa e^{1/\kappa^2}\sqrt{8\log(10/\delta)}]\varepsilon.
\end{align*}
As long as both $\kappa\geq \sqrt{2}\vee (M/\pi)$ and $C_2^2> 2\pi^2\kappa^2+2\pi^2\kappa^3e^{1/\kappa^2}\sqrt{8\log(10/\delta)}$ hold, we will have $S(t_2)> 0$ and thus $\mu\notin \CIhat$.

Gathering the above conditions, we need to ensure that the algorithmic parameters satisfy
\begin{align*}
    \kappa\geq \sqrt{2}\vee (M/\pi),\quad C_1^2\wedge C_2^2> 2\pi^2\kappa^2+2\pi^2\kappa^3e^{1/\kappa^2}\sqrt{8\log(10/\delta)}
\end{align*}
for the certification process, and also $N\geq N_0$ for the pilot event $\widetilde{\cE}$ to be true. These can be simultaneously satisfied by setting
\begin{align*}
    N=N_0,\quad \kappa=\sqrt{2}\vee (M/\pi),\quad C=C_1=C_2=2\pi\kappa+2\pi \kappa^{3/2}e^{1/(2\kappa^2)}(8\log(10/\delta))^{1/4},
\end{align*}
which concludes the proof.
\end{proof}

\begin{proof}[Proof of Theorem~\ref{thm:known_var}]
Theorem~\ref{thm:known_var} now directly follows from the combination of Proposition~\ref{prop:known_var_coverage} and Proposition~\ref{prop:known_var_length}, with a special notice that both the coverage property in Proposition~\ref{prop:known_var_coverage} and the length property in Proposition~\ref{prop:known_var_length} hold on the same joint event $\cE\medcap \widetilde{\cE}$.
\end{proof}

\subsection{Proofs of Performance Guarantees in the Unknown-Variance Setting}\label{sec:proof_unknown_var_ub}

\subsubsection{Lipschitzness of the Certificates}
With second-order certificates being more sophisticated than their simple modulus-based counterparts in Section~\ref{sec:known_var_derivation}, we introduce the following reformulation that views the population (resp. empirical) certificates as functionals of the population (resp. empirical) characteristic functions.  
\begin{Definition}
    For any function $f:\,\bbR\to \bbC$, any $\mu\in\bbR$ and any $\lambda>0$, we define
\begin{align*}
    \cJ_1[f](t;\mu,\lambda)=\abs{3e^{-\im\mu t+\lambda t^2/2}f(t)-2},
\end{align*}
and also
\begin{align*}
    \cJ_2[f](t;\mu,\lambda)&=\abs{(3e^{-\im \mu t+\lambda t^2/2}f(t)-2)^2-(3e^{-\im \mu (2t)+\lambda (2t)^2/2}f(2t)-2)}
    +\abs{3e^{-\im \mu t+\lambda t^2/2}f(t)-2}^2.
\end{align*}
\end{Definition}
These two functionals $\cJ_1$ and $\cJ_2$ are directly related to the population and empirical PSD certificates through:
\begin{align*}
\begin{dcases}
    \cJ_1[\phi](t;\mu,\lambda)=\abs{\Upsilon(t;\mu,\lambda)},\\
    \cJ_2[\phi](t;\mu,\lambda)=\abs{\Upsilon^2(t;\mu,\lambda)-\Upsilon(2t;\mu,\lambda)}+\abs{\Upsilon(t;\mu,\lambda)}^2,
\end{dcases}
\end{align*}
which always holds for $\mu\in\bbR$ and $\lambda>0$. Similar equations hold when we replace $\phi$ by $\phi_n$ and replace $\Upsilon$ by $\Upsilon_n$. Under the above formulation, we can bound the difference between population and empirical certificate values using the difference between population and empirical characteristic functions, according to the following Lipschitzness property of $\cJ_1$ and $\cJ_2$:

\begin{Lemma}\label{lem:lipschitz}
For any $f,g:\bbR\to \bbC$, we have
\begin{align*}
    \abs{\cJ_1[f](t;\mu,\lambda)-\cJ_1[g](t;\mu,\lambda)}\leq 3e^{\lambda t^2/2}\abs{f(t)-g(t)},
\end{align*}
and also
\begin{align*}
    &\ \abs{\cJ_2[f](t;\mu,\lambda)-\cJ_2[g](t;\mu,\lambda)}\nonumber\\
    &\leq 18e^{\lambda t^2}\abs{f^2(t)-g^2(t)}+ 24 e^{\lambda t^2/2}\abs{f(t)-g(t)}+ 3e^{2\lambda t^2}\abs{f(2t)-g(2t)}.
\end{align*}
\end{Lemma}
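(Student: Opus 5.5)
The plan is to reduce everything to the reverse triangle inequality after writing each certificate explicitly as a polynomial in the rotated and rescaled values of $f$ and $g$. Fix $t,\mu,\lambda$ and set
\begin{align*}
u=e^{-\im\mu t+\lambda t^2/2},\qquad u_2=e^{-\im\mu(2t)+\lambda(2t)^2/2}.
\end{align*}
Their moduli are $\abs{u}=e^{\lambda t^2/2}$ and $\abs{u_2}=e^{2\lambda t^2}$. Write $a=uf(t)$, $b=ug(t)$, $a_2=u_2f(2t)$, $b_2=u_2g(2t)$, and $F=3a-2$, $G=3b-2$. Then $\cJ_1[f]=\abs{F}$ and $\cJ_1[g]=\abs{G}$. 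The reverse triangle inequality gives
\begin{align*}
\abs{\cJ_1[f]-\cJ_1[g]}\le\abs{F-G}=3\abs{u}\abs{f(t)-g(t)}=3e^{\lambda t^2/2}\abs{f(t)-g(t)},
\end{align*}
which is the first claim.

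For $\cJ_2$, split it into its two summands and handle each separately; the reverse triangle inequality applies to each modulus.

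\textbf{First summand.} Expanding,
\begin{align*}
F^2-(3a_2-2)=9a^2-12a+6-3a_2,
\end{align*}
and similarly for $G$. The difference of the two inner expressions is
\begin{align*}
9(a^2-b^2)-12(a-b)-3(a_2-b_2).
\end{align*}
Since $a^2-b^2=u^2\,(f(t)^2-g(t)^2)$ and $\abs{u^2}=e^{\lambda t^2}$, its modulus is at most
\begin{align*}
9e^{\lambda t^2}\abs{f^2(t)-g^2(t)}+12e^{\lambda t^2/2}\abs{f(t)-g(t)}+3e^{2\lambda t^2}\abs{f(2t)-g(2t)}.
\end{align*}

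\textbf{Second summand.} Expanding, $\abs{F}^2=9\abs{a}^2-12\Re(a)+4$, and similarly for $G$. Hence
\begin{align*}
\bigl|\abs{F}^2-\abs{G}^2\bigr|\le 9\bigl|\abs{a}^2-\abs{b}^2\bigr|+12\abs{a-b}.
\end{align*}
The key observation is that $\abs{a}^2=\abs{a^2}$. Applying the reverse triangle inequality once more,
\begin{align*}
\bigl|\abs{a}^2-\abs{b}^2\bigr|=\bigl|\abs{a^2}-\abs{b^2}\bigr|\le\abs{a^2-b^2}=e^{\lambda t^2}\abs{f^2(t)-g^2(t)}.
\end{align*}
So the second summand contributes at most $9e^{\lambda t^2}\abs{f^2-g^2}+12e^{\lambda t^2/2}\abs{f-g}$. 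This yields the stated $\abs{f^2-g^2}$ term exactly, with no reinterpretation needed.

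Adding the two contributions gives the constants $18$, $24$ and $3$, which is the claimed bound. The only step that needs care is this conversion of $\bigl|\abs{f}^2-\abs{g}^2\bigr|$ into $\abs{f^2-g^2}$; everything else is bookkeeping of the moduli of $u$, $u^2$ and $u_2$.
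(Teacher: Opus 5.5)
Your proposal is correct and is essentially the paper's argument: pair corresponding terms in $\cJ_1$ and in each of the two summands of $\cJ_2$, then apply the reverse triangle inequality. This gives exactly the paper's split, namely $9e^{\lambda t^2}|f^2(t)-g^2(t)|+12e^{\lambda t^2/2}|f(t)-g(t)|+3e^{2\lambda t^2}|f(2t)-g(2t)|$ from the first summand plus $9e^{\lambda t^2}|f^2(t)-g^2(t)|+12e^{\lambda t^2/2}|f(t)-g(t)|$ from the second, and your use of $|a|^2=|a^2|$ spells out a step the paper leaves implicit (your $\le$ in the first claim is also the correct relation where the paper writes an equality).
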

\begin{proof}
By definition, we have precisely
\begin{align*}
    \abs{\cJ_1[f](t;\mu,\lambda)-\cJ_1[g](t;\mu,\lambda)}&=3e^{\lambda t^2/2}\abs{f(t)-g(t)},
\end{align*}
since the shared multiplier $e^{-\im \mu t}$ does not affect the modulus. This already proves the first conclusion. To prove the second one, we pair the terms and use the triangle inequality to claim that
\begin{align*}
    \abs{\cJ_2[f](t;\mu,\lambda)-\cJ_2[g](t;\mu,\lambda)}&\leq 9e^{\lambda t^2}\abs{f^2(t)-g^2(t)}+12 e^{\lambda t^2/2}\abs{f(t)-g(t)}\nonumber\\
    &\ +3e^{2\lambda t^2}\abs{f(2t)-g(2t)}+9e^{\lambda t^2}\abs{f^2(t)-g^2(t)}\nonumber\\
    &\ +12e^{\lambda t^2/2}\abs{f(t)-g(t)}\\
    &=18e^{\lambda t^2}\abs{f^2(t)-g^2(t)}+24e^{\lambda t^2/2}\abs{f(t)-g(t)}\nonumber\\
    &\ +3e^{2\lambda t^2}\abs{f(2t)-g(2t)},
\end{align*}
which recovers the second conclusion.
\end{proof}

\subsubsection{Proof of Lemma~\ref{lem:quartic_gap}}
\begin{proof}
We split the problem into three cases.
\paragraph{Case I.}When $\abs{rt}\in(\pi/2,\pi]$. In this case, $\cos(rt)\leq 0$, and therefore,
\begin{align*}
    \abs{\Upsilon(t;r,\lambda)}\Big|_{\phi(t)=e^{-\sigma^2t^2/2}}&=\abs{3e^{-\im rt+(\lambda-\sigma^2)t^2/2}-2}\nonumber\\
    &\geq \abs{3e^{(\lambda-\sigma^2)t^2/2}\cos(rt)-2}\geq \abs{0-2}\geq 1+ (rt/\pi)^4.
\end{align*}
\paragraph{Case II.}When $\abs{rt}\in[0,\pi/2]$ and $\lambda> \sigma^2$. In this case, $e^{(\lambda-\sigma^2)t^2/2}\geq 1$, and therefore,
\begin{align*}
    \abs{\Upsilon(t;r,\lambda)}\Big|_{\phi(t)=e^{-\sigma^2t^2/2}}&=\abs{3e^{-\im rt+(\lambda-\sigma^2)t^2/2}-2}\\
    &=\sqrt{ \abs{3e^{-\im rt}-2}^2+(e^{(\lambda-\sigma^2)t^2/2}-1)(9(e^{(\lambda-\sigma^2)t^2/2}+1)-12)}\\
    &\geq\abs{3e^{-\im rt}-2}\\
    &=1+(\sqrt{13-12\cos(rt)}-1)\\
    &=1+\frac{12(1-\cos(rt))}{\sqrt{13-12\cos(rt)}+1}\\
    &\geq 1+\frac{12(2rt/\pi)^2}{\sqrt{25}+1}\\
    &=1+8(rt/\pi)^2\geq 1+(rt/\pi)^4.
\end{align*}
\paragraph{Case III.}When $\abs{rt}\in[0,\pi/2]$ and $\lambda\leq \sigma^2$. In this case, we have
\begin{align*}
    &\ \left(\abs{\Upsilon^2(t;r,\lambda)-\Upsilon(2t;r,\lambda)}+\abs{\Upsilon(t;r,\lambda)}^2\right)\Big|_{\phi(t)=e^{-\sigma^2t^2/2}}\\
    &\geq\Re\Big((3e^{-\im rt+\lambda t^2/2}e^{-\sigma^2t^2/2}-2)^2-(3e^{-2\im rt+2\lambda t^2}e^{-2\sigma^2t^2}-2)\Big)+\abs{3e^{-\im rt+\lambda t^2/2}e^{-\sigma^2t^2/2}-2}^2\\
    &=1+\Big(6(3e^{(\lambda-\sigma^2)t^2}-e^{2(\lambda-\sigma^2)t^2})\cos^2(rt)-24e^{(\lambda-\sigma^2)t^2/2}\cos(rt)+3e^{2(\lambda-\sigma^2)t^2}+9\Big)\\
    &\define 1+\Big(6a^2(3-a^2)x^2-24ax+3a^4+9\Big),
\end{align*}
where $a=e^{(\lambda-\sigma^2)t^2/2}\in[0,1]$ and $x=\cos(rt)\in[0,1]$. Note that
\begin{align*}
    &\ 6a^2(3-a^2)x^2-24ax+3a^4+9\\
    &=9(1-x)^2+3a^4(1-x)^2+6(a-1)^2(a^2+2a+3)x(1-x)+3(1-a)^3(a+3)x^2\\
    &\geq 9(1-x)^2,\quad \forall\, x\in[0,1],\ a\in[0,1].
\end{align*}
Therefore, in this case, we obtain a lower bound:
\begin{align*}
    \geq 1+9(1-\cos(rt))^2\geq 1+9(2rt/\pi)^4\geq 1+(rt/\pi)^4,
\end{align*}
which completes the proof.
\end{proof}

\subsubsection{Proof of Theorem~\ref{thm:unknown_var}}
\begin{Proposition}\label{prop:unknown_var_coverage}
With the setup of Theorem~\ref{thm:unknown_var}, for the $\CIhat$ defined by \eqref{eq:CI_unknown_var}, we have
    \begin{align*}
\inf_{\theta\in\bbR,\, \sigma^2>0}\ \inf_{\varepsilon\in[0,\epsmax],\, Q}\ P^n_{\theta,\sigma,\varepsilon,Q}\Big(\theta\in\CIhat\Big)\geq 1-\delta,
    \end{align*}
as long as $n\geq 2N_0$ and $\kappa\geq 1/\sqrt{8\log(10)}$.
\end{Proposition}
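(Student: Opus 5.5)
We follow the template of Proposition~\ref{prop:known_var_coverage}, working on the intersection of two good events. The first is the pilot event $\widetilde{\cE}$ from \eqref{eq:pilot_event_unknown_var}, computed on the holdout half. By Lemma~\ref{lem:pilot_unknown_var}, it holds with probability at least $1-\delta/2$ once $n\geq 2N_0$. By Corollary~\ref{cor:unknown_var_pilot}, on $\widetilde{\cE}$ we have $\theta\in\widetilde{\CI}$ and $\sigma^2\in[\widetilde{\sigma}_-^2,\widetilde{\sigma}_+^2]$.

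The second is a uniform concentration event $\cE$ for the empirical characteristic function of the training half $X_{1:\ceil{n/2}}$. We condition on the holdout, which makes $\widetilde{\cT}$ and the doubled grid $2\widetilde{\cT}$ deterministic. Lemma~\ref{lem:chf_concentration} then applies, with $\sigma$ replaced by $\widetilde{\sigma}_+$ and sample size $\ceil{n/2}$. It gives, with conditional probability at least $1-\delta/2$,
\begin{align*}
|\phi_n(s)-\phi(s)|\leq \kappa\widetilde{\sigma}_+ s\sqrt{16\log(10/\delta)/n}\leq \kappa e^{\widetilde{\sigma}_+^2s^2/2}\sqrt{16\log(10/\delta)/n}
\end{align*}
simultaneously for all $s\in\widetilde{\cT}\cup 2\widetilde{\cT}$. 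The factor $16$ reflects the halved sample size. The condition $\kappa\geq 1/\sqrt{8\log 10}$ is exactly what makes the union bound over the $O(\log n)$ frequencies fit the constant $\log(10/\delta)$. Since $\widetilde{\cT}$ depends only on the holdout, independence lets us integrate out the conditioning, so $\widetilde{\cE}\cap\cE$ has probability at least $1-\delta$.

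On $\widetilde{\cE}\cap\cE$ we must exhibit one $\lambda\in\widetilde{\cV}$ such that $\mu=\theta$ passes both certificates at every $t\in\widetilde{\cT}$. Take $\lambda_*\in\widetilde{\cV}$ nearest to $\sigma^2$. By the grid spacing and Corollary~\ref{cor:unknown_var_pilot},
\begin{align*}
|\lambda_*-\sigma^2|\leq \frac{\widetilde{\sigma}_+^2-\widetilde{\sigma}_-^2}{\ceil{4\kappa^{-2}\sqrt n\log(en)}}\lesssim \frac{\kappa^2\widetilde{\sigma}_+^2L}{\sqrt n\log^2(en)}.
\end{align*}
We split each certificate value into two pieces using the triangle inequality, going from $\cJ_i[\phi_n](t;\theta,\lambda_*)$ to $\cJ_i[\phi](t;\theta,\lambda_*)$ to $\cJ_i[\phi](t;\theta,\sigma^2)$.

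For the stochastic piece, Lemma~\ref{lem:lipschitz} applies, with $|f^2-g^2|\leq 2|f-g|$ for $|f|,|g|\leq1$. Combined with $\lambda_*\leq\widetilde{\sigma}_+^2$ and $\cE$, it gives $|\cJ_1[\phi_n]-\cJ_1[\phi]|\leq 3\kappa e^{\widetilde{\sigma}_+^2t^2}\sqrt{16\log(10/\delta)/n}$. The $\cJ_2$ term at $2t$ gives $3e^{2\widetilde{\sigma}_+^2t^2}\cdot\kappa e^{2\widetilde{\sigma}_+^2t^2}\sqrt{\cdot}$. This exceeds the $e^{5\widetilde{\sigma}_+^2t^2/2}$ in $\Delta_2$, so we should instead keep the sharper bound $\kappa\widetilde{\sigma}_+ s$ at $s=2t$. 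That bound is polynomial and absorbed into $e^{\widetilde{\sigma}_+^2t^2/2}$, so the total sits within $e^{5\widetilde{\sigma}_+^2t^2/2}$ times a constant at most $81\kappa/2$.

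For the deterministic piece, write $\Upsilon(t;\theta,\lambda)=3e^{-\im\theta t}e^{(\lambda-\sigma^2)t^2/2}e^{\sigma^2t^2/2}\phi(t)-2$. With $|e^{\sigma^2t^2/2}\phi(t)|\leq1$, a Lipschitz computation in $\lambda$ bounds both certificate differences by a constant times $|\lambda_*-\sigma^2|\,t^2e^{O(\widetilde{\sigma}_+^2t^2)}$. On $\widetilde{\cT}$ we have $\widetilde{\sigma}_+^2t^2\leq\kappa^{-2}\log(en)$, so this is $\lesssim e^{O(\widetilde{\sigma}_+^2t^2)}/\sqrt n$. It therefore fits into the remaining half of $\Delta_1$ and $\Delta_2$, which is why the constants $6$ and $81$ were chosen generously.

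At $\lambda=\sigma^2$, Lemma~\ref{lem:true_candidate_unknown_var} says $\Upsilon(\cdot;\theta,\sigma^2)$ is a genuine characteristic function. Lemma~\ref{lem:cara_psd} with $m=2$ and the Sylvester-criterion reduction to \eqref{eq:unknown_var_certificates_population} then give $\cJ_1\leq1$ and $\cJ_2\leq1$. Hence $\theta\in\CIhat_{\lambda_*,t}$ for all $t$, so $\theta\in\CIhat$.

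The main obstacle is the bookkeeping of exponents. The $f(2t)$ term carries $e^{2\lambda t^2}$, and the variance discretization error must be shown to be dominated by $\Delta_2(t)$ uniformly over $t\in\widetilde{\cT}$ and over $\lambda_*$ possibly exceeding $\sigma^2$. We will handle this using $\lambda_*\leq\widetilde{\sigma}_+^2$ and the polynomial-in-$t$ form of the concentration bound.
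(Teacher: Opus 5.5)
Your proposal is correct and follows the paper's argument: the pilot event plus the training-half concentration event, a grid point $\lambda_*\in\widetilde{\cV}$ next to $\sigma^2$, and the chain from $\cJ_i[\phi_n](t;\theta,\lambda_*)$ through $\cJ_i[\phi](t;\theta,\lambda_*)$ to $\cJ_i[\phi](t;\theta,\sigma^2)\le 1$ (the paper writes the middle step as $\cJ_i[\phi](t;\theta,\lambda_*)=\cJ_i[\psi](t;\theta,\sigma^2)$ with $\psi=e^{(\lambda_*-\sigma^2)t^2/2}\phi$ and $\lambda_*\le\sigma^2$, which is the same estimate as your Lipschitz-in-$\lambda$ step), and using the linear-in-$s$ bound of Lemma~\ref{lem:chf_concentration} at $s=2t$ actually fixes the paper's write-up, whose exponential-form event only bounds the $\phi_n(2t)-\phi(2t)$ term by $3\kappa e^{4\widetilde{\sigma}_+^2t^2}\sqrt{16\log(10/\delta)/n}$ rather than at the claimed $e^{5\widetilde{\sigma}_+^2t^2/2}$ order. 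One correction: the lower bound on $\kappa$ is not what makes the union bound work, since the geometric series in Lemma~\ref{lem:chf_concentration} holds for any $t_1$; it is needed to absorb the $\kappa$-free discretization error $|\lambda_*-\sigma^2|t^2\le 1/(2\sqrt{n})$ into the $\kappa$-proportional slacks $\Delta_1,\Delta_2$ (and using only $\widetilde{\sigma}_+^2-\widetilde{\sigma}_-^2\le\widetilde{\sigma}_+^2$ there avoids carrying an $L/\log(en)$ factor).
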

\begin{proof}
Suppose that both the pilot coverage event $\widetilde{\cE}$ defined by \eqref{eq:pilot_event_unknown_var} and the training-set concentration event
\begin{align}\label{eq:concentration_event_unknown_var_proof}
    \cE(X_{1:\ceil{n/2}})=\Big\{\abs{\phi_n(t)-\phi(t)}\leq \kappa e^{\widetilde{\sigma}_+^2t^2/2}\sqrt{\frac{16\log(10/\delta)}{n}},\ t\in\sqrt{\bbZ}_+\cdot(\kappa\widetilde{\sigma}_+)^{-1}\Big\}
\end{align}
hold. This joint event happens with probability exceeding $1-\delta/2-\delta/2=1-\delta$. (Here, $\phi_n(t)$ denotes the empirical characteristic function evaluated on the training set $X_{1:\ceil{n/2}}$) On this joint event $\widetilde{\cE}\medcap \cE$, we have $\theta\in\widetilde{\CI}$, and it suffices to show that $\theta$ passes the certificates for all $t\in\widetilde{\cT}$ and some $\lambda\in\widetilde{\cV}$. Then, by our construction of $\widetilde{\cV}$, there exists $\lambda_*\in\widetilde{\cV}$ such that $\sigma^2- (\widetilde{\sigma}_+^2-\widetilde{\sigma}_-^2)/\ceil{4\kappa^{-2}\sqrt{n}\log(en)}\leq \lambda_*\leq \sigma^2$ given the event $\widetilde{\cE}$. Define
\begin{align*}
    \psi(t)\define e^{(\lambda_*-\sigma^2)t^2/2}\phi(t).
\end{align*}
Note that for all $t\in\widetilde{\cT}$
\begin{align*}
    \abs{\lambda_*-\sigma^2}t^2\leq \frac{\ceil{\log(en)}(\widetilde{\sigma}_+^2-\widetilde{\sigma}_-^2)}{\ceil{4\kappa^{-2}\sqrt{n}\log(en)}\kappa^2\widetilde{\sigma}_+^2}\leq \frac{1}{4\sqrt{n}}.
\end{align*}
Therefore, we can use $|e^x-1|\leq 2\abs{x}$ when $\abs{x}\leq 1$ to claim that for all $t\in\widetilde{\cT}$
\begin{align*}
    \abs{\phi(t)-\psi(t)}=\abs{1-e^{(\lambda_*-\sigma^2)t^2/2}}\abs{\phi(t)}&\leq \abs{1-e^{(\lambda_*-\sigma^2)t^2/2}}\\
    &\leq 1/(4\sqrt{n}),
\end{align*}
and also
\begin{align*}
    \abs{\phi(2t)-\psi(2t)}\leq \abs{1-e^{2(\lambda_*-\sigma^2)t^2}}\leq 1/\sqrt{n},
\end{align*}
\begin{align*}
    \abs{\phi(t)^2-\psi(t)^2}=\abs{\phi(t)+\psi(t)}\abs{\phi(t)-\psi(t)}\leq 1/(2\sqrt{n}).
\end{align*}
Then, for all $t\in\widetilde{\cT}$, using $\cJ_1[\phi](t;\theta,\sigma^2)\leq 1$, we obtain
\begin{align*}
    \abs{\Upsilon_n(t;\theta,\lambda_*)}&=\cJ_1[\phi_n](t;\theta,\lambda_*)\\
    &\leq \cJ_1[\phi](t;\theta,\lambda_*)+3e^{\lambda_0 t^2/2}\abs{\phi_n(t)-\phi(t)}\\
    &\leq \cJ_1[\phi](t;\theta,\lambda_*)+3\kappa e^{\widetilde{\sigma}_+^2t^2}\sqrt{16\log(10/\delta)/n}\\
    &=\cJ_1[\psi](t;\theta,\sigma^2)+3\kappa e^{\widetilde{\sigma}_+^2t^2}\sqrt{16\log(10/\delta)/n}\\
    &\leq \cJ_1[\phi](t;\theta,\sigma^2)+ 3e^{\sigma^2t^2/2}\abs{\phi(t)-\psi(t)}+3\kappa e^{\widetilde{\sigma}_+^2t^2}\sqrt{16\log(10/\delta)/n}\\
    &\leq 1+6\kappa e^{\widetilde{\sigma}_+^2t^2}\sqrt{16\log(10/\delta)/n}
\end{align*}
as long as $\kappa\geq (16\log(10))^{-1/2}$. Similarly, for the second certificate, we have that for all $t\in\widetilde{\cT}$,
\begin{align*}
    &\ \abs{\Upsilon_n^2(t;\theta,\lambda_*)-\Upsilon_n(2t;\theta,\lambda_*)}+\abs{\Upsilon_n(t;\theta,\lambda_*)}^2\nonumber\\
    &=\cJ_2[\phi_n](t;\theta,\lambda_*)\\
    &\leq \cJ_2[\phi](t;\theta,\lambda_*)+18e^{\widetilde{\sigma}_+^2t^2}\abs{\phi_n^2(t)-\phi^2(t)}\nonumber\\
    &\ +24e^{\widetilde{\sigma}_+^2t^2/2}\abs{\phi_n(t)-\phi(t)}+3e^{2\widetilde{\sigma}_+^2t^2}\abs{\phi_n(2t)-\phi(2t)}\\
    &\leq \cJ_2[\phi](t;\theta,\lambda_*)+63\kappa e^{5\widetilde{\sigma}_+^2t^2/2}\sqrt{16\log(10/\delta)}\\
    &=\cJ_2[\psi](t;\theta,\sigma^2)+63\kappa e^{5\widetilde{\sigma}_+^2t^2/2}\sqrt{16\log(10/\delta)}\\
    &\leq \cJ_2[\phi](t;\theta,\sigma^2)+63\kappa e^{5\widetilde{\sigma}_+^2t^2/2}\sqrt{16\log(10/\delta)}+18e^{\widetilde{\sigma}_+^2t^2}\abs{\phi^2(t)-\psi^2(t)}\nonumber\\
    &+24e^{\widetilde{\sigma}_+^2t^2/2}\abs{\phi(t)-\psi(t)}+3e^{2\widetilde{\sigma}_+^2t^2}\abs{\phi(2t)-\psi(2t)}\\
    &\leq 1+81\kappa e^{5\widetilde{\sigma}_+^2t^2/2}\sqrt{16\log(10/\delta)/n}
\end{align*}
also as long as $\kappa\geq (16\log(10))^{-1/2}$. Therefore, under the given conditions, $\theta$ passes all certificates with $\lambda=\lambda_*$, which implies that $\theta\in\CIhat$ on $\widetilde{\cE}\medcap \cE$.
\end{proof}

\begin{Proposition}\label{prop:unknown_var_length}
    With the setup of Theorem~\ref{thm:unknown_var}, for the $\CIhat$ defined by \eqref{eq:CI_unknown_var} with $\kappa\geq \sqrt{5}\vee (2M/\pi)$, there exist absolute constants $C$ and $N$ that depend only on $\delta$, $\epsmax$, and $\kappa$, such that
    \begin{align*}
        \inf_{\theta\in\bbR,\,\sigma^2>0}\ \inf_{\varepsilon\in[0,\epsmax],\, Q}\ P^n_{\theta,\sigma,\varepsilon,Q}\Big(\CIhat\bigcap \{\mu:\abs{\mu-\theta}\geq \overline{r}\}=\emptyset\Big)\geq 1-\delta
    \end{align*}
    whenever $n\geq N$, where
    \begin{align*}
        \overline{r}(n,\varepsilon,\sigma)=C\sigma\left(\frac{1}{n^{1/8}}+\frac{\varepsilon^{1/4}}{\sqrt{1\vee \log(en\varepsilon^2)}}\right).
    \end{align*}
    Furthermore, the choices of these constants can tolerate Proposition~\ref{prop:unknown_var_coverage}.
\end{Proposition}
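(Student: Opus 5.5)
We work on the same joint event $\widetilde{\cE}\medcap\cE$ as in Proposition~\ref{prop:unknown_var_coverage}, which has probability at least $1-\delta$, so that the coverage and length guarantees hold simultaneously. By Corollary~\ref{cor:unknown_var_pilot}, every $\mu$ with $\abs{\mu-\theta}>|\widetilde{\CI}|$ is already excluded. Hence it suffices to take $r=\mu-\theta$ with $\overline{r}\le\abs{r}\le 2M\sigma/\sqrt{\log(en)}$ and show the following: for every $\lambda\in\widetilde{\cV}$ there is a witness $t\in\widetilde{\cT}$, independent of $\lambda$, at which one of the two empirical certificates fails. On $\widetilde{\cE}$ we have $\widetilde{\sigma}_+^2\asymp\sigma^2$ and $\widetilde{\sigma}_+^2/\sigma^2\le(1+L/\log(en))^2$. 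The grid $\widetilde{\cT}$ therefore contains $t_1=(\kappa\widetilde{\sigma}_+)^{-1}$ and, when $\varepsilon\ge n^{-1/2}$, $t_2=(\kappa\widetilde{\sigma}_+)^{-1}\lfloor\sqrt{\log(en\varepsilon^2)}\rfloor$. Both satisfy $\abs{rt}\le 2M/\kappa\le\pi$ by the assumption $\kappa\ge 2M/\pi$.

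\textbf{Lower bound on the population certificate.} Fix $\lambda\in\widetilde{\cV}$ and write $\phi=\phi_0^{\theta}+\varepsilon e^{-\sigma^2t^2/2}(\xi-e^{\im\theta t})$, where $\phi_0^\theta(t)=e^{\im\theta t-\sigma^2t^2/2}$. Then $\abs{\phi(s)-\phi_0^\theta(s)}\le 2\varepsilon e^{-\sigma^2s^2/2}$ for $s=t,2t$. Apply Lemma~\ref{lem:lipschitz} with $f=\phi$ and $g=\phi_0^\theta$, using $\abs{f^2-g^2}\le 2\abs{f-g}$. Every term is then bounded by $\varepsilon e^{(\lambda-\sigma^2)s^2/2}$ times a constant. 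Since $\lambda\le\widetilde{\sigma}_+^2$ and $t^2\lesssim\log(en)/(\kappa^2\sigma^2)$, we get $e^{(\lambda-\sigma^2)t^2}\le e^{O(L/\kappa^2)}=O(1)$. The approximation error is therefore $O(\varepsilon)$ for both $\cJ_1$ and $\cJ_2$.

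By translation, $\cJ_i[\phi_0^\theta](t;\mu,\lambda)$ equals the certificate of $e^{-\sigma^2t^2/2}$ at location $r$. Lemma~\ref{lem:quartic_gap} then gives $\max_i\cJ_i[\phi_0^\theta]\ge 1+(rt/\pi)^4$ uniformly in $\lambda$. This uniformity in $\lambda$ is what lets us handle the union over $\lambda\in\widetilde{\cV}$: a single $t$ works for every $\lambda$.

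\textbf{Plug-in error.} On $\cE$, Lemma~\ref{lem:lipschitz} together with $\lambda\le\widetilde{\sigma}_+^2$ gives $\abs{\cJ_1[\phi_n]-\cJ_1[\phi]}\le\Delta_1(t)/2$ and $\abs{\cJ_2[\phi_n]-\cJ_2[\phi]}\le 63\kappa e^{5\widetilde{\sigma}_+^2t^2/2}\sqrt{16\log(10/\delta)/n}$. Both are $\lesssim\Delta_2(t)$. Rejection of $\mu$ therefore follows once
\begin{align*}
(rt/\pi)^4-C'\varepsilon-C''\kappa e^{5\widetilde{\sigma}_+^2t^2/2}\sqrt{\log(10/\delta)/n}>0,
\end{align*}
where the inequality is imposed for whichever index $i\in\{1,2\}$ attains the maximum in Lemma~\ref{lem:quartic_gap}. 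The larger slack $\Delta_2$ dominates, so it suffices to verify this single condition.

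\textbf{Choice of witness (main obstacle).} There are two cases.

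\emph{Case I} ($\varepsilon\le n^{-1/2}$). Take $t=t_1$. Then $e^{5\widetilde{\sigma}_+^2t_1^2/2}=e^{5/(2\kappa^2)}$ is a constant, and $(rt_1)^4\gtrsim C^4 n^{-1/2}/\kappa^4$. This beats $\varepsilon+n^{-1/2}$ once $C$ is large.

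\emph{Case II} ($\varepsilon\ge n^{-1/2}$). Take $t=t_2$. Then $e^{5\widetilde{\sigma}_+^2t_2^2/2}\le (en\varepsilon^2)^{5/(2\kappa^2)}$, so the noise term is at most $\varepsilon^{5/\kappa^2}n^{5/(2\kappa^2)-1/2}\lesssim\varepsilon$. This is exactly where $\kappa\ge\sqrt5$ is needed. Meanwhile $(rt_2)^4\gtrsim C^4\varepsilon$ when $\abs{r}\ge C\sigma\varepsilon^{1/4}/\sqrt{\log(en\varepsilon^2)}$.

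The delicate point is tracking the ratio $\widetilde{\sigma}_+/\sigma$ inside the exponents: it is $1+O(1/\log n)$, so it only changes constants. One must also ensure that the constants $C,N$ are compatible with Proposition~\ref{prop:unknown_var_coverage}, which requires only $n\ge 2N_0$ and a weaker lower bound on $\kappa$. Taking $N=2N_0$ and $C$ large completes the proof.
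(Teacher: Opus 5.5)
Your proposal is correct and follows the paper's proof essentially step for step: the joint event $\widetilde{\cE}\cap\cE$, pilot localization, an $O(\varepsilon)$ comparison with the clean characteristic function via Lemma~\ref{lem:lipschitz}, translation to Lemma~\ref{lem:quartic_gap} uniformly in $\lambda$, and the witnesses $t_1,t_2$; moreover, by keeping the plug-in term as $e^{5\widetilde{\sigma}_+^2t^2/2}$ rather than relaxing it to $e^{10\sigma^2t^2}$ (the relaxation is why the paper's Case~II ends up needing $\kappa^2\geq 20$), you genuinely get by with the stated condition $\kappa\geq\sqrt{5}$. The only slip is in the approximation step, where you need $\abs{\phi^2(t)-\phi_0^2(t)}\leq\abs{\phi(t)-\phi_0(t)}\,(\abs{\phi(t)}+\abs{\phi_0(t)})\leq 4\varepsilon e^{-\sigma^2t^2}$: the cruder $\abs{f^2-g^2}\leq 2\abs{f-g}$ you cite would leave a factor $e^{(\lambda-\sigma^2/2)t^2}$, which grows polynomially in $en\varepsilon^2$ at $t_2$ instead of being $O(1)$.
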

\begin{proof}
Again, we assume the joint event $\widetilde{\cE}\medcap \cE$ (defined in \eqref{eq:pilot_event_unknown_var} and \eqref{eq:concentration_event_unknown_var_proof}). This joint event occurs with probability exceeding $1-\delta/2-\delta/2=1-\delta$. On this event and when $\log(en)\geq L$, any $\mu$ with $\abs{\mu-\theta}> 2M\sigma/\sqrt{\log(en)}$ will be excluded from $\widetilde{\CI}$ and therefore excluded from $\CIhat$. It suffices to show that any $\mu$ with $\abs{\mu-\theta}\in[\overline{r},2M\sigma/\sqrt{\log(en)}]$ cannot pass the certificates for any $\lambda\in\widetilde{\cV}$. Define
\begin{align*}
    \phi_0(t)\define e^{\im \theta t-\sigma^2t^2/2}
\end{align*}
as the characteristic function of the clean distribution $\cN(\theta,\sigma^2)$. Then,
\begin{align*}
    \abs{\phi(t)-\phi_0(t)}&=\abs{(1-\varepsilon)e^{\im \theta t-\sigma^2t^2/2}+\varepsilon\xi_Q(t)e^{-\sigma^2t^2/2}-e^{\im \theta t-\sigma^2t^2/2}}\\
    &=\varepsilon e^{-\sigma^2t^2/2}\abs{\xi_Q(t)-e^{\im \theta t}}\\
    &\leq 2\varepsilon e^{-\sigma^2t^2/2}.
\end{align*}
Also,
\begin{align*}
    \abs{\phi^2(t)-\phi_0^2(t)}&=\abs{\phi(t)-\phi_0(t)}\abs{\phi(t)+\phi_0(t)}\leq 4\varepsilon e^{-\sigma^2t^2/2}.
\end{align*}
Denoting $r=\mu-\theta$, we have that for all $t\in\widetilde{\cT}$,
\begin{align*}
    &\ \abs{\Upsilon_n(t;\mu,\lambda)}-1-\Delta_1(t)\nonumber\\
    &= \cJ_1[\phi_n](t;\mu,\lambda)-1-\Delta_1(t)\\
    &\geq \cJ_1[\phi](t;\mu,\lambda)-3e^{\lambda t^2/2}\abs{\phi_n(t)-\phi(t)}-1-\Delta_1(t)\\
    &\geq \cJ_1[\phi_0](t;\mu,\lambda)-3e^{\lambda t^2/2}\abs{\phi(t)-\phi_0(t)}-3e^{\lambda t^2/2}\abs{\phi_n(t)-\phi(t)}-1-\Delta_1(t)\\
    &\geq \cJ_1[\phi_0](t;\mu,\lambda)-1-6\varepsilon e^{(\widetilde{\sigma}_+^2-\widetilde{\sigma}_-^2)t^2/2}-6\kappa e^{\widetilde{\sigma}^2_+t^2}\sqrt{16\log(10/\delta)/n}\\
    &\geq \cJ_1[\phi_0](t;\mu,\lambda)-1-6\varepsilon e^{(2\frac{L}{\log(en)}+\frac{L^2}{\log^2(en)})\sigma^2t^2/2}-6\kappa e^{\widetilde{\sigma}^2_+t^2}\sqrt{16\log(10/\delta)/n}\\
    &\geq \cJ_1[\phi_0](t;\mu,\lambda)-1-6\varepsilon e^{3L\sigma^2t^2/(2\log(en))}-6\kappa e^{\widetilde{\sigma}^2_+t^2}\sqrt{16\log(10/\delta)/n}\\
    &=\cJ_1[e^{-\sigma^2t^2/2}](t;r,\lambda)-1-6\varepsilon e^{3L\sigma^2t^2/(2\log(en))}-6\kappa e^{\widetilde{\sigma}^2_+t^2}\sqrt{16\log(10/\delta)/n}\\
    &\geq \cJ_1[e^{-\sigma^2t^2/2}](t;r,\lambda)-1-6\varepsilon e^{3L\sigma^2t^2/(2\log(en))}-6\kappa e^{4\sigma^2t^2}\sqrt{16\log(10/\delta)/n}
\end{align*}
as long as $\log(en)\geq L$. Similarly,
\begin{align*}
    &\ \abs{\Upsilon_n^2(t;\mu,\lambda)-\Upsilon_n(2t;\mu,\lambda)}+\abs{\Upsilon_n(t;\mu,\lambda)}^2\nonumber-1-\Delta_2(t)\\
    &=\cJ_2[\phi_n](t;\mu,\lambda)-1-\Delta_2(t)\\
    &\geq \cJ_2[\phi](t;\mu,\lambda)-18e^{2\widetilde{\sigma}_+^2t^2}\abs{\phi_n^2(t)-\phi^2(t)}\nonumber\\
    &\ -24e^{\widetilde{\sigma}_+^2t^2/2}\abs{\phi_n(t)-\phi(t)}-3e^{2\widetilde{\sigma}_+^2t^2}\abs{\phi_n(2t)-\phi(2t)}-1-\Delta_2(t)\\
    &\geq \cJ_2[\phi](t;\mu,\lambda)-(63+81)\kappa e^{5\widetilde{\sigma}_+^2t^2/2}\sqrt{16\log(10/\delta)}-1\\
    &\geq \cJ_2[\phi_0](t;\mu,\lambda)-144\kappa e^{5\widetilde{\sigma}_+^2t^2/2}\sqrt{16\log(10/\delta)}-18e^{\widetilde{\sigma}_+^2t^2}\abs{\phi^2(t)-e^{2\im\theta t}\phi_0^2(t)}\nonumber\\
    &-24e^{\widetilde{\sigma}_+^2t^2/2}\abs{\phi(t)-e^{\im \theta t}\phi_0(t)}-3e^{2\widetilde{\sigma}_+^2t^2}\abs{\phi(2t)-e^{\im\theta t}\phi_0(2t)}-1\\
    &\geq \cJ_2[\phi_0](t;\mu,\lambda)-1-126\varepsilon e^{6L\sigma^2t^2/\log(en)}-144\kappa e^{5\widetilde{\sigma}_+^2t^2/2}\sqrt{16\log(10/\delta)/n}\\
    &=\cJ_2[e^{-\sigma^2t^2/2}](t;r,\lambda)-1-126\varepsilon e^{6L\sigma^2t^2/\log(en)}-144\kappa e^{5\widetilde{\sigma}_+^2t^2/2}\sqrt{16\log(10/\delta)/n}\\
    &\geq \cJ_2[e^{-\sigma^2t^2/2}](t;r,\lambda)-1-126\varepsilon e^{6L\sigma^2t^2/\log(en)}-144\kappa e^{10\sigma^2t^2}\sqrt{16\log(10/\delta)/n}
\end{align*}
also when $\log(en)\geq L$. Note that $\cJ_1[e^{-\sigma^2t^2/2}](t;r,\lambda)$ and $\cJ_2[e^{-\sigma^2t^2/2}](t;r,\lambda)$ are exactly the two terms on the left-hand side of the inequality in Lemma~\ref{lem:quartic_gap}. Thus, we apply Lemma~\ref{lem:quartic_gap} to conclude that when $\abs{rt}\leq \pi$,
\begin{align*}
    &\ \Big(\abs{\Upsilon_n(t;\mu,\lambda)}-1-\Delta_1(t)\Big)\vee\Big(\abs{\Upsilon_n^2(t;\mu,\lambda)-\Upsilon_n(2t;\mu,\lambda)}+\abs{\Upsilon_n(t;\mu,\lambda)}^2\nonumber-1-\Delta_2(t)\Big)\nonumber\\
    &\geq (rt/\pi)^4-126\varepsilon e^{6L\sigma^2t^2/\log(en)}-144\kappa e^{10\sigma^2t^2}\sqrt{16\log(10/\delta)/n}\\
    &\define S(t).
\end{align*}
Therefore, it suffices to show that for any $\varepsilon\in[0,\epsmax]$: either $\overline{r}(n,\epsilon,\sigma)\geq M\sigma/\sqrt{\log(en)}$, or for any $r\in[\overline{r}(n,\varepsilon,\sigma),M\sigma/\sqrt{\log(en)}]$, there exists $t\in\widetilde{\cT}$ with $\abs{rt}\leq \pi$ such that $S(t)>0$. We consider the following cases.
\paragraph{\textbf{Case I.}} When $\varepsilon\in[0,n^{-1/2}]$ and $\abs{r/\sigma}\in[ C_1n^{-1/8},2M/\sqrt{\log(en)}]$. Consider $t_1=1/(\kappa\widetilde{\sigma}_+)\in\widetilde{\cT}$. Then, on $\widetilde{\cE}\medcap \cE$, we have $\abs{rt_1}\leq 2M/\kappa$. Thus, when $\kappa\geq 2M/\pi$, we will have $\abs{rt_1}\leq \pi$. When this holds, we have
\begin{align*}
    S(t_1)&\geq C_1^4n^{-1/2}(\sigma^2/\widetilde{\sigma}_+^2)^2-126\varepsilon e^{6L\kappa^{-2}(\sigma^2/\widetilde{\sigma}_+^2)/\log(en)}-144\kappa e^{10\kappa^{-2}(\sigma^2/\widetilde{\sigma}_+^2)}\sqrt{16\log(10/\delta)/n}\\
    &\geq n^{-1/2}[C_1^4/16-126e^{6/\kappa^2}-144\kappa e^{10/\kappa^2}\sqrt{16\log(10/\delta)}],
\end{align*}
as long as $\log(en)\geq L$. Therefore, as long as, additionally, $C_1^4/16>126e^{6/\kappa^2}+144\kappa e^{10/\kappa^2}\sqrt{16\log(10/\delta)}$, we will have $S(t_1)>0$, and thus $\mu=\theta+r\notin\CIhat$.

\paragraph{\textbf{Case II.}} When $\varepsilon\in[n^{-1/2},\epsmax]$ and $\abs{r/\sigma}\in[C_2\varepsilon^{1/4}/\sqrt{\log(en\varepsilon^2)},2M/\sqrt{\log(en)}]$. Consider $t_2=(\kappa \widetilde{\sigma}_+)^{-1}\floor{\sqrt{\log(en\varepsilon^2)}}$. We have $t_2\in\widetilde{\cT}$. Also, since $\abs{rt_2}\leq 2M/\kappa$, we have $\abs{rt_2}\leq \pi$ as long as $\kappa\geq 2M/\pi$. Thus,
\begin{align*}
    S(t_2)&\geq C_2^4\floor{\sqrt{\log(en\varepsilon^2)}}/\sqrt{\log(en\varepsilon^2)}(\sigma^2/\widetilde{\sigma}_+^2)^2\varepsilon-126\varepsilon e^{6L\kappa^{-2}(\sigma^2/\widetilde{\sigma}_+^2)\log(en\varepsilon^2)/\log(en)}\nonumber\\
    &\ -144\kappa e^{10\kappa^{-2}(\sigma^2/\widetilde{\sigma}_+^2)\log(en\varepsilon^2)}\sqrt{16\log(10/\delta)/n}\\
    &\geq \varepsilon[C_2^4/32-126e^{6L/\kappa^2}]-144\kappa e^{10/\kappa^2}\sqrt{16\log(10/\delta)}n^{10/\kappa^2-1/2}\varepsilon^{20/\kappa^2}.
\end{align*}
If, in addition, $\kappa^2\geq 20$ and thus $10/\kappa^2-1/2\leq 0$, we have $n^{10/\kappa^2-1/2}\leq (1/\varepsilon^2)^{10/\kappa^2-1/2}$, which leads to
\begin{align*}
    S(t_2)&\geq \varepsilon[C_2^4/32-126e^{6L/\kappa^2}-144\kappa e^{10/\kappa^2}\sqrt{8\log(10/\delta)}].
\end{align*}
Therefore, we can pick $C_2^4/32>126e^{6L/\kappa^2}+144\kappa e^{10/\kappa^2}\sqrt{16\log(10/\delta)}$ to make $S(t_2)>0$ and thus $\mu=\theta+r\notin \CIhat$.

Aggregating the above conditions, and also taking into account the conditions needed for Proposition~\ref{prop:unknown_var_coverage}, we need
\begin{align*}
    \kappa\geq \sqrt{5}\vee (1/\sqrt{16\log(10)})\vee (2M/\pi)=\sqrt{5}\vee (2M/\pi),\quad N\geq e^{L-1}\vee (2N_0),
\end{align*}
which can be simultaneously satisfied by taking
\begin{align*}
    \kappa =\sqrt{5}\vee (2M/\pi),\quad N=\ceil{e^{L-1}}\vee (2N_0).
\end{align*}
With the above choices, we accordingly set
\begin{align*}
    C=C_1=C_2=4\Big(126e^{6L/\kappa^2}+144\kappa e^{10/\kappa^2}\sqrt{16\log(10/\delta)}\Big)^{1/4}
\end{align*}
to conclude the proof.
\end{proof}

\begin{proof}[Proof of Theorem~\ref{thm:unknown_var}]
The conclusions in both Proposition~\ref{prop:unknown_var_coverage} and Proposition~\ref{prop:unknown_var_length} hold on the same joint event $\widetilde{\cE}\medcap \cE$ that happens with at least $1-\delta$ probability. The desired Theorem~\ref{thm:unknown_var} is thus proved.
\end{proof}

\section{Deferred Proofs of Lower Bounds}\label{sec:proof_lb}
\subsection{Proof of the CI-to-Test Reduction}\label{sec:proof_CI_to_test}
\begin{Proposition}\label{prop:CI_to_test_known_var}
Given $\epsmax\in(0,1/2)$ and $\sigma^2>0$, if a confidence interval $\CIhat$ satisfies
\begin{align*}
    \inf_{\theta}\inf_{\varepsilon\in[0,\epsmax],\, Q}\ P_{\theta,\sigma,\varepsilon,Q}^n(\theta\in\CIhat)\geq 1-\delta
\end{align*}
and
\begin{align*}
    \inf_{\theta}\inf_{\varepsilon\in[0,\epsmax],\, Q}\ P_{\theta,\sigma,\varepsilon,Q}^n(\abs{\CIhat}\leq r/2)\geq 1-\delta,
\end{align*}
for some function $r=r(n,\sigma,\varepsilon)>0$, then its induced test $T(X_{1:n})=\indi\{0\notin \CIhat\}$ satisfies
\begin{align*}
    \sup_{P\in H_0(\sigma,\epsmax)} P^n(T)+\sup_{P\in H_1(r,\sigma,\varepsilon)}P^n(1-T)\leq 3\delta.
\end{align*}
\end{Proposition}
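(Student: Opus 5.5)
The plan is to bound the type-I and type-II errors of $T=\indi\{0\notin\CIhat\}$ separately: the type-I error will be at most $\delta$ and the type-II error at most $2\delta$, which together give the $3\delta$ bound.

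\emph{Type-I error.} Take any $P\in H_0(\sigma,\epsmax)$. Then $P=P_{0,\sigma,\epsmax,Q}$ for some $Q$, so it lies in the model with $\theta=0$ and contamination level $\epsmax\in[0,\epsmax]$. The coverage guarantee is uniform over $\theta$, $\varepsilon$ and $Q$, so it applies here and gives
\begin{align*}
P^n(T=1)=P^n(0\notin\CIhat)\leq\delta .
\end{align*}
Taking the supremum over $H_0(\sigma,\epsmax)$ bounds the first term by $\delta$.

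\emph{Type-II error.} Take any $P\in H_1(r,\sigma,\varepsilon)$, so $P=P_{r,\sigma,\varepsilon,Q}$ with true location $r$ and contamination $\varepsilon$. I work on the event
\begin{align*}
\{r\in\CIhat\}\cap\{\abs{\CIhat}\leq r(n,\sigma,\varepsilon)/2\}.
\end{align*}
The coverage guarantee at $\theta=r$ makes the first event fail with probability at most $\delta$. The length guarantee at the same parameters $(r,\sigma,\varepsilon,Q)$ makes the second fail with probability at most $\delta$. A union bound shows the intersection has probability at least $1-2\delta$.

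On this event, suppose $0\in\CIhat$. Then $\CIhat$ contains both $0$ and $r$, so its diameter is at least $r$. This contradicts $\abs{\CIhat}\leq r/2$, since $r>0$. Hence $0\notin\CIhat$, i.e.\ $T=1$, and $P^n(1-T)\leq 2\delta$. Taking the supremum over $H_1$ bounds the second term by $2\delta$.

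\emph{Where care is needed.} The one point to watch is that the length function is evaluated at the alternative's own $\varepsilon$, not at $\epsmax$. This works precisely because the length guarantee is adaptive: it holds for every $\varepsilon\in[0,\epsmax]$ with the rate $r(n,\sigma,\varepsilon)$. Since $\sigma$ is the same in both hypotheses, no other subtlety arises. The unknown-variance analogue follows by the same argument, since the coverage guarantee there is also uniform over $\sigma$.
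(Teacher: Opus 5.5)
Your proof is correct and matches the paper's argument. The type-I error is at most $\delta$ by coverage at $\theta=0$, $\varepsilon=\epsmax$. The type-II error is at most $2\delta$: the event $\{0\in\CIhat,\ r\in\CIhat\}$ forces $\abs{\CIhat}\geq r>r/2$, so a union bound over the length and coverage failures at the alternative's own $(r,\varepsilon,Q)$ suffices. Your explicit fixing of the alternative's $\varepsilon$ is, if anything, slightly cleaner than the paper's display, which takes a supremum over $\varepsilon$ there.
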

\begin{proof}
Let $\CIhat$ be such a confidence interval satisfying
\begin{align*}
    \inf_{\theta}\,\inf_{\varepsilon\in[0,\epsmax],\, Q}\ P_{\theta,\sigma,\varepsilon,Q}^n(\theta\in\CIhat)\geq 1-\delta
\end{align*}
and
\begin{align*}
    \inf_{\theta}\,\inf_{\varepsilon\in[0,\epsmax],\, Q}\ P_{\theta,\sigma,\varepsilon,Q}^n(\abs{\CIhat}\leq r/2)\geq 1-\delta.
\end{align*}
Then, by the above assumptions, we conclude for $T(X_{1:n})=\indi\{0\notin \CIhat\}$ that
\begin{align*}
    \sup_{P\in H_0(\sigma,\varepsilon)} P^n(T)&=\sup_{\varepsilon\in[0,\epsmax],\, Q}\ P_{0,\sigma,\varepsilon,Q}^n(0\notin\CIhat)\\
    &\leq \sup_{\theta}\,\sup_{\varepsilon\in[0,\epsmax],\, Q}\ P_{\theta,\sigma,\varepsilon,Q}^n(\theta\notin\CIhat)\leq \delta.
\end{align*}
Meanwhile, we also have that
\begin{align*}
    \sup_{P\in H_1(r,\sigma,\varepsilon)}P^n(1-T)
    &=\sup_{\varepsilon\in[0,\epsmax],\, Q}\ P_{r,\sigma,\varepsilon,Q}^n(0\in\CIhat)\\
    &=\sup_{\varepsilon\in[0,\epsmax],\, Q}\ \big\{P_{r,\sigma,\varepsilon,Q}^n(0\in\CIhat,\ r\in \CIhat)+P_{r,\sigma,\varepsilon,Q}^n(0\in\CIhat,\ r\notin \CIhat)\Big\}.
\end{align*}
Since $\{0\in\CIhat,\, r\in\CIhat\}$ implies $\abs{\CIhat}> r/2$, we continue to write
\begin{align*}
    \sup_{P\in H_1(r,\sigma,\varepsilon)}P^n(1-T)
    &\leq \sup_{\varepsilon\in[0,\epsmax],\, Q}\ \big\{P_{r,\sigma,\varepsilon,Q}^n(\abs{\CIhat}>r/2)+P_{r,\sigma,\varepsilon,Q}^n(r\notin \CIhat)\Big\}\\
    &\leq  \sup_{\theta}\,\sup_{\varepsilon\in[0,\epsmax],\, Q}\ P_{\theta,\sigma,\varepsilon,Q}^n(\abs{\CIhat}> r/2)+\sup_{\theta}\,\sup_{\varepsilon\in[0,\epsmax],\, Q}\ P_{\theta,\sigma,\varepsilon,Q}^n(\theta\notin\CIhat)\\
    &\leq 2\delta.
\end{align*}
Thus, the sum of type-I and type-II is no larger than $3\delta$, which concludes the proof.
\end{proof}

\begin{Proposition}\label{prop:CI_to_test_unknown_var}
Given $\epsmax\in(0,1/2)$, if a confidence interval $\CIhat$ satisfies
\begin{align*}
    \inf_{\theta,\sigma^2}\,\inf_{\varepsilon\in[0,\epsmax],\, Q}\ P_{\theta,\sigma,\varepsilon,Q}^n(\theta\in\CIhat)\geq 1-\delta
\end{align*}
and
\begin{align*}
    \inf_{\theta,\sigma^2}\,\inf_{\varepsilon\in[0,\epsmax],\, Q}\ P_{\theta,\sigma,\varepsilon,Q}^n(\abs{\CIhat}\leq r/2)\geq 1-\delta,
\end{align*}
for some function $r=r(n,\sigma,\varepsilon)>0$, then its induced test $T(X_{1:n})=\indi\{0\notin \CIhat\}$ satisfies
\begin{align*}
    \sup_{P\in H_0(\epsmax)} P^n(T)+\sup_{P\in H_1(r,\sigma,\varepsilon)}P^n(1-T)\leq 3\delta.
\end{align*}
\end{Proposition}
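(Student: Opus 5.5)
The argument mirrors the proof of Proposition~\ref{prop:CI_to_test_known_var}; the only change is that the null hypothesis $H_0(\epsmax)$ now ranges over all variances $\sigma'^2>0$, not just the $\sigma^2$ of the alternative. We bound the type-I and type-II errors of $T=\indi\{0\notin\CIhat\}$ separately and add the two bounds.

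\textbf{Type-I error.} Any $P\in H_0(\epsmax)$ has the form $P_{0,\sigma',\epsmax,Q}$ for some $\sigma'>0$ and some $Q$. Since the coverage guarantee is uniform over $(\theta,\sigma^2)$ and over $\varepsilon\in[0,\epsmax]$, we may take $\theta=0$, variance $\sigma'^2$ and $\varepsilon=\epsmax$. This gives
\begin{align*}
\sup_{P\in H_0(\epsmax)}P^n(T)\leq \sup_{\theta,\sigma'^2}\,\sup_{\varepsilon\in[0,\epsmax],\,Q}P^n_{\theta,\sigma',\varepsilon,Q}(\theta\notin\CIhat)\leq\delta.
\end{align*}

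\textbf{Type-II error.} Fix $\sigma$ and $\varepsilon$, and let $P=P_{r,\sigma,\varepsilon,Q}\in H_1(r,\sigma,\varepsilon)$ with $r=r(n,\sigma,\varepsilon)$. Split the event according to whether $r$ is covered:
\begin{align*}
P^n(0\in\CIhat)\leq P^n(0\in\CIhat,\ r\in\CIhat)+P^n(r\notin\CIhat).
\end{align*}
If both $0$ and $r$ lie in $\CIhat$, then the length satisfies $\abs{\CIhat}\geq r>r/2$. The length guarantee applies at these same parameters $(r,\sigma,\varepsilon,Q)$, and its bound is the same function $r(n,\sigma,\varepsilon)$, so the first term is at most $\delta$. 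The coverage guarantee at $\theta=r$ bounds the second term by $\delta$. Hence
\begin{align*}
\sup_{P\in H_1(r,\sigma,\varepsilon)}P^n(1-T)\leq 2\delta.
\end{align*}

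Adding the two bounds gives a total error of at most $3\delta$. The one point that needs checking is that the length guarantee is invoked at the alternative's own $(\sigma,\varepsilon)$. It is, because the length bound is required uniformly over all parameters with the function $r(n,\sigma,\varepsilon)$ evaluated at the true parameters. The freedom in the null variance is therefore absorbed entirely by the infimum over $\sigma^2$ in the coverage condition.
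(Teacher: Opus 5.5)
Your proposal is correct and follows the paper's argument essentially verbatim. You control the type-I error by the coverage guarantee at $\theta=0$ with an arbitrary null variance and $\varepsilon=\epsmax$, and you split the type-II error according to whether $r\in\CIhat$, using the length guarantee at the alternative's own $(\sigma,\varepsilon)$ and coverage at $\theta=r$; your choice to keep $\varepsilon$ fixed in the alternative, rather than taking a supremum over it as the paper's known-variance proof loosely does, is if anything slightly cleaner.
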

\begin{proof}
The proof of Proposition~\ref{prop:CI_to_test_unknown_var} is almost a verbatim copy of the previous one for Proposition~\ref{prop:CI_to_test_known_var}. We only need to adapt the translation between the coverage of $\CIhat$ and the type-I of its induced test $T=\indi\{0\notin\CIhat\}$ to the unknown-variance setting:
\begin{align*}
    \sup_{P\in H_0(\varepsilon)} P^n(T)&=\sup_{\sigma^2}\sup_{\varepsilon\in[0,\epsmax],\, Q}\ P_{0,\sigma,\varepsilon,Q}^n(0\notin\CIhat).
\end{align*}
\end{proof}
Note that we do not force $\epsmax\leq 1/3$ in the above proposition as we did in the upper bound proof.

\subsection{Proof of Proposition~\ref{prop:known_var_matching}}\label{sec:proof_known_var_matching}
Let $K$ be a positive multiple of 4, and $\tau>0$. Consider
\begin{align*}
    M_0(t)&=\litb{e^{\im t/(2\tau)}-e^{-\im t/(2\tau)}}^K=\sum_{k=0}^K(-1)^{K-k}\binom{K}{k}e^{\im t(k-K/2)/\tau},
\end{align*}
which is $O(t^K)$ as $t\to 0$. For $a,b\in(0,1)$, we iteratively define
\begin{align*}
    M_1(t)=e^{\im at/\tau}M_0(t)=O(t^K).
\end{align*}
and
\begin{align*}
    M_2(t)&=\int_0^t M_1(s)\dif s=O(t^{K+1})\\
    &=\sum_{k=0}^K(-1)^{K-k}\binom{K}{k}\frac{\tau}{k-K/2+a}e^{\im t(k-K/2+a)/\tau}-\sum_{k=0}^K(-1)^{K-k}\binom{K}{k}\frac{\tau}{k-K/2+a},
\end{align*}
and
\begin{align*}
    M_3(t)=e^{\im bt/\tau}M_2(t)=O(t^{K+1}),
\end{align*}
and
\begin{align*}
    M_4(t)&=\int_0^t M_3(s)\dif s=O(t^{K+2})\\
    &=\sum_{k=0}^K(-1)^{K-k}\binom{K}{k}\frac{\tau^2}{(k-K/2+a)(k-K/2+a+b)}e^{\im t(k-K/2+a+b)/\tau}\nonumber\\
    &\quad -\sum_{k=0}^K(-1)^{K-k}\binom{K}{k}\frac{\tau^2}{(k-K/2+a)b}e^{\im t b/\tau}\nonumber\\
    &\quad -\sum_{k=0}^K(-1)^{K-k}\binom{K}{k}\frac{\tau^2}{(k-K/2+a)(k-K/2+a+b)}\nonumber\\
    &\quad +\sum_{k=0}^K(-1)^{K-k}\binom{K}{k}\frac{\tau^2}{(k-K/2+a)b}\\
    &=\sum_{k=0}^K(-1)^{K-k}\binom{K}{k}\frac{\tau^2}{(k-K/2+a)(k-K/2+a+b)}e^{\im t(k-K/2+a+b)/\tau}\nonumber\\
    &\quad -\sum_{k=0}^K(-1)^{K-k}\binom{K}{k}\frac{\tau^2}{(k-K/2+a)b}e^{\im t b/\tau}\nonumber\\
    &\quad +\sum_{k=0}^K(-1)^{K-k}\binom{K}{k}\frac{\tau^2}{(k-K/2+a+b)b}.
\end{align*}
From $M_0(t)=O(t^K)$, $M_0(0)=0$ and since
\begin{align*}
    M_4(t)=\int_0^te^{\im bs/\tau}\Big(\int_0^s e^{\im a s'/\tau} M_0(s')\dif s'\Big)\dif s,
\end{align*}
it is straightforward that:
\begin{Claim}
We have
\begin{align*}
    M_4(0)=0,\quad M_4(t)=O(t^{K+2}).
\end{align*}
\end{Claim}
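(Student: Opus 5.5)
The claim is that $M_4(0)=0$ and $M_4(t)=O(t^{K+2})$ as $t\to 0$. The natural route is to work directly from the integral representation
\begin{align*}
    M_4(t)=\int_0^te^{\im bs/\tau}\Big(\int_0^s e^{\im a s'/\tau} M_0(s')\dif s'\Big)\dif s,
\end{align*}
rather than from the expanded trigonometric sum. The expanded form is only useful later, for reading off the weights of the signed measure.

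First, $M_4(0)=0$ is immediate, since the outer integral runs over an interval of length zero. Equivalently, the constant term in the final expanded formula was chosen exactly to cancel the value at $t=0$.

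For the order of vanishing, start from
\begin{align*}
    e^{\im t/(2\tau)}-e^{-\im t/(2\tau)}=2\im\sin(t/(2\tau)).
\end{align*}
Since $\abs{\sin x}\le\abs{x}$, this gives $\abs{M_0(s)}\le(\abs{s}/\tau)^K$ for all real $s$, a bound uniform in $s$ rather than merely asymptotic. The tilting factors $e^{\im as'/\tau}$ and $e^{\im bs/\tau}$ have modulus one, so
\begin{align*}
    \Big|\int_0^s e^{\im as'/\tau}M_0(s')\dif s'\Big|\le \frac{\abs{s}^{K+1}}{(K+1)\tau^K},
\end{align*}
and integrating once more,
\begin{align*}
    \abs{M_4(t)}\le \frac{\abs{t}^{K+2}}{(K+1)(K+2)\tau^K}.
\end{align*}
This gives $M_4(t)=O(t^{K+2})$ with an explicit constant.

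Because $M_4$ is an entire function (a finite exponential sum), this bound is equivalent to the vanishing of its Taylor coefficients of orders $0,\dots,K+1$ at the origin. These derivatives are exactly the moments of the signed measure whose Fourier transform is $M_4$. This is the form in which the claim will be used, namely the moment-matching property in Proposition~\ref{prop:known_var_matching}.

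The only mild subtlety is identifying the integral representation with the expanded closed form. That is a routine check: differentiate the closed form twice, and confirm that it vanishes at $t=0$ along with its first derivative. The first derivative is $M_3(0)=e^{0}M_2(0)=0$. I do not expect a real obstacle here.
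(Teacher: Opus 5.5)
Your argument is correct and follows the paper's route: the paper only says the claim is straightforward from the integral definition, and your bound $|M_0(s)|\le(|s|/\tau)^K$, integrated twice against unimodular tilts, makes that explicit, with the constant $1/((K+1)(K+2)\tau^K)$. One small caution on your aside: if you actually carry out the ``routine check,'' you will find that the printed closed form equals $-M_4$, because $\int_0^t e^{\mathrm{i}cs/\tau}\,ds=\frac{\tau}{\mathrm{i}c}(e^{\mathrm{i}ct/\tau}-1)$ and the paper's formulas drop the $1/\mathrm{i}$ at each of the two integrations. This is harmless, since a constant factor changes neither $M_4(0)=0$ nor the $O(t^{K+2})$ order used for moment matching.
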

\begin{proof}
This is straightforward from the integral definition.
\end{proof}
Rewrite
\begin{align*}
    M_4(t)&=\sum_{2\mid k, k\neq K/2}\alpha_k e^{\im t(k-K/2+a+b)/\tau}+\alpha_{K/2}e^{\im t(a+b)/\tau}+\beta_+\nonumber\\
    &\quad - \Big(\sum_{2\nmid k}\alpha_k e^{\im t(k-K/2+a+b)/\tau}+\beta_-e^{\im tb/\tau}\Big),
\end{align*}
where
\begin{align*}
    \alpha_k=\binom{K}{k}\frac{\tau^2}{(k-K/2+a)(k-K/2+a+b)},
\end{align*}
and
\begin{align*}
    \beta_+=\sum_{k=0}^K(-1)^{K-k}\binom{K}{k}\frac{\tau^2}{(k-K/2+a+b)b},\quad \beta_-=\sum_{k=0}^K(-1)^{K-k}\binom{K}{k}\frac{\tau^2}{(k-K/2+a)b}.
\end{align*}
We claim that:
\begin{Claim}\label{clm:relative_magnitude}
When $a\in(0,1/8]$, $b\in(0,1/8]$, we have that $\beta_\pm$ and all $\alpha_{0:K}$ are positive, and
\begin{align*}
    \frac{\beta_+}{\alpha_{K/2}}\geq \frac{\pi a}{4b},\quad \frac{\beta_+}{\sum_{2\mid k,\, k\neq K/2}\alpha_k}\geq \frac{3}{8\pi b(a+b)},\quad \frac{\alpha_{K/2}}{\sum_{2\mid k,\, k\neq K/2}\alpha_k}\geq \frac{3}{2\pi^2a(a+b)},\quad \frac{\beta_-}{\sum_{2\nmid k}\alpha_k}\geq \frac{3}{8\pi ab}.
\end{align*}
\end{Claim}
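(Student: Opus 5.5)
The plan is to obtain closed forms for $\beta_\pm$ and $\alpha_{K/2}$ and then bound the remaining $\alpha_k$ crudely by the central binomial coefficient. Throughout I write $j=k-K/2\in\{-K/2,\dots,K/2\}$ and use that $K/2$ is even, since $4\mid K$. Two consequences will be used repeatedly:
\begin{itemize}
\item $(-1)^{K-k}=(-1)^k=(-1)^j$;
\item $k$ is even if and only if $j$ is even.
\end{itemize}

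\textbf{Positivity of the $\alpha_k$.} We have $\alpha_k=\binom{K}{k}\tau^2/((j+a)(j+a+b))$. For $j\ge 1$ both factors of the denominator are positive. For $j\le -1$ both are negative, because $j+a+b\le -1+1/4<0$. For $j=0$ the denominator is $a(a+b)>0$. Hence every $\alpha_k>0$. The same case analysis gives the lower bound
\begin{align*}
\abs{(j+a)(j+a+b)}\ \ge\ (\abs{j}-1/4)^2\quad\text{for } j\neq 0,
\end{align*}
which is what I will use to control the non-central terms.

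\textbf{Closed forms for $\beta_\pm$.} I will use the partial-fraction identity
\begin{align*}
\sum_{k=0}^K(-1)^k\binom{K}{k}\frac{1}{k+z}=\frac{K!}{z(z+1)\cdots(z+K)}.
\end{align*}
It can be checked by residues, or by integrating $x^{z-1}(1-x)^K$ when $\Re z>0$ and then continuing analytically. Take $z=c-K/2$ with $c\in\{a,\,a+b\}$. Pairing the factors $j=\pm m$ gives
\begin{align*}
\prod_{j=-K/2}^{K/2}(j+c)=c\prod_{m=1}^{K/2}\big(-(m^2-c^2)\big)=c\,\big((K/2)!\big)^2\prod_{m=1}^{K/2}\Big(1-\frac{c^2}{m^2}\Big).
\end{align*}
The sign is positive because there are $K/2$ factors and $K/2$ is even. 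Therefore
\begin{align*}
\beta_-=\frac{\tau^2\binom{K}{K/2}}{ab\,\Pi(a)},\qquad \beta_+=\frac{\tau^2\binom{K}{K/2}}{b(a+b)\,\Pi(a+b)},\qquad \Pi(c)=\prod_{m=1}^{K/2}(1-c^2/m^2)\in(0,1].
\end{align*}
Both are positive. Moreover $\Pi(c)^{-1}\ge 1$, so $\beta_-\ge\tau^2\binom{K}{K/2}/(ab)$ and $\beta_+\ge\tau^2\binom{K}{K/2}/(b(a+b))$. If an upper bound on $\Pi^{-1}$ is needed, the Euler product gives $\Pi(c)\ge\sin(\pi c)/(\pi c)\ge 2/\pi$ for $c\le 1/4$.

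\textbf{The four ratios.} Since $\alpha_{K/2}=\binom{K}{K/2}\tau^2/(a(a+b))$, we get
\begin{align*}
\beta_+/\alpha_{K/2}=\frac{a}{b\,\Pi(a+b)}\ \ge\ \frac{a}{b}\ \ge\ \frac{\pi a}{4b}.
\end{align*}
For the remaining ratios I bound $\binom{K}{k}\le\binom{K}{K/2}$ and use the lower bound on $\abs{(j+a)(j+a+b)}$ from the first step.
\begin{itemize}
\item Even $k\neq K/2$ (that is, $j=\pm2,\pm4,\dots$):
\begin{align*}
\sum_{2\mid k,\,k\neq K/2}\alpha_k\ \le\ \tau^2\binom{K}{K/2}\cdot 2\sum_{m\ge1}(2m-1/4)^{-2}.
\end{align*}
\item Odd $k$ (that is, $j=\pm1,\pm3,\dots$):
\begin{align*}
\sum_{2\nmid k}\alpha_k\ \le\ \tau^2\binom{K}{K/2}\cdot 2\sum_{m\ge1}(2m-5/4)^{-2}.
\end{align*}
\end{itemize}
Dividing the lower bounds on $\beta_\pm$ and the exact value of $\alpha_{K/2}$ by these sums yields bounds of the form $c_1/(b(a+b))$, $c_2/(a(a+b))$ and $c_3/(ab)$.

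\textbf{Main obstacle.} The only delicate step is checking that these absolute series constants are small enough to match the stated values $3/(8\pi)$, $3/(2\pi^2)$ and $3/(8\pi)$. The odd sum is the tightest, because $j=\pm1$ contributes up to $2(4/3)^2$. If the crude bound $(\abs{j}-1/4)^{-2}$ is too loose there, I will sharpen it in two ways. First, I will treat $j=1$ and $j=-1$ exactly, where the denominators are $(1+a)(1+a+b)\ge 1$ and $(1-a)(1-a-b)\ge 21/32$ respectively. Second, I will exploit the decay of $\binom{K}{k}/\binom{K}{K/2}$ away from the centre. Everything else in the argument is routine bookkeeping.
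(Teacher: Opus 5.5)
Your proposal is correct, and it bounds $\beta_\pm$ by a different route than the paper.

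\textbf{Bounds on $\beta_\pm$.} The paper never evaluates the alternating sums. In Lemma~\ref{lem:control_alternation} it pairs the terms $k$ and $K-k$ and compares the result with the Mittag-Leffler expansion of $\pi\cot(\pi x)$. This controls the deviation of $\sum_k(-1)^{K-k}\binom{K}{k}\frac{1}{k-K/2+x}$ from its central term $\binom{K}{K/2}/x$, and gives the two-sided estimate $\frac{\pi/4}{x}\binom{K}{K/2}\le\cdot\le\frac{2-\pi/4}{x}\binom{K}{K/2}$ for $x\le 1/4$.

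Your Beta-function identity instead gives the sum exactly as $\binom{K}{K/2}/(x\,\Pi(x))$. Positivity of $\beta_\pm$ is then immediate, and the lower constant improves from $\pi/4$ to $1$. Via Euler's product, the closed form also yields an upper bound $\pi/\sin(\pi x)$, which is sharper than the paper's $(2-\pi/4)/x$.

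\textbf{Bounds on the non-central $\alpha_k$.} The paper uses the single estimate $1/((j+a)(j+a+b))\le 2/j^2$ together with $\sum_{j\neq 0}2/j^2=2\pi^2/3$, for both the even and the odd sums. Your split by parity with $(\lvert j\rvert-1/4)^{-2}$ is finer.

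\textbf{The step you flagged as the main obstacle.} It is not delicate. For $m\ge 1$ you have $2m-5/4\ge 3m/4$ and $2m-1/4\ge 7m/4$, so your series give
\[
\sum_{2\nmid k}\alpha_k\le\tfrac{16\pi^2}{27}\,\tau^2\tbinom{K}{K/2}\approx 5.85\,\tau^2\tbinom{K}{K/2},
\qquad
\sum_{2\mid k,\,k\ne K/2}\alpha_k\le\tfrac{16\pi^2}{147}\,\tau^2\tbinom{K}{K/2}\approx 1.07\,\tau^2\tbinom{K}{K/2}.
\]
Given your lower bounds $\beta_-\ge\tau^2\binom{K}{K/2}/(ab)$ and $\beta_+\ge\tau^2\binom{K}{K/2}/(b(a+b))$, the claim only requires these sums to be at most $\frac{8\pi}{3}\tau^2\binom{K}{K/2}\approx 8.38\,\tau^2\binom{K}{K/2}$ for the two $\beta$ ratios. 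For the $\alpha_{K/2}$ ratio the requirement is $\frac{2\pi^2}{3}\tau^2\binom{K}{K/2}\approx 6.58\,\tau^2\binom{K}{K/2}$. Both hold with room to spare.

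So neither proposed sharpening is needed. The second one could not have helped anyway: for fixed $j$, $\binom{K}{K/2+j}/\binom{K}{K/2}\to 1$ as $K\to\infty$, so the binomial coefficients give no decay near the centre uniformly in $K$.
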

\begin{proof}
The positivity of $\alpha_{0:K}$ is because $(k-K/2+a)$ and $(k-K/2+a+b)$ always take the same sign. 
We also observe that $\alpha_{K/2}=\binom{K}{K/2}\frac{\tau^2}{a(a+b)}$.
By Lemma~\ref{lem:control_alternation}, we have
\begin{align*}
    \beta_+\geq \frac{\pi}{4}\binom{K}{K/2}\frac{\tau^2}{b(a+b)},\quad \beta_-\geq \frac{\pi}{4}\binom{K}{K/2}\frac{\tau^2}{ab},
\end{align*}
and thus the positivity of $\beta_\pm$ and the first inequality are proved. Meanwhile, since 
\begin{align*}
    \frac{1}{(j+a)(j+a+b)}\leq \frac{2}{j^2},\quad j\in\bbZ\backslash\{0\},
\end{align*}
we have
\begin{align*}
    \sum_{2\nmid k}\alpha_k\vee \sum_{2\mid k,\,k\neq K/2}\alpha_k\leq\sum_{k\neq K/2}\alpha_k &\leq \binom{K}{K/2}\sum_{k\neq K/2}\frac{\tau^2}{(k-K/2+a)(k-K/2+a+b)}\\
    &\leq \binom{K}{K/2}\sum_{j\in\bbZ\backslash\{ 0\}}\frac{\tau^2}{(j+a)(j+a+b)}\\
    &\leq \binom{K}{K/2}\sum_{j\in\bbZ\backslash\{ 0\}}\frac{2\tau^2}{j^2}\\
    &=\frac{2\pi^2}{3}\binom{K}{K/2}\tau^2.
\end{align*}
Combining this with the lower bounds on $\beta_+$ and $\beta_-$ completes the proof.
\end{proof}
Let 
\begin{align*}
    T=\beta_++\sum_{2\mid k}\alpha_k=\beta_-+\sum_{2\nmid k}\alpha_k>0,
\end{align*}
where the second identity is due to $M_4(0)=0$. Finally, we define
\begin{align*}
    \overline{M}_4(t)=\frac{1}{T}M_4(t).
\end{align*}
Then, $\overline{M}_4(t)$ is the difference between the characteristic functions of two discrete probability distributions:
\begin{align*}
    \nu_0=\frac{\beta_+}{T}\delta_0+\sum_{2\mid k}\alpha_k \delta_{(k-K/2+a+b)/\tau},\quad \nu_1=\frac{\beta_-}{T}\delta_{b/\tau}+\sum_{2\nmid k}\alpha_k \delta_{(k-K/2+a+b)/\tau}.
\end{align*}
From now on, we set $a=\varepsilon^{1/2}/C_1$ and $b=a/C_2$ for some constants $C_1$ and $C_2$ that depend only on $\epsmax$. We shall show that by setting $C_1$ and $C_2$ appropriately large, the resulting $\nu_0$ and $\nu_1$ will satisfy Proposition~\ref{prop:known_var_matching}.
\begin{proof}[Proof of Proposition~\ref{prop:known_var_matching}] Setting $a=\varepsilon^{1/2}/C_1$ and $b=a/C_2$. The second part of Proposition~\ref{prop:known_var_matching} follows from the fact that
\begin{align*}
    \overline{M}_4(t)=\frac{1}{T}M_4(t)=O(t^{K+2}).
\end{align*}
To show the first conclusion, we apply Lemma~\ref{clm:relative_magnitude}. We have
\begin{align*}
    1-\nu_1(\{b/\tau\})=1-\frac{\beta_-}{T}=\frac{\sum_{2\nmid k}\alpha_k}{\beta_-+\sum_{2\nmid k}\alpha_k}\leq \frac{1}{3/(8\pi ab)+1}=\frac{\varepsilon}{3C_1^2C_2/(8\pi)+\varepsilon}.
\end{align*}
Thus, it suffices to let $C_1^2 C_2\geq 8\pi/3$. Similarly,
\begin{align*}
    1-\nu_0(\{0\})=\frac{\sum_{2\mid k}\alpha_k}{\beta_++\sum_{2\mid k}\alpha_k}&\leq \frac{4b/(\pi a)+8\pi b(a+b)/3}{1+4b/(\pi a)+8\pi b(a+b)/3}\leq \frac{4b/(\pi a)}{1+4b/(\pi a)}\leq \frac{4b}{\pi a}=\frac{4}{\pi C_2}.
\end{align*}
Thus, it suffices to let $C_2\geq 4/(\pi\epsmax)$. Aggregating the above conditions, we need to ensure that
\begin{align*}
    C_1^2C_2\geq 8\pi/3,\quad C_2\geq 4/(\pi\epsmax),
\end{align*}
and also for making $a,b\leq 1/8$: 
\begin{align*}
    2^{-1/2}/C_1\leq 1/8,\quad 2^{-1/2}/(C_1C_2)\leq 1/8,
\end{align*}
all of which can be simultaneously satisfied by setting
\begin{align*}
    C_2=2/\epsmax,\quad C_1=8.
\end{align*}
Thus, the first part of Proposition~\ref{prop:known_var_matching} is now proved. Fixing these choices of $a$ and $b$, we now prove the third part of Proposition~\ref{prop:known_var_matching}. Note that both $\nu_0$ and $\nu_1$ are supported on $[-K/\tau,K/\tau]$, and that the preceding proof has shown that the masses of $\nu_0$ and $\nu_1$ are highly concentrated on $0$ and $b/\tau$, respectively. Therefore, denoting $c=1/(C_1C_2)=\epsmax/16<1/32$, we have $b=c\varepsilon^{1/2}$, and
\begin{align*}
    \abs{\E_{\nu_1}[X^k]}&\leq \nu_1(\{b/\tau\})\cdot\abs{b/\tau}^k+(1-\nu_1(\{b/\tau\}))\cdot\abs{K/\tau}^k\\
    &\leq c^k \varepsilon^{k/2}/\tau^k+ \varepsilon (K/\tau)^k\\
    &\leq 2\varepsilon (K/\tau)^k
\end{align*}
as long as $k\geq 2$ and $K/\tau\geq 1$. To control the moments of $\nu_0$, we need one more step. Note that by Claim~\ref{clm:relative_magnitude},
\begin{align*}
    1-\nu_0(\{0\})-\nu_0(\{b/\tau\})=\frac{\sum_{2\mid k,\, k\neq K/2}\alpha_k}{T}&\leq \frac{1}{1+3/(8\pi b(a+b))}\\
    &= \frac{8\pi}{3}\varepsilon c(c+1/C_1)\\
    &=\frac{8\pi}{3}\cdot \frac{\epsmax}{16}(\frac{\epsmax}{16}+1/8)\varepsilon\leq \varepsilon.
\end{align*}
Therefore,
\begin{align*}
    \abs{\E_{\nu_0}[X^k]}&\leq \nu_0(\{b/\tau\})\cdot 0^k+\nu_0(\{b/\tau\}) \abs{b/\tau}^k+ (1-\nu_0(\{0\})-\nu_0(\{b/\tau\}))\cdot\abs{K/\tau}^k\\
    &\leq c^k\varepsilon^{k/2}/\tau^k+\varepsilon\abs{K/\tau}^k\\
    &\leq 2\varepsilon (K/\tau)^k
\end{align*}
whenever $k\geq 2$ and $K/\tau\geq 1$. The desired conclusions are then proved.  
\end{proof}

\subsection{Proof of Theorem~\ref{thm:unknown_var_lower}}\label{sec:proof_unknown_var_lower}
\begin{proof}[Proof of Theorem~\ref{thm:unknown_var_lower}]
For $\epsmax\in(0,1/3]$, we first construct
\begin{align*}
    \mu_0=(1-\epsmax)\delta_0+\epsmax\big(p\delta_{y_1}+(1-p)\delta_{y_2}\big),\quad \mu_1=\cN(1,s^2),
\end{align*}
where
\begin{align*}
     y_1=-\frac{1}{\epsmax}-\frac{A(\epsmax)}{\epsmax},\quad y_2=-\frac{1}{\epsmax}+\frac{A(\epsmax)}{\epsmax},\quad A(\epsmax)=\sqrt{\frac{2\epsmax^2+9\epsmax+5}{3\epsmax+1}},
\end{align*}
and
\begin{align*}
    s^2=\frac{2-\epsmax-\epsmax^2}{\epsmax(1+3\epsmax)},\quad p=\frac{1/\epsmax-y_2}{y_1-y_2}=\frac{1}{2}-\sqrt{\frac{3\epsmax+1}{2\epsmax^2+9\epsmax+5}}.
\end{align*}
It can be verified that $\mu_0$ and $\mu_1$ match the first three moments. Moreover, 
\begin{align*}
    s^2\leq 2/\epsmax,\quad \max\{\abs{y_1},\abs{y_2}\}\leq 4/\epsmax.
\end{align*}
Now we define $\nu_0$ and $\nu_1$ as the scaled version of $\mu_0$ and $\mu_1$, that is,
\begin{align*}
    \nu_0=(1-\epsmax)\delta_0+\epsmax\big(p\delta_{ry_1}+(1-p)\delta_{ry_2}\big),\quad \nu_1=\cN(r,r^2s^2)
\end{align*}
for $r=c'n^{-1/8}$ for some absolute constant $c'$ that depends on $\delta$ and $\epsmax$. Still, $\nu_0$ and $\nu_1$ match their first three moments. $\nu_0$ is supported within $[-1,1]\cdot 4c'n^{-1/8}/\epsmax$ and is thus $(32c'^2n^{-1/4}/\epsmax^2)$-sub-Gaussian; $r^2s^2\leq 4c'^2 n^{-1/4}/\epsmax$, and thus $\nu_1$ is also $(32c'^2n^{-1/4}/\epsmax^2)$-sub-Gaussian. Therefore, setting $P_i=\nu_i*\cN(0,1)$ for $i=1,2$, we conclude by Lemma~\ref{lem:chi2_subgaussian} that
\begin{align*}
    \chi^2(P_1,P_0)\leq \frac{16 (32c'^2n^{-1/4}/\epsmax^2)^4}{\sqrt{3}(1-32c'^2n^{-1/4}/\epsmax^2)}\leq 2^{24}c'^2/\epsmax^2\cdot n^{-1},
\end{align*}
as long as we set $c'^2\leq \epsmax^2/128$. By further setting $c'$ small with respect to $\delta$, we obtain a lower bound of
\begin{align*}
    r/\sigma\geq \frac{c'n^{-1/8}}{\sqrt{1+r^2s^2}}\geq c_{\delta,\epsmax} n^{-1/8},
\end{align*}
which proves the desired conclusion.
\end{proof}

\subsection{Proof of Theorem~\ref{thm:unknown_var_lower_above_third}}\label{sec:proof_unknown_var_lower_above_third}
\begin{proof}[Proof of Theorem~\ref{thm:unknown_var_lower_above_third}]
We start by considering when we can find $s^2>0$ and a probability distribution $Q$ such that
\begin{align*}
    \mu_0=(1-\epsmax)\delta_0+\epsmax Q\quad \textnormal{and}\quad \mu_1=N(1,s^2)
\end{align*}
match the first $K$ moments. This is equivalent to determining the existence of $Q$ such that
\begin{align*}
    \E_Q[X^k]=(1/\epsmax)\E_{\cN(1,s^2)}[X^k]\define m_k,\quad k=0,1,2,\cdots,K,
\end{align*}
which is precisely a truncated Hamburger moment problem. We choose to investigate only odd $K$ since for odd $K$, the existence of $K$ is easier to determine via the positive semidefiniteness of the Hankel matrices. Define $H_k=[m_{i+j}]_{i,j=0}^k\in\bbR^{(k+1)\times(k+1)}$ as the order-$k$ Hankel matrix that any candidate $Q$ must obey. We have
\begin{align*}
    \det H_0&=1,\\
    \det H_1&=\epsmax^{-2}(\epsmax s^2+\epsmax-1),\\
    \det H_2&=\epsmax^{-3}s^2\big[(3\epsmax-1)s^4+\epsmax-1\big],\\
    \det H_3&=2\epsmax^{-4}s^6\big[(9\epsmax-3)s^6+(9\epsmax-9)s^4+(3-3\epsmax)s^2+\epsmax-1\big],\\
    \det H_4&=12\epsmax^{-5}s^{12}\big[(45\epsmax-21)s^8+(30\epsmax-30)s^4+(8-8\epsmax)s^2+\epsmax-1\big],\\
    \det H_5&=288\epsmax^{-6}s^{20}\big[(225\epsmax-105)s^{10}+(225\epsmax-225)s^8+(150-150\epsmax)s^6\nonumber\\
    &\ +(90\epsmax-90)s^4+(15-15\epsmax)s^2+\epsmax-1\big],\\
    \det H_6&=34560\epsmax^{-7}s^{30}\big[(1575\epsmax-855)s^{12}+(1575\epsmax-1575)s^8+(840-840\epsmax)s^6\nonumber\\
    &\ +(225\epsmax-225)s^4+(24-24\epsmax)s^2+\epsmax-1\big].
\end{align*}
We observe that each determinant is a polynomial of $s^2$ with $\epsmax$-dependent coefficients. When $\epsmax\in(1/3,1/2)$, the leading coefficients of $\{\det H_k\}_{k=0}^3$ are all positive. Therefore, by setting $s^2=s^2(\epsmax)$ large enough, we can make $\det H_k>0$ for all $k=0,1,\cdots,3$, indicating $H_3\succ 0$ and thus the existence of $Q$ such that $\mu_0$ and $\mu_1$ match up to the $2\times 3+1=7$-th moment. Furthermore, there exists such a $Q$ supported on at most $8$ atoms. Pick this $Q$, and let $B=B(\epsmax)$ be the maximum absolute value of its 8 supporting atoms. Following the proof of Theorem~\ref{thm:unknown_var_lower}, we set $r=c'n^{-1/16}$ for $c'$ depending on $\delta$ and $\epsmax$, and define $\nu_0$ (resp. $\nu_1$) as $\mu_0$ (resp. $\mu_1$) scaled by $r$. Then, $\nu_0$ is $c'^2 B(\epsmax)^2n^{-1/8}$-sub-Gaussian, and $\nu_1$ is $c'^2s^2(\epsmax)n^{-1/8}$-sub-Gaussian. By setting $c'$ small enough with respect to $\epsmax$, we can make both sub-Gaussian coefficients smaller than $c''n^{-1/8}<1$. Define $P_i=\nu_i*\cN(0,1)$ for $i=0,1$. By Lemma~\ref{lem:chi2_subgaussian}, we have
\begin{align*}
    \chi^2(P_1,P_0)\leq \frac{16(c'' n^{-1/8})^8}{\sqrt{7}(1-c'' n^{-1/8})}\lesssim_{\delta,\epsmax} n^{-1}.
\end{align*}
By further making $c'$ small enough with respect to $\delta$, we obtain a lower bound of
\begin{align*}
    r/\sigma\geq \frac{c'n^{-1/16}}{\sqrt{1+s^2(\epsmax)}}\gtrsim_{\delta,\epsmax} n^{-1/16}
\end{align*}
for all $\epsmax\in(1/3,1/2)$. Similarly, since the leading coefficients of $\{\det H_k\}_{k=0}^5$ are all positive when $\epsmax\in(7/15,1/2)$, we can find a finitely supported $Q$ that matches from the $0$-th to the $2\times 5+1=11$-th moments, leading to a lower bound of
\begin{align*}
    r/\sigma\gtrsim_{\delta,\epsmax} n^{-1/(2\times 11+2)}\asymp_{\delta,\epsmax} n^{-1/24},\quad \epsmax\in(7/15,1/2).
\end{align*}
We cannot push this argument to higher moments, since making the leading coefficient of $\det H_6$ positive already requires $\epsmax>19/35$, which is above $1/2$.
\end{proof}

\section{Supplementary Proofs}\label{sec:other_proof}

\subsection{Remaining Proofs for Section~\ref{sec:overview}}\label{sec:other_proof_sec2}
\subsubsection{Proofs of Lemma~\ref{lem:true_candidate_known_var} and \ref{lem:true_candidate_unknown_var}}\label{sec:proof_chf}
\begin{proof}[Proof of Lemma~\ref{lem:true_candidate_known_var}]
Recall the notations $\phi(t)=\E_P[\exp(\im tX)]$ and $\xi(t)=\E_Q[\exp(\im tX)]$. We have 
\begin{align*}
    \Upsilon(t;\mu)=2e^{-\im \mu t+\sigma^2 t^2/2}\phi(t)-1&=2e^{-\im \mu t+\sigma^2 t^2/2}[(1-\varepsilon)e^{\im \theta t-\sigma^2t^2/2}+\varepsilon e^{-\sigma^2t^2/2}\xi(t)]-1\\
    &=2(1-\varepsilon)e^{\im (\theta-\mu)t}+2\varepsilon \xi(t)-1.
\end{align*}
Under the condition $\mu=\theta$, this becomes $\Upsilon(t;\mu=\theta)=1-2\varepsilon+2\varepsilon \xi(t)$, which is the characteristic function of $(1-2\varepsilon)\delta_0+2\varepsilon Q$. 

On the other hand, under the condition that $\Upsilon(t;\mu)$ is the characteristic function of a probability distribution, if $\mu\neq \theta$, we have
\begin{align*}
    1\geq \sup_{t\in\bbR}\abs{\Upsilon(t;\mu)}\geq \abs{\Upsilon(\pi/(\theta-\mu);\mu)}=\abs{2\varepsilon+2\varepsilon \xi(t)-3}\geq 3-4\varepsilon>1.
\end{align*}
This leads to a contradiction, and thus we must have $\mu=\theta$ under the given condition.
\end{proof}

\begin{proof}[Proof of Lemma~\ref{lem:true_candidate_unknown_var}]
We have 
\begin{align*}
    \Upsilon(t;\mu,\lambda)=3e^{-\im \mu t+\lambda t^2/2}\phi(t)-2&=3e^{-\im \mu t+\lambda t^2/2}[(1-\varepsilon)e^{\im \theta t-\sigma^2t^2/2}+\varepsilon e^{-\sigma^2t^2/2}\xi(t)]-2\\
    &=3(1-\varepsilon)e^{\im (\theta-\mu)t+(\lambda-\sigma^2)t^2/2}+3\varepsilon e^{(\lambda-\sigma^2) t^2/2} \xi(t)-2.
\end{align*}
Under the condition that $\mu=\theta$ and $\lambda=\sigma^2$, this becomes $\Upsilon(t;\mu,\lambda)=(1-3\varepsilon)+3\varepsilon \xi(t)$, which is the characteristic function of $(1-3\varepsilon)\delta_0+3\varepsilon Q$.

On the other hand, under the condition that $\Upsilon(t;\mu,\lambda)$ is the characteristic function of a true probability distribution, we must have
\begin{align*}
    1\geq \sup_{t\in\bbR}\ \abs{\Upsilon(t;\mu,\lambda)}.
\end{align*}
If $\lambda<\sigma^2$, then
\begin{align*}
    \sup_{t\in\bbR}\ \abs{\Upsilon(t;\mu,\lambda)}\geq \limsup_{t\to \infty}\ \abs{\Upsilon(t;\mu,\lambda)}=2>1,
\end{align*}
which leads to a contradiction. If $\lambda>\sigma^2$, then
\begin{align*}
    \sup_{t\in\bbR}\ \abs{\Upsilon(t;\mu,\lambda)}\geq \sup_{t\in\bbR}\ [3(1-\varepsilon)-3\varepsilon] e^{(\lambda-\sigma^2)t^2/2}-2=+\infty,
\end{align*}
which also leads to a contradiction. Therefore, we must have $\lambda=\sigma^2$. Given this, we have
\begin{align*}
    1\geq \abs{\Upsilon(t;\mu,\lambda=\sigma^2)}= \abs{3(1-\varepsilon)e^{\im (\theta-\mu)t}+3\varepsilon \xi(t)-2},\quad \forall\, t\in\bbR.
\end{align*}
If $\mu\neq \theta$, we pick $t=\pi/(\theta-\mu)$ to see that
\begin{align*}
    \abs{\Upsilon(t=\pi/(\mu-\theta);\mu,\lambda=\sigma^2)}=\abs{3\varepsilon+3\varepsilon\xi(t)-5}\geq5-6\varepsilon\geq 3
\end{align*}
since $\varepsilon\leq 1/3$, which again leads to a contradiction. Therefore, under the given condition, we must have $\mu=\theta$ and $\lambda=\sigma^2$.
\end{proof}

\subsubsection{Proof of Lemma~\ref{lem:pilot_unknown_var} Part I: Pilot Variance Estimator}
We first prove the existence of such a pilot variance estimator.
\begin{Lemma}\label{lem:pilot_unknown_var_varpart}
There exists a variance estimator $\widetilde{\sigma}^2$ such that: for any fixed $\delta\in(0,1)$ and $\epsmax\in(0,1/2)$, there are absolute constants $L$ and $N_1$ that depend only on $\delta$ and $\epsmax$, such that
\begin{align*}
    \inf_{\theta,\sigma^2}\, \inf_{ Q}\,P_{\theta,\sigma,\epsmax,Q}^n\Big(\frac{\abs{\widetilde{\sigma}^2-\sigma^2}}{\sigma^2}\leq \frac{L}{\log(en)}\Big)\geq 1-\delta
\end{align*}
whenever $n\geq N_1$.
\end{Lemma}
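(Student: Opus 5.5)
The plan is to build $\widetilde{\sigma}^2$ from the modulus of the empirical characteristic function, following the Kotekal--Gao construction at $\varepsilon=\epsmax$. The starting identity is
\begin{align*}
    \abs{e^{\sigma^2t^2/2}\phi(t)}=\abs{(1-\epsmax)e^{\im\theta t}+\epsmax\xi(t)}\in[1-2\epsmax,\,1],
\end{align*}
which holds for every $t$, every $\theta$ and every $Q$. Taking logarithms gives
\begin{align*}
    -\frac{2\log\abs{\phi(t)}}{t^2}=\sigma^2-\frac{2\log\abs{(1-\epsmax)e^{\im\theta t}+\epsmax\xi(t)}}{t^2}\in\Big[\sigma^2,\ \sigma^2+\frac{2\log(1/(1-2\epsmax))}{t^2}\Big].
\end{align*}
So $-2\log\abs{\phi(t)}/t^2$ overestimates $\sigma^2$ by at most an additive $c_{\epsmax}/t^2$. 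At a frequency $t\asymp\sqrt{\log(en)}/\sigma$ this bias is a relative $O(1/\log(en))$. The catch is that the right frequency depends on the unknown $\sigma$.

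First I would fix the scale with a crude estimate $\widehat{s}$ satisfying $\widehat{s}\asymp\sigma$ with probability $1-\delta/4$. One candidate is a rescaled interquartile range; another is a median-type quantity chosen to tolerate $\epsmax<1/2$ Gaussian-mean-shift contamination. Any rule giving a constant-factor approximation uniformly over $Q$ suffices. Sample splitting keeps this estimate independent of the second half.

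I would then set $t_0=c\sqrt{\log(en)}/\widehat{s}$ with $c$ small and define
\begin{align*}
    \widetilde{\sigma}^2=-2\log\abs{\phi_n(t_0)}/t_0^2.
\end{align*}
Conditionally on $\widehat{s}$, Hoeffding gives $\abs{\phi_n(t_0)-\phi(t_0)}\le\sqrt{2\log(8/\delta)/n}$. Because $\sigma t_0\le c'\sqrt{\log(en)}$, we have $\abs{\phi(t_0)}\ge(1-2\epsmax)e^{-c'^2\log(en)/2}\ge(1-2\epsmax)n^{-1/4}$ once $c'^2\le 1/2$. This dominates the $n^{-1/2}$ fluctuation, so $\log\abs{\phi_n(t_0)}=\log\abs{\phi(t_0)}+O(n^{-1/4})$. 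The stochastic error in $\widetilde{\sigma}^2$ is therefore $O(n^{-1/4}/t_0^2)$, which is relatively negligible. The bias bound above contributes relative error $2\log(1/(1-2\epsmax))/(\sigma^2t_0^2)\lesssim 1/\log(en)$ on the event $\widehat{s}\asymp\sigma$. Together these give the claimed $L/\log(en)$ relative error for $n\ge N_1$.

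The main obstacle is the first step: a contamination-robust constant-factor scale estimate that works for every $\epsmax<1/2$. Quantile-based estimators are spoiled when $Q$ places mass near $\theta$. If that route fails, I would make the scale choice itself Fourier-based. Scan the geometric grid $t_k=2^{-k}$ for the frequency where $\abs{\phi_n(t_k)}$ first drops below a threshold $\asymp n^{-1/4}$. The bound $\abs{\phi(t)}\in[(1-2\epsmax),1]e^{-\sigma^2t^2/2}$ pins this crossing to $t\asymp\sqrt{\log n}/\sigma$ up to constants. A union bound over the $O(\log n)$ grid points, as in Lemma~\ref{lem:chf_concentration}, controls the fluctuations. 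Polynomial-time computability is immediate because only finitely many grid points are evaluated.
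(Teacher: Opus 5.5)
Your second stage is the paper's argument: sample splitting, $\widetilde{\sigma}^2=-2\log\abs{\phi_n(t_0)}/t_0^2$ at $t_0\asymp\sqrt{\log(en)}/\widehat{s}$, the bias bound from $\abs{(1-\epsmax)e^{\im\theta t}+\epsmax\xi(t)}\in[1-2\epsmax,1]$, and a single Hoeffding bound at a frequency where $\abs{\phi(t_0)}\gtrsim n^{-1/4}$. The gap is the first stage. You flag it as the main obstacle but never close it: no statistic is actually shown to satisfy $\widehat{s}\asymp\sigma$ uniformly over $\theta$, $\sigma^2>0$ and $Q$.

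Your one concrete candidate, the rescaled IQR, fails for $\epsmax\in(1/4,1/2)$: with $Q=\delta_{-A}$ and $A\to\infty$, the lower quartile runs off to $-\infty$. Your stated worry also points the wrong way. Mass of $Q$ near $\theta$ is harmless here, because every component is convolved with $\cN(0,\sigma^2)$. So $P_{\theta,\sigma,\epsmax,Q}$ has density at most $(2\pi\sigma^2)^{-1/2}$, and no interval of length $o(\sigma)$ can carry constant mass.

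The Fourier fallback is not justified as written either. Since $\sigma$ is unrestricted, the dyadic grid must span infinitely many scales rather than $O(\log n)$ points. For $t\ll 1/\sigma$ you then need $\abs{\phi_n(t)}>\tau$ with constant margin at infinitely many frequencies, and a Hoeffding union bound over those is not summable. Lemma~\ref{lem:chf_concentration} does not help, because it presupposes an anchor scale $t_1$. This can be repaired by a deterministic argument. For $t\lesssim 1/(\sigma\sqrt{\log(n/\delta)})$ the clean points have nearly aligned phases and make up more than half of the sample, so $\abs{\phi_n(t)}\geq c_{\epsmax}>0$. That leaves only $O(\log\log n)$ dyadic points near the crossing for the union bound, but this argument is missing from your proposal.

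The paper closes this step by importing the Kotekal--Gao estimator of Lemma~\ref{lem:constant_var_est}. It takes the minimum of within-block sample variances over $\lceil n^{C_1}\rceil$ random blocks of size $\lceil C_2\log n\rceil$. This is $\asymp\sigma^2$ because with high probability some block is entirely clean, while the oblivious Gaussian noise keeps every block's sample variance from collapsing.

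A more elementary fix in your own spirit is the median absolute deviation about the sample median, or the length of the shortest interval containing a fixed fraction $p<1-\epsmax$ of the data. The density bound gives the lower bound. Lemma~\ref{lem:median} together with $1-\epsmax>1/2$ gives the upper bound. The DKW inequality controls the sampling error, uniformly over $Q$ and scale-equivariantly. With either estimator plugged into your first stage, the rest of your proof goes through as written.
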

To prove Lemma~\ref{lem:pilot_unknown_var_varpart}, we need a preliminary result about a crude finite-sample approximation to $\sigma^2$ up to leading constants. The idea is to inspect the blockwise variation, as shown in \cite{kotekal2025sparsity} and \cite{kotekal2025optimal}.
\begin{Lemma}[\cite{kotekal2025optimal} Proposition 3.1]\label{lem:constant_var_est}
Consider the following variance estimator:
\begin{align}\label{eq:constant_var_est}
    \breve{\sigma}^2(X_{1:n})\define \min_{1\leq r\leq m}\frac{1}{\ell-1}\sum_{j\in E_r}(X_j-\overline{X}_{E_r})^2,
\end{align}
where $\{E_1,E_2,\cdots,E_m\}$ are independent random size-$\ell$ subsets of $[n]$ sampled uniformly at random, and $\overline{X}_{E_r}\define \sum_{j\in E_r}X_j/\ell$ denotes the group mean. Fix any $\delta\in(0,1)$, there exist absolute constants $C_1$, $C_2$, and $C_3$ that depend only on $\delta$, such that for $m=\ceil{n^{C_1}}$ and $2<\ell=\ceil{C_2\log(n)}$, the estimator \eqref{eq:constant_var_est} satisfies
\begin{align*}
      \inf_{\theta,\sigma^2}\,\inf_{\varepsilon\in[0,1/2),Q}\, P_{\theta,\sigma,\varepsilon,Q}^n\Big(C_3^{-1}\leq \frac{\breve{\sigma}^2}{\sigma^2}\leq C_3\Big)\geq 1-\delta.
\end{align*}   
\end{Lemma}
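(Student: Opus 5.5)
The estimator is a minimum over $m$ random blocks of within-block sample variances, so the argument splits into an upper bound (some block is clean) and a lower bound (every block has variance at least a constant times $\sigma^2$). By location–scale equivariance of $\breve{\sigma}^2$ I may take $\theta=0$ and $\sigma=1$. I will use the representation $X_j=\gamma_j+Z_j$, where the $Z_j$ are i.i.d. $\cN(0,1)$ and independent of $\gamma_j$, and $\gamma_j$ equals $\theta$ with probability $1-\varepsilon$ and is drawn from $Q$ otherwise. I condition on $\gamma$ throughout. By a Chernoff bound, with high probability at least $n/4$ indices are clean, since $\varepsilon<1/2$ and I may use $n/2-O(\sqrt{n})$ as the count.

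\textbf{Upper bound.} A single random block of size $\ell$ is entirely clean with probability about $(1/4)^{\ell}\ge n^{-C_2\log 4}$. With $m=\lceil n^{C_1}\rfloor$ independent blocks and $C_1>C_2\log 4+1$, at least one fully clean block exists with probability $1-\exp(-n)$. Some care is needed, because the subsets are random but independent of the data values given $\gamma$. On a clean block, $(\ell-1)S^2\sim\chi^2_{\ell-1}$, so $S^2\le C_3$ with probability at least $1-\delta/4$ for any fixed $C_3>1$. The minimum is then at most that block's variance.

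\textbf{Lower bound.} I need every one of the $m$ blocks to satisfy $S^2_{E_r}\ge C_3^{-1}$. The key fact is that for any fixed shifts $\gamma_{E}$, the vector $X_E-\overline{X}_E$ equals $P(\gamma_E+Z_E)$, where $P$ is the centering projection. Then $(\ell-1)S^2=\|P\gamma_E+PZ_E\|^2$ is a noncentral $\chi^2_{\ell-1}$, which stochastically dominates the central $\chi^2_{\ell-1}$ because the noncentral chi-square is stochastically increasing in the noncentrality. Hence for each block, $P\bigl(S^2<c\bigr)\le P\bigl(\chi^2_{\ell-1}<c(\ell-1)\bigr)\le (ec)^{(\ell-1)/2}$ for small $c$. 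A union bound over $m=n^{C_1}$ blocks requires $(ec)^{(\ell-1)/2}n^{C_1}\le\delta/4$. With $\ell-1\ge C_2\log n$ this holds once $c$ is a small constant depending on $C_1/C_2$ and $\delta$, and I set $C_3=1/c$.

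\textbf{Main obstacle.} The delicate point is the order of choosing constants. The upper bound forces $C_1\gtrsim C_2$, and the lower bound then needs $c$ exponentially small in $C_1/C_2$. This is fine because that ratio is a fixed constant, so I first fix $C_2$ (say $C_2=2$), then $C_1$, then $c$. I also need to handle rigorously the dependence between the random subsets and the data; conditioning on $\gamma$ and on the subsets, which are independent of $Z$, does this cleanly. Finally, I need $n$ large enough that $\ell>2$ and that the Chernoff bound on the clean count holds.
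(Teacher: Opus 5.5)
Your proof is correct and follows the standard route for the cited result \cite{kotekal2025optimal}. The upper bound comes from a fully clean block among the $\lceil n^{C_1}\rceil$ independent blocks, whose sample variance is a central $\chi^2_{\ell-1}$; the lower bound comes from stochastic dominance of the noncentral over the central $\chi^2_{\ell-1}$, the lower-tail bound $(ec)^{(\ell-1)/2}$, and a union bound over all blocks. Your conditioning on the latent labels, together with a Chernoff bound on the clean count, is the adaptation needed for the mixture formulation.

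The remaining loose ends are cosmetic. The without-replacement and ceiling effects are absorbed by using, say, $\log 5$ instead of $\log 4$ when choosing $C_1$. The large-$n$ requirement is harmless because the paper only invokes the lemma for $n\geq N_0$; alternatively, taking $C_2$ small makes the condition $\ell>2$ itself force $n$ to be large.
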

\begin{proof}[Proof of Lemma~\ref{lem:pilot_unknown_var_varpart}]
We perform a sample-splitting strategy in which we use the first half of the data to obtain the crude estimate $\breve{\sigma}^2$ defined in Lemma~\ref{lem:constant_var_est}. We then combine this estimate with the second half of the samples in a Fourier-based pipeline. Recall that for any distribution in $\{P_{\theta,\sigma,\epsmax,Q}:\, Q\}$, its characteristic function is given by
\begin{align*}
    \phi(t)=\Big((1-\epsmax) e^{\im \theta t}+\epsmax \xi(t)\Big)\cdot e^{-\sigma^2t^2/2},
\end{align*}
where $\xi(t)=\E_Q[\exp(\im t X)]$ denotes the characteristic function of the adversary. Thus, we have
\begin{align*}
    -\frac{2\log\abs{\phi(t)}}{t^2}=\sigma^2-\frac{2\log\abs{(1-\epsmax)e^{\im \theta t}/3+\epsmax\xi(t)}}{t^2}\in \Big[\sigma^2,\sigma^2+2\log\Big(\frac{1}{1-2\epsmax}\Big)/t^2\Big],\quad \forall\, t\neq 0,
\end{align*}
where we have used $\abs{\xi(t)}\leq 1$ so that
\begin{align*}
    1\geq \abs{(1-\epsmax)e^{\im \theta t}+\epsmax\xi(t)}\geq 1-2\epsmax\Longrightarrow \abs{\phi(t)}\in e^{-\sigma^2t^2/2}\cdot [1-2\epsmax,\,1]
\end{align*}
With this intuition, we define the following estimator:
\begin{align*}
    \widetilde{\sigma}^2=\frac{-2\log\abs{\phi_n(t_*)}}{t_*^2},\quad t_*=c_0 \frac{\sqrt{\log(en)}}{\breve{\sigma}},
\end{align*}
where $\phi_n$ is the empirical characteristic function evaluated on the second half of the data, $c_0$ is an absolute $(\delta,\epsmax)$-dependent constant to be specified later. We shall show that the above estimator $\widetilde{\sigma}^2$ can achieve the claimed performance. To begin with, we apply the triangle inequality to see that
we have
\begin{align*}
    \abs{\widetilde{\sigma}^2-\sigma^2}&\leq \abs*{-\frac{2\log\abs{\phi_n(t_*)}}{t_*^2}+\frac{2\log\abs{\phi(t_*)}}{t_*^2}}+\abs*{-\frac{2\log\abs{\phi(t_*)}}{t_*^2}-\sigma^2}\\
    &\leq \frac{2}{t_*^2}\frac{\abs{\phi_n(t_*)-\phi(t_*)}}{\abs{\phi_n(t_*)}\wedge \abs{\phi(t)}}+\frac{2\log(3)}{t_*^2},
\end{align*}
where we have used $\abs{\log(x)-\log(y)}\leq \abs{x-y}/(x\wedge y)$.
By Lemma~\ref{lem:constant_var_est}, when $n\geq N_0$, we have $\breve{\sigma}^2/\sigma^2\in[1/C_0, C_0]$ with probability at least $1-\delta/2$ (w.r.t. the holdout data), where both $N_0$ and $C_0$ are $\delta$-dependent constants. Regarding the second (training) half of the samples, we apply Hoeffding's inequality to see that
\begin{align*}
    \abs{\phi_n(t_*)-\phi(t_*)}\leq A \frac{1}{\sqrt{n}},
\end{align*}
with probability at least $1-\delta/2$, where $A$ is an absolute constant depending only on $\delta$. Therefore, on the union event that holds with probability at least $1-\delta$, we have
\begin{align*}
    \frac{\abs{\widetilde{\sigma}^2-\sigma^2}}{\sigma^2}&\leq \frac{2\breve{\sigma}^2}{c_0^2\sigma^2\log(en)}\cdot \frac{A n^{-1/2}}{((1-2\epsmax)e^{-\sigma^2 t_*^2/2}-An^{-1/2})_+}+  \frac{2\log(3)\breve{\sigma}^2}{c_0^2\sigma^2\log(en)}\\
    &\leq \frac{2C_0^2}{c_0^2\log(en)}\Big\{\frac{An^{-1/2}}{((1-2\epsmax)(en)^{-c_0^2/C_0^2}-An^{-1/2})_+}+\log(3)\Big\}.
\end{align*}
Thus, we may choose $c_0\leq C_0/2$, and then $(en)^{-c_0^2/C_0^2}/3\geq 2An^{-1/2}$ holds whenever $n\geq N_1$, where $N_1$ is an absolute constant that depends only on $\delta$ and $\epsmax$. With this choice, the term inside the bracket is at most $1+\log(3)$, which proves the desired conclusion.
\end{proof}

\subsubsection{Proof of Lemma~\ref{lem:pilot_unknown_var} Part II: Pilot Mean Estimator}
We then prove the existence of such a pilot mean estimator claimed in Lemma~\ref{lem:pilot_unknown_var}. 
\begin{Lemma}\label{lem:pilot_unknown_var_meanpart}
    There exists a mean estimator $\widetilde{\theta}$ such that: for any fixed $\delta\in(0,1)$ and $\epsmax\in(0,1/2)$, there are absolute constants $M$ and $N_2$ that depend only on $\delta$ and $\epsmax$, such that
\begin{align*}
    \inf_{\theta,\sigma^2}\, \inf_{ Q}\,P_{\theta,\sigma,\epsmax,Q}^n\Big(\abs{\widetilde{\theta}-\theta}\leq \frac{M}{\sqrt{\log(en)}}\Big)\geq 1-\delta
\end{align*}
whenever $n\geq N_2$.
\end{Lemma}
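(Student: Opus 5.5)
The plan mirrors the variance part: split the sample, use the first half to get a constant-factor scale $\breve{\sigma}$ from Lemma~\ref{lem:constant_var_est}, and run a single-frequency Fourier estimator on the second half. The target bound reads $M/\sqrt{\log(en)}$, but scale-equivariance (and how it is used in Lemma~\ref{lem:pilot_unknown_var}) means $M\sigma/\sqrt{\log(en)}$; I would prove the scaled version. The starting identity is
\begin{align*}
    e^{-\im\theta t}\phi(t)e^{\sigma^2t^2/2}=(1-\epsmax)+\epsmax e^{-\im\theta t}\xi(t)\in\overline{\bbD}(1-\epsmax,\epsmax).
\end{align*}
This disk lies in the sector $\{\abs{\Arg z}\leq \arcsin(\epsmax/(1-\epsmax))\}$, and $\epsmax/(1-\epsmax)<1$. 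Hence $\Arg\phi(t)=\theta t+\omega(t)$ modulo $2\pi$, with $\abs{\omega(t)}\leq \alpha\define\arcsin(\epsmax/(1-\epsmax))<\pi/2$.

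The estimator is built in three steps.
\begin{enumerate}
    \item Take $t_*=c_0\sqrt{\log(en)}/\breve{\sigma}$, so that $t_*\asymp_{\delta}\sqrt{\log(en)}/\sigma$. Then $\Arg\phi_n(t_*)/t_*$ determines $\theta$ up to error $(\alpha+\text{noise})/t_*\lesssim \sigma/\sqrt{\log(en)}$, but only modulo $2\pi/t_*$, which is of the same order. This is the wrapping problem.
    \item To unwrap, first produce a coarse location estimate $\check{\theta}$ with $\abs{\check{\theta}-\theta}\leq C\sigma$. The sample median works here: under $P_{\theta,\sigma,\epsmax,Q}$ at least a $(1-\epsmax)>1/2$ fraction is $\cN(\theta,\sigma^2)$, so the median lies within $O_{\epsmax,\delta}(\sigma)$ of $\theta$ with high probability once $n$ is large, by the standard Huber-contamination argument.
    \item Then set
    \begin{align*}
        \widetilde{\theta}=\argmin\Big\{\abs{\mu-\check{\theta}}:\ t_*\mu\equiv\Arg\phi_n(t_*)\ (\mathrm{mod}\ 2\pi)\Big\}.
    \end{align*}
    This is correct provided the wrapping period $2\pi/t_*$ exceeds about $2(C\sigma+\sigma/\sqrt{\log(en)})$.
\end{enumerate}

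Step 3 is where the scheme can break: it needs $t_*\lesssim 1/\sigma$, which contradicts $t_*\asymp\sqrt{\log n}/\sigma$. So I would iterate over a geometric ladder of frequencies $t_j=2^j/(c\breve{\sigma})$, $j=0,1,\dots,J$, with $2^J\asymp\sqrt{\log(en)}$. At each level, unwrap using the previous estimate. Write $\widetilde{\theta}_j$ for the level-$j$ estimate. The error at level $j$ is at most $(\alpha+\eta_j)/t_j$, where $\eta_j$ bounds the angular perturbation from sampling. Since $\alpha<\pi/2$, we have $(\alpha+\eta)/t_j<\pi/(2t_j)=\pi/t_{j+1}$ with slack, which is exactly the half-period at the next level. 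Thus the correct branch is selected at every step as long as the induction holds.

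The sampling noise is handled as follows. By Hoeffding and a union bound over the $J=O(\log\log n)$ frequencies, $\abs{\phi_n(t_j)-\phi(t_j)}\lesssim\sqrt{\log(J/\delta)/n}$. Also $\abs{\phi(t_j)}\geq(1-2\epsmax)e^{-\sigma^2t_j^2/2}\geq(1-2\epsmax)(en)^{-c'}$ with $c'$ small, which follows by choosing $c_0$ small against the constant $C_0$ from Lemma~\ref{lem:constant_var_est}. So the angular perturbation satisfies $\eta_j\lesssim n^{c'-1/2}=o(1)$ uniformly in $j$. I would take $\eta_j\leq(\pi/2-\alpha)/2$ for $n\geq N_2$, and prove by induction that $\abs{\widetilde{\theta}_j-\theta}\leq(\alpha+\eta)/t_j$. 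The base case $j=0$ uses the median, with $c$ chosen so that $\pi/t_0$ exceeds the median's $O(\sigma)$ error. At the final level $t_J\asymp\sqrt{\log(en)}/\sigma$, the bound gives $\abs{\widetilde{\theta}-\theta}\leq M\sigma/\sqrt{\log(en)}$.

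The main obstacle is this unwrapping induction, specifically keeping the cone-angle bound $\alpha<\pi/2$ and the doubling factor compatible. If doubling proves too aggressive, the ratio $2$ can be replaced by $1+\gamma$, with $\gamma$ depending on $\epsmax$. That only changes $J$ by a constant factor, which keeps the union bound harmless. Polynomial-time computability is immediate, since only $O(\log\log n)$ empirical characteristic function evaluations and a median are needed.
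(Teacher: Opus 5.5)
Your argument works once the ladder ratio is fixed as described below. It proves the $\sigma$-scaled bound $M\sigma/\sqrt{\log(en)}$, which is also what the paper's proof actually delivers.

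\textbf{Comparison with the paper.} Your route differs from the paper's. The paper never uses the phase. It minimizes the discretized Kotekal--Gao criterion $\cL_n(\mu,\lambda)=\max_{t\in\cT}\big(\abs{\epsmax^{-1}e^{-\im\mu t+\lambda t^2/2}\phi_n(t)-(1-\epsmax)/\epsmax}-1\big)_+$ over location and variance grids localized by the median and $\breve{\sigma}$. Here $\cT$ is a grid of spacing $\asymp 1/\breve{\sigma}$ on the band $[t_*,2t_*]$, with $t_*\asymp\sqrt{\log(en)}/\breve{\sigma}$.

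In the paper, aliasing is resolved in one shot rather than by induction. For $\abs{\mu-\theta}\geq 2\pi/t_*$, the phase $\abs{\mu-\theta}t$ sweeps an interval of length at least $2\pi$ across the band. So some grid frequency puts it near an odd multiple of $\pi$, which forces $\cL(\mu,\lambda)$ above a fixed margin for every $\lambda$, while the truth has $\cL=0$. A Lipschitz bound then handles the grids.

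Your multiscale phase unwrapping instead uses two facts: $\overline{\bbD}(1-\epsmax,\epsmax)$ lies in a cone of half-angle $\alpha<\pi/2$, and the positive factor $e^{-\sigma^2t^2/2}$ does not affect the argument. As a result you need no variance search, only $O(\log\log n)$ evaluations of $\phi_n$, and you get a closed-form estimator. The paper's version costs a grid search, but it needs no error-propagation bookkeeping and reuses the same modulus certificate that drives the later confidence-interval construction.

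\textbf{Correction to the induction step.} It is not enough that $\widetilde{\theta}_j$ lies within the half-period $\pi/t_{j+1}$ of $\theta$. It must lie within half a period of the correct point of the lattice $\{\mu:\ t_{j+1}\mu\equiv\mathrm{Arg}\,\phi_n(t_{j+1})\ (\mathrm{mod}\ 2\pi)\}$, and that point is itself up to $(\alpha+\eta)/t_{j+1}$ away from $\theta$. With ratio $\rho=t_{j+1}/t_j$, the previous error is $\rho(\alpha+\eta)/t_{j+1}$, so the requirement is $(\rho+1)(\alpha+\eta)<\pi$.

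Doubling therefore needs $\alpha+\eta<\pi/3$. This fails when $\epsmax/(1-\epsmax)>\sqrt{3}/2$, i.e.\ when $\epsmax>2\sqrt{3}-3\approx 0.464$. Your fallback of a smaller, $\epsmax$-dependent ratio is thus necessary rather than optional. Any $\rho\in(1,\pi/(\alpha+\eta)-1)$ works; this interval is nonempty since $\alpha<\pi/2$ and $\eta\to 0$. It gives $J=O(\log\log n/\log\rho)$, so the union bound is unaffected.

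The same offset enters the base case. There you need $D\sigma+(\alpha+\eta)/t_0<\pi/t_0$, i.e.\ $c>D\sqrt{C_3}/(\pi-\alpha-\eta)$, with $D$ from Lemma~\ref{lem:median} and $C_3$ from Lemma~\ref{lem:constant_var_est}.
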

We need the following standard result that serves as a crude (constant-rate) approximation to the true mean.
\begin{Lemma}\label{lem:median}
For any fixed $\delta\in(0,1)$ and $\epsmax\in(0,1/2)$, the sample median $X_\textnormal{med}=\textnormal{Median}\{X_1,X_2,\cdots,X_n\}$ satisfies
\begin{align*}
    \inf_{\theta,\sigma^2}\,\inf_{Q}\, P_{\theta,\sigma,\epsmax,Q}^n(\abs{X_\textnormal{med}-\theta}\leq D\sigma)\geq 1-\delta,
\end{align*}
whenever $n\geq 8(1-2\epsmax)^{-2}\log(2/\delta)$, where
\begin{align*}
    D=\Phi^{-1}\Big(\frac{1/2+\sqrt{\log(2/\delta)/(2n)}}{1-\epsmax}\Big)
\end{align*}
is an absolute $(\delta,\epsmax)$-dependent constant and $\Phi^{-1}$ is the inverse CDF of $\cN(0,1)$.
\end{Lemma}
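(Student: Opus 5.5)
The plan is to reduce the median's deviation to a binomial tail bound plus an inequality for the Gaussian CDF at level $\theta\pm D\sigma$. By location--scale equivariance of the median, we may assume $\theta=0$ and $\sigma=1$. Write $F$ for the CDF of $P=P_{0,1,\epsmax,Q}$. The event $\{X_\textnormal{med}>D\}$ is contained in $\{\#\{j: X_j> D\}\geq n/2\}$. Likewise, $\{X_\textnormal{med}<-D\}$ is contained in $\{\#\{j: X_j< -D\}\geq n/2\}$. It therefore suffices to show that each of these counts reaches $n/2$ with probability at most $\delta/2$.

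The key step bounds the one-sided probabilities uniformly over $Q$ using only the clean component. Since $Q*\cN(0,1)$ has an arbitrary law, it can contribute mass up to $\epsmax$ anywhere. Thus
\begin{align*}
P(X>D)\leq (1-\epsmax)(1-\Phi(D))+\epsmax=1-(1-\epsmax)\Phi(D),
\end{align*}
and symmetrically $P(X<-D)\leq 1-(1-\epsmax)\Phi(D)$. With the stated choice $\Phi(D)=(1/2+\eta)/(1-\epsmax)$, where $\eta=\sqrt{\log(2/\delta)/(2n)}$, we get $P(X>D)\leq 1/2-\eta$. Hoeffding's inequality applied to the Bernoulli count then gives
\begin{align*}
\Prob\bigl(\#\{X_j>D\}\geq n/2\bigr)\leq \exp(-2n\eta^2)=\delta/2.
\end{align*}
The lower tail is identical, and a union bound yields the claim.

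The step that needs care is checking that $D$ is well-defined, i.e.\ that $(1/2+\eta)/(1-\epsmax)<1$, equivalently $\eta<1/2-\epsmax$. Squaring, this reads $\log(2/\delta)/(2n)<(1-2\epsmax)^2/4$, which is exactly guaranteed by $n\geq 8(1-2\epsmax)^{-2}\log(2/\delta)$ (with room to spare). The same computation shows that $D$ stays bounded by a $(\delta,\epsmax)$-dependent constant for all such $n$: at the smallest allowed $n$, the argument of $\Phi^{-1}$ is at most $(1/2+(1-2\epsmax)/4)/(1-\epsmax)<1$, and it only decreases as $n$ grows. So $D$ is bounded above by $\Phi^{-1}$ of that fixed value, which justifies calling $D$ an absolute constant. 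Nothing here uses the Gaussian structure of the contamination, only $\varepsilon=\epsmax$ Huber-type mass, so the argument is routine.
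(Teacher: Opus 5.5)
Your proof is correct, but it takes a different route from the paper. The paper applies the Dvoretzky--Kiefer--Wolfowitz inequality to get $\sup_t|F_n(t)-F(t)|\le\eta$ with probability $1-\delta$. It evaluates this bound at the random point $X_{\textnormal{med}}$ to get $|F(X_{\textnormal{med}})-1/2|\le\eta$. It then sandwiches $F(X_{\textnormal{med}})$ between $(1-\epsmax)\Phi((X_{\textnormal{med}}-\theta)/\sigma)$ and that quantity plus $\epsmax$, which bounds $\Phi(|X_{\textnormal{med}}-\theta|/\sigma)$. You instead fix the two deterministic thresholds $\pm D$ in advance, reduce each tail event of the median to at least $n/2$ exceedances, and apply a one-sided Hoeffding bound to each Bernoulli count, splitting $\delta$ between the two tails. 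The constants coincide exactly: two Hoeffding tails at $\delta/2$ each reproduce the $2e^{-2n\eta^2}$ of DKW with Massart's constant. Your version is more elementary, since it needs only Hoeffding rather than the sharp DKW constant. It also avoids a step the paper glosses over: the identity $F_n(X_{\textnormal{med}})=1/2$ is not literally true (for odd $n$ it equals $(n+1)/(2n)$), so the paper's argument implicitly relies on left limits and on the continuity of $F$, which does hold here. Your counting inclusion handles any median convention directly and needs no continuity. In exchange, the paper's route controls $F$ uniformly, so it brackets $\Phi((X_{\textnormal{med}}-\theta)/\sigma)$ from both sides at once and would extend verbatim to any empirical quantile.
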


\begin{proof}
    By Dvoretzky-Kiefer-Wolfowitz (DKW) inequality, with probability exceeding $1-\delta$, we have
\begin{align*}
    \abs{F_n(t)-F(t)}\leq \sqrt{\frac{\log(2/\delta)}{2n}},\quad \forall\, t\in\bbR,
\end{align*}
regardless of the choice of the underlying distribution, where $F$ and $F_n$ are the population and empirical cumulative distribution functions, respectively. On this uniform concentration event,
\begin{align*}
    \sqrt{\frac{\log(2/\delta)}{2n}}\geq \abs{F_n(X_\mathrm{med})-F(X_\mathrm{med})}=\abs{F(X_\mathrm{med})-1/2}.
\end{align*}
Meanwhile, for Efron's model, it holds that
\begin{align*}
    (1-\epsmax)\Phi((X_\mathrm{med}-\theta)/\sigma)\leq F(X_\mathrm{med})\leq (1-\epsmax)\Phi((X_\mathrm{med}-\theta)/\sigma)+\epsmax.
\end{align*}
Combining the two chains of inequalities, we obtain
\begin{align*}
    \frac{1/2-\epsmax-\sqrt{\log(2/\delta)/(2n)}}{1-\epsmax}\leq \Phi((X_\mathrm{med}-\theta)/\sigma)\leq \frac{1/2+\sqrt{\log(2/\delta)/(2n)}}{1-\epsmax},
\end{align*}
which is
\begin{align*}
    \Phi(\abs{X_\mathrm{med}-\theta}/\sigma)\leq \frac{1/2+\sqrt{\log(2/\delta)/(2n)}}{1-\epsmax}.
\end{align*}
When $n\geq 8\log(2/\delta)/(1-2\epsmax)^2$, the right-hand side is upper bounded by
\begin{align*}
    \frac{1/2+(1-2\epsmax)/4}{1-\epsmax}=\frac{3/4-\epsmax/2}{(1-\epsmax)}<1,
\end{align*}
and thus, on the uniform concentration event, we will have $\abs{X_\mathrm{med}-\theta}\leq D\sigma$. The desired conclusion then follows.
\end{proof}

With Lemma~\ref{lem:constant_var_est} and Lemma~\ref{lem:median}, we construct the estimator $\widetilde{\theta}$ required by Lemma~\ref{lem:pilot_unknown_var_meanpart} as follows. We first obtain the crude estimators $\breve{\sigma}^2$ and $X_\textnormal{med}$ from a holdout set of size $n/2$. Then, we define
\begin{align*}
    \cL(\mu,\lambda)\define \max_{t\in \cT}\Big(\abs*{\frac{1}{\epsmax}e^{-\im \mu t+\lambda t^2/2}\phi(t)-\frac{1-\epsmax}{\epsmax}}-1\Big)_+,
\end{align*}
where $\cT$ is a finite frequency grid to be specified later, and use the second half of the data to compute its empirical version
\begin{align*}
    \cL_n(\mu,\lambda)\define\max_{t\in \cT}\Big(\abs*{\frac{1}{\epsmax}e^{-\im \mu t+\lambda t^2/2}\phi_n(t)-\frac{1-\epsmax}{\epsmax}}-1\Big)_+.
\end{align*}
We define the estimator $\widetilde{\theta}$ by:
\begin{align}
    (\widetilde{\theta},\widetilde{\lambda})\in\argmin_{(\mu,\lambda)\in \cM\times \cV} \, \cL_n(\mu,\lambda),
\end{align}
where $\cM$ and $\cV$ are also finite grids to be specified later. We shall show that by choosing $\cT$, $\cM$ and $\cV$ appropriately, the resulting $\widetilde{\theta}$ satisfies the claimed property. Note that we can equivalently write
\begin{align*}
    \cL_n(\mu,\lambda)=\frac{1}{\epsmax}\max_{t\in\cT}\,\sup_{\xi\in\bbC:\, \abs{\xi}\leq 1}\,\abs*{e^{-\im \mu t+\lambda t^2/2}\phi_n(t)-(1-\epsmax)-\epsmax\xi}.
\end{align*}
Therefore, the estimator $\widetilde{\theta}$ is the same as the one in \cite{kotekal2025optimal} up to scaling, except that the candidate sets of $(t,\mu,\lambda)$ are all discretized.

According to Lemma~\ref{lem:constant_var_est} and Lemma~\ref{lem:median}, there are $(\delta,\epsmax)$-dependent absolute constants $C$, $D$, and $N_0$, such that the event
\begin{align*}
    \breve{\cE}=\{1/C\leq \breve{\sigma}^2/\sigma^2\leq C,\quad \abs{X_\textnormal{med}-\theta}\leq D\sigma\}
\end{align*}
holds with probability at least $1-\delta/2$ whenever $n\geq N_0$, with respect to the randomness of the holdout data ($P_\textnormal{holdout}^{n/2}$).
Concretely, we set:
\begin{enumerate}
    \item Three $\epsmax$-dependent constants
    \begin{align*}
        \alpha=\frac{\epsmax}{1-\epsmax}\in(0,1),\quad \beta=\sqrt{\frac{1+\alpha^2}{2}}\in(0,1),\quad \gamma=\frac{\beta}{\alpha}-1>0.
    \end{align*}
    \item $\cT_0 =[t_*,\, 2t_*]$ with $t_*=\sqrt{\log(en)/(8C)}/\breve{\sigma}$, and $\cT$ is a gird of $\cT_0$ with spacing at most
    \begin{align*}
        h_{\cT}=\frac{\arccos(\beta)}{2D\sqrt{C}\breve{\sigma}}.
    \end{align*}
    \item $\cM_0=[X_\textnormal{med}-2D\sqrt{C}\breve{\sigma},X_\textnormal{med}+2D\sqrt{C}\breve{\sigma}]$, and $\cM$ is an equally-spaced grid of $\cM_0$ with spacing at most
    \begin{align*}
        h_{\cM}=\frac{\gamma\epsmax}{16t_*(en)^{1/4}}.
    \end{align*}
    \item $\cV_0=[\breve{\sigma}^2/C,C\breve{\sigma}^2]$, and $\cV$ is an equally-spaced grid of $\cV_0$ with spacing at most
    \begin{align*}
        h_\cV=\frac{\gamma\epsmax}{16t_*^2(en)^{1/4}}.
    \end{align*}
\end{enumerate}

\begin{Claim}\label{clm:chf_union_bound}
With the above setup, for the event
\begin{align*}
    \widetilde{\cE}=\Big\{\max_{t\in\cT}\frac{e^{C\breve{\sigma}^2t^2/2}}{\epsmax}\abs{\phi_n(t)-\phi(t)}\leq  \frac{\gamma}{4}\Big\},
\end{align*}
we have
\begin{align*}
    P^{n/2}_\textnormal{train}(\widetilde{\cE})\geq 1 - 4\abs{\cT} e^{-\frac{1}{256}(\epsmax\gamma)^2\sqrt{n}}.
\end{align*}
\end{Claim}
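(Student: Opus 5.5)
The plan is to condition on the holdout data and treat $\breve{\sigma}^2$ as fixed, which makes the grid $\cT$ deterministic. Conditionally, the training half consists of $\floor{n/2}$ or $\ceil{n/2}$ i.i.d.\ draws from $P_{\theta,\sigma,\epsmax,Q}$, independent of $\cT$. For each fixed $t\in\cT$, $\phi_n(t)$ is an average of the bounded complex variables $e^{\im tX_j}$. I would split it into real and imaginary parts, $\cos(tX_j)$ and $\sin(tX_j)$, each taking values in $[-1,1]$. Hoeffding's inequality applied to each part gives
\begin{align*}
    P\Big(\abs{\phi_n(t)-\phi(t)}>s\Big)\leq P\Big(\abs{\Re(\phi_n-\phi)(t)}>s/\sqrt{2}\Big)+P\Big(\abs{\Im(\phi_n-\phi)(t)}>s/\sqrt{2}\Big)\leq 4e^{-c\,m s^2},
\end{align*}
where $m\approx n/2$ is the training sample size and $c$ is an absolute constant (with Hoeffding, $c=1/4$ for the range-$2$ variables after the $\sqrt{2}$ split).

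Next, I choose $s=s(t)=\epsmax\gamma e^{-C\breve{\sigma}^2t^2/2}/4$. The failure of $\widetilde{\cE}$ at frequency $t$ is exactly the event $\abs{\phi_n(t)-\phi(t)}>s(t)$. Since $t\in\cT\subseteq[t_*,2t_*]$ with $t_*^2=\log(en)/(8C\breve{\sigma}^2)$, we get
\begin{align*}
    C\breve{\sigma}^2t^2/2\leq C\breve{\sigma}^2\cdot 4t_*^2/2=\log(en)/4,
\end{align*}
so $e^{-C\breve{\sigma}^2t^2}\geq (en)^{-1/2}$. This gives $s(t)^2\geq (\epsmax\gamma)^2(en)^{-1/2}/16$, and therefore $m s^2\gtrsim (\epsmax\gamma)^2\sqrt{n}$. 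With $m\geq n/2$ and the constant bookkeeping, this should produce the exponent $(\epsmax\gamma)^2\sqrt{n}/256$; I would adjust the Hoeffding constant and the factor $e^{-1/2}$ to match the stated $1/256$.

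Finally, a union bound over the $\abs{\cT}$ frequencies gives $P(\widetilde{\cE}^c\mid\text{holdout})\leq 4\abs{\cT}e^{-(\epsmax\gamma)^2\sqrt{n}/256}$. Since this bound does not depend on the holdout realization except through $\abs{\cT}$, the statement holds conditionally on the holdout, and hence also unconditionally.

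The only delicate step is tracking constants, in particular getting $1/256$ in the exponent. The choice $t\leq 2t_*$ is exactly what makes the exponential weight cost only $(en)^{1/4}$, and that is why the $\sqrt{n}$ rate survives in the exponent.
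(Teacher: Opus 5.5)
Your proposal is correct and follows the paper's proof: per-frequency Hoeffding on the real and imaginary parts at threshold $\frac14\epsmax\gamma e^{-C\breve{\sigma}^2t^2/2}$, the bound $e^{-C\breve{\sigma}^2t^2}\geq (en)^{-1/2}$ from $t\leq 2t_*$, and a union bound over $\cT$, all conditionally on the holdout. The constant closes as you hoped: with training size $m=n/2$, your $\sqrt{2}$-split bound $4e^{-ms^2/4}$ gives exponent at least $(\epsmax\gamma)^2\sqrt{n}/(128\sqrt{e})\geq(\epsmax\gamma)^2\sqrt{n}/256$. By contrast, the paper's $|\Re z|+|\Im z|$ split gives $4e^{-mx^2/8}$, which at $m=n/2$ would yield only $1/(256\sqrt{e})$; the paper reaches $1/256$ because its display writes $n$ in place of $m$.
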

\begin{proof}
On the training set, we use Hoeffding's inequality to see that
\begin{align*}
    \abs{\phi_n(t)-\phi(t)}\leq x,\quad \textnormal{w.p.}\ 1-4\exp(-nx^2/8),
\end{align*}
for each $t\in \cT$. Now, for each $t\in\cT$, we set $x_t=\frac{1}{4}\epsmax\cdot \gamma e^{-C\breve{\sigma}^2t^2/2}$ to see that
\begin{align*}
     P^{n/2}_\textnormal{train}\Big(\frac{1}{\epsmax}e^{C\breve{\sigma}^2t^2/2}\abs{\phi_n(t)-\phi(t)}> \frac{\gamma}{4}\Big)&\leq 4e^{-\frac{1}{128}n(\epsmax\gamma)^2\exp(-C\breve{\sigma}^2t^2)}\\
     &\leq 4e^{-\frac{1}{128}n\epsmax^2\gamma^2\exp(-4C\breve{\sigma}^2t_*^2)}\\
     &\leq  4e^{-\frac{1}{256}(\epsmax\gamma)^2\sqrt{n}},
\end{align*}
where we have used $\max \cT=2t_*=2\sqrt{\log(en)/(8C)}/\breve{\sigma}$. Taking the union bound over $t\in \cT$, we obtain the desired result.
\end{proof}

\begin{Claim}[Small $\cL_n$ at the truth]\label{clm:truth_small_loss}
With the above setup, we have
\begin{align*}
    \cL(\theta,\sigma^2)\leq 0.
\end{align*}
Thus, on $\widetilde{\cE}$, we have
\begin{align*}
    \cL_n(\theta,\sigma^2)\leq \frac{\gamma}{4}.
\end{align*}
\end{Claim}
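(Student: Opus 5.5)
Both assertions follow from direct computation at the truth $(\mu,\lambda)=(\theta,\sigma^2)$, with no need to touch the grids $\cM$ or $\cV$; afterwards we transfer to the empirical loss via the event $\widetilde{\cE}$ from Claim~\ref{clm:chf_union_bound}.

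\textbf{Population part.} Substitute the Efron characteristic function $\phi(t)=(1-\epsmax)e^{\im\theta t-\sigma^2t^2/2}+\epsmax e^{-\sigma^2t^2/2}\xi(t)$ into the definition of $\cL$ at $(\mu,\lambda)=(\theta,\sigma^2)$. The factor $e^{-\im\theta t+\sigma^2t^2/2}$ cancels the Gaussian damping and the clean rotation, so
\begin{align*}
    \frac{1}{\epsmax}e^{-\im \theta t+\sigma^2 t^2/2}\phi(t)-\frac{1-\epsmax}{\epsmax}=e^{-\im\theta t}\xi(t).
\end{align*}
This quantity has modulus at most $1$, because $\xi$ is a genuine characteristic function; this is the $m=1$ case of Lemma~\ref{lem:cara_psd}. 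Hence each term $(\abs{\cdot}-1)_+$ vanishes for every $t$, in particular for every $t\in\cT$, and $\cL(\theta,\sigma^2)\le 0$. In fact it equals $0$ by nonnegativity.

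\textbf{Empirical part.} For each $t\in\cT$, the map $z\mapsto(\abs{z}-1)_+$ is $1$-Lipschitz. The empirical and population arguments inside differ by
\begin{align*}
    \frac{1}{\epsmax}e^{\sigma^2t^2/2}\abs{\phi_n(t)-\phi(t)},
\end{align*}
since the rotation $e^{-\im\theta t}$ does not change the modulus. Therefore
\begin{align*}
    \cL_n(\theta,\sigma^2)\le \cL(\theta,\sigma^2)+\max_{t\in\cT}\frac{1}{\epsmax}e^{\sigma^2t^2/2}\abs{\phi_n(t)-\phi(t)}.
\end{align*}

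\textbf{Matching the weight in $\widetilde{\cE}$.} The event $\widetilde{\cE}$ is stated with the weight $e^{C\breve{\sigma}^2t^2/2}$, not $e^{\sigma^2t^2/2}$. We work on $\widetilde{\cE}$ intersected with the holdout event $\breve{\cE}$, on which $\breve{\sigma}^2/\sigma^2\ge 1/C$, i.e.\ $\sigma^2\le C\breve{\sigma}^2$. Then $e^{\sigma^2t^2/2}\le e^{C\breve{\sigma}^2t^2/2}$, so the maximum above is at most $\gamma/4$, which gives $\cL_n(\theta,\sigma^2)\le\gamma/4$.

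The only point needing care is exactly this implicit use of $\breve{\cE}$ to compare the two exponential weights; the rest is immediate.
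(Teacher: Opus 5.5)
Your proof is correct and follows the paper's argument. You substitute the Efron characteristic function so the population term reduces to $(\abs{e^{-\im\theta t}\xi(t)}-1)_+=0$, and then use the $1$-Lipschitzness of $z\mapsto(\abs{z}-1)_+$ together with the event $\widetilde{\cE}$. Your explicit use of $\breve{\cE}$ to get $\sigma^2\le C\breve{\sigma}^2$ is a welcome clarification: the paper bounds the weight by a maximum over $\lambda\in\cV$, which tacitly requires exactly this, and the claim is only ever invoked on $\breve{\cE}\cap\widetilde{\cE}$.
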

\begin{proof}
Since
\begin{align*}
    \phi(t)=[(1-\epsmax)e^{\im \theta t}+\epsmax\xi(t)]\cdot e^{-\sigma^2t^2/2},
\end{align*}
we have
\begin{align*}
    \cL(\theta,\sigma^2)=(\abs{e^{-\im \theta t}\xi(t)}-1)_+\leq (1-1)_+=0.
\end{align*}
On the other hand, since $\abs{(\abs{z}-c)_+-(\abs{w}-c)_+}\leq \abs{z-w}$ for any $c\in \bbR$ and $z,w\in\bbC$, we have, on $\widetilde{\cE}$, that
\begin{align*}
    \abs{\cL_n(\theta,\sigma^2)-\cL(\theta,\sigma^2)}&\leq \max_{t\in \cT,\, \lambda\in\cV}\, \frac{1}{\epsmax}e^{\lambda t^2/2}\abs{\phi_n(t)-\phi(t)}\\
    &\leq  \max_{t\in \cT}\, \frac{1}{\epsmax}e^{C\breve{\sigma}^2 t^2/2}\abs{\phi_n(t)-\phi(t)}\\
    &\leq \frac{\gamma}{4},
\end{align*}
where we have used Claim~\ref{clm:chf_union_bound} in the last line.
\end{proof}

\begin{Claim}[Uniformly large $\cL_n$ at bad candidates]\label{clm:bad_large_loss}
    With the above setup, let $(\mu,\lambda)\in \cM_0\times \cV_0$ and write $r=\mu-\theta$. If $\breve{\cE}$ holds and
    \begin{align*}
        \abs{r}\geq \frac{2\pi}{t_*},
    \end{align*}
    then
    \begin{align*}
        \cL(\mu,\lambda)\geq \gamma.
    \end{align*}
    Thus, if $\widetilde{\cE}$ also holds, we have
    \begin{align*}
        \inf_{(\mu,\lambda)\in\cM_0\times \cV_0:\ \abs{\mu-\theta}\geq 2\pi/t_*}\, \cL_n(\mu,\lambda)\geq \frac{3\gamma}{4}.
    \end{align*}
\end{Claim}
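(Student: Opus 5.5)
The plan is to exhibit a single witness frequency $t\in\cT$ at which the modulus inside $\cL(\mu,\lambda)$ is at least $1+\gamma=\beta/\alpha$. The empirical statement then follows by the same Lipschitz argument as in Claim~\ref{clm:truth_small_loss}.

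\textbf{Step 1: reduce to a modulus bound.} Write $r=\mu-\theta$ and $a(t)=e^{(\lambda-\sigma^2)t^2/2}>0$. From $\phi(t)=[(1-\epsmax)e^{\im\theta t}+\epsmax\xi(t)]e^{-\sigma^2t^2/2}$ we get
\begin{align*}
\abs*{\frac{1}{\epsmax}e^{-\im\mu t+\lambda t^2/2}\phi(t)-\frac{1-\epsmax}{\epsmax}}
=\abs*{\frac{1}{\alpha}\big(a e^{-\im rt}-1\big)+a e^{-\im\mu t}\xi(t)}
\geq \frac{1}{\alpha}\abs{a e^{-\im rt}-1}-a,
\end{align*}
using $\abs{\xi}\le 1$ and $(1-\epsmax)/\epsmax=1/\alpha$.

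\textbf{Step 2: the inequality at a good angle.} Suppose $\cos(rt)\leq-\beta$. Then
\begin{align*}
\abs{ae^{-\im rt}-1}^2=a^2-2a\cos(rt)+1\geq a^2+2a\beta+1\geq(1+a\beta)^2 .
\end{align*}
Hence the modulus is at least
\begin{align*}
\frac{1+a\beta}{\alpha}-a=\frac1\alpha+a\gamma\ge\frac1\alpha\ge\frac\beta\alpha=1+\gamma,
\end{align*}
where the last inequality uses $\beta<1$. This holds for every $a>0$, so no control on $\lambda$ relative to $\sigma^2$ is needed. Therefore $\cL(\mu,\lambda)\ge\gamma$ as soon as some $t\in\cT$ has $\cos(rt)\le-\beta$.

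\textbf{Step 3: existence of a good grid frequency (main obstacle).} As $t$ ranges over $\cT_0=[t_*,2t_*]$, the angle $\abs{r}t$ sweeps an interval of length $\abs{r}t_*\ge 2\pi$, so it passes through a full arc $\{\vartheta:\cos\vartheta\le-\beta\}$ modulo $2\pi$. That arc has length $2(\pi-\arccos\beta)>\pi$, since $\arccos\beta<\pi/2$.

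Consecutive grid points of $\cT$ move the angle by at most $\abs{r}h_\cT$, so we need $\abs{r}h_\cT<2(\pi-\arccos\beta)$. I would bound $\abs{r}$ on $\breve\cE$ using $\mu\in\cM_0$:
\begin{align*}
\abs{r}\le\abs{\mu-X_{\rm med}}+\abs{X_{\rm med}-\theta}\le 2D\sqrt C\breve\sigma+D\sigma\le 3D\sqrt C\breve\sigma .
\end{align*}
This gives $\abs{r}h_\cT\le\tfrac32\arccos\beta$. Then $\tfrac32\arccos\beta<2(\pi-\arccos\beta)$ holds because $\arccos\beta<\pi/2<4\pi/7$. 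So some grid point lands in the good arc.

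This constant bookkeeping is the delicate part; if it turned out too tight, I would shrink $h_\cT$ by a constant factor, which is harmless. The boundary effect at the endpoints of $\cT_0$ is handled by the full $2\pi$ sweep.

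\textbf{Step 4: pass to the empirical loss.} For $\lambda\in\cV_0$ we have $\lambda\le C\breve\sigma^2$. Using $\abs{(\abs z-c)_+-(\abs w-c)_+}\le\abs{z-w}$, we get
\begin{align*}
\abs{\cL_n(\mu,\lambda)-\cL(\mu,\lambda)}\le\max_{t\in\cT}\epsmax^{-1}e^{C\breve\sigma^2t^2/2}\abs{\phi_n(t)-\phi(t)}\le\gamma/4
\end{align*}
on $\widetilde\cE$, by Claim~\ref{clm:chf_union_bound}. This bound is uniform over all $(\mu,\lambda)\in\cM_0\times\cV_0$. Taking the infimum over $\abs{\mu-\theta}\ge 2\pi/t_*$ then yields $\cL_n\ge 3\gamma/4$.
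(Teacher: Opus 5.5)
Your Steps 1, 2 and 4 are correct and follow the paper. In Step 2 you use $\abs{ae^{-\im rt}-1}\ge 1+a\beta$, which is a slightly cleaner variant of the paper's $\sqrt{1+a^2+2a\beta}\ge\beta(1+a)$. Both bounds are uniform in $a=e^{(\lambda-\sigma^2)t^2/2}$, so neither needs control of $\lambda$.

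Step 3, however, contains a false statement. The set $\{\vartheta:\cos\vartheta\le-\beta\}$ is $[\pi-\arccos\beta,\,\pi+\arccos\beta]$ modulo $2\pi$. Its length is $2\arccos\beta$, which is below $\pi/2$ here because $\beta>1/\sqrt2$. The value $2(\pi-\arccos\beta)$ you wrote is the length of $\{\cos\vartheta\le\beta\}$. So the inequality you check, $\tfrac32\arccos\beta<2(\pi-\arccos\beta)$, is not what your stepping argument needs. The correct requirement is an angular step $<2\arccos\beta$, and your bound $\abs{r}h_\cT\le\tfrac32\arccos\beta$ still gives it, so the step survives once restated.

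There is a second, smaller gap. The claim that ``a sweep of length $\ge2\pi$ passes through a full arc'' is false in general. When $\abs{r}t_*<2\pi+2\arccos\beta$, the arcs can be cut at both ends of $[\abs{r}t_*,2\abs{r}t_*]$. In that case one endpoint angle lies inside an arc. You should therefore state explicitly that $t_*,2t_*\in\cT$, which is the natural reading of ``grid of $\cT_0$'' and matches the paper's count $\abs{\cT}\le\lceil t_*/h_\cT\rceil+1$.

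The paper organizes this step differently. It picks $t_1\in\cT_0$ with $\abs{r}t_1$ \emph{exactly} an odd multiple of $\pi$ and then moves to a nearby grid point. That route needs the angular error to be at most the half-width $\arccos\beta$. The paper gets this from $\abs{r}\le 2D\sqrt{C}\breve{\sigma}$. As you correctly note, however, $\mu\in\cM_0$ together with $\breve{\cE}$ only gives $\abs{r}\le 3D\sqrt{C}\breve{\sigma}$.

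So the paper's rounding step needs one extra remark: a grid of spacing $h_\cT$ containing both endpoints puts every point of $\cT_0$ within $h_\cT/2$ of the grid. That yields an angular error of at most $\tfrac34\arccos\beta$. Your stepping argument only needs the step to be below the full width $2\arccos\beta$, so it absorbs the factor $3/2$ directly. After the two corrections above, your version is the more robust of the two, and no shrinking of $h_\cT$ is needed.
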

\begin{proof}
Because $\abs{r}\geq 2\pi/t_*$, the interval $[\abs{r}t_*, 2\abs{r}t_*]$ has a length of at least $2\pi$, and therefore must contain an odd multiple of $\pi$. That is, there exist $k\in \bbZ$ and $t_1\in [t_*, 2t_*]$, such that
\begin{align*}
    (2k+1)\pi= \abs{r}t_1.
\end{align*}
By our construction of $\cT$, there $t_2\in\cT$ such that $\abs{t_1-t_2}\leq h_\cT$. Therefore,
\begin{align*}
    \abs{(2k+1)\pi-\abs{r}t_2}=\abs{r}\abs{t_1-t_2}\leq h_\cT \abs{r}\leq \frac{\arccos(\beta)}{2D\sqrt{C}\breve{\sigma}}\cdot 2D\sqrt{C}\breve{\sigma}\leq \arccos(\beta),
\end{align*}
which implies
\begin{align*}
    \cos(\abs{r}t_2)\leq -\cos(\arccos(\beta))=-\beta.
\end{align*}
Let us move to the structure of the characteristic function. We have
    \begin{align*}
        \cL(\mu,\lambda)&\geq \abs*{\frac{1}{\epsmax}e^{-\im \mu t_2+\lambda t_2^2/2}\phi(t_2)-\frac{1-\epsmax}{\epsmax}}-1.
    \end{align*}
Using $\phi(t)=[(1-\epsmax)e^{\im \theta t}+\epsmax \xi(t)]\cdot e^{-\sigma^2t^2/2}$, we can write
\begin{align*}
    \abs*{\frac{1}{\epsmax}e^{-\im \mu t_2+\lambda t_2^2/2}\phi(t_2)-\frac{1-\epsmax}{\epsmax}}&=\abs*{e^{(\lambda-\sigma^2)t_2^2/2}\Big(\frac{(1-\epsmax)e^{-\im rt_2}}{\epsmax}+e^{-\im \mu t}\xi(t)\Big)-\frac{1-\epsmax}{\epsmax}}\\
    &\geq \frac{1-\epsmax}{\epsmax}\abs*{1-e^{-\im rt_2+(\lambda-\sigma^2)t_2^2/2}}-e^{(\lambda-\sigma^2)t_2^2/2}\\
    &=\frac{1-\epsmax}{\epsmax}\sqrt{1+e^{(\lambda-\sigma^2)t_2^2}-2e^{(\lambda-\sigma^2)t_2^2/2}\cos(\abs{r}t_2)}-e^{(\lambda-\sigma^2)t_2^2/2}\\
    &\geq \frac{1-\epsmax}{\epsmax}\sqrt{1+e^{(\lambda-\sigma^2)t_2^2}+2e^{(\lambda-\sigma^2)t_2^2/2}\beta}-e^{(\lambda-\sigma^2)t_2^2/2}\\
    &\geq \frac{1-\epsmax}{\epsmax}\sqrt{[1+e^{(\lambda-\sigma^2)t_2^2}+2e^{(\lambda-\sigma^2)t_2^2/2}]\beta}-e^{(\lambda-\sigma^2)t_2^2/2}\\
    &=\frac{1-\epsmax}{\epsmax}\beta(1+e^{(\lambda-\sigma)t_2^2/2})-e^{(\lambda-\sigma^2)t_2^2/2},
\end{align*}
where we have used the previous conclusion $\cos(\abs{r}t_2)\leq-\beta$ in the third-to-last line. Therefore, 
\begin{align*}
     \cL(\mu,\lambda)&\geq \frac{1-\epsmax}{\epsmax}\beta(1+e^{(\lambda-\sigma)t_2^2/2})-e^{(\lambda-\sigma^2)t_2^2/2}-1\\
     &= (\frac{1-\epsmax}{\epsmax}\beta-1)(1+e^{(\lambda-\sigma)t_2^2/2})\\
     &\geq  \frac{1-\epsmax}{\epsmax}\beta-1\\
     &=\frac{\beta}{\alpha}-1\\
     &=\gamma.
\end{align*}
Finally, we apply Claim~\ref{clm:chf_union_bound} and proceed as in Claim~\ref{clm:truth_small_loss} to see that
\begin{align*}
        &\ \inf_{(\mu,\lambda)\in\cM_0\times \cV_0:\ \abs{\mu-\theta}\geq 2\pi/t_*}\, \cL_n(\mu,\lambda)\nonumber\\
        &\geq \inf_{(\mu,\lambda)\in\cM_0\times \cV_0:\ \abs{\mu-\theta}\geq 2\pi/t_*}\, \cL(\mu,\lambda)-\sup_{(\mu,\lambda)\in\cM_0\times \cV_0}\abs{\cL_n(\mu,\lambda)-\cL(\mu,\lambda)}\\
        &\geq \gamma-\max_{t\in\cT}\frac{e^{C\breve{\sigma}^2t^2/2}}{\epsmax}\abs{\phi_n(t)-\phi(t)}\\
        &\geq \gamma-\frac{\gamma}{4}=\frac{3\gamma}{4},
    \end{align*}
which concludes the proof.
\end{proof}
Combining Claim~\ref{clm:truth_small_loss} and Claim~\ref{clm:bad_large_loss} already shows that the truth $(\theta,\sigma^2)$ can beat all bad candidates $(\mu,\lambda):\abs{\mu-\theta}\gtrsim t_*^{-1}\asymp \sigma/\sqrt{\log(n)}$ if we minimize $\cL_n(\mu,\lambda)$ over continuous pilot intervals $\cM_0\times \cV_0$. That being said, when minimizing over $\cM_0\times \cV_0$, bad candidates will not become the minimizer on a high probability event. However, due to computational issues, we have chosen to minimize $\cL_n(\mu,\lambda)$ over the discrete set $\cM\times \cV$ instead, where the true values $(\theta,\sigma^2)$ may not be included. Therefore, we need to show further that a close-to-truth pair $(\mu,\lambda)\approx(\theta,\sigma^2)$ can also produce a small loss compared to all bad candidates. This can be done through a Lipschitz argument that relates $\cL_n(\mu,\lambda)$ to $\cL_n(\theta,\sigma^2)$.

\begin{Claim}[Lipschitzness of $\cL_n$]\label{clm:lipschitz}
    With the above setup, for any $(\mu,\lambda)$ and $(\mu',\lambda')$ in $\cM_0\times \cV_0$, we have
    \begin{align*}
        \abs{\cL_n(\mu,\lambda)-\cL_n(\mu',\lambda')}\leq \frac{2t_* (en)^{1/4}}{\epsmax}\abs{\mu-\mu'}+\frac{2t_*^2 (en)^{1/4}}{\epsmax}\abs{\lambda-\lambda'}.
    \end{align*}
\end{Claim}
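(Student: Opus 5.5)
The approach is to reduce everything to a single-frequency Lipschitz estimate and then take the maximum over $t\in\cT$. Fix $t\in\cT$ and write
\begin{align*}
f_t(\mu,\lambda)=\Big(\abs*{\tfrac{1}{\epsmax}e^{-\im \mu t+\lambda t^2/2}\phi_n(t)-\tfrac{1-\epsmax}{\epsmax}}-1\Big)_+ .
\end{align*}
Since $\abs{(\abs{z}-c)_+-(\abs{w}-c)_+}\le\abs{z-w}$ (already used in Claim~\ref{clm:truth_small_loss}), and a maximum of functions inherits any common Lipschitz bound, it suffices to show
\begin{align*}
\frac{1}{\epsmax}\abs{\phi_n(t)}\,\abs*{e^{-\im\mu t+\lambda t^2/2}-e^{-\im\mu' t+\lambda' t^2/2}}\le \frac{2t_*(en)^{1/4}}{\epsmax}\abs{\mu-\mu'}+\frac{2t_*^2(en)^{1/4}}{\epsmax}\abs{\lambda-\lambda'}
\end{align*}
for every $t\in\cT\subseteq[t_*,2t_*]$, where $\abs{\phi_n(t)}\le 1$ is used.

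First I move $\mu$ with $\lambda$ fixed, and then move $\lambda$. For the $\mu$-step, $\abs{e^{-\im\mu t}-e^{-\im\mu' t}}\le t\abs{\mu-\mu'}\le 2t_*\abs{\mu-\mu'}$, and this gets multiplied by $e^{\lambda t^2/2}$. For the $\lambda$-step, the mean value theorem gives $\abs{e^{\lambda t^2/2}-e^{\lambda' t^2/2}}\le \tfrac{t^2}{2}e^{\max(\lambda,\lambda')t^2/2}\abs{\lambda-\lambda'}\le 2t_*^2 e^{\max(\lambda,\lambda')t^2/2}\abs{\lambda-\lambda'}$.

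The main obstacle is bounding the exponential factor $e^{\lambda t^2/2}$ uniformly over $\lambda\in\cV_0$ and $t\in\cT$ by $(en)^{1/4}$. Since $\lambda\le C\breve\sigma^2$ and $t\le 2t_*$, we get
\begin{align*}
\lambda t^2/2\le C\breve\sigma^2\cdot 4t_*^2/2=2C\breve\sigma^2\cdot\frac{\log(en)}{8C\breve\sigma^2}=\frac{\log(en)}{4}.
\end{align*}
Hence $e^{\lambda t^2/2}\le (en)^{1/4}$. This is exactly where the choice $t_*=\sqrt{\log(en)/(8C)}/\breve\sigma$ enters, and it is why the factor $(en)^{1/4}$ appears. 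The bound is deterministic and does not require the event $\breve\cE$.

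Combining the two steps via the triangle inequality and taking the maximum over $t\in\cT$ yields the claim.
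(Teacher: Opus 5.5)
Your proposal is correct and follows essentially the same route as the paper: use the $1$-Lipschitzness of $(\abs{z}-c)_+$ and $\abs{\phi_n(t)}\le 1$ to reduce to bounding $\abs{e^{-\im\mu t+\lambda t^2/2}-e^{-\im\mu' t+\lambda' t^2/2}}$, then use $t\le 2t_*$ together with $e^{\lambda t^2/2}\le (en)^{1/4}$ on $\cV_0\times\cT$. The only difference is cosmetic. You split the increment coordinate-wise via the triangle inequality and a real one-variable mean value theorem, whereas the paper applies a joint mean value theorem directly to the complex-valued $F_t$; your version is slightly cleaner, since the equality form of the mean value theorem does not hold for complex-valued functions.
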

\begin{proof}
Since $f(z)=(\abs{z}-c)_+$ is 1-Lipschitz on $\bbC$ for any $c\in\bbR$, we have
\begin{align*}
    \abs{\cL_n(\mu,\lambda)-\cL_n(\mu',\lambda')}&\leq \sup_{t\in\cT}\frac{\abs{\phi_n(t)}}{\epsmax}\abs{e^{-\im \mu t+\lambda t^2/2}-e^{-\im \mu't+\lambda't^2/2}}\\
    &\leq  \frac{1}{\epsmax}\cdot \sup_{t\in\cT}\, \abs{e^{-\im \mu t+\lambda t^2/2}-e^{-\im \mu't+\lambda't^2/2}}\\
    & =\frac{1}{\epsmax}\cdot \sup_{t\in\cT}\, \abs{F_t(\mu,\lambda)-F_t(\mu',\lambda')},
\end{align*}
where $F_t(u,v)\define e^{-\im ut+vt^2/2}$. Note that
\begin{align*}
    \partial_u F_t(u,v)=-\im t F_t(u,v),\quad \partial_v F_t(u,v)=\frac{t^2}{2}F_t(u,v).
\end{align*}
By the mean-value theorem, for each $t\in \cT$, there exist $\mu''_t$ between $\mu$ and $\mu'$ (and thus in $\cM_0$), and $\lambda''_t$ between $\lambda$ and $\lambda'$ (and thus in $\cV_0$), such that
\begin{align*}
    \abs{F_t(\mu,\lambda)-F_t(\mu',\lambda')}&=\abs*{-\im t F_t(\mu''_t,\lambda''_t)(\mu-\mu')+\frac{t^2}{2}F_t(\mu''_t,\lambda''_t)(\lambda-\lambda')}\\
    &\leq \abs{e^{-\im\mu''_t t+\lambda''_t t^2/2}}\cdot ( 2t_*\abs{\mu-\mu'}+2t_*^2\abs{\lambda-\lambda'})\\
    &\leq e^{C\breve{\sigma}^2 (2t_*)^2/2 }\cdot ( 2t_*\abs{\mu-\mu'}+2t_*^2\abs{\lambda-\lambda'})\\
    &=e^{2C\log(en)/(8C)}\cdot ( 2t_*\abs{\mu-\mu'}+2t_*^2\abs{\lambda-\lambda'})\\
    &=(en)^{1/4}\cdot ( 2t_*\abs{\mu-\mu'}+2t_*^2\abs{\lambda-\lambda'}).
\end{align*}
The desired conclusion is thus proved.
\end{proof}
We now derive the final claim, which asserts that a small loss $\cL_n(\mu,\lambda)$ can be achieved somewhere within the finite grid $\cM\times \cV$.
\begin{Claim}[Small $\cL_n$ within the grid]\label{clm:grid_small_loss}
With the above setup, there exists $(\mu_*,\lambda_*)\in\cM\times \cV$, such that
\begin{align*}
    \cL_n(\mu_*,\lambda_*)\leq \frac{\gamma}{2}
\end{align*}
on the joint event $\breve{\cE}\medcap \tilde{\cE}$.
\end{Claim}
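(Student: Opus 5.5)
The plan is to take the grid point nearest to the truth and compare it with the truth using Claims~\ref{clm:truth_small_loss} and \ref{clm:lipschitz}. We work on $\breve{\cE}\medcap\widetilde{\cE}$.

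First we check that the truth lies in the continuous pilot box. On $\breve{\cE}$ we have $\abs{X_\textnormal{med}-\theta}\le D\sigma\le D\sqrt{C}\breve{\sigma}$, so $\theta\in\cM_0$; in fact $\theta$ sits at distance at least $D\sqrt{C}\breve\sigma$ from each endpoint of $\cM_0$. Also $\breve\sigma^2/C\le\sigma^2\le C\breve\sigma^2$, so $\sigma^2\in\cV_0$.

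Next we pick the grid point. Because $\cM$ and $\cV$ are equally spaced grids of $\cM_0$ and $\cV_0$ with spacings at most $h_\cM$ and $h_\cV$, and both contain their endpoints, there exist $\mu_*\in\cM$ and $\lambda_*\in\cV$ with
\begin{align*}
\abs{\mu_*-\theta}\le h_\cM,\qquad \abs{\lambda_*-\sigma^2}\le h_\cV.
\end{align*}
Both pairs $(\mu_*,\lambda_*)$ and $(\theta,\sigma^2)$ lie in $\cM_0\times\cV_0$, so Claim~\ref{clm:lipschitz} applies. Inserting the definitions of $h_\cM$ and $h_\cV$ gives
\begin{align*}
\abs{\cL_n(\mu_*,\lambda_*)-\cL_n(\theta,\sigma^2)}\le \frac{2t_*(en)^{1/4}}{\epsmax}\cdot\frac{\gamma\epsmax}{16t_*(en)^{1/4}}+\frac{2t_*^2(en)^{1/4}}{\epsmax}\cdot\frac{\gamma\epsmax}{16t_*^2(en)^{1/4}}=\frac{\gamma}{8}+\frac{\gamma}{8}=\frac{\gamma}{4}.
\end{align*}

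Finally, Claim~\ref{clm:truth_small_loss} gives $\cL_n(\theta,\sigma^2)\le\gamma/4$ on $\widetilde{\cE}$. Combining the two bounds yields $\cL_n(\mu_*,\lambda_*)\le\gamma/2$, which is the claim.

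The main point needing care is the membership $(\theta,\sigma^2)\in\cM_0\times\cV_0$ on $\breve{\cE}$. Without it, neither the Lipschitz bound nor the existence of nearby grid points is justified. The potentially subtle part is the exponent bound inside Claim~\ref{clm:lipschitz}, which needs $\lambda''\le C\breve\sigma^2$; this holds because $\cV_0$ is convex and contains both $\lambda_*$ and $\sigma^2$. The rest is arithmetic with the chosen spacings.
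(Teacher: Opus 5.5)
Your proposal is correct and follows the paper's own argument essentially step for step. On $\breve{\cE}$ you place $(\theta,\sigma^2)$ in $\cM_0\times\cV_0$, take the nearest grid point, use Claim~\ref{clm:lipschitz} with the chosen spacings to get an increment of $\gamma/8+\gamma/8$, and add $\cL_n(\theta,\sigma^2)\le\gamma/4$ from Claim~\ref{clm:truth_small_loss}. (Minor remark: that last bound also uses $\sigma^2\le C\breve{\sigma}^2$ from $\breve{\cE}$, which is harmless since you work on the joint event.)
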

\begin{proof}
On $\breve{\cE}$, we have $(\theta,\sigma^2)\in \cM_0\times \cV_0$. Since $\cM\times \cV$ is a discretization of $\cM_0\times \cV_0$ with step-sizes no larger than $(h_\cM,h_\cV)$, there exists $(\mu_*,\lambda_*)\in\cM\times \cV$ such that
\begin{align*}
    \abs{\mu_*-\theta}\leq h_\cM,\quad \abs{\lambda_*-\sigma^2}\leq h_\cV.
\end{align*}
By Claim~\ref{clm:lipschitz}, we have
\begin{align*}
    \cL_n(\mu_*,\lambda_*)&\leq \cL_n(\theta,\sigma^2)+\frac{2t_*(en)^{1/4}}{\epsmax}\abs{\mu_*-\theta}+\frac{2t_*^2(en)^{1/4}}{\epsmax}\abs{\lambda_*-\sigma^2}\\
    &\leq \cL_n(\theta,\sigma^2)+\frac{2t_*(en)^{1/4}}{\epsmax} h_\cM+\frac{2t_*^2(en)^{1/4}}{\epsmax}h_\cV.
\end{align*}
When $\tilde{\cE}$ also holds, we conclude by Claim~\ref{clm:truth_small_loss} that $\cL_n(\theta,\sigma^2)\leq \gamma/4$. Plugging in this bound and our preset magnitudes of $(h_\cM,h_\cV)$, we obtain that, on $\breve{\cE}\medcap \tilde{\cE}$,
\begin{align*}
    \cL_n(\mu_*,\lambda_*)&\leq \frac{\gamma}{4}+\frac{2t_*(en)^{1/4}}{\epsmax}\cdot \frac{\gamma\epsmax}{16t_*(en)^{1/4}}+\frac{2t_*^2(en)^{1/4}}{\epsmax}\cdot \frac{\gamma \epsmax}{16t_*^2(en)^{1/4}}\\
    &\leq \frac{\gamma}{2}.
\end{align*}
\end{proof}
We are now able to prove Lemma~\ref{lem:pilot_unknown_var_meanpart}.
\begin{proof}[Proof of Lemma~\ref{lem:pilot_unknown_var_meanpart}] Consider the joint event $\breve{\cE}\medcap \tilde{\cE}$. On this event, we combine Claim~\ref{clm:bad_large_loss} and Claim~\ref{clm:grid_small_loss} to see that
\begin{align*}
    \cL_n(\mu_*,\lambda_*)\leq \frac{\gamma}{2}<\frac{3\gamma}{4}&\leq  \inf_{(\mu,\lambda)\in\cM_0\times \cV_0:\ \abs{\mu-\theta}\geq 2\pi/t_*}\, \cL_n(\mu,\lambda)\\
    &\leq \inf_{(\mu,\lambda)\in\cM\times \cV:\ \abs{\mu-\theta}\geq 2\pi/t_*}\, \cL_n(\mu,\lambda)
\end{align*}
for some $(\mu_*,\lambda_*)\in\cM\times\cV$. Therefore, on this joint event, the solution to
\begin{align*}
    (\widetilde{\theta},\widetilde{\lambda})\in\argmin_{(\mu,\lambda)\in \cM\times\cV }\cL_n(\mu,\lambda)
\end{align*}
must satisfy
\begin{align*}
    \abs{\widetilde{\theta}-\theta}\leq \frac{2\pi}{t_*}=\frac{2\pi \breve{\sigma}}{\sqrt{\log(en)/(8C)}}\leq \frac{4\sqrt{2}\pi C\sigma}{\sqrt{\log(en)}},
\end{align*}
where the last inequality follows from $\breve{\sigma}^2\leq C\sigma^2$ on $\breve{\cE}$. Meanwhile, this joint event $\breve{\cE}\medcap \tilde{\cE}$ holds with probability at least
\begin{align*}
    P^n(\breve{\cE}\medcap \tilde{\cE})&\geq 1-\delta/2-\abs{\cT}e^{-\frac{1}{256}(\epsmax\gamma)^2\sqrt{n}}\\
    &\geq 1-\delta/2-(\ceil{\frac{t_*}{h_\cT}}+1)e^{-\frac{1}{256}(\epsmax\gamma)^2\sqrt{n}}\\
    &=1-\delta/2-\ceil*{\frac{D\sqrt{\log(en)}}{\sqrt{2}\arccos(\beta)}+1}e^{-\frac{1}{256}(\epsmax\gamma)^2\sqrt{n}}.
\end{align*}
For any fixed $\delta\in(0,1)$ and $\epsmax\in(0,1/2)$, the last term on the right-hand side tends to $0$ as $n$ tends to infinity. Therefore, we can select a $(\delta,\epsmax)$-dependent absolute constant $N$ to make this term smaller than $\delta/2$ whenever $n\geq N$.
    
\end{proof}

\subsection{Remaining Proofs for Section~\ref{sec:upper_bound_known_var}}
\subsubsection{Details about Remark~\ref{rmk:whether_interval_known_var}}\label{sec:explain_remark}
As we have remarked in Remark~\ref{rmk:whether_interval_known_var},  $\CIhat$ will become a true interval as long as we set $\kappa$ as a large enough $(\delta,\epsmax)$-dependent constant compared to $M$. This can be seen from the following arguments:
    \begin{itemize}
        \item For each $t\in \cT$, $\Upsilon_n(t;\mu)$ has period $2\pi/t$. From \eqref{eq:upsilon_to_cosine}, we know that the set $\{\mu\in\bbR:\, \abs{\Upsilon_n(t;\mu)}\leq 1+\Delta(t)\}$ is the union of infinitely many intervals; the centers of two consecutive intervals are separated by a distance of $2\pi/t$.
        \item By our choice of $\cT$ \eqref{eq:known_var_grid}, we have $2\pi/t\geq 2\pi\kappa \sigma/(1+\sqrt{\log(en)})$. When $\kappa$ is large enough compared to $M$, this gap $2\pi/t$ is larger than the length of the pilot interval $\widetilde{\CI}$ -- which is $M\sigma/\sqrt{\log(en)}$. When this holds, the set $\CIhat_t$ defined by \eqref{eq:CI_known_var_piece}, which is exactly the intersection between the union of intervals $\{\mu\in\bbR:\, \abs{\Upsilon_n(t;\mu)}\leq 1+\Delta(t)\}$ and the pilot confidence interval $\widetilde{\CI}$, will be one interval (including the special case $\emptyset$).
        \item Finally, recall that $\CIhat=\bigcap_{t\in\cT}\CIhat_t$, and note that the intersection of intervals is still an interval (or $\emptyset$ as its most special case).
    \end{itemize}

\subsection{Remaining Proofs for Section~\ref{sec:upper_bound_unknown_var}}
\subsubsection{Algorithmic Description in the Unknown-Variance Setting}\label{sec:alg_description}
In the following Algorithm~\ref{alg:unknown_var}, we present the algorithmic description of the confidence interval \eqref{eq:CI_unknown_var}.
\begin{algorithm}[ht]
  \caption{Certification-based construction of $\CIhat$ (unknown-variance)}
  \label{alg:unknown_var}
  \begin{algorithmic}[2]
    \Require Max contamination level $\epsmax\in(0,1/3]$; Data $X_1,\dots,X_n$; Pilot sets $\widetilde{\CI}$, $\widetilde{\cT}$, and $\widetilde{\cV}$
    \Ensure Confidence set $\CIhat$ for $\theta$

    \State $\CIhat \gets \varnothing$
    \For{$\text{each candidate }\mu \in \widetilde{\CI}$}\Comment{crude pilot interval}
        \For{$\text{each} $ $\lambda\in \widetilde{\cV}$}
            \State $\textsc{Cet}\gets \textsc{Pass}$
            \For{$\text{each }t\in\widetilde{\cT}$ }
                \State Compute $\Upsilon_n(t;\mu,\lambda)=3e^{-\im\mu t+\lambda t^2/2}\phi_n(t)-2$
                \If{$\abs{\Upsilon_n(t;\mu,\lambda)}>1+\Delta_1(t)$ }
                    \State $\textsc{Cet}\gets \textsc{Fail}$
                    \State \textbf{break}\Comment{end the loop over $t\in\widetilde{\cT}$}
                \ElsIf{$\abs{\Upsilon_n^2(t;\mu,\lambda)-\Upsilon_n(2t;\mu,\lambda)}+\abs{\Upsilon_n(t;\mu,\lambda)}^2>1+\Delta_2(t)$}
                    \State $\textsc{Cet}\gets \textsc{Fail}$
                    \State \textbf{break}\Comment{end the loop over $t\in\widetilde{\cT}$}
                \EndIf
            \EndFor
            \If{$\textsc{Cet}=\textsc{Pass}$}\Comment{all certificates are passed}
                \State $\CIhat \gets \CIhat \bigcup \{\mu\}$
                \State \textbf{break}\Comment{end the loop over $\lambda\in\widetilde{\cV}$}
            \EndIf
        \EndFor
    \EndFor
    \State $\CIhat\gets [\inf\CIhat,\ \sup \CIhat]$\Comment{(optional) make $\CIhat$ a true interval}
  \end{algorithmic}
\end{algorithm} 
\begin{Remark}
In analogy to Remark~\ref{rmk:known_var_discretize} following Algorithm~\ref{alg:known_var}, in Algorithm~\ref{alg:unknown_var}, we can also discretize $\widetilde{\CI}$ into pivots equally spaced by a gap of $\Theta(\widetilde{\sigma}n^{-1/8})$ to allow fast scanning over $\widetilde{\CI}$ without changing the order of the length.
\end{Remark}

\subsection{Remaining Proofs for Section~\ref{sec:discussion}}
\subsubsection{Proof of Theorem~\ref{thm:no_adaptation_between}}\label{sec:proof_no_adaptation_between}
Recall the conversion from confidence intervals to tests in Section~\ref{sec:lower_bound}. We define a half-Huber-half-Efron version of testing problems:
\begin{align}\label{eq:known_var_test_hybrid}
    H_0^{(\textnormal{Huber})}(\sigma,\epsmax)=\{(1-\epsmax)\cN(0,\sigma^2)+\varepsilon Q:\,Q\}\quad \textnormal{v.s.}\quad H_1^{(\textnormal{Efron})}(r,\sigma,\varepsilon)
\end{align}
and
\begin{align}\label{eq:unknown_var_test_hybrid}
    H_0^{(\textnormal{Huber})}(\epsmax)=\{(1-\epsmax)\cN(0,\sigma^2)+\varepsilon Q:\,\sigma,Q\}\quad \textnormal{v.s.}\quad H_1^{(\textnormal{Efron})}(r,\sigma,\varepsilon)
\end{align}
The following results are direct analogies to Proposition~\ref{prop:CI_to_test_known_var} and Proposition~\ref{prop:CI_to_test_unknown_var}.
\begin{Proposition}
Let $\delta\in(0,1)$ and $\epsmax\in(0,1/2)$ be fixed constants.
\begin{itemize}
    \item If $\sigma^2>0$ is known and a confidence interval $\CIhat$ satisfies
\begin{align*}
    \inf_\theta \,\inf_{\varepsilon\in[0,\epsmax],\,Q} (P_{\theta,\sigma,\varepsilon,Q}^\textnormal{(Huber)})^n(\theta\in\CIhat)\geq 1-\delta
\end{align*}
and
\begin{align*}
    \inf_\theta \,\inf_{\varepsilon\in[0,\epsmax],\,Q} (P_{\theta,\sigma,\varepsilon,Q}^\textnormal{(Efron)})^n(\abs{\CIhat}\leq r/2)\geq 1-\delta
\end{align*}
for some $r=r(n,\sigma,\varepsilon)>0$, then its induced test $T(X_{1:n})=\indi\{0\notin \CIhat\}$ satisfies
\begin{align*}
    \sup_{P\in H_0^{(\textnormal{Huber})}(\sigma,\epsmax)} P^n(T)+\sup_{P\in H_1^{(\textnormal{Efron})}(r,\sigma,\varepsilon)} P^n(1-T)\leq 3\delta.
\end{align*}
\item If $\sigma^2>0$ is unknown and a confidence interval $\CIhat$ satisfies
\begin{align*}
    \inf_{\theta,\sigma^2} \,\inf_{\varepsilon\in[0,\epsmax],\,Q} (P_{\theta,\sigma,\varepsilon,Q}^\textnormal{(Huber)})^n(\theta\in\CIhat)\geq 1-\delta
\end{align*}
and
\begin{align*}
    \inf_{\theta,\sigma^2} \,\inf_{\varepsilon\in[0,\epsmax],\,Q} (P_{\theta,\sigma,\varepsilon,Q}^\textnormal{(Efron)})^n(\abs{\CIhat}\leq r/2)\geq 1-\delta
\end{align*}
for some $r=r(n,\sigma,\varepsilon)>0$, then its induced test $T(X_{1:n})=\indi\{0\notin \CIhat\}$ satisfies
\begin{align*}
    \sup_{P\in H_0^{(\textnormal{Huber})}(\epsmax)} P^n(T)+\sup_{P\in H_1^{(\textnormal{Efron})}(r,\sigma,\varepsilon)} P^n(1-T)\leq 3\delta.
\end{align*}
\end{itemize}
\end{Proposition}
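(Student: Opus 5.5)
The plan is to copy the argument of Proposition~\ref{prop:CI_to_test_known_var} (and Proposition~\ref{prop:CI_to_test_unknown_var}) almost verbatim. The one change is bookkeeping: the type-I error is controlled by coverage over the Huber class, and the type-II error is controlled by coverage and length over the Efron class. Efron's class is contained in Huber's class, since $Q*\cN(0,\sigma^2)$ is an admissible Huber contamination. So coverage on Huber's models automatically gives coverage on Efron's models, and this is what makes the type-II bound work.

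\textbf{Known variance.} Set $T=\indi\{0\notin\CIhat\}$.

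For the type-I error, any $P\in H_0^{(\textnormal{Huber})}(\sigma,\epsmax)$ is a Huber model with $\theta=0$ and contamination level at most $\epsmax$. Hence $P^n(T)=P^n(0\notin\CIhat)\leq\delta$ by the Huber coverage assumption.

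For the type-II error, take $P=P^{(\textnormal{Efron})}_{r,\sigma,\varepsilon,Q}\in H_1^{(\textnormal{Efron})}(r,\sigma,\varepsilon)$ and decompose
\begin{align*}
P^n(0\in\CIhat)\leq P^n(0\in\CIhat,\,r\in\CIhat)+P^n(r\notin\CIhat).
\end{align*}
On the first event, $\CIhat$ contains both $0$ and $r$, so $\abs{\CIhat}\geq r>r/2$. That event therefore has probability at most $\delta$ by the Efron length assumption evaluated at $(\theta,\varepsilon)=(r,\varepsilon)$.

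For the second term, rewrite $P$ in Huber form as $(1-\varepsilon)\cN(r,\sigma^2)+\varepsilon\,(Q*\cN(0,\sigma^2))$, with $\varepsilon\leq\epsmax$. The Huber coverage assumption at $\theta=r$ then gives $P^n(r\notin\CIhat)\leq\delta$.

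Taking suprema over $\varepsilon$ and $Q$ and adding the two errors gives a total of at most $3\delta$.

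\textbf{Unknown variance.} The argument is identical, except that the type-I supremum also runs over $\sigma^2>0$. This is handled because the coverage hypothesis already takes the infimum over $(\theta,\sigma^2)$. The embedding of Efron's models into Huber's models holds for each fixed $\sigma$, so the type-II step goes through unchanged.

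\textbf{Main obstacle.} There is no real obstacle. The one point to state carefully is the class inclusion $P^{(\textnormal{Efron})}_{\theta,\sigma,\varepsilon,Q}=P^{(\textnormal{Huber})}_{\theta,\sigma,\varepsilon,Q*\cN(0,\sigma^2)}$, because that is exactly what lets Huber coverage control the $P^n(r\notin\CIhat)$ term. Two smaller points should be made explicit. First, the null hypothesis as written has $(1-\epsmax)\cN(0,\sigma^2)+\varepsilon Q$; it should be read with contamination weight $\epsmax$ (or any level in $[0,\epsmax]$), and either reading is covered by the Huber coverage assumption. Second, the hypotheses should be read with infima over $\varepsilon$ and $Q$, as in Proposition~\ref{prop:CI_to_test_known_var}, rather than the suprema that appear in the statement of Theorem~\ref{thm:no_adaptation_between}.
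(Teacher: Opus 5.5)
Your proposal is correct and follows the route the paper intends. The paper gives no separate proof and simply declares this a direct analogue of Propositions~\ref{prop:CI_to_test_known_var} and~\ref{prop:CI_to_test_unknown_var}: type-I error from coverage at $\theta=0$, and type-II error from splitting into $\{0\in\CIhat,\,r\in\CIhat\}$ and $\{r\notin\CIhat\}$, controlled by length and coverage. You also correctly make explicit the one ingredient the paper leaves implicit, namely the identity $P^{(\textnormal{Efron})}_{\theta,\sigma,\varepsilon,Q}=P^{(\textnormal{Huber})}_{\theta,\sigma,\varepsilon,Q*\cN(0,\sigma^2)}$, which is what lets Huber coverage bound $P^n(r\notin\CIhat)$ under the Efron alternative.
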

With this reduction approach, it remains to study the lower bound of the hybrid testing problems \eqref{eq:known_var_test_hybrid} and \eqref{eq:unknown_var_test_hybrid}. Although the exact detection thresholds seem complicated, we note that when $\varepsilon=0$, Efron's models and Huber's models are identical, and both \eqref{eq:known_var_test_hybrid} and \eqref{eq:unknown_var_test_hybrid} become Huber-versus-Huber problems that have been studied in \cite{luo2024adaptive}. The special $\varepsilon$ cases can serve as a lower bound for general $\varepsilon$, which are enough for our derivation of Theorem~\ref{thm:no_adaptation_between}.
\begin{proof}[Proof of Theorem~\ref{thm:no_adaptation_between}]
First, we consider the known-variance testing problem \eqref{eq:known_var_test_hybrid}. For the special case of $\varepsilon=0$, Lemma 4 of \cite{luo2024adaptive} shows that
\begin{align}
    r\gtrsim_{\delta,\epsmax}\frac{\sigma}{\sqrt{\log(n)}}
\end{align}
is required for detecting within a total error of $4\delta$. The first claim of Theorem~\ref{thm:no_adaptation_between} is thus proved. As for the second claim, we consider the unknown-variance counterpart \eqref{eq:unknown_var_test_hybrid}. Lemma 10 of \cite{luo2024adaptive} shows a lower bound of 
\begin{align*}
    r\gtrsim_{\delta,\epsmax}\sigma
\end{align*}
in this setting, which completes the proof of the second part.
\end{proof}

\section{Auxiliary Results}\label{sec:aux}
\begin{Lemma}\label{lem:cosine_quadratic}
    We have
    \begin{align}
        \frac{4}{\pi^2}x^2 \leq 1-\cos(x) \leq \frac{1}{2}x^2,\quad \forall\, x\in[-\pi/2,\pi/2],
    \end{align}
    and
    \begin{align}
        1-\cos(x)\geq \frac{2}{\pi^2}x^2,\quad \forall\, x\in[-\pi,\pi].
    \end{align}
\end{Lemma}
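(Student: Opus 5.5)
The upper bound $1-\cos x\le x^2/2$ holds for all real $x$. Set $g(x)=x^2/2-1+\cos x$. Then $g(0)=0$ and $g'(x)=x-\sin x$. Since $|\sin x|\le |x|$, the derivative satisfies $g'(x)\ge 0$ for $x\ge 0$. Hence $g$ is nondecreasing on $[0,\infty)$ and nonnegative there, and because $g$ is even it is nonnegative everywhere.

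For both lower bounds I would use the half-angle identity $1-\cos x=2\sin^2(x/2)$ together with Jordan's inequality $|\sin y|\ge \frac{2}{\pi}|y|$ for $|y|\le \pi/2$. Jordan's inequality follows from the concavity of $\sin$ on $[0,\pi/2]$: the graph lies above the chord joining $(0,0)$ and $(\pi/2,1)$, and evenness of $|\sin|$ handles negative $y$.

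The second claim comes first. For $x\in[-\pi,\pi]$ we have $y=x/2\in[-\pi/2,\pi/2]$, so
\begin{align*}
1-\cos x=2\sin^2(x/2)\ge 2\cdot\frac{4}{\pi^2}\cdot\frac{x^2}{4}=\frac{2}{\pi^2}x^2.
\end{align*}

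For the sharper constant $4/\pi^2$ on $[-\pi/2,\pi/2]$, I would use concavity directly. By evenness it suffices to treat $x\in[0,\pi/2]$. There $x/2\in[0,\pi/4]$, and concavity of $\sin$ on $[0,\pi/4]$ gives $\sin(x/2)\ge \frac{\sin(\pi/4)}{\pi/4}\cdot\frac{x}{2}=\frac{2\sqrt2}{\pi}x$. Therefore
\begin{align*}
2\sin^2(x/2)\ge 2\cdot\frac{8}{\pi^2}x^2=\frac{16}{\pi^2}x^2\ge \frac{4}{\pi^2}x^2.
\end{align*}

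There is no real obstacle in this argument. The only step that needs care is applying Jordan's inequality at the correct half-angle: the argument of $\sin$ must lie in $[-\pi/2,\pi/2]$ (or in $[0,\pi/4]$ for the sharper bound) for the chord comparison to hold.
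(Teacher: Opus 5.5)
Your upper bound, the $[-\pi,\pi]$ lower bound via $1-\cos x=2\sin^2(x/2)$ and Jordan's inequality, and the overall strategy are all correct. The paper states this lemma without proof, so there is no argument of theirs to compare against.

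There is, however, an arithmetic error in the $[-\pi/2,\pi/2]$ bound. The chord slope is right, $\frac{\sin(\pi/4)}{\pi/4}=\frac{2\sqrt2}{\pi}$, but multiplying by $\frac{x}{2}$ gives $\frac{\sqrt2}{\pi}x$, not $\frac{2\sqrt2}{\pi}x$. As written, your intermediate inequality $1-\cos x\ge \frac{16}{\pi^2}x^2$ is false. Since $\frac{16}{\pi^2}>\frac12$, it contradicts the upper bound $1-\cos x\le \frac12 x^2$ you just proved, for every $x\neq 0$. With the correct factor the chain reads
\[
1-\cos x = 2\sin^2(x/2)\ \ge\ 2\cdot\frac{2}{\pi^2}x^2=\frac{4}{\pi^2}x^2,\qquad x\in[0,\pi/2],
\]
which is exactly the claimed bound. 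It holds with no slack, with equality at $x=\pi/2$ since $1-\cos(\pi/2)=1=\frac{4}{\pi^2}(\pi/2)^2$.

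So the proof goes through once that line is corrected. You should drop the suggestion that $4/\pi^2$ is reached with room to spare. A useful sanity check is that no constant above $\frac12$ can ever appear in a quadratic lower bound for $1-\cos x$ near $0$.
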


\begin{Lemma}\label{lem:chf_concentration}
    Consider an arbitrary probability distribution $P$ on $\bbR$. For any $\delta\in(0,1)$ and any $t_1>0$, we have
    \begin{align*}
        P^n\litb{\sup_{t\in t_1\cdot \sqrt{\bbZ_+}}t^{-1}\abs*{\frac{1}{n}\sum_{j=1}^ne^{itX_j}-\E_P[e^{itX}]}\leq t_1^{-1}\sqrt{\frac{8\log(5/\delta)}{n}}}\geq 1-\delta.
    \end{align*}
\end{Lemma}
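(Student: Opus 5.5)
The lemma has two parts: a Hoeffding bound at each fixed frequency, and a union bound over the grid $t_1\sqrt{\bbZ_+}$ with a confidence level that grows with the frequency. Write $D_n(t)=\phi_n(t)-\phi(t)$, where $\phi_n(t)=\frac1n\sum_j e^{\im tX_j}$.

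\textbf{Fixed $t$.} The real parts $\cos(tX_j)$ and imaginary parts $\sin(tX_j)$ are i.i.d. and take values in $[-1,1]$. Hoeffding's inequality gives $\abs{\Re D_n(t)}\le x$ and $\abs{\Im D_n(t)}\le x$, each with probability at least $1-2e^{-nx^2/2}$. Combining the two, $\abs{D_n(t)}\le \sqrt2\,x$ with probability at least $1-4e^{-nx^2/2}$. Equivalently, $\abs{D_n(t)}\le u$ fails with probability at most $4e^{-nu^2/4}$.

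\textbf{Grid points.} Take $t=\sqrt k\,t_1$ for $k\in\bbZ_+$. The target event requires $\abs{D_n(t)}\le \sqrt{k}\,\sqrt{8\log(5/\delta)/n}$, since $t/t_1=\sqrt k$. So set $u_k^2=8k\log(5/\delta)/n$. The failure probability at index $k$ is then at most
\[
4\exp\bigl(-2k\log(5/\delta)\bigr)=4(\delta/5)^{2k}.
\]

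\textbf{Union bound.} Summing over $k\ge1$ gives a total failure probability of at most
\[
4\sum_{k\ge1}(\delta/5)^{2k}=\frac{4(\delta/5)^2}{1-(\delta/5)^2}\le \frac{4\delta^2/25}{24/25}=\frac{\delta^2}{6}\le\delta,
\]
using $\delta<1$. This proves the claim.

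\textbf{Convention on $\bbZ_+$.} If $\bbZ_+$ includes $k=0$, then $t=0$ and $D_n(0)=0$ deterministically, so that term is vacuous under the convention $0/0=0$.

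The main point of the argument is that the threshold grows like $\sqrt k$. This makes the per-index failure probabilities geometrically summable, so no $\log$ of the grid size appears in the bound.
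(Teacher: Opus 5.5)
Your proof is correct and follows the same route as the paper: Hoeffding bounds on the cosine and sine parts at each grid frequency $t=\sqrt{k}\,t_1$, with a threshold growing like $\sqrt{k}$ so that the per-frequency failure probabilities form a summable geometric series. The differences are cosmetic. You combine the real and imaginary parts within each frequency using $\lvert z\rvert\le\sqrt{2}\max(\lvert\Re z\rvert,\lvert\Im z\rvert)$, which gives a total failure probability of $\delta^2/6$, whereas the paper applies separate union bounds for cosine and sine (each at level $\delta/2$) and then uses $\lvert z\rvert\le\lvert\Re z\rvert+\lvert\Im z\rvert$; also, your Hoeffding exponent $nx^2/2$ is the correct one, since the paper's display contains a typo ($n^2$ in place of $n$).
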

\begin{proof}
    Since $\abs{\cos(tX)}\leq 1$, by Hoeffding's inequality, we have that for any $u\geq 0$
    \begin{align*}
        P^n\litb{\abs*{\frac{1}{n}\sum_{j=1}^n\cos(tX_j)-\E_P[\cos(tX)]}>ut}\leq 2e^{-n^2(ut)^2/2}=2e^{-n^2u^2t^2/2}.
    \end{align*}
    Let $u=t_1^{-1}\sqrt{2\log(5/\delta)/n}$, we have
    \begin{align*}
        \sum_{t\in t_1\cdot \sqrt{\bbZ_+}} 2e^{-n^2u^2t^2/2}&=2\sum_{k\in \bbZ_+}(\frac{\delta}{5})^{k}\\
        &\leq 2\cdot \frac{\delta/5}{1-\delta/5}\\
        &\leq \frac{2\delta/5}{1-1/5}\\
        &\leq \delta/2.
    \end{align*}
    This means that
    \begin{align*}
        \abs*{\frac{1}{n}\sum_{j=1}^n\cos(tX_j)-\E_P[\cos(tX)]}\leq \frac{t}{t_1}\sqrt{\frac{2\log(5/\delta)}{n}},\quad \forall\, t\in t_1\cdot \sqrt{\bbZ_+},
    \end{align*}
    with probability at least $1-\delta/2$. The same result holds for $\sin(tX)$. Finally, we use
    \begin{align*}
        \abs*{\frac{1}{n}\sum_{j=1}^ne^{itX_j}-\E_P[e^{itX}]}&\leq \abs*{\frac{1}{n}\sum_{j=1}^n\cos(tX)-\E_P[\cos(tX)]}+\abs*{\frac{1}{n}\sum_{j=1}^n\sin(tX)-\E_P[\sin(tX)]}
    \end{align*}
    to conclude the proof.
\end{proof}

\begin{Lemma}[\cite{vershynin2018high} Exercise 2.3.3]\label{lem:poisson_tail}
For $k> \lambda>0$, we have
\begin{align*}
    \Prob\litb{\mathrm{Pois}(\lambda)\geq k}\leq (e\lambda/k)^ke^{-\lambda}. 
\end{align*}
\end{Lemma}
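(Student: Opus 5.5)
The statement is the Poisson upper tail bound $\Prob(\mathrm{Pois}(\lambda)\geq k)\leq (e\lambda/k)^k e^{-\lambda}$ for $k>\lambda>0$. I will prove it with the standard Chernoff (exponential Markov) argument, using the closed-form moment generating function of the Poisson law.

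First, for $X\sim\mathrm{Pois}(\lambda)$ and any $s>0$, summing the series gives
\begin{align*}
    \E[e^{sX}]=\sum_{j\geq 0}e^{-\lambda}\frac{(\lambda e^s)^j}{j!}=\exp\big(\lambda(e^s-1)\big).
\end{align*}
Second, Markov's inequality applied to $e^{sX}$ yields
\begin{align*}
    \Prob(X\geq k)\leq e^{-sk}\,\E[e^{sX}]=\exp\big(\lambda(e^s-1)-sk\big).
\end{align*}
Third, I minimize the exponent over $s>0$. Setting the derivative to zero gives $\lambda e^s=k$, that is, $s=\log(k/\lambda)$. This choice is admissible (strictly positive) precisely because $k>\lambda$, and it is the only place the hypothesis is used.

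Substituting $s=\log(k/\lambda)$ gives
\begin{align*}
    \Prob(X\geq k)\leq \exp\big(k-\lambda-k\log(k/\lambda)\big)=e^{-\lambda}\Big(\frac{e\lambda}{k}\Big)^k,
\end{align*}
which is exactly the claim.

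There is no real obstacle. The only points needing care are the admissibility of the optimizing $s$, which is handled above, and the observation that $k$ need not be an integer: Markov's inequality does not use integrality.
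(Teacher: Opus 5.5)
Your proof is correct: the MGF computation, the exponential Markov bound, and the substitution $s=\log(k/\lambda)>0$ (admissible exactly because $k>\lambda$) give $e^{-\lambda}(e\lambda/k)^k$ with no gaps. The paper does not prove this lemma; it only cites Vershynin's Exercise 2.3.3, and your Chernoff argument is the standard route that exercise intends.
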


\begin{Lemma}[\cite{stein2010complex} Chapter 5.3.2]\label{lem:Mittag-Leffler}
The meromorphic function $\pi\cot(\pi z)$ admits its Mittag-Leffler expansion:
\begin{align*}
    \pi\cot(\pi z)=\frac{1}{z}+\sum_{j=1}^{+\infty}\frac{2z}{z^2-j^2},\quad z\in\bbC\backslash \bbZ.
\end{align*}
\end{Lemma}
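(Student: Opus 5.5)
The plan is the classical Liouville argument. Define
\begin{align*}
    g(z)\define \frac{1}{z}+\sum_{j=1}^{+\infty}\frac{2z}{z^2-j^2},\qquad z\in\bbC\backslash\bbZ,
\end{align*}
and show that $\Delta(z)\define \pi\cot(\pi z)-g(z)$ extends to a bounded entire function which is odd, hence identically zero.

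\textbf{Step 1 (convergence and poles).} For $z$ in a compact set $K\subseteq\bbC$ and $j\geq 2\sup_K\abs{z}$ we have $\abs{2z/(z^2-j^2)}\lesssim_K 1/j^2$. So the series converges absolutely and locally uniformly on $\bbC\backslash\bbZ$, and $g$ is meromorphic there. Using $2z/(z^2-j^2)=1/(z-j)+1/(z+j)$, $g$ has a simple pole of residue $1$ at every integer. The function $\pi\cot(\pi z)$ also has simple poles with residue $1$ exactly at the integers. Therefore $\Delta$ has only removable singularities and is entire.

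\textbf{Step 2 (symmetries).} Both $\pi\cot(\pi z)$ and $g$ are odd, so $\Delta$ is odd. For periodicity, write
\begin{align*}
    g(z)=\lim_{N\to\infty}g_N(z),\qquad g_N(z)\define\sum_{\abs{j}\leq N}\frac{1}{z-j}.
\end{align*}
Then
\begin{align*}
    g_N(z+1)-g_N(z)=\frac{1}{z+N+1}-\frac{1}{z-N}\to 0,
\end{align*}
so $g(z+1)=g(z)$. Since $\cot$ is also $1$-periodic, $\Delta$ is $1$-periodic. Consequently it suffices to bound $\Delta$ on the strip $\{0\leq \Re z\leq 1\}$.

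\textbf{Step 3 (boundedness — the main obstacle).} On the compact part $\{0\leq\Re z\leq1,\ \abs{\Im z}\leq 1\}$, $\Delta$ is bounded because it is continuous. On the remaining part, write $z=x+\im y$ with $x\in[0,1]$ and $\abs{y}\geq 1$.
\begin{itemize}
    \item For the cotangent, $\abs{\pi\cot(\pi z)}\to\pi$ uniformly as $\abs{y}\to\infty$, so it is bounded there.
    \item For the series, use $\Re(z^2-j^2)=x^2-y^2-j^2$. This gives $\abs{z^2-j^2}\geq y^2+j^2-1\geq \tfrac12(y^2+j^2)$, and also $\abs{z}\leq 1+\abs{y}\leq 2\abs{y}$. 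Hence
    \begin{align*}
        \sum_{j\geq1}\abs*{\frac{2z}{z^2-j^2}}\leq \sum_{j\geq 1}\frac{8\abs{y}}{y^2+j^2}\leq 8\int_0^\infty\frac{\abs{y}\dif u}{y^2+u^2}=4\pi,
    \end{align*}
    and $\abs{1/z}\leq 1$.
\end{itemize}
Thus $\Delta$ is bounded on the strip, and by periodicity on all of $\bbC$. This uniform-in-$y$ control of the series is the only genuinely delicate estimate. The rest of the argument is bookkeeping.

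\textbf{Step 4 (conclusion).} By Liouville's theorem $\Delta$ is constant. Since $\Delta$ is odd, $\Delta(0)=0$, so $\Delta\equiv 0$. This gives the claimed expansion $\pi\cot(\pi z)=1/z+\sum_{j\geq1}2z/(z^2-j^2)$ on $\bbC\backslash\bbZ$.
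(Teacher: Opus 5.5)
Your proof is correct and is the standard Liouville argument: the difference $\pi\cot(\pi z)-g(z)$ is entire, odd and $1$-periodic, it is bounded on a strip because $\sum_{j\geq 1}|y|/(y^2+j^2)\leq \pi/2$, and hence it vanishes identically. The paper gives no proof of its own and only cites Stein--Shakarchi, so this matches the approach the paper relies on.
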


\begin{Lemma}\label{lem:cot}
Let $f(x)=1-\pi x\cot(\pi x)$. Then, $f(x)$ is increasing on $(0,1)$ and $\lim_{x\to 0^+}f(x)=0$.
\end{Lemma}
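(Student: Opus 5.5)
Set $f(x)=1-\pi x\cot(\pi x)$ on $(0,1)$. I will get both the limit and the monotonicity from the Mittag-Leffler expansion in Lemma~\ref{lem:Mittag-Leffler}, which turns $f$ into an explicit series of monotone terms. Substituting $z=x\in(0,1)$ and multiplying by $x$ gives
\begin{align*}
    \pi x\cot(\pi x)=1+\sum_{j=1}^{\infty}\frac{2x^2}{x^2-j^2},\qquad\text{so}\qquad f(x)=\sum_{j=1}^{\infty}\frac{2x^2}{j^2-x^2}.
\end{align*}
For $x\in(0,1)$ every denominator satisfies $j^2-x^2\geq j^2-1\geq0$, with equality only at $j=1$. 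So each summand is positive and finite.

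\textbf{Monotonicity.} Each summand $g_j(x)=2x^2/(j^2-x^2)$ is strictly increasing on $(0,1)$. Its numerator increases, and its denominator is positive and decreasing. Equivalently, $g_j'(x)=4xj^2/(j^2-x^2)^2>0$. The series converges pointwise, and a convergent sum of increasing functions is increasing, so $f$ is increasing on $(0,1)$. The only care needed is to justify the pointwise convergence and that monotonicity passes to the limit. Both are immediate because the terms are nonnegative and $g_j(x)\leq 2/(j^2-1)$ for $j\geq2$.

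\textbf{Limit.} For $x\in(0,1/2]$, each term satisfies $g_j(x)\leq 2x^2/(j^2-1/4)$. Hence
\begin{align*}
    0\leq f(x)\leq 2x^2\sum_{j\geq1}\frac{1}{j^2-1/4}=4x^2,
\end{align*}
since the telescoping sum $\sum_{j\geq1}\frac{1}{j^2-1/4}=\sum_{j\geq1}\bigl(\frac{1}{j-1/2}-\frac{1}{j+1/2}\bigr)=2$. This bound tends to $0$ as $x\to0^+$, giving $\lim_{x\to0^+}f(x)=0$. As a check, the Taylor expansion $\pi x\cot(\pi x)=1-\pi^2x^2/3+O(x^4)$ gives the same limit.

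I do not expect a real obstacle. The only point that needs attention is the sign of the denominators $x^2-j^2$ in the expansion, which is what makes every summand of $f$ positive.
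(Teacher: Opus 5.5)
Your argument is correct. The paper states Lemma~\ref{lem:cot} without proof, but your Mittag-Leffler route is exactly the identity $\sum_{j\ge1}2x/(j^2-x^2)=(1-\pi x\cot(\pi x))/x$ that the paper itself uses in the proof of Lemma~\ref{lem:control_alternation}, and your termwise monotonicity together with the bound $0\le f(x)\le 4x^2$ delivers both claims. One cosmetic point: for $x\in(0,1)$ you have the strict inequality $j^2-x^2>j^2-1\ge 0$, so even the $j=1$ denominator is strictly positive, and that strict positivity is what your ``positive and finite'' conclusion actually uses.
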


\begin{Lemma}\label{lem:control_alternation}
For $K$ being a positive multiple of $4$ and $x\in(0,1)$, we have
\begin{align*}
    \abs*{\sum_{k=0}^L(-1)^{K-k}\binom{K}{k}\frac{1}{(k-L/2+x)}-\frac{1}{x}\binom{K}{K/2}}\leq \frac{1-\pi x\cot(\pi x)}{x}\binom{K}{K/2}.
\end{align*}
Specifically, if $x\in(0,1/4)$, we have
\begin{align*}
    \frac{\pi/4}{x}\binom{K}{K/2}\leq \sum_{k=0}^L(-1)^{K-k}\binom{K}{k}\frac{1}{(k-L/2+x)}\leq \frac{2-\pi/4}{x}\binom{K}{K/2}.
\end{align*}
\end{Lemma}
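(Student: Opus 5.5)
Reading the $L$ in the statement as $K$, the plan is to reindex the sum symmetrically about the central term, pair opposite indices, and compare the resulting series with the Mittag-Leffler expansion of $\pi\cot(\pi x)$ from Lemma~\ref{lem:Mittag-Leffler}.

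\textbf{Step 1 (reindexing).} Substitute $j=k-K/2$, so $j$ runs over $\{-K/2,\dots,K/2\}$. Then $K-k=K/2-j$. Since $K$ is a multiple of $4$, $K/2$ is even, so $(-1)^{K-k}=(-1)^j$. The sum becomes
\begin{align*}
S(x)\define\sum_{j=-K/2}^{K/2}(-1)^j\binom{K}{K/2+j}\frac{1}{j+x}.
\end{align*}
The $j=0$ term is exactly $\binom{K}{K/2}/x$.

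\textbf{Step 2 (pairing).} By the symmetry $\binom{K}{K/2+j}=\binom{K}{K/2-j}$, pair the terms $j$ and $-j$ for $j\geq1$, using
\begin{align*}
\frac{1}{x+j}+\frac{1}{x-j}=\frac{2x}{x^2-j^2}.
\end{align*}
This gives
\begin{align*}
S(x)-\frac{1}{x}\binom{K}{K/2}=\sum_{j=1}^{K/2}(-1)^j\binom{K}{K/2+j}\frac{2x}{x^2-j^2}.
\end{align*}

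\textbf{Step 3 (bounding the remainder).} For $x\in(0,1)$ and $j\geq1$, each factor $2x/(j^2-x^2)$ is positive. Using the crude bound $\binom{K}{K/2+j}\leq\binom{K}{K/2}$, discarding the signs, and extending the sum to all $j\geq1$, the absolute value of the remainder is at most
\begin{align*}
\binom{K}{K/2}\sum_{j=1}^{\infty}\frac{2x}{j^2-x^2}.
\end{align*}
By Lemma~\ref{lem:Mittag-Leffler} at $z=x$,
\begin{align*}
\sum_{j\geq1}\frac{2x}{j^2-x^2}=\frac{1}{x}-\pi\cot(\pi x)=\frac{1-\pi x\cot(\pi x)}{x}.
\end{align*}
This is exactly the first claim.

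\textbf{Step 4 (the case $x\in(0,1/4)$).} Let $f(x)=1-\pi x\cot(\pi x)$. By Lemma~\ref{lem:cot}, $f$ is increasing on $(0,1)$, so $f(x)\leq f(1/4)=1-\pi/4$. The first claim then places $S(x)$ in the interval
\begin{align*}
\Big[\frac{1-f(x)}{x}\binom{K}{K/2},\ \frac{1+f(x)}{x}\binom{K}{K/2}\Big]\subseteq\Big[\frac{\pi/4}{x}\binom{K}{K/2},\ \frac{2-\pi/4}{x}\binom{K}{K/2}\Big].
\end{align*}

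No step is a real obstacle. The only points needing care are the sign bookkeeping in Step 1 (this is where the divisibility of $K$ by $4$ is used) and the positivity of $2x/(j^2-x^2)$ (this is where $x<1$ is used). The bound in Step 3 is crude, since it ignores the alternation, but it is already sharp enough for the stated claim.
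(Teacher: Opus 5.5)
Your proposal is correct and follows the paper's proof essentially step for step. Like the paper, you pair the terms symmetric about $k=K/2$ to get $2x/(x^2-j^2)$, bound the remainder by $\binom{K}{K/2}\sum_{j\geq 1} 2x/(j^2-x^2)$, evaluate that series with the Mittag-Leffler expansion of $\pi\cot(\pi x)$, and finish the case $x\in(0,1/4)$ with the monotonicity of $1-\pi x\cot(\pi x)$ from Lemma~\ref{lem:cot}; your explicit remark that $4\mid K$ is what makes the central term equal to $+\binom{K}{K/2}/x$ (a point the paper leaves implicit) and your reading of the typo $L$ as $K$ are both right.
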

\begin{proof}
We have
\begin{align*}
    &\ \abs*{\sum_{k=0}^K(-1)^{K-k}\binom{K}{k}\frac{1}{(k-K/2+x)}-\frac{1}{x}\binom{K}{K/2}}\\
    &=\abs*{\sum_{k=0}^{K/2-1}(-1)^{K-k}\binom{K}{k}\litb{\frac{1}{(k-K/2+x)}+\frac{1}{(K/2-k+x)}}}\\
    &=\abs*{\sum_{k=0}^{K/2-1}(-1)^{K-k}\binom{K}{k}\frac{2x}{x^2-(k-K/2)^2}}\\
    &\leq \binom{K}{K/2}\sum_{j=1}^{+\infty}\frac{2x}{j^2-x^2}\\
    &=\frac{1-\pi x\cot(\pi x)}{x}\binom{K}{K/2},
\end{align*}
where the last line comes from the Mittag-Leffler expansion in Lemma~\ref{lem:Mittag-Leffler}. The second conclusion follows by applying the triangular inequality and noting from Lemma~\ref{lem:cot} that $1-\pi x\cot(\pi x)\in (0,1-\pi/4)$ when $x\in(0,1/4)$.
\end{proof}

\begin{Lemma}\label{lem:finite_support}
    Let $\{m_0=1,m_1,m_2,\cdots\,m_k\}$ be a truncated moment sequence of some probability measure on $\bbR$. Then, we can find a probability measure $P$ supported on at most $k+1$ points such that $\E_P[X^j]=m_j$ for $j=0,1,2,\cdots,k$.
\end{Lemma}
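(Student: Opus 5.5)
This is the finite-support (Carathéodory/Tchakaloff-type) reduction for truncated moment sequences. The plan is a convexity argument in $\bbR^{k}$.

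Let $P_0$ be a probability measure realizing $(m_0,\dots,m_k)$. Define the moment curve $v(x)=(x,x^2,\dots,x^k)\in\bbR^k$ and the convex set
\begin{align*}
\cC=\Big\{\E_P[v(X)]:\ P\ \text{finitely supported probability measure}\Big\}=\mathrm{conv}\{v(x):x\in\bbR\}.
\end{align*}

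The first step is to show that $m=(m_1,\dots,m_k)$ lies in $\cC$. Once this is established, Carathéodory's theorem in $\bbR^k$ writes $m$ as a convex combination of at most $k+1$ points $v(x_i)$. The measure $P=\sum_i p_i\delta_{x_i}$ is then the desired one, since $m_0=1$ is automatic from $\sum_i p_i=1$.

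To show $m\in\cC$, I would argue by separation, which I expect to be the main obstacle.

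\emph{Interior case.} Suppose $m\notin\cC$. By the separating hyperplane theorem there is a nonzero $a\in\bbR^k$ and $c$ with $\langle a,m\rangle\ge c\ge\langle a,v(x)\rangle$ for all $x$. The first version I would aim for uses only the interior of $\cC$: if $m\notin\mathrm{int}\,\cC$, the separation can be taken non-strict. Then the polynomial $q(x)=c-\langle a,v(x)\rangle$ satisfies $q\ge 0$ on $\bbR$ while $\E_{P_0}[q(X)]=c-\langle a,m\rangle\le 0$. Hence $q(X)=0$ $P_0$-a.s., so $P_0$ is supported on the finitely many zeros of $q$; here $q$ is a nonconstant polynomial because $a\neq 0$. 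In that case $P_0$ is itself finitely supported, so $m\in\cC$ directly.

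\emph{Boundary case.} It remains to handle $m\in\overline{\cC}\setminus\cC$ on the boundary. I would treat this by the same supporting-hyperplane argument, applied to the closure. If $m$ lies on the boundary of $\overline{\cC}$, take a supporting hyperplane. The associated polynomial $q\ge 0$ has $\E_{P_0}[q]=0$, which again forces $P_0$ to have finite support. Otherwise $m$ lies in the interior of $\overline{\cC}$, which equals the interior of $\cC$ because $\cC$ is convex.

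\emph{Degenerate case.} If $\cC$ has empty interior in $\bbR^k$, I would restrict to its affine hull. This cannot actually happen, since $v(x)$ for distinct $x$ affinely span $\bbR^k$ by a Vandermonde argument.

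Finally, the Carathéodory step bounds the number of atoms by $k+1$. In the finitely supported case from the separation argument, the support of $P_0$ may exceed $k+1$ points, but applying Carathéodory to that finite convex combination reduces it to at most $k+1$ atoms.
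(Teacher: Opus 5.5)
Your proposal is correct and follows the same route as the paper: you place $(m_1,\dots,m_k)$ in the convex hull of the moment curve $\{(x,x^2,\dots,x^k): x\in\mathbb{R}\}$ and invoke Carath\'{e}odory's theorem to get at most $k+1$ atoms. The one difference is that you actually justify why the moment vector lies in that convex hull, via the observation that a nonnegative, nonconstant polynomial $q$ with $\mathbb{E}_{P_0}[q(X)]=0$ forces $P_0$ to be finitely supported; the paper simply asserts this step. Your first non-strict separation argument already covers every case, so the separate ``boundary case'' paragraph is redundant.
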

\begin{proof}
By definition, since $m_{0:k}$ is a truncated moment sequence, there exists a probability measure $Q$ such that
\begin{align*}
    \E_Q[X^j]=m_j,\quad j=0,1,2,\cdots,k.
\end{align*}
Define the set of moment curves
\begin{align*}
    \cC=\{(x,x^2,\cdots,x^k):\ x\in\bbR\}\subset \bbR^k.
\end{align*}
Then, we have
\begin{align*}
    (m_1,m_2,\cdots,m_k)=\E_{X\sim Q}[(X,X^2,\cdots,X^k)]\in\mathrm{conv}(\cC).
\end{align*}
By Carath\'{e}odory's convex hull theorem (\cite{caratheodory1907variabilitatsbereich}, see also Proposition A.35 of \cite{schmudgen2017moment}), for any subset $T\subseteq \bbR^k$, any point in $\mathrm{conv}(T)$ can be written as the convex combination of at most $k+1$ points in $T$. This implies the existence of atoms $y_{1:(k+1)}$ and weights $w_{1:(k+1)}$ such that
\begin{align*}
    (m_1,m_2,\cdots,m_k)=\sum_{i=1}^{k+1} w_i (y_i, y_i^2,\cdots y_i^k).
\end{align*}
The probability measure $P=\sum_{i=1}^{k+1} w_i\delta_{y_i}$ satisfies the desired properties.
\end{proof}

\end{sloppypar}

\end{document}